\numberwithin{equation}{section}
\newtheorem{thm}{Theorem}[section]
\newtheorem{prop}[thm]{Proposition}
\newtheorem{lem}[thm]{Lemma}
\newtheorem{cor}[thm]{Corollary}
{\bf}{\it}
\newtheorem{fthm}{Theorem}{\bf}{\it}
{\bf}{\it}
{\bf}{\it}
{\bf}{\it}
{\bf}{\it}
\theoremstyle{definition}
\newtheorem{defn}[thm]{Definition}
\theoremstyle{remark}
\newtheorem{ex}[thm]{Example}
\newtheorem{rem}[thm]{Remark}
{\bf}{\it}
\newtheorem{definition and corollary}[thm]{Definition and Corollary}
\newtheorem{fex}[fthm]{Example}{\it}{\rm}
\newcommand{\al}{\alpha}
\newcommand{\af}{\mathrm{af}}
\newcommand{\sh}{\mathrm{sph}}
\newcommand{\C}{{\mathbb C}}
\newcommand{\cO}{{\mathcal O}}
\newcommand{\End}{\mathrm{End}}
\newcommand{\bI}{{\mathbf I}}
\newcommand{\bh}{{\mathbf h}}
\newcommand{\bW}{{\mathbb W}}
\newcommand{\ch}{\mathrm{ch}}
\newcommand{\gch}{\mathrm{gch}}
\newcommand{\la}{\lambda}
\newcommand{\La}{\Lambda}
\newcommand{\lo}{\mathrm{loc}}
\newcommand{\ra}{\mathrm{rat}}
\newcommand{\mt}{{\mathtt t}}
\newcommand{\Gr}{\mathrm{Gr}}
\newcommand{\g}{\mathfrak{g}}
\newcommand{\gb}{\mathfrak{b}}
\newcommand{\sB}{\mathscr B}
\newcommand{\sGB}{\mathscr{GB}}
\newcommand{\h}{\mathfrak{h}}
\newcommand{\bG}{\mathbf{G}}
\newcommand{\bH}{\mathbf{H}}
\newcommand{\bL}{\mathbf{L}}
\newcommand{\bv}{\mathbf{v}}
\newcommand{\tI}{\mathtt{I}}
\newcommand{\tJ}{\mathtt{J}}
\newcommand{\GW}{\mathtt{GW}}
\newcommand{\gp}{\mathfrak{p}}
\newcommand{\bO}{\mathbb{O}}
\renewcommand{\P}{\mathbb{P}}
\newcommand{\bQ}{\mathbf{Q}}
\newcommand{\sA}{\mathscr{A}}
\newcommand{\sS}{\mathscr{S}}
\newcommand{\sR}{\mathscr{R}}
\newcommand{\sQ}{\mathscr{Q}}
\newcommand{\sH}{\mathscr{H}}
\newcommand{\sX}{\mathscr{X}}
\newcommand{\Q}{\mathbb{Q}}
\newcommand{\R}{\mathbb{R}}
\newcommand{\bX}{\mathbb{X}}
\newcommand{\Z}{\mathbb{Z}}
\newcommand{\si}{\frac{\infty}{2}}
\newcommand{\Span}{\mbox{\rm Span}}
\newcommand{\Gm}{\mathbb G_m}
\newcommand\ringring[1]{%
  {
   \mathop{\kern0pt #1}\limits^{
     \vbox to-1.85ex{
       \kern-2ex 
       \hbox to 0pt{\hss\normalfont\kern.1em \r{}\kern-.45em \r{}\hss}%
       \vss 
     }
   }
  }
}
\title{Darboux coordinates on the BFM spaces\footnote{MSC2010: 14N15,17B37,20G44,81R10}}
\author{Syu \textsc{Kato}\footnote{Department of Mathematics, Kyoto University, Oiwake Kita-Shirakawa Sakyo Kyoto 606-8502 JAPAN \tt{E-mail:syuchan@math.kyoto-u.ac.jp}}}
\begin{document}
\maketitle

\begin{abstract}
Bezrukavnikov-Finkelberg-Mirkovi\'c [Compos. Math. {\bf 141} (2005)] identified the equivariant $K$-group of an affine Grassmannian, that we refer as (the coordinate ring of) a BFM space \'a l\`a Teleman [Proc. ICM Seoul (2014)], with a version of Toda lattice. We give a new system of generators and relations of a certain localization of this space, that can be seen as a version of its Darboux coordinate. This establishes a conjecture in Finkelberg-Tymbaliuk [Progress in Math. {\bf 300} (2019)] that relates the BFM space of a connected reductive algebraic group with those of Levi subgroups.
\end{abstract}

\section*{Introduction}
Let $G$ be a connected reductive algebraic group over $\C$. Let $B$ be a Borel subgroup of $G$ and let $H \subset B$ be its maximal torus. Let $\Gr_G$ denote the (thin) affine Grassmannian of $G$. The $G$-equivariant $K$-group $K_G ( \Gr_G )$ of $\Gr_G$ admits the structure of an algebra, and it is identified with the phase space of the relativistic Toda lattice in \cite{BFM05}. In particular, the space $K_G ( \Gr_G )$ carries a Poisson bracket. Braverman-Finkelberg-Nakajima \cite{Nak15,BFN18,BFN19} constructed a commutative algebra $\sA ( G, V )$ for each representation $V$ of $G$, whose spectrum is supposed to be a part of the space of vacua in the corresponding three-dimensional gauge theory. The space $\Gr_G$ played an essential r\^ole there, and we have a Poisson algebra embedding
\begin{equation}
\sA ( G, V ) \hookrightarrow \sA ( G, \{ 0 \} ) = K_G ( \Gr_G ).\label{KCoulomb}
\end{equation}
In addition, Teleman \cite{Tel19} gives a recipe to understand $\sA ( G, V )$ from $K_G ( \Gr_G )$.

Associated to $G$, we have its flag manifold $\sB$. In \cite{Kat18c,Kat18d}, we have constructed a ring morphism connecting $K _G ( \Gr_G )$ with the equivariant quantum $K$-group $qK_G ( \sB )$ of $\sB$ (\cite{Giv00,Lee04}):
\begin{equation}
K _G ( \Gr_G ) _\lo \cong qK_G ( \mathscr B )_\lo,\label{KPeterson}
\end{equation}
where the subscripts $``\lo"$ denote certain localizations, whose meaning {\it differs} in the both sides. This result, commonly referred to as the $K$-theoretic Peterson isomorphism (\cite{LLMS17}), also exhibits an aspect of the rich structures of $K _G ( \Gr_G )$.

Finkelberg-Tymbaliuk \cite{FT19} extensively studied $K _{\mathop{GL} ( n )} ( \Gr_{\mathop{GL} ( n )} )$ and deduced an algebra morphism
\begin{equation}
K _{\mathop{GL} ( n )} ( \Gr_{\mathop{GL} ( n )} ) \longrightarrow K _L ( \Gr_L )\label{F-hom}
\end{equation}
for a connected (standard) Levi subgroup $L \subset \mathop{GL} ( n )$. As this homomorphism is an incarnation of the coproduct structure of their shifted affine quantum groups (and also as they have similar homomorphisms for homologies \cite{FKPRW}), they led to conjecture that (\ref{F-hom}) exists for every connected reductive $G$ and also with the extra $\Gm$-action given by the loop rotation action.

The goal of this paper is to answer this conjecture affirmatively as:

\begin{fthm}[$\doteq$ Theorem \ref{main} + Corollary \ref{mcor}]\label{fmain}
For each connected reductive subgroup $H \subset L \subset G$, we have a chain of injective algebra homomorphisms:
$$K _{G \times \Gm} ( \Gr_G ) \hookrightarrow K _{L \times \Gm} ( \Gr_L ) \hookrightarrow K _{H \times \Gm} ( \Gr_H ).$$
\end{fthm}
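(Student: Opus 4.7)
The plan is to read off both injections from the Darboux-coordinate presentation of a localization of $K_{G \times \Gm}(\Gr_G)$ furnished by Theorem \ref{main}. Write $\sR_G$ for that localization. Since the Darboux generators are naturally grouped according to the root datum of $G$, for a standard Levi $L \subset G$ (with fixed maximal torus $H \subset L$) one should be able to isolate the subset of generators indexed by the root data of $L$ (respectively of $H$) and land in the Darboux presentation of $\sR_L$ (respectively of $\sR_H$). The argument then breaks into three steps: build the map on localizations, descend it, deduce injectivity.

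First, I would construct a candidate algebra homomorphism $\Phi_{G,L} : \sR_G \to \sR_L$ on the Darboux localizations by sending the Darboux generators of $\sR_G$ attached to roots of $L$ and to characters of $H$ identically to the corresponding generators of $\sR_L$, and specializing the remaining generators as dictated by the Poisson relations. That $\Phi_{G,L}$ respects the defining relations should then be a direct check from their explicit Darboux form, and similarly yields $\Phi_{L,H} : \sR_L \to \sR_H$ with $\Phi_{G,H} = \Phi_{L,H} \circ \Phi_{G,L}$.

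Second, I would verify that $\Phi_{G,L}$ sends the non-localized subalgebra $K_{G \times \Gm}(\Gr_G) \subset \sR_G$ into $K_{L \times \Gm}(\Gr_L) \subset \sR_L$. This is the crux of the argument: the non-localized BFM ring is not described directly by the Darboux presentation, so one must identify its image as honest regular functions on the $L$-BFM space. Here I would exploit the abelianization embedding \eqref{KCoulomb} together with the Weyl character description of equivariant $K$-classes, or alternatively route the argument through the quantum-$K$ incarnation \eqref{KPeterson} and its partial-flag analogue for $L$, to show that each natural generator of $K_{G \times \Gm}(\Gr_G)$ carries no spurious poles after applying $\Phi_{G,L}$.

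Third, injectivity is deduced by passing through the torus. For $L = H$ the target $K_{H \times \Gm}(\Gr_H)$ is an explicit Laurent polynomial algebra (the relativistic Toda phase space attached to a torus), and the composite $K_{G \times \Gm}(\Gr_G) \to K_{H \times \Gm}(\Gr_H)$ plays the role of a $K$-theoretic Harish-Chandra / Chevalley restriction, whose injectivity should be transparent from the Darboux formulas. Since this composite factors through $K_{L \times \Gm}(\Gr_L)$, both intermediate homomorphisms are forced to be injective, and Corollary \ref{mcor} then packages the two arrows into the chain asserted. The main obstacle is the second step: tracking which Darboux-localized combinations lift to actual equivariant $K$-classes on $\Gr_G$. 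The Darboux coordinates by design trivialize the Poisson structure on an open stratum, and a priori nothing prevents $\Phi_{G,L}$ from introducing spurious poles when applied to a regular class; settling this will likely require an explicit calculation on a generating set (for instance the classes of fundamental representations, or of spherical perverse sheaves on $\Gr_G$) together with the finiteness and generation input that underlies Theorem \ref{main}.
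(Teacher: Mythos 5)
Your plan gets the architecture right — construct the map on the Darboux localization of Proposition \ref{resGH}, then show it descends to the non‑localized BFM rings — and you correctly isolate the descent step as the crux. But the crux is precisely what you have not resolved, and the ingredients you gesture at are not the ones that actually work here. The paper's proof of Theorem \ref{main} does not proceed by testing a generating set of $K_{\bG}(\Gr_G)$ (such as fundamental representations or spherical classes) for spurious poles. Instead it introduces two $\C_q\bX^*(G)$-bases of $K_{\bL}(\Gr_L)$ — the classes $E^{\tJ}[\beta;\la]$ coming from the $\sH_q(\tJ)$-module decomposition in Lemma \ref{E-basis}, and the Darboux monomials $[\cO^\star_{\tJ,\beta}(\la)]$ — and controls the transition matrix between them. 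The essential new input that makes the leading-coefficient comparison work is Theorem \ref{qK-surj}, the surjection $qK_{\bG}(\sB)^\wedge \twoheadrightarrow qK_{\bL}(\sB^{\tJ})^\wedge$ obtained by setting the Novikov variables $Q^{-w_0\al_i^\vee}$ with $i\notin\tJ$ to zero, which is itself derived from the geometric reduction $\sQ_G(\beta)\cong G\times_{P^\tJ}\sQ_{L^\tJ}(\beta)$ of quasi-map spaces (Lemma \ref{Q-ind}). This is transported to the affine-Grassmannian side through the noncommutative Peterson isomorphism (Theorem \ref{ncPet}), giving the compatibility of transition coefficients stated in Propositions \ref{a-comp} and \ref{aa-comp}, after which one passes to a completion $\mathbf K_G^\wedge \subset \mathbf K_L^\wedge$ and removes leading terms inductively. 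Your phrase ``together with the finiteness and generation input that underlies Theorem \ref{main}'' is essentially a placeholder for this entire machinery.

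Two smaller points. First, your step 3 is logically off as stated: if $g\circ f$ is injective then $f$ is, but $g$ need not be, so factoring through $K_{\bL}(\Gr_L)$ does not by itself force the second arrow to be injective. The paper avoids this by realizing all three rings as genuine subrings of $K_{\bH}(\Gr_H)$, making both inclusions automatic once containment is established; the injectivity of $K_{\bL}(\Gr_L)\to K_{\bH}(\Gr_H)$ is Corollary \ref{absDarboux} applied to $L$, not a consequence of the factoring. Second, Theorem A also includes Corollary \ref{mcor}, which extends the embeddings from the simply connected setting to quotients $G/Z$ by finite central subgroups, requiring a separate argument with the component decomposition of $\Gr_{H/Z}$ indexed by $\mathsf{Irr}\,Z$ and twisting by outer (Dynkin-diagram) automorphisms of the affine root system; your proposal does not touch this reduction at all.
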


Since the main portion of Theorem \ref{fmain} is the case of simple and simply connected $G$, we concentrate into this case in the rest of this introduction.

Here $K _{H \times \Gm} ( \Gr_H )$ is the (quantized) Heisenberg algebra, and hence this embedding can be seen to equip each $K _{L \times \Gm} ( \Gr_L )$ with its Darboux coordinate system. In addition, Corollary \ref{genKG} supplies its modification that describes a certain localization of the ring $K _{L \times \Gm} ( \Gr_L )$. This makes $K _{G \times \Gm} ( \Gr_G )$ into (the quantized phase space of) an integrable system called the relativistic Toda lattice, as described in Bezrukavnikov-Finkelberg-Mirkovi\'c \cite{BFM05}. In view of the homology version of (\ref{KPeterson}) discovered by Peterson \cite{Pet97}, it can be understood as the $K$-theoretic version of the fundamental presentation of (equivariant) quantum cohomology of flag varieties due to Givental-Kim \cite{GK95} and Kim \cite{Kim99}.

In the course of the proof of Theorem \ref{fmain}, we exhibit the non-commutative version of the main result in \cite{Kat18c}:

\begin{fthm}[$\doteq$ Corollary \ref{ncGr} and Theorem \ref{ncPet}]\label{fnc}
We have a commutative diagram, whose bottom arrow is an isomorphism of non-commutative rings:
$$
\xymatrix{
& K_{H \times \Gm} ( \bQ_G^{\ra} ) & \\
K_{H \times \Gm} ( \Gr_G ) _\lo \ar[rr] \ar@{^{(}->}[ru]^{\Phi} & & qK_{H \times \Gm} ( \sB )_\lo \ar@{_{(}->}[lu]_{\Psi}
},$$
where $\bQ_G^{\ra}$ is the semi-infinite flag manifold of $G$ $(\cite{Kat18d})$. Moreover, all of these morphisms respect Schubert bases. 
\end{fthm}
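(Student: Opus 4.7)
The plan is to promote the commutative Peterson-type isomorphism \eqref{KPeterson} to its $\Gm$-loop-rotation-equivariant refinement by realizing both sides as subrings of $K_{H \times \Gm}(\bQ_G^{\ra})$. The maps $\Phi$ and $\Psi$ arise from pull-back along the two natural projections from the semi-infinite flag manifold: $\bQ_G^{\ra}$ fibres over $\Gr_G$ by remembering only the bundle datum (forgetting the generic $B$-reduction), and over $\sB$ by restricting the $B$-reduction at $\infty \in \bP^1$. On Schubert bases, $\Phi$ sends each affine Schubert class on $\Gr_G$ to the semi-infinite Schubert class indexed by the corresponding dominant translation in the affine Weyl group, while $\Psi$ sends each finite-type Schubert class on $\sB$ to the semi-infinite Schubert class indexed by the corresponding finite Weyl group element, with the Novikov parameters realized as classes of suitable equivariant line bundles on $\bQ_G^{\ra}$.

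With $\Phi$ and $\Psi$ constructed, the bottom horizontal arrow is then defined as $\Psi^{-1} \circ \Phi$ after passing to the common localization that inverts the dominant translation / Novikov classes (which is precisely the meaning of the subscript $\lo$ on each side). Commutativity of the triangle is built into this definition, and injectivity of the bottom arrow is inherited from that of $\Phi$. That it is an \emph{isomorphism} follows because its classical limit (forgetting the $\Gm$-action) is the Peterson isomorphism \eqref{KPeterson} of \cite{Kat18c,Kat18d}, together with a flat-deformation rigidity argument: both sides are flat deformations of their classical limits with isomorphic associated graded pieces indexed by the same Schubert bases, so any basis-respecting map that specialises to an isomorphism at the classical point must itself be an isomorphism. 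Compatibility of all three morphisms with the Schubert bases is automatic from the construction.

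The main obstacle is to verify that $\Phi$ and $\Psi$ really are ring homomorphisms for the non-commutative products induced by the loop-rotation $\Gm$-action: the convolution product on $K_{G \times \Gm}(\Gr_G)$ and the quantum $K$-product on $qK_{G \times \Gm}(\sB)$ both acquire loop-parameter-dependent corrections, and one must show that they deform consistently to the non-commutative multiplication on $K_{H \times \Gm}(\bQ_G^{\ra})$. I would reduce this to an identity of structure constants on semi-infinite Schubert classes, exploiting the vanishing of higher $\Ext$-groups between line bundles on semi-infinite orbit closures and the convolution-type presentation of semi-infinite intersection classes available from \cite{Kat18d}. This structure-constant matching on a common basis in $K_{H \times \Gm}(\bQ_G^{\ra})$ is what ultimately forces the bottom arrow to be an isomorphism of non-commutative rings.
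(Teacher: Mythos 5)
Your overall framework---realize both $K_{H\times\Gm}(\Gr_G)_\lo$ and $qK_{H\times\Gm}(\sB)_\lo$ inside $K_{H\times\Gm}(\bQ_G^{\ra})$ and compare them there---is indeed the right picture, and it is the one the paper uses. But several of the concrete mechanisms you propose do not work, and the paper's actual proof proceeds by different and essential means.

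First, the geometric description of $\Phi$ is wrong: there is no projection $\bQ_G^{\ra}\to\Gr_G$. The $\C$-points of $\bQ_G^{\ra}$ are $G(\!(z)\!)/(H\cdot N(\!(z)\!))$, and $H\cdot N(\!(z)\!)$ is not contained in $G[\![z]\!]$ (it involves negative $z$-powers), so there is no quotient map onto $\Gr_G=G(\!(z)\!)/G[\![z]\!]$, and hence no pullback to speak of. In the paper, $\Phi$ is constructed algebraically: Theorem \ref{K-id} and Corollary \ref{ncGr} build it as an $\sH_q$-module embedding sending the affine Schubert class $[\cO_{\Gr_G(ut_\beta)}]$ to the semi-infinite Schubert class $[\cO_{\bQ_G(ut_\beta)}]$ (for $u\in W$, $\beta\in Q^\vee_<$), and the localization is required precisely because the image of $K_{\bH}(\Gr_G)$ is not the whole image of $\Psi$ before inverting the translation elements $\mt_\gamma$. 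Likewise, $\Psi$ is not a restriction-at-$\infty$ map; it is the Peterson-type embedding of Theorem \ref{qK-int}, which matches the quantum $K$-operators $A^\la$ with the line-bundle twist operators $\Xi(\la)$ and realizes Novikov variables as translated Schubert classes.

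Second, your ``flat-deformation rigidity'' argument for the isomorphism does not address the actual difficulty, which is containment of images after localization, not specialization. One must show that $\Phi$ and $\Psi$ have the same image in $K_{H\times\Gm}(\bQ_G^{\ra})$ after inverting the translation classes, and this requires an explicit comparison of generators. The paper gets this via Lemma \ref{e-char}: because $K_{\bG}(\Gr_G)_\lo$ is a regular $\sH_q^\sh$-module, its $\sH_q^\sh$-endomorphism algebra is itself, so the non-commutative ring structure is the operator algebra structure. The operators $\Xi(-\varpi_i)$ on $K_{\bG}(\bQ_G^{\ra})$ restrict to endomorphisms of the localized affine Grassmannian $K$-group (Lemma \ref{div} gives the finiteness needed), and matching them against the quantum $K$-operators $A^{-\varpi_i}$ via Theorem \ref{qK-int} is what yields the ring isomorphism. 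Your proposal to ``match structure constants on a common basis using vanishing of higher Ext-groups'' gestures at the right target but gives no actual mechanism; the nil-DAHA module comparison and the endomorphism-algebra identification are not optional side remarks, they are the content. Without them, the claim that $\Psi^{-1}\circ\Phi$ even makes sense as a map (rather than just as a rational correspondence) is unjustified.

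Finally, a smaller but real point: at $q\neq 1$ the $\Gm$-equivariant object $qK_{\bH}(\sB)$ is a quantum $D_q$-module, not a ring with a quantum product; the ``non-commutative ring'' structure in the theorem is the algebra generated by the shift operators $A^\la$, $Q^\beta$, and $R(\bH)$. Your wording ``the quantum $K$-product on $qK_{G\times\Gm}(\sB)$'' implicitly assumes a multiplicative structure that does not exist in that form, which is exactly the kind of subtlety the operator formalism in Theorem \ref{reconst} and the $\Xi$-calculus on $\bQ_G^{\ra}$ are there to handle.
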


Our strategy to prove Theorem \ref{fmain} is as follows: We first refine some of the algebraic arguments in \cite{Kat18c} to prove Theorem \ref{fnc}. Then, we transplant the natural operations of $K _{G \times \Gm} ( \bQ_G^\ra )$ and give an algebra generator set of a suitable localization $K _{G \times \Gm} ( \Gr_G )_\lo$ of $K _{G \times \Gm} ( \Gr_G )$ in term of the Heisenberg action of $K _{H \times \Gm} ( \Gr_H )$. These boil down the proof of Theorem \ref{fmain} into a comparison of integral structures. For this comparison, we prove the ($\Gm$-equivariant version of the) following, best expressed in the language of quantum $K$-groups.

Let $\sB^L$ be the flag variety of $L$. Let $\bX^*$ be the weight lattice of $H$. Let $\{ \varpi_i \}_{i \in \tI}$ be the set of fundamental weights with respect to $H \subset B$. We have line bundles $\cO_{\sB} ( - \varpi_i )$ and $\cO_{\sB^L} ( - \varpi_i )$ on $\sB$ and $\sB^L$, respectively. Let $Q^{\vee}_+$ denote the nonnegative span of positive coroots of $G$, and let $Q^{\vee}_{L,+}$ denote the nonnegative span of positive coroots of $L$. We have a natural inclusion $Q^{\vee}_{L,+} \subset Q^{\vee}_+$. Let us employ the definition of quantum $K$-groups as:
$$qK_G ( \sB ) = K_G ( \sB ) \otimes \C [\![Q^{\vee}_+]\!] \hskip 5mm \text{and} \hskip 5mm qK_L ( \sB^L ) = K_L ( \sB^L ) \otimes \C [\![Q^{\vee}_{L,+}]\!],$$
where $\beta \in Q^{\vee}_+$ defines a formal variable $Q^{\beta} \in \C [\![Q^{\vee}_+]\!]$. These spaces are equipped with the commutative ring structures whose multiplications are denoted by $\star$. The multiplication $\star$ coincides with the usual multiplications rules of $K_G ( \sB )$ or $K_L ( \sB^L )$ by setting $Q^{\beta} = 0$ for all $\beta \neq 0$.

\begin{fthm}[$\doteq$ Theorem \ref{qK-surj}]\label{flsurj}
There exists a surjective morphism of rings
$$qK_G ( \sB ) \longrightarrow \!\!\!\!\! \rightarrow qK_L ( \sB^L )$$
obtained by setting $Q^{\beta} \equiv 0$ for $\beta \in Q^{\vee}_+ \setminus Q^{\vee}_{L,+}$. This morphism sends the quantum multiplication of $\cO_{\sB} ( - \varpi_i )$ to the quantum multiplication by $\cO_{\sB^L} ( - \varpi_i )$ for each $i \in \tI$.
\end{fthm}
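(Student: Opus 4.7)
The plan is to factor the desired map as
\begin{equation*}
qK_G(\sB) \twoheadrightarrow qK_G(\sB) / I \xrightarrow{\ \bar\pi\ } qK_L(\sB^L),
\end{equation*}
where $I$ is the $\C[\![Q^\vee_+]\!]$-submodule generated by $\{Q^\beta : \beta \in Q^\vee_+ \setminus Q^\vee_{L,+}\}$. First I would check that $I$ is a two-sided ideal under $\star$: for $\beta \notin Q^\vee_{L,+}$ and any $b \in qK_G(\sB)$, the product $Q^\beta \star b$ is a sum of terms $Q^{\beta+\gamma} \cdot (\text{Schubert class})$ with $\gamma \in Q^\vee_+$; since the coefficient of any coroot outside $\tI_L$ in $\beta + \gamma$ is bounded below by its (strictly positive) coefficient in $\beta$, we have $\beta + \gamma \notin Q^\vee_{L,+}$, and hence $Q^\beta \star b \in I$. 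Thus the quotient inherits a ring structure under $\star$ and is identified, as a module, with $K_G(\sB) \otimes \C[\![Q^\vee_{L,+}]\!]$.

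Next I would define $\bar\pi$ at the module level as $\iota^* \otimes \id$, where $\iota: \sB^L \hookrightarrow \sB$ is the $L$-equivariant embedding as the $L$-orbit of the base point $B/B$ (equivalently, the $Z(L)$-fixed component of $\sB$ containing that point). The classical pullback $\iota^*: K_G(\sB) \to K_L(\sB^L)$ is a ring morphism sending $\cO_\sB(-\varpi_i)$ to $\cO_{\sB^L}(-\varpi_i)$ for every $i \in \tI$. The substantive claim is that $\bar\pi$ respects the quantum products. By $\C[\![Q^\vee_{L,+}]\!]$-linearity and the classical multiplicativity of $\iota^*$, the verification reduces to checking
\begin{equation*}
\bar\pi \bigl( \cO_\sB ( - \varpi_i ) \star [\cO_{X_w}] \bigr) = \cO_{\sB^L} ( - \varpi_i ) \star \bar\pi ( [\cO_{X_w}] )
\end{equation*}
for every $i \in \tI$ and Schubert basis element $[\cO_{X_w}] \in K_G(\sB)$. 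For this one invokes a $K$-theoretic quantum Chevalley-type formula, which expands the left-hand product as a combinatorial sum of terms $Q^\beta [\cO_{X_{w'}}]$ (indexed by quantum LS paths or alcove-path data), and matches term by term: contributions with $\beta \notin Q^\vee_{L,+}$ are killed by $\bar\pi$, whereas contributions with $\beta \in Q^\vee_{L,+}$ should correspond, via the inclusion of root data from $L$ into $G$, to the terms appearing in the analogous $L$-Chevalley expansion of $\cO_{\sB^L}(-\varpi_i) \star \iota^*[\cO_{X_w}]$.

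The main technical obstacle will be this combinatorial matching across $G$ and $L$. Geometrically, it amounts to identifying the Gromov-Witten contributions from rational curves in $\sB$ of class $\beta \in Q^\vee_{L,+}$ with those from curves in the Levi flag variety $\sB^L$ of the same class; I expect this to be accessible via equivariant localization with respect to $Z(L) \subset H$, whose fixed locus on $\sB$ contains $\sB^L$ and whose lift to the relevant moduli stacks should isolate the $L$-component up to contributions annihilated by $\iota^*$. In particular, for $i \in \tI \setminus \tI_L$ one must further argue that the only surviving contributions are the purely classical ones, reflecting the fact that $\cO_{\sB^L}(-\varpi_i)$ is central on $\sB^L$ and admits no genuine quantum corrections on the $L$-side. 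Surjectivity of $\bar\pi$ is then automatic, since its image contains the generators $\cO_{\sB^L}(-\varpi_i)$ for $i \in \tI$ together with all $Q^\beta$ for $\beta \in Q^\vee_{L,+}$.
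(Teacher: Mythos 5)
Your reduction to an ideal-and-quotient setup, and the identification of $\iota^*$ as the classical part of the map, are both sound (for the ideal check, the point is simply that Novikov variables are central and $Q^\vee_+ \setminus Q^\vee_{L,+}$ is closed under adding $Q^\vee_+$). The classical statement that $\iota^*:K_G(\sB)\to K_L(\sB^L)$ is a ring isomorphism taking $\cO_\sB(-\varpi_i)$ to $\cO_{\sB^L}(-\varpi_i)$ is also correct. But the ``substantive claim'' that $\bar\pi$ respects $\star$ is the entire content of the theorem, and your proposal does not prove it; it names it as the ``main technical obstacle'' and gestures at a Chevalley-type term-matching plus $Z(L)$-localization without carrying either out. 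That is a genuine gap, not a routine verification.

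The paper closes this gap differently, by two ingredients you do not have. First, the geometric core is Lemma \ref{Q-ind}: for $\beta \in -w_0 Q^\vee_{\tJ,+}$, the quasi-map space $\sQ_G(\beta)$ is isomorphic to $G\times_{P^\tJ}\sQ_{L^\tJ}(\beta)$, so that the $G$-Euler characteristic $\chi(\sQ_G(\beta),\cO(\la))$ is literally the algebraic induction (via $D_{w_0}$) of the corresponding $L^\tJ$-Euler characteristic. This replaces any curve-counting comparison by a direct identification of the moduli spaces whose sheaf cohomology computes the invariants. (The appendix version, Theorem \ref{qK-surj-gen}, does the analogue at the level of stable-map moduli stacks, $\mathcal M_{g,n,\beta}(G\times_P X)\cong G\times_P\mathcal M_{g,n,\beta}(X)$.) Second, the paper uses the reconstruction-type statement Theorem \ref{reconst}(4)--(5): $qK$ is generated by line-bundle twists, and a relation $f[\cO_\sB]=0$ is equivalent to the vanishing of all pairings $\langle f[\cO_\sB],[\cO_\sB]\rangle^\GW_\la$. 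This lets the whole multiplicativity question be expressed in terms of Euler characteristics on quasi-map spaces, to which Lemma \ref{Q-ind} applies. Without these two tools your plan forces you either to prove and compare an explicit quantum $K$-Chevalley formula for $G$ and $L$ (nontrivial, and not in the paper), or to set up $Z(L)$-localization on moduli of stable maps. The latter is harder than your sketch suggests: $\sB^{Z(L)}$ is a disjoint union of $|W/W_L|$ copies of $\sB^L$, all of which contribute fixed loci, and one must argue that restriction to the identity component is compatible with virtual localization in $K$-theory (including the normal-bundle Euler class denominators), none of which is addressed. Your remark about $i\notin\tI_L$ (no quantum corrections since $\cO_{\sB^L}(-\varpi_i)$ is a character) is correct and matches the paper's Proposition \ref{Aqt}, but it only covers the easy cases.

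In short: right scaffolding, right identification of where the difficulty lives, but the actual mechanism that makes the proof go -- the induction isomorphism $\sQ_G(\beta)\cong G\times_{P^\tJ}\sQ_{L^\tJ}(\beta)$ together with the reconstruction theorem -- is missing, and what you propose in its place is not developed far enough to know whether it works.
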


We remark that the classical analogue of Theorem \ref{flsurj} is an isomorphism, sometimes referred to as the ``induction equivalence". We present a direct proof in the main body of this paper, that yields an interesting representation theoretic consequence (Corollary \ref{inclWeyl}), though it holds in much greater generality (Theorem \ref{qK-surj-gen}). Theorems \ref{flsurj} and \cite[Theorem A]{Kat19a} upgrade the key observations in Leoung-Li \cite{LL10} to the $K$-theoretic settings.

\begin{fex}\label{exToda}
Assume that $G = \mathop{SL} ( n, \C )$. Let us choose the fundamental weights $\varpi_1,\ldots,\varpi_{n-1}$ and simple coroots $\al_1^{\vee}, \ldots, \al_{n-1}^{\vee}$ in accordance with the table in the end of Bourbaki \cite{Bou2}. We understand that $\varpi_n = 0$. Let $V = \C^n$ be the dual vector representation of $G$. According to Givental-Lee \cite{GL03}, we have
$$\ch \, V = [\cO_{\sB} ( - \varpi_1 )] + \sum_{i = 1}^{n-1} a^{\varpi_i} ( [\cO_{\sB} ( - \varpi_{i+1} )] ) \in qK_G ( \sB ),$$
where we have $a^{\varpi_i} = (1 - Q^{\al_i^{\vee}}) ( [\cO_{\sB} ( - \varpi_i) ] \star )^{-1} \in \mathrm{End} \, qK _G ( \sB )$. Let $L \subset G$ be a Levi subgroup. If we specialize $Q^{\al_{i}^{\vee}} = 0$ when $\al_i^{\vee} \not\in Q^{\vee}_{L,+}$, then the effect of $\ch \, V$ restricts to that of $qK_{L} ( \sB^{L} )$. When $\al_i^{\vee} \not\in Q^{\vee}_{L,+}$, the effect $a^{\varpi_i}$ becomes a character twist on $qK_{L} ( \sB^{L} )$.
\end{fex}

Here we warn that the definition of quantum $K$-groups, as well as the normalizations in Theorem \ref{flsurj} and Example \ref{exToda} are different from the main body of the paper for the sake of simplicity of expositions.

\medskip

The organization of this paper is as follows: After recalling preliminary stuffs in \S 1, we provide a certain collection of elements in the equivariant $K$-groups of semi-infinite flag manifolds (Theorem \ref{X-weight}) in \S 2. These collections are the ``reduced version" of line bundles, and the only non-trivial point is that we can divide the classes of line bundles properly. Using these elements, we provide (Proposition \ref{resGH}) a new system of generators of $K_G ( \Gr_G )_\lo$ in \S 3. In order to transplant elements from semi-infinite flag manifolds to affine Grassmannian, we prove Theorem \ref{fnc} (Theorem \ref{ncPet}). In \S 4, we prove Theorem \ref{flsurj} (Theorem \ref{qK-surj}), that is an essential tool to compute the ``leading terms" of the the maps in Theorem \ref{fmain}. Using them, we prove Theorem \ref{fmain} in \S 5. In Appendix A, we present an another proof of Theorem \ref{flsurj} (Theorem \ref{qK-surj-gen}) that applies in much greater generality.

\section{Preliminaries}

A vector space is always a $\C$-vector space, and a graded vector space refers to a $\Z$-graded vector space whose graded pieces are finite-dimensional and its grading is bounded from the above. Tensor products are taken over $\C$ unless stated otherwise. We define the graded dimension of a graded vector space as
$$\mathrm{gdim} \, M := \sum_{i\in \Z} q^i \dim _{\C} M_i \in \Q (\!(q^{-1})\!).$$
We set $\C_q := \C [q,q^{-1}]$. As a rule, we suppress $\emptyset$ and associated parenthesis from notation. This particularly applies to $\emptyset = \tJ \subset \tI$ frequently used to specify parabolic subgroups.

\subsection{Groups, root systems, and Weyl groups}\label{sec:setup}\label{setup}
Basically, material presented in this subsection can be found in \cite{CG97, Kum02}.

Let $G$ be a connected, reductive algebraic group over $\C$ such that $[G,G]$ is a simply connected group of rank $r$ and we have a complementary torus $H'$ such that $G \cong [G,G] \times H'$. Let $B$ and $H$ be a Borel subgroup and a maximal torus of $G$ such that $H \subset B$. We set $N$ $(= [B,B])$ to be the unipotent radical of $B$. We denote the Lie algebra of an algebraic group by the corresponding German small letter. We have a (finite) Weyl group $W := N_G ( H ) / H$. For an algebraic group $E$, we denote its set of $\C [z]$-valued points by $E [z]$, its set of $\C [\![z]\!]$-valued points by $E [\![z]\!]$, and its set of $\C (z)$-valued points by $E (z)$. Let $\mathbf I \subset G [\![z]\!]$ be the preimage of $B \subset G$ via the evaluation at $z = 0$ (the Iwahori subgroup of $G [\![z]\!]$).

Let $\bX^* := \mathrm{Hom} _{gr} ( H, \Gm )$ be the weight lattice of $H$, and let $\bX^* ( G )$ denote the subgroup of $\bX^*$ whose elements define characters of $G$. We set $\bX_*$ and $\bX_* ( G )$ as the dual lattices of $\bX^*$ and $\bX^* ( G )$, respectively. We denote the natural pairings of lattices by $\left< \bullet, \bullet \right>$.

Let $\Delta \subset \bX^*$ be the set of roots, let $\Delta_+ \subset \Delta$ be the set of roots that yield root subspaces in $\gb$, and let $\Pi \subset \Delta _+$ be the set of simple roots. We set $\Delta_- := - \Delta_+$. Let $Q^{\vee} \subset \bX_*$ be the $\Z$-span of coroots. We define $\Pi^{\vee} \subset Q ^{\vee}$ to be the set of positive simple coroots, and let $Q_+^{\vee} \subset Q ^{\vee}$ be the set of non-negative integer span of $\Pi^{\vee}$. For $\beta, \gamma \in \bX_*$, we define $\beta \ge \gamma$ if and only if $\beta - \gamma \in Q^{\vee}_+$. Let $\tI := \{1,2,\ldots,r\}$. We fix bijections $\mathtt I \cong \Pi \cong \Pi^{\vee}$ such that $i \in \tI$ corresponds to $\alpha_i \in \Pi$, its coroot $\alpha_i^{\vee} \in \Pi ^{\vee}$, and a simple reflection $s_i \in W$ corresponding to $\alpha_i$. We also have a reflection $s_{\alpha} \in W$ corresponding to $\alpha \in \Delta_+$. For each $\tJ \subset \tI$, we set $\bX^*_+ ( \tJ ) := \{ \lambda \in \bX^* \mid \left< \alpha^{\vee}_i, \lambda \right> \ge 0, \hskip 2mm \forall i \in \tJ \}$. Let $\{\varpi_i\}_{i \in \tI} \subset \bX^*_+$ be the set of fundamental weights (i.e. $\left< \al_i^{\vee}, \varpi_j \right> = \delta_{i,j}$) and we set $\rho := \sum_{i \in \tI} \varpi_i = \frac{1}{2}\sum_{\al \in \Delta^+} \al \in \bX^*_+$.

For a subset $\tJ \subset \tI$, we define $P ^\tJ$ to be the standard parabolic subgroup of $G$ corresponding to $\tJ$. I.e. we have $\gb \subset \gp ^\tJ \subset \g$ and $\gp ^\tJ$ contains the root subspace corresponding to $- \alpha_i$ ($i \in \tI$) if and only if $i \in \tJ$. Then, the set of characters of $P ^ \tJ$ is identified with $\bX^*_0 ( \tJ ) := \bX^* ( G ) \oplus \Lambda^{( \tI \setminus \tJ )}$, where we set $\Lambda^\tJ := \sum_{i \in \tJ} \Z \varpi_i $. We also set
$$\La^{\tJ}_{++} := \sum_{j \in \tJ} \Z_{> 0} \varpi_j \subset \La^{\tJ}_{+} := \sum_{j \in \tJ} \Z_{\ge 0} \varpi_j \subset \bX^*, \hskip 3mm  Q_{\tJ,+}^{\vee} := \sum_{j \in \tJ} \Z_{\ge 0} \al_j^{\vee} \subset Q_{\tJ}^{\vee} := \sum_{j \in \tJ} \Z \al_j^{\vee}.$$
We define $W ^\tJ \subset W$ to be the subgroup generated by $\{s_i\}_{i \in \tJ}$. It is the Weyl group of the maximal reductive subgroup $L ^\tJ$ of $P ^ \tJ$ that contains $H$ (we refer $L^\tJ$ as the standard Levi subgroup of $P^\tJ$ in the below).

Let $\la \in \bX^*$. We consider the subset
$$\Sigma ( \la ) := \text{convex span of } \{ W \la \} \subset \bX^* \otimes _\Z \R.$$
We set $\Sigma_* ( \la ) := \Sigma ( \la ) \setminus \{ W \la \}$.

We set $\bG := G \times \Gm$, $\bL ^ \tJ  := L ^ \tJ  \times \Gm$, and $\bH := H \times \Gm$ for the simplicity of notation.

Let $\Delta_{\af} := \Delta \times \Z \delta \cup \{m \delta\}_{m \neq 0}$ be the untwisted affine root system of $\Delta$ with its positive part $\Delta_+ \subset \Delta_{\af, +}$. We set $\alpha_0 := - \vartheta + \delta$, $\Pi_{\af} := \Pi \cup \{ \alpha_0 \}$, and $\tI_{\af} := \tI \cup \{ 0 \}$, where $\vartheta$ is the highest root of $\Delta_+$. We set $W _{\af} := W \ltimes Q^{\vee}$ and call it the affine Weyl group. It is a reflection group generated by $\{s_i \mid i \in \tI_{\af} \}$, where $s_0$ is the reflection with respect to $\alpha_0$. Let $\ell : W_\af \rightarrow \Z_{\ge 0}$ be the length function and let $w_0^{\tJ} \in W$ be the longest element in $W^{\tJ} \subset W_\af$. We set $\widetilde{W}_\af := W \ltimes \bX_*$ and call it the extended affine Weyl group. We have $t_{\beta} \in \bX_* \subset \widetilde{W}_\af$ for each $\beta \in \bX_*$ such that $t_{\beta} \in W_\af$ for $\beta \in Q^{\vee}$, $u t_{\beta} u^{-1} = t_{u \beta}$ for each $u \in W$, and $t_{- \vartheta^{\vee}} := s_{\vartheta} s_0$ (for the coroot $\vartheta^{\vee}$ of $\vartheta$). By setting $$\ell ( w t_{\gamma} ) = \ell ( t_{\gamma} w ) = \ell ( w )$$
for $w \in W_\af$ and $\gamma \in \bX_* ( G )$, we extend the length function to $\widetilde{W}_\af$ (that is possible by $\bX_* \cong \bX_* ( G ) \times Q^{\vee}$).

Let $\le$ be the Bruhat order of $W_\af$. In other words, $w \le v$ holds if and only if a subexpression of a reduced decomposition of $v$ yields a reduced decomposition of $w$ (see \cite{BB05}). We define the generic (semi-infinite) Bruhat order $\le_\si$ as:
\begin{equation}
w \le_\si v \Leftrightarrow w t_{\beta} \le v t_{\beta} \hskip 5mm \text{for every } \beta \in Q^{\vee} \text{ such that } \left< \beta, \al_i \right> \ll 0 \text{ for } i \in \tI. \label{si-ord}
\end{equation}
By \cite{Lus80}, this defines a preorder on $W_{\af}$. Here we remark that $w \le v$ if and only if $w \ge_\si v$ for $w,v \in W$.

\begin{thm}[Peterson \cite{Pet97} Lecture 13]\label{sib}
Let $w \in W_\af$ be such that $w \le_\si e$. We have $w = u t_{\beta}$ for some $u \in W$ and $\beta \in Q^{\vee}_+$. \hfill $\Box$
\end{thm}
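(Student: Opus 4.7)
The plan is to decode the condition (\ref{si-ord}) directly. Write $w = u t_\gamma$ with $u \in W$ and $\gamma \in Q^\vee$; the goal is to force $\gamma \in Q^\vee_+$. Choose $\beta \in Q^\vee$ so deep in the antidominant cone that both $\beta$ and $\gamma + \beta$ satisfy $\langle \alpha, \bullet \rangle \le -2$ for every $\alpha \in \Delta_+$; then by (\ref{si-ord}) we must have $w t_\beta = u t_{\gamma + \beta} \le t_\beta$ in the affine Bruhat order on $W_\af$. A first pass with the length formula $\ell(v t_\mu) = -2\langle\rho, \mu\rangle - \ell(v)$, valid for $v \in W$ and $\mu$ in this deep region, gives only $\langle \rho, \gamma \rangle \ge -\ell(u)/2$; this $\rho$-height estimate is strictly weaker than $\gamma \in Q^\vee_+$, so one must actually use the subword structure of the affine Bruhat order, not merely length counts.

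The cleanest route, which I would take, is via the quantum Bruhat graph $\mathrm{QBG}$ on $W$: vertices are $W$; arrows are $v \to v s_\alpha$ for $\alpha \in \Delta_+$, of two kinds --- Bruhat edges of weight $0$ raising the length by $1$, and quantum edges of weight $\alpha^\vee$ dropping the length by $\langle 2\rho, \alpha^\vee \rangle - 1$. A fundamental result of Peterson --- essentially the content of the cited Lecture~13 --- identifies the generic semi-infinite order with weighted paths in $\mathrm{QBG}$: for $u,v \in W$ and $\gamma, \delta \in Q^\vee$,
\begin{equation*}
u t_\gamma \le_\si v t_\delta \ \text{if and only if there is a directed path}\ v \to u \ \text{in}\ \mathrm{QBG}\ \text{of total weight}\ \gamma - \delta.
\end{equation*}
Applying this with $v = e$ and $\delta = 0$, the hypothesis $w \le_\si e$ produces a directed path $e \to u$ in $\mathrm{QBG}$ whose total weight equals $\gamma$; since each edge weight is either $0$ or a positive coroot $\alpha^\vee \in Q^\vee_+$, the total weight of any directed path automatically lies in $Q^\vee_+$, forcing $\gamma \in Q^\vee_+$.

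The main obstacle is establishing the $\mathrm{QBG}$ characterization of $\le_\si$; once that reformulation is in place, the theorem is immediate. If one prefers to bypass the citation, a parallel direct argument works via alcove walks: $u t_{\gamma+\beta} \le t_\beta$ says the alcove $u t_{\gamma+\beta} A_0$ lies on some minimum gallery from the fundamental alcove $A_0$ to $t_\beta A_0$, and by letting $\beta$ range over an open subcone of the antidominant cone while tracking the simple-coroot components picked up along such galleries, one extracts $\langle \alpha_i, \gamma \rangle \ge 0$ for every $i \in \tI$; combined with $\gamma \in Q^\vee$ and the nonnegativity of the inverse Cartan matrix for each simple factor of $[G,G]$, this yields $\gamma \in Q^\vee_+$. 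Either way, the nontrivial input is combinatorial/geometric rather than length-theoretic.
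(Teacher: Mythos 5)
The paper offers no proof of this statement---only the citation to Peterson---so what I can really assess is whether your argument is sound. The quantum-Bruhat-graph route is correct: once one grants the characterization of $\le_\si$ by weighted directed $\mathrm{QBG}$ paths, a path $e\to u$ of total weight $\gamma$ automatically has $\gamma\in Q^{\vee}_+$ because every edge weight is $0$ or a positive coroot, and that finishes it. As you yourself note, this characterization is essentially what Peterson's Lecture 13 establishes, so you have not removed the dependence on the cited source, but you have correctly reduced the theorem to it, and this is the standard modern phrasing.

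The alternative alcove-walk sketch, however, cannot work as written. You propose to extract $\langle\gamma,\alpha_i\rangle\ge 0$ for all $i\in\tI$---that is, dominance of the cocharacter $\gamma$---and then pass to $\gamma\in Q^{\vee}_+$ via positivity of the inverse Cartan matrix. But dominance is strictly stronger than membership in $Q^{\vee}_+$, and it genuinely fails for elements that the theorem does cover: for any $G$ of rank $\ge 2$ take $w=t_{\alpha_1^{\vee}}$. Then $w\le_\si e$ (for $\beta$ deep antidominant both $\alpha_1^{\vee}+\beta$ and $\beta$ are antidominant with $\ell(t_{\alpha_1^{\vee}+\beta})=\ell(t_\beta)-2$, and $t_{\alpha_1^{\vee}+\beta}\le t_\beta$; equivalently the $\mathrm{QBG}$ loop $e\to s_1\to e$ has weight $\alpha_1^{\vee}$), yet $\langle\alpha_1^{\vee},\alpha_2\rangle=-1<0$, so $\gamma=\alpha_1^{\vee}$ is not dominant. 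Hence no gallery argument can produce the dominance you claim, and the inverse-Cartan step is a red herring. The quantities one should aim to control directly are $\langle\gamma,\varpi_i\rangle$ for $i\in\tI$, whose nonnegativity is precisely the assertion $\gamma\in Q^{\vee}_+$.
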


For $w, v \in \widetilde{W}_\af$, we write $w \ge_\si v$ if and only if there exists $\gamma \in \bX_* ( G )$ such that $w t_{\gamma}, v t_{\gamma} \in W_\af$ and $w t_{\gamma} \ge_\si v t_{\gamma}$.

Let $\widetilde{W}_\af^-$ denote the set of minimal length representatives of $\widetilde{W}_\af / W$ in $\widetilde{W}_\af$. We set
$$\bX _*^- ( \tJ ) := \{\beta \in \bX _* \mid \left< \beta, \al_i \right> < 0, \forall i \in \tJ \}$$
and
$$\bX _*^{\le} ( \tJ ) := \{\beta \in \bX _* \mid \left< \beta, \al_i \right> \le 0, \forall i \in \tJ \}.$$
We have $\bX _*^- ( \tJ ) \subset \bX _*^- ( \tJ' )$ and $\bX _*^{\le} ( \tJ ) \subset \bX _*^{\le} ( \tJ' )$ when $\tJ' \subset \tJ$.

\begin{thm}[see e.g. Macdonald \cite{Mac03}]\label{af-len}
For $\beta \in \bX _*^-$, it holds:
\begin{enumerate}
\item We have $\ell (ut_{\beta}) = \ell (t_{\beta}) - \ell ( u )$ and $\ell (t_{\beta} u ) = \ell (t_{\beta}) + \ell ( u )$ for every $u \in W$;
\item For each $u \in W$ and $\beta' \in \bX _*^{\le}$, we have
$$\hskip -8mm \ell ( t_{u\beta})= \ell ( u t_{\beta} u^{-1}) = \ell ( t_{\beta} )\hskip 3mm \text{and} \hskip 3mm \ell ( t_{u ( \beta + \beta' )} ) = \ell ( t_{u \beta} ) + \ell ( t _{u \beta'}) = 2 \left< \beta + \beta', \rho \right>;$$
\item Each $w \in \widetilde{W}_\af^-$ is decomposed into $w = u t_{\gamma}$ for some $u \in W$ and $\gamma \in \bX_*^{\le}$ such that $\ell ( w ) = \ell ( t_{\gamma} ) - \ell ( u )$.
\end{enumerate}
\end{thm}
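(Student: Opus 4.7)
The plan is to derive all three parts from the Iwahori--Matsumoto length formula for the extended affine Weyl group $\widetilde{W}_\af = W \ltimes \bX_*$. For $w = v t_\mu$ with $v \in W$ and $\mu \in \bX_*$, this reads
\[
\ell(v t_\mu) = \sum_{\alpha \in \Delta_+} \bigl| \langle \mu, \alpha \rangle + \chi_v(\alpha) \bigr|,
\]
where $\chi_v(\alpha) = 1$ if $v\alpha \in \Delta_-$ and $0$ otherwise. It is obtained by counting the positive real affine roots sent to negative ones by $(v t_\mu)^{-1}$, and extends from $W_\af$ to $\widetilde{W}_\af$ under the convention $\ell(w t_\gamma) = \ell(w)$ for $\gamma \in \bX_*(G)$ already adopted in the excerpt. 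In particular, on the weakly antidominant cone the formula collapses to $\ell(t_\gamma) = -2\langle \gamma, \rho\rangle$, a linear function of $\gamma \in \bX_*^{\le}$.

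Part (1) is a direct evaluation. For $\beta \in \bX_*^-$ strictly antidominant, $\langle \beta, \alpha \rangle \le -1$ for every $\alpha \in \Delta_+$, so $\langle \beta, \alpha \rangle + \chi_u(\alpha) \le 0$ and the absolute value resolves uniformly. Summing gives
\[
\ell(u t_\beta) = \sum_{\alpha \in \Delta_+} (-\langle \beta, \alpha\rangle) - \bigl|\{\alpha \in \Delta_+ : u\alpha \in \Delta_-\}\bigr| = \ell(t_\beta) - \ell(u).
\]
The identity $\ell(t_\beta u) = \ell(t_\beta) + \ell(u)$ follows by rewriting $t_\beta u = u \cdot t_{u^{-1}\beta}$ and applying the length formula with the change of variables $\gamma = u\alpha$, using the sign flip $\gamma \mapsto -\gamma$ to restore $\Delta_+$. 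Part (2) is then nearly immediate: the identity $u t_\beta u^{-1} = t_{u\beta}$ supplies the first equality, reindexing the length sum via $\alpha \mapsto u^{-1}\alpha$ together with $|x| = |-x|$ yields $\ell(t_{u\beta}) = \ell(t_\beta)$, and the additivity in the second formula reduces to linearity of $\gamma \mapsto -2\langle \gamma, \rho\rangle$ on $\bX_*^{\le}$.

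For part (3), given $w \in \widetilde{W}_\af^-$, take the unique semidirect-product decomposition $w = u t_\gamma$ with $u \in W$ and $\gamma \in \bX_*$. The minimality $\ell(w s_i) > \ell(w)$ for all $i \in \tI$ translates, via the affine-root action $w(\alpha_i) = u\alpha_i - \langle \gamma, \alpha_i \rangle \delta$, into: $\langle \gamma, \alpha_i \rangle \le 0$ for all $i$, and, whenever equality holds, $u \alpha_i \in \Delta_+$. Thus $\gamma \in \bX_*^{\le}$, and, setting $\tJ := \{i \in \tI : \langle \gamma, \alpha_i \rangle = 0\}$, the element $u$ is a minimal-length representative of its coset in $W / W^\tJ$. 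Plugging back into the length formula, the roots $\alpha$ in the positive subsystem generated by $\tJ$ contribute zero (since $\langle \gamma, \alpha \rangle = 0$ and $u\alpha \in \Delta_+$), while the remaining roots contribute exactly as in part (1), giving $\ell(w) = \ell(t_\gamma) - \ell(u)$.

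The main obstacle will be establishing the length formula itself: the bijection between $\ell$ and affine-root inversions requires careful tracking of $\delta$-coefficients under translations, especially for the extension from $W_\af$ to $\widetilde{W}_\af$ that splits off the $\bX_*(G)$-part. Once that formula and its antidominant specialization are in hand, each assertion follows from a transparent combinatorial manipulation, which is presumably why the statement is merely cited.
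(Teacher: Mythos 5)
Your argument is correct and takes essentially the same route as the paper: the paper simply refers to Macdonald \cite{Mac03}, formulas (2.4.1)--(2.4.3), which are precisely the Iwahori--Matsumoto length identities that you derive and then specialize to the (weakly) antidominant cone. Incidentally, your computation gives the correct sign $\ell(t_\gamma) = -2\langle\gamma,\rho\rangle$ for $\gamma\in\bX_*^{\le}$; the expression $2\langle\beta+\beta',\rho\rangle$ in the theorem as stated is a sign typo.
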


\begin{proof}
The first assertions follow from \cite[(2.4.1)]{Mac03}. The second assertions follow from 1) and \cite[(2.4.2)]{Mac03}. The third assertion is a consequence of \cite[(2.4.3)]{Mac03}.
\end{proof}

For each $\la \in \bX^*_+ ( \tJ )$, we denote a finite-dimensional simple $P^{\tJ}$-module with a non-zero $B$-eigenvector $\bv_{\la}$ of $H$-weight $\la$ by $V ^\tJ ( \la )$. Let $R ( G )$ be the (complexified) representation ring of $G$. We have an identification $R ( G ) = ( \C [H] )^W \subset \C \bX^*$ by taking characters. For a semi-simple $H$-module $V$, we set
$$\ch \, V := \sum_{\la \in \bX^*} e^\la \cdot \dim _{\C} \mathrm{Hom}_H ( \C_\la, V ).$$
If $V$ is a $\Z$-graded $H$-module in addition, then we set
$$\gch \, V := \sum_{\la \in \bX, n \in \Z} q^n e^\la \cdot \dim _{\C} \mathrm{Hom}_H ( \C_\la, V_n ).$$

For a $\bH$-equivariant coherent sheaf on a projective $\bH$-variety $\mathcal X$, let $\chi ( \mathcal X, \mathcal F ) \in \C [\bH]$ denote its equivariant Euler-Poincar\'e characteristic. We set $\bX^*_\af := \bX^* \oplus \Z \delta$ and understand that $e ^{\delta} = q \in \C \bX^*_\af = \C [\bH]$.

For $\tJ' \subset \tJ \subset \tI$, we identify $W^{\tJ} / W^{\tJ'}$ with its minimal coset representative in $W^{\tJ}$. We set $\sB^{\tJ}_{\tJ'} := P^\tJ / P ^{\tJ'} $ and call it the partial flag manifold of $L^\tJ$. It is equipped with the Bruhat decomposition
$$\sB^{\tJ}_{\tJ'} = \bigsqcup_{w \in W^{\tJ} / W^{\tJ'}} \mathbb O^{\tJ}_{\tJ'} ( w )$$
into $B$-orbits such that $\mathrm{codim}_{\sB^{\tJ}_{\tJ'}} \bO^{\tJ}_{\tJ'} ( w ) = \ell ( w )$ for each $w \in W^{\tJ} / W^{\tJ'}$. We set $\sB^\tJ_{\tJ'} ( w ) := \overline{\bO^\tJ_{\tJ'} ( w )} \subset \sB^{\tJ}$.

We have a notion of $H$-equivariant $K$-group $K_H ( \sB^{\tJ}_{\tJ'} )$ of $\sB^{\tJ}_{\tJ'}$ with coefficients in $\C$ (see e.g. \cite{KK90}). Explicitly, we have
\begin{equation}
K_H ( \sB^{\tJ}_{\tJ'} ) = \bigoplus_{w \in W^{\tJ} / W^{\tJ'}} \C [H] \, [\cO _{\sB^{\tJ}_{\tJ'} (w)}].\label{desc-K}
\end{equation}

For each $\la \in w_0^\tJ \bX^*_0 ( \tJ' )$, we have a line bundle $\cO _{\sB^{\tJ}_{\tJ'}} ( \la )$ such that
$$\ch \, H ^0 ( \sB^{\tJ}_{\tJ'}, \cO_{\sB^{\tJ}_{\tJ'}} ( \la ) ) = \ch \, V^{\tJ} ( \la ), \hskip 3mm \cO_{\sB^{\tJ}_{\tJ'}} ( \la ) \otimes_{\cO_{\sB^{\tJ}_{\tJ'}}} \cO _{\sB^{\tJ}_{\tJ'}} ( - \mu ) \cong \cO_{\sB^{\tJ}_{\tJ'}} ( \la - \mu )$$
holds for $\la, \mu \in w_0 ^\tJ \bX^*_0 ( \tJ' ) \cap \bX^*_+ ( \tJ )$.

\subsection{The nil-DAHA and its spherical version}

\begin{defn}\label{def-DAHA}
The nil-DAHA $\sH_q$ or $\sH_q ( G )$ of type $G$ is a $\C_q$-algebra generated by $\{ e^{\la} \}_{\la \in \bX^*} \cup \{ D_i \} _{i \in \tI_\af} \cup \{ T_{\gamma} \}_{\gamma \in \bX_* (G)}$ subject to the following relations:
\begin{enumerate}
\item $e^{\la + \mu} = e ^{\la} \cdot e ^{\mu}$ for $\la, \mu \in \bX^*$;
\item $D_i ^2 = D_i$ for each $i \in \tI_\af$;
\item For each distinct $i, j \in \tI_{\af}$, we set $m_{i,j} \in \Z_{> 0}$ as the minimum number such that $(s_is_j)^{m_{i,j}} = 1$. Then, we have
$$\overbrace{D_i D_j \cdots}^{m_{i,j}\text{-terms}} = \overbrace{D_j D_i \cdots}^{m_{i,j}\text{-terms}};$$
\item For each $\la \in \bX^*$ and $i \in \tI_\af$, we have
$$D_i e^{\la} - e^{s_i \la} D_i = \frac{e^{\la} - e^{s_i \la}}{1 - e ^{\al_i}}, \hskip 5mm \text{where} \hskip 5mm e^{\al_0} = q \cdot e^{- \vartheta^{\vee}};$$
\item $T_{\gamma} T_{\gamma'} = T_{\gamma'} T_{\gamma}$ for each $\gamma, \gamma' \in \bX_* ( G )$;
\item $T_{\gamma} D_i = D_i T_{\gamma}$ for each $i \in \tI_\af$ and $\gamma \in \bX_* ( G )$;
\item $T_{\gamma} e^{\la} = q^{\left< \gamma, \la \right>} e^{\la} T_{\gamma}$ for each $\la \in \bX^*$ and $\gamma \in \bX_* ( G )$.
\end{enumerate}
We also consider the $\C_q$-subalgebras $\sH_q^0, \sH_q ( \tJ ) \subset \sH_q$ generated by $\{ D_i \mid i \in \tI_\af \}$ and $\{ e^{\la}, D_i \mid \la \in \bX^*, i \in \tJ\}$ (for $\tJ \subset \tI_\af$), respectively.
\end{defn}

Let $\sS_q' := \C [\bH] \otimes \C W_\af$ be the smash product algebra, whose multiplication reads as:
$$( e^\la \otimes w ) (e^{\mu} \otimes v) = e^{\la + w \mu} \otimes w v \hskip 5mm \la,\mu \in \bX^*_\af, w,v \in W_\af.$$
We add $1 \otimes t_{\gamma} \in \C \otimes \C \widetilde{W}_\af$ ($\gamma \in \bX_* ( G )$) such that
$$( e^\la \otimes t_{\gamma} ) (e^{\mu} \otimes t_{\gamma'}) = q^{\left< \gamma, \mu \right>} e^{\la + \mu} \otimes t_{\gamma + \gamma'} \hskip 5mm \la,\mu \in \bX^*_\af, \gamma, \gamma' \in \bX_* ( G )$$
to $\sS_q'$ to obtain the smash product algebra $\sS_q := \C [\bH] \otimes \C \widetilde{W}_\af$. Let $\C ( \bH )$ denote the fraction field of (the Laurant polynomial algebra) $\C [\bH]$. We have a scalar extension
$$\sR_q := \C ( \bH ) \otimes_{\C [\bH]} \sS_q = \C ( \bH ) \otimes_{\C} \C \widetilde{W}_\af.$$

The following is a very slight extension of \cite{LSS10} \S 2.2 (and hence we omit its proof):

\begin{thm}[cf. \cite{LSS10} \S 2.2]
We have an embedding of algebras $\imath^* : \sH_q \hookrightarrow \sR_q$:
\begin{align*}
e^{\la} \mapsto e ^{\la} \otimes 1, \,\, & D_i \mapsto  \frac{1}{1 - e ^{\al_i}} \otimes 1 - \frac{e^{\al_i}}{1 - e ^{\al_i}} \otimes s_i, \, T_{\gamma} \mapsto 1 \otimes t_{\gamma}. 
\end{align*}
for each $\la \in \bX^*_\af, i \in \tI_\af,$ and $\gamma \in \bX_* ( G )$.
\end{thm}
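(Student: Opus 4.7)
The task is to verify that the assignments preserve the defining relations $(1)$--$(7)$ of Definition~\ref{def-DAHA} and then to deduce injectivity. Since the paper presents this as a minor extension of \cite{LSS10}~\S 2.2, where the embedding of the ``$T_\gamma$-free'' subalgebra generated by $\{e^\la\} \cup \{D_i\}$ into $\C(\bH) \otimes \C W_\af$ is already established, only the relations involving the central cocharacters $T_\gamma$ truly require new verification.

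Relations $(1)$ and $(5)$ are immediate from the multiplication rules of $\C[\bH]$ and $\bX_*(G)$, while $(7)$ is literally the $q$-twist built into the definition of $\sS_q \subset \sR_q$. For $(6)$, note that $\gamma \in \bX_*(G)$ pairs trivially with every root (by the definition of $\bX^*(G)$ and hence of its dual), so $t_\gamma \in \widetilde W_\af$ commutes with $s_i$ for every $i \in \tI$; since $s_0 = t_{-\vartheta^\vee}s_\vartheta$ and $\langle\gamma,\vartheta\rangle = 0$, it also commutes with $s_0$. Together with the triviality of $q^{\langle\gamma,\al_i\rangle}$, this shows that $\imath^*(T_\gamma)$ commutes termwise with both summands of $\imath^*(D_i)$ for every $i \in \tI_\af$. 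The non-trivial Demazure-Lusztig relations $(2)$, $(3)$, $(4)$ are then exactly as in \cite{LSS10}~\S 2.2: writing $\imath^*(D_i) = \frac{1}{1-e^{\al_i}}(1\otimes 1 - e^{\al_i}\otimes s_i)$ and using $s_i\cdot f(e^{\al_i}) = f(e^{-\al_i})\cdot s_i$, a direct calculation yields $\imath^*(D_i)^2 = \imath^*(D_i)$ and the Leibniz identity $(4)$ (with the affine action accounting for $i = 0$ via the stipulated value of $e^{\al_0}$), while the braid relations $(3)$ are the classical statement that the Demazure-Lusztig operators satisfy the $W_\af$-braid relations inside $\C(\bH) \otimes \C W_\af$.

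For injectivity, the relations show that $\sH_q$ is spanned over $\C_q$ by monomials $e^\la D_w T_\gamma$ indexed by $(\la,w,\gamma) \in \bX^* \times W_\af \times \bX_*(G)$. Expanding $\imath^*(D_w)$ along a reduced decomposition of $w$ produces, inside $\sR_q = \C(\bH) \otimes \C\widetilde W_\af$, a sum of pure tensors of the form $(\text{rational})\cdot (e^\mu \otimes v t_\gamma)$ with $v \le w$ in the Bruhat order, in which the $v = w$ contribution carries a nonzero rational coefficient of $e^\la \otimes wt_\gamma$. Since distinct triples $(\la,w,\gamma)$ label distinct basis elements $wt_\gamma$ of $\C\widetilde W_\af$ over $\C(\bH)$, their images are linearly independent. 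The only substantive point is the braid relation for the $D_i$, which is precisely what permits invoking \cite{LSS10}~\S 2.2; this is the expected main obstacle if one were to prove the statement from scratch.
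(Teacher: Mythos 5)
The paper explicitly omits the proof of this theorem, deferring to \cite{LSS10}~\S 2.2, so your fill-in is to be judged on its own terms rather than compared against an argument in the text. Your plan is the expected one and is essentially correct: reduce to checking the defining relations of $\sH_q$ under $\imath^*$; note that (1), (5), (7) are formal from the smash-product multiplication; verify the new commutation relation (6) involving $T_\gamma$ using the fact that $\gamma \in \bX_*(G)$ pairs trivially with every root (so that both $t_\gamma(e^{\al_i}) = e^{\al_i}$ and $t_\gamma s_i = s_i t_\gamma$ hold for all $i \in \tI_\af$); and appeal to the classical result, already needed in \cite{LSS10}, that the Demazure--Lusztig operators $\frac{1}{1-e^{\al_i}}\otimes 1 - \frac{e^{\al_i}}{1-e^{\al_i}}\otimes s_i$ satisfy the idempotent, braid, and Leibniz relations (2), (3), (4). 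This is the correct division of labor.

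Two small slips are worth flagging. First, you wrote $s_0 = t_{-\vartheta^\vee}s_\vartheta$, but the paper's convention is $t_{-\vartheta^\vee} := s_\vartheta s_0$, whence $s_0 = s_\vartheta t_{-\vartheta^\vee} = t_{\vartheta^\vee}s_\vartheta$. This is harmless for your conclusion, since either way $s_0$ differs from $s_\vartheta$ by a translation, translations commute with $t_\gamma$, and $s_\vartheta t_\gamma s_\vartheta^{-1} = t_{s_\vartheta\gamma} = t_\gamma$ because $\langle\gamma,\vartheta\rangle = 0$; but the formula as written is wrong. Second, the injectivity sentence ``since distinct triples $(\la,w,\gamma)$ label distinct basis elements $wt_\gamma$, their images are linearly independent'' does not parse: the map $(\la,w,\gamma) \mapsto wt_\gamma$ forgets $\la$. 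What the expansion you set up actually gives is a triangularity argument: writing a putative relation $\sum a_{\la,w,\gamma}\, e^\la D_w T_\gamma = 0$ with $a_{\la,w,\gamma} \in \C_q$, fix $(w,\gamma)$ with $\ell(w)$ maximal among those appearing; the coefficient of $1 \otimes wt_\gamma$ in the image under $\imath^*$ is $\bigl(\sum_\la a_{\la,w,\gamma} e^\la\bigr) c_{w,w}$ with $c_{w,w} \ne 0$ in $\C(\bH)$, and since $\{e^\la\}_{\la \in \bX^*}$ is a $\C_q$-basis of $\C_q[\bX^*] \subset \C(\bH)$, all $a_{\la,w,\gamma}$ for this $(w,\gamma)$ vanish. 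This is clearly what you intend, so the gap is one of phrasing rather than of idea, but the statement as written should be repaired.
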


\begin{cor}[Leibniz fule for $D_i$]\label{Lei}
Let $i \in \tI_\af$ and $\la \in \bX^*_\af$. We have
$$D_i \cdot e^{\la} = \frac{e^{\la} - e^{s_i\la}}{1 - e^{\al_i}} + e^{s_i \la} \cdot D_i \hskip 3mm \text{in} \hskip 3mm \sR_q.$$
\end{cor}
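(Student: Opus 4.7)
The plan is to observe that the stated identity is essentially a rearrangement of the defining relation (4) of the nil-DAHA in Definition \ref{def-DAHA}, transferred to $\sR_q$ via the embedding $\imath^*$. First I would handle the case $\la \in \bX^*$: the relation $D_i e^{\la} - e^{s_i\la} D_i = \frac{e^{\la} - e^{s_i\la}}{1 - e^{\al_i}}$ holds in $\sH_q$ by definition, and since $\imath^*$ is an algebra embedding it continues to hold in $\sR_q$ after transposing the $e^{s_i\la} D_i$ term to the right-hand side.

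To finish, I would extend from $\bX^*$ to $\bX^*_\af = \bX^* \oplus \Z\delta$. Since $s_i \delta = \delta$ for every $i \in \tI_\af$ (the element $\delta$ is $W_\af$-invariant) and $e^{\delta} = q$ is central in $\sR_q$ (it is a scalar from $\C_q$), writing $\la = \mu + n\delta$ with $\mu \in \bX^*$ gives $e^{\la} = q^n e^{\mu}$ and $e^{s_i\la} = q^n e^{s_i\mu}$; multiplying both sides of the identity for $\mu$ by $q^n$ yields the identity for $\la$.

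Alternatively, I can give a direct verification in $\sR_q$ by substituting the explicit formula $\imath^*(D_i) = \frac{1}{1-e^{\al_i}} \otimes 1 - \frac{e^{\al_i}}{1-e^{\al_i}} \otimes s_i$ and multiplying by $e^{\la} \otimes 1$ on the right using the smash product rule $(f \otimes w)(g \otimes 1) = f \cdot w(g) \otimes w$. The $1$-component produces $\frac{e^{\la}}{1-e^{\al_i}} \otimes 1$ and the $s_i$-component produces $-\frac{e^{\al_i} e^{s_i\la}}{1-e^{\al_i}} \otimes s_i$; splitting $\frac{e^{\la}}{1-e^{\al_i}} = \frac{e^{\la}-e^{s_i\la}}{1-e^{\al_i}} + \frac{e^{s_i\la}}{1-e^{\al_i}}$ and then recognizing $\frac{e^{s_i\la}}{1-e^{\al_i}} \otimes 1 - \frac{e^{\al_i} e^{s_i\la}}{1-e^{\al_i}} \otimes s_i = (e^{s_i\la} \otimes 1) \cdot \imath^*(D_i)$ delivers the identity on the nose.

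There is no substantive obstacle: the content of the corollary is purely bookkeeping, and the only mild point is confirming that the $\delta$-direction is captured correctly, which follows from centrality of $q$ and $W_\af$-invariance of $\delta$.
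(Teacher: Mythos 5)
Your proposal is correct and matches the (implicit) argument of the paper, which states this as a Corollary to the embedding theorem without written proof: one simply transposes the term $e^{s_i\la}D_i$ in the nil-DAHA relation (4) and pushes it through $\imath^*$, and your direct verification via the smash-product formula for $\imath^*(D_i)$ is a faithful unwinding of the same fact. The remark about extending from $\bX^*$ to $\bX^*_\af$ via centrality of $q=e^\delta$ and $W_\af$-invariance of $\delta$ is the right piece of bookkeeping (and is in fact already needed to make sense of $e^{s_0\la}$ in the original relation when $i=0$), so nothing is missing.
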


Since we have a natural action of $\sR_q$ on $\C ( \bH )$, we obtain an action of $\sH_q$ on $\C ( \bH )$ (in a way it preserves $\C [\bH]$), that we call the polynomial representation.

For $w \in t_{\gamma} W_\af$ ($\gamma \in \bX_* ( G )$), we find a reduced expression $w = t_{\gamma} s_{i_1} \cdots s_{i_{\ell}}$ ($i_1,\ldots,i_{\ell} \in \tI_\af$) and set
$$D_w := T_{\gamma} D_{s_{i_1}} D_{s_{i_2}} \cdots D_{s_{i_{\ell}}} \in \sH_q.$$
By Definition \ref{def-DAHA} 3), the element $D_w$ is independent of the choice of a reduced expression. By Definition \ref{def-DAHA} 2), we have
$D_i D_{w_0} = D_{w_0}$ for each $i \in \tI$, and hence $D_{w_0}^2 = D_{w_0}$. We have an explicit form
\begin{equation}
D_{w_0} = 1 \otimes \left( \sum_{w \in W} w \right) \cdot \frac{e^{-\rho}}{\prod_{\al \in \Delta^+} ( e^{- \al / 2} - e^{\al/2})} \otimes 1 \in \sA_q \label{WCF}
\end{equation}
obtained from the (left $W$-invariance of the) Weyl character formula. We set
$$\sH_q^\sh \equiv \sH_q^\sh ( G ) := D_{w_0} \sH_q D_{w_0}$$
and call it the spherical nil-DAHA of type $G$.

\begin{thm}[see e.g. Kostant-Kumar \cite{KK90}]\label{finK}
We have a $\sH_q ( \tI )$-action on $K_{\bH} ( \sB )$ with the following properties:
\begin{enumerate}
\item For each $\la \in \bX^*$, the left multiplication by $e^{\la} \in \sH_q ( \tI )$ is equal to the $H$-character twist of $K_{\bH} ( \sB )$ by $e^{\la}$;
\item For each $i \in \tI$, we have
$$D_i ( [\cO_{\sB ( w )}] ) = \begin{cases}[\cO_{\sB ( s_i w )}] & (s_i w < w)\\ [\cO_{\sB ( w )}] & (s_i w > w)\end{cases};$$
\item For $\la \in \bX^*$, the twist by $\cO_\sB ( \la )$ defines a $\sH_q ( \tI )$-module automorphism;
\item We have $K_{\bG} ( \sB ) = D_{w_0} K_{\bH} ( \sB )$;
\item We have $K_{\bH} ( \sB ) = \sH_q ( \tI ) \cdot [\cO_{\sB}] = \C_q [H] \cdot K_{\bG} ( \sB ) \subset K_{\bH} ( \sB )$.
\end{enumerate}
\end{thm}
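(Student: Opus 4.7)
The plan is to construct the action via equivariant localization at the $T$-fixed points of $\sB$, following Kostant-Kumar \cite{KK90}. Since $\sB^H$ consists of $|W|$ isolated points indexed by $W$, localization gives an injection $K_{\bH}(\sB) \hookrightarrow \bigoplus_{w \in W} \C(\bH) \cdot [w]$ whose image is a $\C[\bH]$-lattice cut out by explicit divisibility conditions. The ring $\sR_q$ acts naturally on the larger module, with $e^\la$ acting on the $w$-th component as $e^{w\la}$ and $s_i$ permuting the components compatibly with its action on $W$. Pulling back along $\imath^* : \sH_q(\tI) \hookrightarrow \sR_q$ yields a candidate $\sH_q(\tI)$-action, and the main task will be to check that it preserves the integral lattice $K_{\bH}(\sB)$ and satisfies the listed properties.

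Property 1 is immediate from $\imath^*(e^\la) = e^\la \otimes 1$. For property 2, I would identify the resulting $D_i$ with the geometric $K$-theoretic Demazure operator $\pi_i^* \pi_{i*}$ for the $\bP^1$-bundle $\pi_i : \sB \to \sB^{\tI}_{\{i\}}$, reading off the formula on Schubert classes from the projection formula according to whether $\pi_i|_{\sB(w)}$ is birational onto its image or a $\bP^1$-fibration. This geometric description simultaneously shows that $D_i$ preserves $K_{\bH}(\sB)$. For property 3, in the fixed-point basis the twist by $\cO_\sB(\la)$ acts diagonally by $e^{w\la}$ on the $w$-th component, and a direct check shows this diagonal operator commutes with the operators $\imath^*(e^\mu)$ and $\imath^*(D_i)$ arising from the embedding.

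For property 4, the Weyl character formula (\ref{WCF}) presents $D_{w_0}$ as the normalized Weyl symmetrizer, so its image on $K_{\bH}(\sB)$ is contained in, and by rank count equals, $K_{\bH}(\sB)^W = K_{\bG}(\sB)$. The first equality of property 5 is cyclicity of $K_{\bH}(\sB)$ over $\sH_q(\tI)$ starting from $[\cO_\sB]$: iterating the Demazure action from property 2 produces the full Schubert basis, and the $\C_q[H]$-coefficients are absorbed via the $e^\la$'s together with the Leibniz rule (item 4 of Definition \ref{def-DAHA}). The second equality then reduces, via property 4, to the standard fact that any $\bH$-equivariant class is a $\C_q[H]$-combination of $\bG$-equivariant ones. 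The hard part will be the verification that the localization-defined operators preserve the integral lattice $K_{\bH}(\sB) \subset \bigoplus_w \C(\bH) \cdot [w]$; this is addressed via the geometric $\pi_i^* \pi_{i*}$ description, which automatically takes values in that lattice.
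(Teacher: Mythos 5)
The paper states this theorem without proof, citing Kostant--Kumar \cite{KK90} directly, so there is no internal proof to compare against; your proposal has to be judged on its own merits. The localization framework you set up is the right one, and Properties 4 and 5 are handled correctly. But there are two real problems with the core of the argument.

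First, your description of the $\sR_q$-action on $\bigoplus_{w}\C(\bH)[w]$ is self-contradictory. You write that ``$e^\la$ [acts] on the $w$-th component as $e^{w\la}$'' and then assert that Property 1 is immediate. But the $H$-character twist is tensoring by the trivial line bundle with weight $\la$, which localizes to $\mathrm{diag}(e^\la)$ --- the \emph{constant} scalar, not $\mathrm{diag}(e^{w\la})$. The operator $\mathrm{diag}(e^{w\la})$ is exactly the line-bundle twist $\cO_\sB(\la)\otimes -$ that you later (correctly) use for Property 3. These are two different diagonal operators and cannot both be $\imath^*(e^\la)$. For Property 1 to hold, you must have $\imath^*(e^\la)$ acting as $\mathrm{diag}(e^\la)$ and $s_i\in\sR_q$ acting as $(\xi_w)_w\mapsto (s_i\cdot\xi_{s_iw})_w$; with that correction your computation for Property 3 does go through.

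Second, and more seriously: the identification $D_i = \pi_i^*\pi_{i*}$ is false, and your verification of the lattice preservation (which the last paragraph of your proposal rightly flags as ``the hard part'') rests entirely on it. The geometric Demazure operator $\pi_i^*\pi_{i*}$ is $R(\bH)$-linear, since $\bH$-equivariant push-pull commutes with twisting by any $\C_\mu$. But by Property 1 and Definition \ref{def-DAHA}(4), the operator $D_i$ of the theorem does \emph{not} commute with the $\C[\bH]$-action; it satisfies the Leibniz rule $D_i\cdot e^\la = \frac{e^\la-e^{s_i\la}}{1-e^{\al_i}} + e^{s_i\la}\cdot D_i$. So $D_i$ and $\pi_i^*\pi_{i*}$ are different operators. (They happen to agree on the $\C_q$-span of Schubert classes --- which is why your reading of Property 2 off the projection formula comes out right --- but, say on $SL_2$, $\pi_s^*\pi_{s*}(e^\varpi[\cO_\sB]) = e^\varpi[\cO_\sB]$ while $D_s(e^\varpi[\cO_\sB]) = 0$.) Consequently ``the geometric $\pi_i^*\pi_{i*}$ description... automatically takes values in that lattice'' does not deliver what you need. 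The correct route is: verify Property 2 on Schubert classes (your projection-formula argument is fine there, once one matches the paper's codimension-indexing against the right $P_i$-action), then deduce lattice preservation from the Schubert-class formula together with the Leibniz rule, using that $\frac{e^\la - e^{s_i\la}}{1-e^{\al_i}}\in\C[\bH]$ and that the Schubert classes form a $\C[\bH]$-basis of $K_\bH(\sB)$ by (\ref{desc-K}).
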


\begin{cor}\label{K-rest}
For each $\tJ' \subset \tJ \subset \tI$, we have a $\sH_q ( \tJ' )$-module map
$$K_{\bH} ( \sB^{\tJ} ) \longrightarrow K_{\bH} ( \sB^{\tJ'} )$$
that sends $[\cO_{\sB^{\tJ}} ( \la )]$ to $[\cO_{\sB^{\tJ'}} ( \la )]$ for every $\la \in \bX^*$.
\end{cor}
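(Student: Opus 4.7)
I would take the desired map to be the pullback $j^*$ along the closed embedding $j : \sB^{\tJ'} \hookrightarrow \sB^{\tJ}$ induced from $P^{\tJ'} \subset P^{\tJ}$. Since $\Gm$ acts trivially on these two finite-dimensional flag varieties and $j$ is $\bH$-equivariant (in particular $B$-equivariant), $j^*$ is a well-defined $\C_q[H]$-linear morphism of equivariant $K$-groups. By Theorem \ref{finK}(1) (applied to $L^\tJ$ and $L^{\tJ'}$), this $\C_q[H]$-linearity is precisely the compatibility with the left multiplication by each $e^\la \in \sH_q(\tJ')$.

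Next, for every $\la \in \bX^*$, the line bundle $\cO_{\sB^\tJ}(\la)$ is induced from the one-dimensional $B$-character of weight $\la$ (via $P^\tJ \times^B \C_\la$), and similarly for $\cO_{\sB^{\tJ'}}(\la)$. Since $j$ is $B$-equivariant and the character is the same, tautologically $j^* \cO_{\sB^\tJ}(\la) \cong \cO_{\sB^{\tJ'}}(\la)$, whence $j^*[\cO_{\sB^\tJ}(\la)] = [\cO_{\sB^{\tJ'}}(\la)]$ as claimed.

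The remaining point is commutation of $j^*$ with $D_i$ for each $i \in \tJ'$. Since $P^{\{i\}} \subset P^{\tJ'} \subset P^{\tJ}$ we have a commutative square
\[
\xymatrix{\sB^{\tJ'} \ar@{^{(}->}[r]^{j} \ar[d]_{\pi'_i} & \sB^{\tJ} \ar[d]^{\pi_i} \\ \sB^{\tJ'}_{\{i\}} \ar@{^{(}->}[r]^{j'} & \sB^{\tJ}_{\{i\}}}
\]
whose vertical maps are the natural $\P^1$-bundles; it is Cartesian, since for $y \in P^{\tJ'}$ and $x \in P^\tJ$ the condition $yP^{\{i\}} = xP^{\{i\}}$ in $P^\tJ/P^{\{i\}}$ forces $x \in y \cdot P^{\{i\}} \subset P^{\tJ'} \cdot P^{\{i\}} = P^{\tJ'}$. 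In the Kostant--Kumar presentation underlying Theorem \ref{finK}(2), the Demazure operator $D_i$ on either side is built out of $\pi_i^*$ and $(\pi_i)_*$ (respectively $\pi'^*_i$ and $(\pi'_i)_*$), and combining the tautological identity $j^* \pi_i^* = \pi'^*_i j'^*$ with flat base change $j'^* (\pi_i)_* = (\pi'_i)_* j^*$ along the Cartesian square above yields $j^* D_i = D_i j^*$.

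The only technical ingredient here is equivariant flat base change along the displayed Cartesian square, together with the Cartesian property itself; granting these, commutation with the full generating set $\{e^\la\}_{\la \in \bX^*} \cup \{D_i\}_{i \in \tJ'}$ of $\sH_q(\tJ')$ follows from the previous paragraphs, yielding the required $\sH_q(\tJ')$-module homomorphism.
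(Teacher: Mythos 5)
Your proof is correct, and it follows a genuinely different path from the paper's. The paper first produces the map at the level of the spherical subspaces $K_{\bL^{\tJ}}(\sB^{\tJ}) \to K_{\bL^{\tJ'}}(\sB^{\tJ'})$ (which are $D_{w_0^{\tJ}}$-invariants, so $D_j$ for $j \in \tJ'$ acts there as the identity), then extends $\C_q[H]$-linearly to the $\bH$-equivariant $K$-groups, and finally propagates commutation with each $D_i$ purely algebraically via the Leibniz rule (Corollary \ref{Lei}), using that $K_{\bH}(\sB^{\tJ}) = \C_q[H] \cdot K_{\bL^{\tJ}}(\sB^{\tJ})$. You instead realize the map directly as the pullback $j^*$ and prove the $D_i$-commutation geometrically, via the Cartesian square of $\P^1$-bundles and base change. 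The two routes construct the same map (both agree on line bundles, which $\C_q[H]$-span); your argument is more self-contained at the price of invoking the explicit Kostant--Kumar description $D_i = \pi_i^*(\pi_i)_*$ and base change, whereas the paper's argument is purely formal once the $\sH_q(\tJ)$-module structures are granted. One small caution on phrasing: the formula $j'^*(\pi_i)_* = (\pi'_i)_* j^*$ is not flat base change along $j'$ (which is a closed embedding, not flat); it is base change for the \emph{Tor-independent} Cartesian square, with Tor-independence supplied by flatness of $\pi_i$. Since you correctly observe $\pi_i$ is a $\P^1$-bundle, the hypothesis is satisfied and the argument stands, but it is worth stating the right version of the base-change theorem being used.
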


\begin{proof}
We have an algebra map $K_{\bL^{\tJ}} ( \sB^{\tJ} ) \longrightarrow K_{\bL^{\tJ'}} ( \sB^{\tJ'} )$ 
that sends $[\cO_{\sB^{\tJ}} ( \la )]$ to $[\cO_{\sB^{\tJ'}} ( \la )]$ for every $\la \in \bX^*$. It is invariant under the action of $D_j$ for $j \in \tJ'$ by Theorem \ref{finK} 3). By extending the scalar, we obtain a map $K_{\bH} ( \sB^{\tJ} ) \longrightarrow K_{\bH} ( \sB^{\tJ'} )$. By the Leibniz rule, this map commutes with the $D_i$-actions for each $i \in \tJ'$. Thus, it gives rise to a $\sH_q ( \tJ' )$-module map as required.
\end{proof}

\begin{cor}[\cite{KK90}]\label{Rsph}
For each $\tJ' \subset \tJ \subset \tI$, the pullback defines a subspace
$$K_{\bH} ( \sB^{\tJ}_{\tJ'} ) \cong K_{\bH} ( \sB^{\tJ} ) D_{w_0^{\tJ'}}  \subset K_{\bH} ( \sB^{\tJ} ).$$
\end{cor}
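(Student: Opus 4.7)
The plan is to realize both sides as the image of the pullback $\pi^{*}$ along the natural smooth fibration
$$\pi : \sB^{\tJ} = P^{\tJ}/B \longrightarrow P^{\tJ}/P^{\tJ'} = \sB^{\tJ}_{\tJ'},$$
whose fibers are isomorphic to the flag manifold $\sB^{\tJ'} = P^{\tJ'}/B$ of the Levi $L^{\tJ'}$, and show that these two subspaces coincide.

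First I would verify injectivity of $\pi^{*}$ by computing it on the Schubert basis (\ref{desc-K}). A $B$-orbit analysis, combined with the length additivity $\ell(u w_{0}^{\tJ'}) = \ell(u) + \ell(w_{0}^{\tJ'})$ for minimal coset representatives $u \in W^{\tJ}/W^{\tJ'}$ (Theorem \ref{af-len} type input) and the codimension convention adopted before (\ref{desc-K}), shows that $\pi^{-1}(\sB^{\tJ}_{\tJ'}(w))$ is irreducible of the same codimension as $\sB^{\tJ}(w)$ in $\sB^{\tJ}$ for every minimal length representative $w \in W^{\tJ}/W^{\tJ'}$. Since both contain $\bO^{\tJ}(w)$ as an open subset, $\pi^{-1}(\sB^{\tJ}_{\tJ'}(w)) = \sB^{\tJ}(w)$ scheme-theoretically. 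Flatness of $\pi$ then gives $\pi^{*}[\cO_{\sB^{\tJ}_{\tJ'}(w)}] = [\cO_{\sB^{\tJ}(w)}]$, so $\pi^{*}$ sends the basis (\ref{desc-K}) of $K_{\bH}(\sB^{\tJ}_{\tJ'})$ to a $\C[\bH]$-linearly independent subset of the Schubert basis of $K_{\bH}(\sB^{\tJ})$.

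Next I would identify the image of $\pi^{*}$ with $K_{\bH}(\sB^{\tJ}) D_{w_{0}^{\tJ'}}$. Applying (\ref{WCF}) to the Levi $L^{\tJ'}$ realizes $D_{w_{0}^{\tJ'}}$ as the $W^{\tJ'}$-symmetrization weighted by the Weyl denominator of $L^{\tJ'}$; via the embedding $\imath^{*} : \sH_{q} \hookrightarrow \sR_{q}$ it acts as an idempotent on $K_{\bH}(\sB^{\tJ})$ (in the Kostant--Kumar right-module convention), and this idempotent is geometrically $\pi^{*}\pi_{*}$ normalized by $\chi(\sB^{\tJ'}, \cO_{\sB^{\tJ'}}) = 1$. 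Hence its image is exactly $\pi^{*} K_{\bH}(\sB^{\tJ}_{\tJ'})$. As a consistency check, one verifies on Schubert classes that $[\cO_{\sB^{\tJ}(w)}] \cdot D_{w_{0}^{\tJ'}} = [\cO_{\sB^{\tJ}(w)}]$ precisely when $w s_{i} > w$ for all $i \in \tJ'$, which is the combinatorial characterization of minimal representatives in $W^{\tJ}/W^{\tJ'}$ appearing in the previous paragraph.

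The main obstacle is the careful bookkeeping of nil-Hecke conventions: matching the right action of $D_{w_{0}^{\tJ'}}$ on $K_{\bH}(\sB^{\tJ})$ with the geometric pullback, while respecting the codimension indexing of (\ref{desc-K}) and the left-action formula of Theorem \ref{finK}. Once these alignments are in place, the result reduces to the formalism of \cite{KK90}.
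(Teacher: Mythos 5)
The paper offers no proof of this statement; it is cited to \cite{KK90}, so you are reconstructing an argument that the paper itself simply references. Your reconstruction is the standard Kostant--Kumar argument and is the right one: realize $\pi^*$ along $\sB^{\tJ} \to \sB^{\tJ}_{\tJ'}$, show $\pi^*$ sends the Schubert basis of $K_{\bH}(\sB^{\tJ}_{\tJ'})$ injectively into the Schubert basis of $K_{\bH}(\sB^{\tJ})$ using flatness and codimension-matching, observe that $D_{w_0^{\tJ'}}$ is idempotent, and identify it with the projector $\pi^*\pi_*$ onto $\pi^*K_{\bH}(\sB^{\tJ}_{\tJ'})$. This is exactly the toolbox the paper invokes (e.g.\ in Theorem \ref{sH-sph}, which is the affine version of this statement, phrased in precisely the same $\odot_q D_{w_0}$ language).

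The one place you should be careful is the consistency check at the end. You assert that $[\cO_{\sB^{\tJ}(w)}]\cdot D_{w_0^{\tJ'}} = [\cO_{\sB^{\tJ}(w)}]$ precisely when $ws_i > w$ for all $i \in \tJ'$. With the paper's codimension labeling ($\mathrm{codim}\,\bO^{\tJ}(w) = \ell(w)$), the left action in Theorem \ref{finK} forces the identification of $K_{\bH}(\sB^{\tJ})$ with a right $\sH_q(\tJ)$-module of the form $[\cO_{\sB^{\tJ}(w)}] \leftrightarrow D_{w w_0^{\tJ}}$ (the only order-reversing bijection compatible with $D_i[\cO_{\sB^{\tJ}(w)}] = [\cO_{\sB^{\tJ}(s_i w)}]$ when $s_i w < w$). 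Under this identification the right fixed points of $D_{w_0^{\tJ'}}$ are those $w$ with $(w w_0^{\tJ})s_i < w w_0^{\tJ}$ for $i \in \tJ'$, which translates to $w s_{i^\#} > w$ where $\alpha_{i^\#} = -w_0^{\tJ}\alpha_i$. In other words the descent condition is governed by $(\tJ')^\# := \{i \in \tJ \mid \alpha_i = -w_0^{\tJ}\alpha_j,\; j \in \tJ'\}$, not by $\tJ'$ itself, and these differ whenever $-w_0^{\tJ}$ does not stabilize $\tJ'$. This is a conjugation twist, not a substantive gap: it changes which Schubert classes are the literal fixed vectors, not the fact that the image of the idempotent equals the image of $\pi^*$. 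But since you flag the convention bookkeeping as the main obstacle, this is precisely where it bites, and you should carry it out explicitly (matching the paper's $w_0^{\tJ}$-twisted labeling of line bundles in \S\ref{sec:setup} and the definition of $\tJ(\beta)^\#$ used later in Lemma \ref{E-basis}) before declaring victory.
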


\subsection{Quasi-map spaces}\label{sec:QM}

Here we recall basics of quasi-map spaces from \cite{FM99,FFKM}.

We have $W$-equivariant isomorphism $H_2 ( \sB, \Z ) \cong Q ^{\vee}$. This identifies the (integral points of the) effective cone of $\sB$ with $Q_{+}^{\vee}$. A quasi-map $( f, D )$ is a map $f : \P ^1 \rightarrow \sB$ together with an $\tI$-colored effective divisor
$$D = \sum_{i \in \tI, x \in \P^1 (\C)} m_x (\alpha^{\vee}_i) \alpha^{\vee}_i \otimes [x] \in Q^{\vee} \otimes_\Z \mathrm{Div} \, \P^1 \hskip 3mm \text{with} \hskip 3mm m_x (\alpha^{\vee}) \in \Z_{\ge 0}.$$
We call $D$ the defect of $(f, D)$. We define the total defect of $(f, D)$ by
$$|D| := \sum_{i \in \tI, x \in \P^1 (\C)} m_x (\alpha^{\vee}_i) \alpha^{\vee}_i \in Q_+^{\vee}.$$

For each $\beta \in Q_+^{\vee}$, we set
$$\sQ ( \sB, \beta ) : = \{ f : \P ^1 \rightarrow X \mid \text{ quasi-map s.t. } f _* [ \P^1 ] + | D | = \beta \},$$
where $f_* [\P^1]$ is the class of the image of $\P^1$ multiplied by the degree of $\P^1 \to \mathrm{Im} \, f$. We denote $\sQ ( \sB, \beta )$ by $\sQ_G ( \beta )$ or $\sQ ( \beta )$ for simplicity.

\begin{defn}[Drinfeld-Pl\"ucker data]\label{Zas}
Consider a collection $\mathcal L = \{( \psi_{\lambda}, \mathcal L^{\lambda} ) \}_{\lambda \in \La_+}$ of inclusions $\psi_{\lambda} : \mathcal L ^{\lambda} \hookrightarrow V ( \lambda ) \otimes _{\C} \mathcal O _{\P^1}$ of line bundles $\mathcal L ^{\lambda}$ over $\P^1$. The data $\mathcal L$ is called a Drinfeld-Pl\"ucker data (DP-data) if the canonical inclusion of $G$-modules
$$\eta_{\lambda, \mu} : V ( \lambda + \mu ) \hookrightarrow V ( \lambda ) \otimes V ( \mu )$$
induces an isomorphism
$$\eta_{\lambda, \mu} \otimes \mathrm{id} : \psi_{\lambda + \mu} ( \mathcal L ^{\lambda + \mu} ) \stackrel{\cong}{\longrightarrow} \psi _{\lambda} ( \mathcal L^{\lambda} ) \otimes_{\cO_{\P^1}} \psi_{\mu} ( \mathcal L^{\mu} )$$
for every $\lambda, \mu \in \La_+$.
\end{defn}

\begin{thm}[Drinfeld, see Finkelberg-Mirkovi\'c \cite{FM99}]\label{Dr}
The variety $\sQ ( \beta )$ is isomorphic to the variety formed by isomorphism classes of the DP-data $\mathcal L = \{( \psi_{\la}, \mathcal L^{\la} ) \}_{\la \in \La_+}$ such that $\deg \, \mathcal L ^{\lambda} = \left< w_0 \beta, \la \right>$. In addition, $\sQ ( \beta )$ is an irreducible variety of dimension $\dim \, \sB + 2 \left< \beta, \rho \right>$.
\end{thm}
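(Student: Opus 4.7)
My plan is to realize both sides as moduli problems and construct a natural transformation between them, then use a stratification argument to control irreducibility and dimension.

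First, I would recall the Plücker embedding: the flag variety $\sB$ sits inside $\prod_{\la \in \La_+} \P( V(\la) )$ as the locus of compatible highest weight lines, the compatibility being exactly that the line $L^\la \subset V(\la)$ in each factor satisfies $L^{\la+\mu} = L^\la \cdot L^\mu$ under $\eta_{\la,\mu} : V(\la+\mu)\hookrightarrow V(\la)\otimes V(\mu)$. Consequently, an honest regular map $f : \P^1 \to \sB$ of class $\beta$ is equivalent to a collection of line sub-bundles $\mathcal L^\la \hookrightarrow V(\la) \otimes \cO_{\P^1}$ whose images satisfy the multiplicativity in Definition \ref{Zas}, and an elementary character/degree computation (using that $\cO_\sB(-\la)$ pulls back to a line bundle of degree $\left<w_0\beta,\la\right>$ on the source under the chosen sign convention) shows that $\deg \mathcal L^\la = \left<w_0\beta,\la\right>$. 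The key construction is then to relax the condition that $\mathcal L^\la \hookrightarrow V(\la)\otimes\cO_{\P^1}$ be saturated: given any DP-data of the prescribed degrees, the saturation $\widetilde{\mathcal L}^\la$ is again a DP-data whose quotient $\widetilde{\mathcal L}^\la / \mathcal L^\la$ is a torsion sheaf supported in finitely many points, and the saturated collection defines an honest map $f : \P^1 \to \sB$. Reading off the lengths of the torsion quotients weight by weight produces an $\tI$-colored divisor $D$, the compatibility (Definition \ref{Zas}) forcing these lengths to combine into $\sum m_x(\al_i^\vee)\al_i^\vee \otimes [x]$.

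Next I would show this assignment is an isomorphism of schemes. In one direction, every quasi-map $(f, D)$ produces a DP-data by twisting the pull-back sub-bundles coming from $f$ by the effective divisor $D$ weight-component by weight-component; in the other direction the above saturation procedure is its inverse. One checks this is functorial in families, so it gives an isomorphism of the corresponding moduli functors. The degree constraint is transparent: the saturated part contributes $\left<w_0 f_*[\P^1],\la\right>$ to $\deg\mathcal L^\la$ and the defect contributes $\left<w_0|D|, \la\right>$, summing to $\left<w_0\beta,\la\right>$.

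For irreducibility and dimension, I would stratify
\[
\sQ(\beta) = \bigsqcup_{\gamma \le \beta} \mathrm{Mor}_{\beta-\gamma}(\P^1,\sB) \times \sQ(\beta)_{\gamma},
\]
where $\mathrm{Mor}_\gamma$ denotes honest maps of class $\gamma$ and the defect stratum of total defect $\gamma$ is a fibration whose fibres over configurations of defect points look like products of affine spaces attached to the colored multiplicities. The open stratum $\gamma=0$ is $\mathrm{Mor}_\beta(\P^1,\sB)$, which is irreducible of dimension $\dim\sB + 2\left<\beta,\rho\right>$; this is the standard computation via $H^0(\P^1, f^*T_\sB)$ and Riemann-Roch using $c_1(T_\sB)=2\rho$, together with the unobstructedness coming from convexity/rational connectedness of $\sB$. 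A parameter count on the deeper strata (dimension of defect configurations $2\left<\gamma,\rho\right>$ plus dimension $\dim\sB + 2\left<\beta-\gamma,\rho\right>$ of maps of the smaller class) shows each has dimension $\le \dim\sB + 2\left<\beta,\rho\right>$, with equality for $\gamma=0$. Showing the open stratum is dense then yields irreducibility and the asserted dimension.

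The main obstacle I anticipate is not the moduli bijection, which is essentially formal once the Plücker description is granted, but the verification that every defect configuration genuinely lies in the closure of the honest-map locus; this is done by writing down explicit one-parameter degenerations that concentrate a chunk of the ``degree'' of a varying family of honest maps into a prescribed colored point, and checking via the DP description that the limiting object is precisely the torsion-modified sub-line-bundle data. The argument that these degenerations exhaust all strata, carried out in Finkelberg-Mirkovi\'c \cite{FM99}, is the real content.
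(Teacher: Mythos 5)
Your overall strategy — Pl\"ucker description of $\sB$, relaxation from saturated sub-line-bundles to arbitrary ones, recovery of the colored defect divisor from the torsion of the saturation, and a defect-type stratification to get irreducibility and dimension — is exactly the approach of Drinfeld and of Finkelberg--Mirkovi\'c in \cite{FM99}, which the paper cites rather than proves. You also correctly locate the genuinely nontrivial step (density of the zero-defect stratum, i.e., that every defect configuration is a limit of honest maps), which is the heart of the argument in \cite{FM99}.

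There is a small but meaningful numerical slip in your dimension count for the strata. The space of colored effective divisors of total defect $\gamma = \sum_i n_i \al_i^{\vee}$ is $\prod_i \mathrm{Sym}^{n_i}(\P^1)$, which has dimension $\sum_i n_i = \left< \gamma, \rho \right>$, not $2\left<\gamma,\rho\right>$ as you wrote. With your figure, every stratum would have dimension exactly $\dim\sB + 2\left<\beta,\rho\right>$, so the irreducibility argument ``open stratum is dense, others have smaller dimension'' would not get off the ground, and your parenthetical ``with equality for $\gamma = 0$'' would be false. With the correct $\left<\gamma,\rho\right>$, the stratum of defect $\gamma$ has dimension $\dim\sB + 2\left<\beta,\rho\right> - \left<\gamma,\rho\right>$, which is strictly less than the open stratum's dimension for $\gamma \ne 0$; that is what your subsequent reasoning actually needs. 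One further caution: unobstructedness ($H^1(\P^1, f^*T_\sB) = 0$) gives you smoothness and the dimension of $\mathrm{Mor}_\gamma(\P^1,\sB)$, but not by itself irreducibility of that morphism space --- connectedness requires a separate input (e.g., Kim--Pandharipande \cite{KP01} or Thomsen), which the paper also invokes for the related graph spaces in \S\ref{GQL}.
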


\begin{thm}[Braverman-Finkelberg \cite{BF14b}]\label{rat}
The variety $\sQ ( \beta )$ is a normal variety with rational singularities.
\end{thm}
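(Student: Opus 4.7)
The plan is to exhibit a proper birational morphism $\pi : \widetilde{\sQ} ( \beta ) \to \sQ ( \beta )$ from a smooth variety $\widetilde{\sQ} ( \beta )$, and then verify
$$\pi_* \cO_{\widetilde{\sQ}(\beta)} \cong \cO_{\sQ ( \beta )} \hskip 5mm \text{and} \hskip 5mm R^i \pi _ * \cO_{\widetilde{\sQ}(\beta)} = 0 \quad (i > 0).$$
By Kempf's rational resolution criterion, this implies that $\sQ ( \beta )$ is Cohen--Macaulay, normal, and has rational singularities. Normality combined with the irreducibility statement of Theorem \ref{Dr} then yields the entire assertion.

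For the resolution I would use a Bott--Samelson type desingularization. Fix an ordered expression $\beta = \alpha^\vee _{i_1} + \cdots + \alpha^\vee _{i_k}$ and construct $\widetilde{\sQ}_{\bi}$ as the iterated fibre product of minimal-degree quasi-map spaces associated with the elementary pieces $\alpha^\vee _{i_j}$, together with the data of the insertion points in $\P^1$ where the defects appear. The resulting $\widetilde{\sQ}_{\bi}$ is smooth, being realized as a tower of $\P^1$-bundles over the Bott--Samelson variety attached to $\bi$, and the natural contraction map $\pi_{\bi} : \widetilde{\sQ}_{\bi} \to \sQ ( \beta )$ is an isomorphism over the open locus of defect-free honest maps, hence birational. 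Alternative models are Laumon's resolution by flags of torsion-free quotient sheaves in type $A$, and in general Drinfeld's compactification $\overline{\mathrm{Bun}}_B ( \P^1 )$ of the moduli of $B$-bundles on $\P^1$ of degree $w_0 \beta$.

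For the sheaf-theoretic statement I would use the factorization structure of quasi-map spaces: the defect morphism $\sQ ( \beta ) \to ( \P^1 )^{(\beta)}$ into the $\Pi^{\vee}$-coloured symmetric product of $\P^1$ identifies an \'etale neighbourhood of a point with defect $\sum _{k} \gamma_k \otimes [x_k]$ with a product of local Zastava pieces $\prod_k Z^{\gamma_k}$, and the resolution $\pi_{\bi}$ factorizes compatibly. Hence it suffices to treat the local Zastava space $Z^\beta$, where one may produce a Frobenius splitting of the Bott--Samelson style resolution compatible with all boundary divisors contracted by $\pi_{\bi}$, in the spirit of Faltings and Mehta--van der Kallen. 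A standard reduction mod $p$ then transfers the desired vanishing of $R^i \pi_* \cO$ back to characteristic zero.

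The main obstacle is the construction of this compatible Frobenius splitting along the singular boundary strata: the stratification of $\sQ ( \beta )$ is indexed by decompositions of $\beta$ into $\Pi^{\vee}$-coloured multi-sets, so one must simultaneously split the resolution with respect to every divisor the map $\pi_{\bi}$ collapses. The factorization property reduces this to the single-point Zastava case, which is precisely where the geometric input of \cite{BF14b} becomes essential.
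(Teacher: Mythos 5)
The paper does not prove Theorem \ref{rat}; it imports it from Braverman--Finkelberg [BF14b], so there is no in-text proof to compare your attempt against. Evaluating the attempt on its own terms, the overall shape (resolve, show $R\pi_*\cO_{\widetilde{\sQ}(\beta)}\cong\cO_{\sQ(\beta)}$, invoke Kempf) is a legitimate strategy, and the reduction to local Zastava pieces via the factorization map to the coloured symmetric product of $\P^1$ is a genuine and standard step. But there are two serious gaps.

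First, the ``Bott--Samelson type desingularization'' $\widetilde{\sQ}_{\bi}$ is not a recognized construction for quasi-map spaces, and your description does not actually produce one: the stratification of $\sQ(\beta)$ is indexed by coloured effective divisors (defect data), not by subwords of a reduced word, so the Bott--Samelson inductive mechanism has nothing to bite on, and there is no reason the proposed iterated fibre product should be either smooth or proper birational onto $\sQ(\beta)$. A dimension count already looks wrong: $\dim\sQ(\beta)=\dim\sB+2\langle\beta,\rho\rangle$, which does not match ``a tower of $\P^1$-bundles over the Bott--Samelson variety attached to $\bi$'' in any obvious way. The resolutions that do exist and are used (including in the present paper, \S 1.4) are Givental's graph/stable-maps space $\sGB_{0,\beta}\to\sQ(\beta)$, which is an orbifold rather than a smooth variety, and (in type $A$) Laumon's small resolution; you mention these only as alternatives. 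Second, the Frobenius-splitting step, which you yourself single out as ``the main obstacle,'' is not constructed but merely asserted, and you close by saying that ``the geometric input of [BF14b] becomes essential'' precisely there. That makes the argument circular: the one genuinely hard step is outsourced to the very reference whose theorem you are trying to reprove. As written, the proposal is an outline of a conceivable strategy with the key vanishing statement left unestablished, and the proposed resolution is unlikely to exist in the form described.
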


For each $\la \in \bX^*$, and $\beta \in Q_+^{\vee}$, we have a $G$-equivariant line bundle $\cO _{\sQ ( \beta )} ( \lambda )$ obtained by the tensor product of the pull-backs $\cO _{\sQ ( \beta )}( \varpi_i )$ of the $i$-th $\cO ( 1 )$ via the embedding
\begin{equation}
\sQ ( \beta ) \hookrightarrow \prod_{i \in \mathtt I} \P ( V ( \varpi_i ) \otimes_{\C} \C [z] _{\le - \left< w_0 \beta, \varpi_i \right>} )\label{Pemb}
\end{equation}
and a $G$-character. We have $\chi ( \sQ ( \beta ), \cO_{\sQ} ( \la ) ) \in \C [\bH]$ for $\beta \in Q^{\vee}, \la \in \bX^*$,
where the grading $q$ is understood to count the degree of $z$ detected by the $\Gm$-action. Here we understand that $\chi ( \sQ ( \beta ), \cO_{\sQ ( \beta )} ( \la ) ) = 0$ if $\beta \not\in Q^{\vee}_+$.

We have an embedding $\sB \subset \sQ ( \beta )$ such that the line bundles $\cO ( \la )$ ($\la \in \bX^*$) correspond to each other by restrictions (\cite{BF14b,Kat18}).

\subsection{Graph and map spaces and their line bundles}\label{GQL}

We refer \cite{KM94,FP95,GL03} for the precise definitions of the notions appearing in this subsection.

For each non-negative integer $n$ and $\beta \in Q^{\vee}_+$, we set $\sGB_{n, \beta}$ to be the space of stable maps of genus zero curves with $n$-marked points to $( \P^1 \times \sB )$ of bidegree $( 1, \beta )$, that is also called the graph space of $\sB$. A point of $\sGB_{n, \beta}$ is a genus zero curve $C$ with $n$-marked points $\{x_1,\ldots,x_n\}$, together with a map to $\P^1$ of degree one. Hence, we have a unique $\P^1$-component of $C$ that maps isomorphically onto $\P^1$. We call this component the main component of $C$ and denote it by $C_0$. For a genus zero curve $C$, let $|C|$ denote the number of its irreducible components. The space $\sGB_{n, \beta}$ is a normal projective variety by \cite[Theorem 2]{FP95} that have at worst quotient singularities arising from the automorphism of curves (in particular, they have rational singularities). The natural $\bH$-action on $( \P^1 \times \sB )$ induces a natural $\bH$-action on $\sGB_{n, \beta}$. Moreover, $\sGB_{0, \beta}$ has only finitely many isolated $\bH$-fixed points, and thus we can apply the formalism of Atiyah-Bott-Lefschetz localization (cf. \cite[p200L26]{GL03} and \cite[Proof of Lemma 5]{BF14b}).

We have a morphism $\pi_{n, \beta} : \sGB_{n, \beta} \rightarrow \sQ ( \beta )$ that factors through $\sGB_{0, \beta}$ (Givental's main lemma \cite{Giv96}; see \cite[\S 8]{FFKM} and \cite[\S 1.3]{FP95}). Let $\widetilde{\mathtt{ev}}_j : \sGB_{n, \beta} \to \P^1 \times \sB$ ($1 \le j \le n$) be the evaluation at the $j$-th marked point, and let $\mathtt{ev}_j : \sGB_{n, \beta} \to \mathscr B$ be its composition with the second projection. The variety $\sGB_{n,\beta}$ is irreducible (as a special feature of flag varieties, see \cite[\S 1.2]{FP95} and \cite{KP01}).

Let $\sX ( \beta ) \subset \sGB_{2,\beta}$ denote the subscheme such that the first marked point projects to $0 \in \P^1$, and the second marked point projects to $\infty \in \P^1$ through the first projection of $\P^1 \times \sB$. By abuse of notation, we write the restriction of $\mathtt{ev}_i$ ($i =1,2$) to $\sX ( \beta )$ by the same letter. Let $\pi_{\beta} : \sX ( \beta ) \rightarrow \sQ ( \beta )$ be the restriction of $\pi_{2,\beta}$ to $\sX ( \beta )$. 
In view of Theorem \ref{rat}, the morphism $\pi_{\beta}$ is the rational resolution of singularities in an orbifold sense.

For each $\la \in \bX^*$, we have a line bundle $\cO_{\sX (\beta)} ( \la ) := \pi_{\beta}^* \cO _{\sQ ( \beta )}( \la )$. In case we want to stress the group $G$, we write $\sX _G ( \beta )$ instead of $\sX (\beta)$.

\subsection{Equivariant quantum $K$-group of $\sB$}\label{eqK}

We introduce a polynomial ring $\C Q^{\vee}_+$ and the formal power series ring $\C [\![Q^{\vee}_+]\!]$ with their variables $Q_i = Q^{\al_i^{\vee}}$ ($i \in \tI$). We set $Q^{\beta} := \prod_{i \in \tI} Q_i ^{\left< \beta, \varpi_i \right>}$ for each $\beta \in Q^{\vee}$. We define the $\bG$-equivariant (small) quantum $D_q$-module of $\sB$ as:
\begin{equation}
qK_{\bG} ( \sB ) := K_{\bG} ( \sB ) \otimes \C Q^{\vee}_+.\label{qDdef}
\end{equation}
Note that the specialization $q = 1$ yields
\begin{equation}
qK_{G} ( \sB ) := K_{G} ( \sB ) \otimes \C Q^{\vee}_+.\label{qKdef}
\end{equation}
Let $qK_{\bG} ( \sB )^{\wedge}$ and $qK_{G} ( \sB )^{\wedge}$ denote the completions of $qK_{\bG} ( \sB )$ and $qK_{G} ( \sB )$ with respect to the variables $\{ Q_i \}_{i \in \tI}$.

Let $\left< \bullet, \bullet \right>^{\GW}$ be the $R ( \bG )$-linear pairing on $qK_{\bG} ( \sB )^{\wedge}$ defined as:
$$\left< a, b \right>^{\GW} := \sum_{\beta \in Q^{\vee}_+} \chi ( \sX ( \beta ), \mathrm{ev}_1^* a \otimes \mathrm{ev}_1^* b ) Q^{\beta} \in \C [\bH] [\![Q^{\vee}_+]\!] \hskip 5mm a, b \in qK_{\bG} ( \sB )^{\wedge}.$$
Since the specialization $Q^{\beta} = 0$ ($\beta \neq 0$) recovers the ($\bG$-equivariant) Euler-Poincar\'e pairing of $\sB$, we know that $\left< \bullet, \bullet \right>^{\GW}$ is non-degenerate. For each $\la \in \bX^*$, the bilinear functional
$$\left< a, b \right>^{\GW}_{\la} := \sum_{\beta \in Q^{\vee}_+} \chi ( \sX ( \beta ), \pi_{\beta}^* \cO_{\sQ ( \beta )} ( \la ) \otimes \mathrm{ev}_1^* a \otimes \mathrm{ev}_1^* b ) Q^{\beta} \in \C [\bH] [\![Q^{\vee}_+]\!]$$
induces a(n unique) linear operator $A^{\la} ( \bullet )$ on $qK_{\bG} ( \sB )^{\wedge}$ such that
$$\left< A^{\la} a, b \right>^{\GW} = \left< a, b \right>^{\GW}_\la \hskip 5mm a, b \in qK_{\bG} ( \sB )^{\wedge}.$$
We remark that the operator $A^\la$ is the character twist when $\la \in \bX^* ( G )$. In case we want to stress the dependence on $G$, we write $\left< \bullet, \bullet \right>^{\GW}_G$ and $A^{\la}_G$ instead of $\left< \bullet, \bullet \right>^{\GW}$ and $A^{\la}$, respectively.

\begin{thm}[Iritani-Milanov-Tonita \cite{IMT15} and \cite{Kat18c}]\label{reconst}
We have:
\begin{enumerate}
\item For $\la, \mu \in \bX^*$, we have $A^\la \circ A^{\mu} = A^{\la + \mu}$ in $\End_{R ( \bG )} ( qK_{\bG} ( \sB )^{\wedge} )$;
\item For $\la \in \bX^*$ and $c \in K_{\bG} ( \sB ) \otimes 1 \subset qK_{\bG} ( \sB )$, we have
$$A^{\la} c \equiv \cO_{\sB} ( \la ) \otimes_{\cO_\sB} c \mod ( Q_i \mid i \in \tI);$$
\item The $q = 1$ specialization of the operator $A^{- \varpi_i}$ $(i \in \tI)$ is the quantum multiplication by $[\cO_{\sB} ( - \varpi_i )]$ on $qK_{G} ( \sB )$;
\item The $R ( G )$-action, the $\C Q^{\vee}$-action, together with the quantum multiplications by $[\cO_{\sB} ( - \varpi_i )]$ $(i \in \tI)$, generates $qK_{G} ( \sB )$ as a ring;
\item For $f \in \C_q [A^{\la},Q^{\beta}\mid \la \in \bX^*, \beta \in Q^{\vee}_+ ]$, we have $f [\cO_{\sB}] = 0$ in $qK_{\bG} ( \sB )$ if and only if
$$\left< f [\cO_{\sB}], [\cO_{\sB}] \right>^\GW_\la = 0 \hskip 5mm \la \in \La_+.$$
\end{enumerate}
\end{thm}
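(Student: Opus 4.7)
The approach is to establish the five assertions by combining the geometric definition of the pairings on the spaces $\sX(\beta)$ with a non-degeneracy principle. Since specializing $Q^{\beta}=0$ for $\beta\neq 0$ recovers the classical Euler--Poincar\'e pairing on $K_{\bG}(\sB)$, which is non-degenerate because $\sB$ is smooth projective, the full pairing $\left<\bullet,\bullet\right>^{\GW}$ is non-degenerate over $\C[\bH][\![Q^{\vee}_+]\!]$ by a Nakayama-type lift. Part (2) falls out first: the $\beta=0$ summand contributes $\sX(0)\cong\sB$ with both evaluation maps equal to the identity and $\pi_0^*\cO_{\sQ(0)}(\la)=\cO_\sB(\la)$, so modulo $(Q_i\mid i\in\tI)$ the defining pairing of $A^{\la}$ reduces to the classical twist by $\cO_\sB(\la)$ on $K_{\bG}(\sB)\otimes 1$.

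For part (1), I would argue geometrically via a degeneration of the target $\P^1$ underlying $\sX(\beta)$: letting the two marked points $0,\infty$ collide produces a boundary stratum where the main component breaks into a chain of two rational curves carrying quasi-maps of total degrees $\beta_1+\beta_2=\beta$. Using the tensor splitting $\cO_{\sQ(\beta)}(\la+\mu)\cong\cO_{\sQ(\beta)}(\la)\otimes\cO_{\sQ(\beta)}(\mu)$ together with a projection/base-change identity along the nodal locus (a $K$-theoretic WDVV for line-bundle insertions, as in Iritani--Milanov--Tonita), one identifies the structure constants defining $A^\la\circ A^\mu$ with those defining $A^{\la+\mu}$; the non-degeneracy established above then upgrades pairing equality to operator equality.

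The main obstacle is part (3): identifying the $q=1$ specialization of $A^{-\varpi_i}$ with the Givental--Lee quantum $K$-product by $[\cO_\sB(-\varpi_i)]$. I would follow the reconstruction strategy of Iritani--Milanov--Tonita, using Givental's main lemma to relate $\sX(\beta)$ with the graph space $\sGB_{2,\beta}$, and compute $\left<A^{-\varpi_i}[\cO_\sB],b\right>^{\GW}_{q=1}$ term-by-term in $\beta$ against $\left<[\cO_\sB(-\varpi_i)]\star[\cO_\sB],b\right>^{\GW}_{q=1}$; non-degeneracy then lifts the resulting equality from action on $[\cO_\sB]$ to operator equality at $q=1$. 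Granting (3), part (4) follows by a Nakayama-type argument from the classical fact that $K_G(\sB)$ is generated as an $R(G)$-algebra by $\{[\cO_\sB(-\varpi_i)]\}_{i\in\tI}$, and part (5) combines non-degeneracy of $\left<\bullet,\bullet\right>^{\GW}_\la=\left<A^\la\bullet,\bullet\right>^{\GW}$ (by the very definition of $A^\la$) with cyclicity of $[\cO_\sB]$ under the joint action of $\{A^\la,Q^{\beta}\}$, which is itself a consequence of (1) and (4).
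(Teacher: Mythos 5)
The paper's own ``proof'' of Theorem \ref{reconst} is not an argument at all: it is a list of pointers, namely \cite{IMT15} Propositions 2.13 and 2.10 for parts (1) and (2), \cite[Lemma 6]{ACT18} or \cite[Theorem 4.2]{Kat18c} for part (3), the \emph{finiteness of quantum $K$-theory} (\cite[Proposition 9]{ACT18}, \cite[Corollary 3.3]{Kat18c}) for part (4), and \cite[proof of Theorem 3.11]{Kat18c} for part (5). Your sketches are directionally consistent with what those references do (the $\beta = 0$ computation for (2), a degeneration/reconstruction argument \'a la Iritani--Milanov--Tonita for (1) and (3), and the pairing-plus-cyclicity argument for (5)), so there is no clash of approach.

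However, there is one concrete gap that the paper flags explicitly and you elide. In part (4) your Nakayama-type argument only gives generation in the $Q$-adic completion $qK_{G}(\sB)^{\wedge}$: knowing that the chosen generators surject onto $\mathrm{gr}_Q$ and that $Q^{\vee}_+$ lies in the Jacobson radical of $\C[\![Q^{\vee}_+]\!]$ yields $S^{\wedge}=qK_{G}(\sB)^{\wedge}$, but to descend to the polynomial ring $qK_{G}(\sB)=K_{G}(\sB)\otimes \C Q^{\vee}_+$ of (\ref{qKdef}) you first need to know that the quantum product is even defined there --- i.e.\ that the structure constants are polynomial rather than formal power series in $Q$. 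This is precisely the nontrivial \emph{finiteness of quantum $K$-theory} that the paper singles out as the essential ingredient; granting it, the descent then follows because completion is faithfully flat on finitely generated $\C Q^{\vee}_+$-modules. Your phrase ``Granting (3)'' smuggles this in silently (since (3) speaks of the quantum product on $qK_{G}(\sB)$, not on its completion), but the proposal should acknowledge that this is a separate, substantive theorem rather than a byproduct of Nakayama. A secondary, smaller point: in part (5) the cyclicity you want is of $[\cO_{\sB}]$ under the $A^{-\la}$ with $\la\in\La_+$ (these are the quantum line bundle operators by (3)), whereas the pairing $\left<\bullet,\bullet\right>^{\GW}_{\la}$ involves $A^{\la}$; one has to use self-adjointness of the $A^{\la}$ under the symmetric Gromov--Witten pairing together with invertibility $A^{\la}=(A^{-\la})^{-1}$ from (1) to reconcile the sign, and you should make that step explicit rather than appeal only to ``cyclicity under $\{A^{\la},Q^{\beta}\}$''.
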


\begin{proof}
The first two assertions follows from \cite{IMT15} Proposition 2.13 and Proposition 2.10, respectively. The third assertion is \cite[Lemma 6]{ACT18} (or \cite[Theorem 4.2]{Kat18c}). The fourth assertion is a consequence of the finiteness of quantum $K$-groups, seen in \cite[Proposition 9]{ACT18} and \cite[Corollary 3.3]{Kat18c}. The fifth assertion can be read off from the proof of \cite[Theorem 3.11]{Kat18c}.
\end{proof}

\section{Preparatory results}

\subsection{Affine Grassmanians}
We define our (thin) affine Grassmannian and (thin) flag manifold by
$$\Gr_G := G (\!(z)\!) / G [\![z]\!] \hskip 3mm \text{and} \hskip 3mm X_G := G (\!(z)\!) / \bI,$$
respectively. We have a natural map $\pi : X_G \rightarrow \Gr_G$ whose fiber is isomorphic to $\sB$. By \cite[\S 4.6]{BD} (cf. \cite[\S 2]{MV07}), the sets of connected components of $\Gr_G$ and $X_G$ are in bijection with $\bX_* ( G )$. Here we note that our assumption on $G$ guarantees that all connected components of $\Gr_G$ are mutually isomorphic as ind-varieties with $G [\![z]\!]$-actions.

\begin{thm}[Bruhat decomposition, \cite{Kum02} Corollary 6.1.20] We have $\bI$-orbit decompositions
$$\Gr_G = \bigsqcup_{\beta \in \bX_*} \mathring{\Gr_G} ( \beta ) \hskip 3mm \text{and} \hskip 3mm X = \bigsqcup_{w \in \widetilde{W}_\af} \bO_G^\af ( w )$$
with the following properties:
\begin{enumerate}
\item we have $\bO_G^\af ( v ) \subset \overline{\bO_G^\af (w)}$ if and only if $v \le w$;
\item $\pi ( \bO _G^\af ( w ) ) \subset \mathring{\Gr_G} ( \beta )$ if and only if $w \in t_{\beta} W$.\hfill $\Box$
\end{enumerate}
\end{thm}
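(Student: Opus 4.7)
The plan is to extract both decompositions from the Bruhat decomposition of the ind-group $G(\!(z)\!)$ with respect to the Iwahori $\bI$. First I would reduce to the case of simply-connected semisimple $G$: under our standing hypothesis $G \cong [G,G] \times H'$ we have $\Gr_G \cong \Gr_{[G,G]} \times \Gr_{H'}$ and $X_G \cong X_{[G,G]} \times \Gr_{H'}$ (for a torus, $\bI_{H'} = H'[\![z]\!]$), while the indexing sets factor as $\widetilde{W}_\af \cong W_\af \times \bX_*(H')$ and $\bX_* \cong Q^{\vee} \times \bX_*(H')$. Since $\Gr_{H'}$ is simply the discrete set $\bX_*(H')$, both decompositions in the theorem restrict trivially on that factor, and I only need to treat $[G,G]$.

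For simply-connected semisimple $G$, I would invoke the Kac-Moody structure: $G(\!(z)\!)$ (after adjoining loop rotation and a central charge) is the minimal Kac-Moody group of untwisted affine type attached to $\g$, and $\bI$ is its standard Borel. The Bruhat lemma for Kac-Moody groups then provides a disjoint decomposition $G(\!(z)\!) = \bigsqcup_{w \in W_\af} \bI w \bI$, which gives the decomposition of $X_G$ indexed by $W_\af$ at once. Property (1) on closures follows from the standard inductive argument: writing a reduced expression for $w$ and performing an $\mathrm{SL}_2$-computation along each simple root direction shows $\bO_G^\af(v) \subset \overline{\bO_G^\af(w)}$ whenever $v \le w$; the converse follows from the strict monotonicity of $\dim \, \bO_G^\af(w) = \ell(w)$ along the Bruhat order.

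Property (2) is then essentially formal from the geometry of the canonical smooth $\sB$-fibration $\pi : X_G \to \Gr_G$. Because $G[\![z]\!] = \bigsqcup_{u \in W} \bI u \bI$ by the finite Bruhat decomposition of $G$, two cosets $w\bI$ and $wu\bI$ (for $u \in W$) share the same $\pi$-image, and there are no further identifications. Hence $\pi$ induces a bijection $\widetilde{W}_\af / W \leftrightarrow \bX_*$ on the level of $\bI$-orbits in which the coset $t_\beta W$ corresponds exactly to $\beta$, yielding $\pi(\bO_G^\af(w)) = \mathring{\Gr_G}(\beta)$ precisely when $w \in t_\beta W$. For the extended indexing $\widetilde{W}_\af = W_\af \rtimes \bX_*(G)$ I would verify that the central translations $t_\gamma$ with $\gamma \in \bX_*(G)$ act as ind-automorphisms of $G(\!(z)\!)/\bI$ permuting connected components, which upgrades the Kac-Moody decomposition to one indexed by $\widetilde{W}_\af$.

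The main obstacle is the closure relation in (1) in the ind-scheme setting: while the Kac-Moody Bruhat lemma is algebraic in nature, showing $\bI v \bI /\bI \subset \overline{\bI w \bI /\bI}$ in the ind-topology requires realizing both cells inside a common finite-dimensional affine Schubert variety (obtained as the closure of $\bI w \bI/\bI$ in the ind-scheme structure) and then applying the classical closure argument there. All of this is carried out in Kumar's treatment of Kac-Moody flag varieties, which is why we can simply cite Corollary 6.1.20 of \cite{Kum02}.
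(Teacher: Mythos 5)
The paper gives no proof of this statement; it is cited as Corollary 6.1.20 of \cite{Kum02}, and your proposal is essentially an expository outline of the content of that Kumar result before deferring to it, so the approach is the same. There is one small inaccuracy in your sketch: strict monotonicity of $\dim \bO_G^\af(w) = \ell(w)$ along the Bruhat order does not by itself yield the ``only if'' direction of property (1), since a priori there could be incomparable pairs $v,w$ with $\ell(v) < \ell(w)$; the standard argument instead shows, via a Bott--Samelson/Demazure resolution of $\overline{\bI w \bI / \bI}$, that the closure is exactly the union of the cells $\bO_G^\af(v)$ with $v \le w$. Since you end by citing Kumar for the details anyway, this does not undermine the validity of your proposal.
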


Let us set $\Gr_G ( \beta ) := \overline{\mathring{\Gr_G} ( \beta )}$ and $X_w := \overline{\bO_G^\af ( w )}$ for $\beta \in \bX_*$ and $w \in \widetilde{W}_\af$. For $w \in \widetilde{W}_\af^-$, we also set $\Gr_G ( w ) := \Gr_G ( \beta )$ for $\beta \in \bX_*$ such that $w \in t_{\beta} W$.

We set
$$K_{\bH} ( \Gr_G ) := \bigoplus_{\beta \in \bX_*} \C [\bH] \, [\cO_{\Gr_G (\beta)}] \hskip 3mm \text{and} \hskip 3mm K_{\bH} ( X_G ) := \bigoplus_{w \in \widetilde{W}_\af} \C [\bH] \, [\cO_{X_w}].$$

The following is an affine version of Theorem \ref{finK}:

\begin{thm}[Kostant-Kumar \cite{KK90}]\label{sH-reg}
The vector space $K_{\bH} ( X_G )$ affords a regular representation of $\sH_q$ such that:
\begin{enumerate}
\item the subalgebra $\C [\bH] \subset \sH_q$ acts by the multiplication of the coefficients;
\item we have $D_i [\cO_{X_w}] = [\cO_{X_{s_i w}}]$ $(s_i w > w)$ or $[\cO_{X_{w}}]$ $(s_i w < w)$.\hfill $\Box$
\end{enumerate}
\end{thm}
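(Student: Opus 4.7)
The plan is to follow the template of the finite-dimensional analogue Theorem \ref{finK}: first construct an action of each generator of $\sH_q$ on $K_{\bH}(X_G)$ geometrically, then verify the relations of Definition \ref{def-DAHA}, and finally identify the resulting module with the left regular representation via the cyclic vector $[\cO_{X_e}]$. For the construction, let $e^\lambda$ ($\lambda \in \bX^*$) act on $\C[\bH][\cO_{X_w}]$ by scalar multiplication by $e^\lambda \in \C[\bH]$; this gives (1) by fiat. For each $i \in \tI_\af$, let $\mathbf{P}_i \supset \bI$ denote the minimal parahoric subgroup of $G(\!(z)\!)$ with $\mathbf{P}_i/\bI \cong \P^1$, let $p_i : X_G \to G(\!(z)\!)/\mathbf{P}_i$ be the associated $\P^1$-fibration, and set $D_i := p_i^* (p_i)_*$; the fiberwise computation (using that $X_{s_i w} = p_i^{-1} p_i(X_w)$ when $s_i w > w$, while $X_w = p_i^{-1} p_i(X_w)$ itself is a $\P^1$-bundle over $p_i(X_w)$ when $s_i w < w$) yields (2). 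For $\gamma \in \bX_*(G)$, left multiplication by $z^\gamma \in H(\!(z)\!)$ induces an $H$-equivariant automorphism of $X_G$ sending $X_w$ to $X_{t_\gamma w}$, and $T_\gamma$ is its $K$-theoretic pullback, combined with the loop-rotation twist arising from $z \mapsto \tau z$.

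Next I would verify relations (1)--(7) of Definition \ref{def-DAHA}. Relations (1), (5), (6), (7) are formal consequences of the constructions, taking into account the loop-rotation weight of $z^\gamma$ for (7). The idempotency $D_i^2 = D_i$ reduces to $(p_i)_* p_i^* = \id$, which follows from $(p_i)_* \cO_{X_G} = \cO_{G(\!(z)\!)/\mathbf{P}_i}$ and the projection formula. The braid relation (3) is obtained by realizing both sides as pull-push through the larger parahoric $\mathbf{P}_{ij} \supset \mathbf{P}_i, \mathbf{P}_j$ (which is of finite type over $\bI$ whenever $m_{i,j} < \infty$), so that the composition depends only on $\mathbf{P}_{ij}$ and not on the reduced word. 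The Leibniz rule (4) is the principal technical obstacle; I would establish it via Atiyah-Bott-Lefschetz localization at the discrete $\bH$-fixed-point set of $X_G$ (indexed by $\widetilde{W}_\af$), reducing the identity to a direct computation on $K_\bH(\P^1)$ for a single fibre of $p_i$, with the convention $e^{\alpha_0} = q \cdot e^{-\vartheta^\vee}$ of Definition \ref{def-DAHA} encoding the loop-rotation weight attached to the affine simple root $\alpha_0 = -\vartheta + \delta$.

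Finally, I would identify $K_\bH(X_G)$ with the left regular $\sH_q$-module. By induction on $\ell(w)$ using (2), one has $D_w \cdot [\cO_{X_e}] = [\cO_{X_w}]$ for every $w \in W_\af$, and composing with $T_\gamma$ extends this to all $w \in \widetilde{W}_\af$. Hence $h \mapsto h \cdot [\cO_{X_e}]$ defines a surjection $\sH_q \to K_\bH(X_G)$; injectivity follows from the fact that $\{D_w\}_{w \in \widetilde{W}_\af}$ forms a $\C[\bH]$-basis of $\sH_q$ (via the embedding $\imath^* : \sH_q \hookrightarrow \sR_q$) and matches the basis $\{[\cO_{X_w}]\}$ on the geometric side. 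The main obstacle throughout is the Leibniz rule, where the $q$-grading coming from the loop rotation must be tracked carefully in the fixed-point localization to match the algebraic formula on the nose.
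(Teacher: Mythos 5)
The paper gives no proof of this theorem (it is quoted from Kostant--Kumar \cite{KK90} and closed with $\Box$), so your argument stands or falls on its own. Your identification of $D_i$ with the push-pull $p_i^*(p_i)_*$ along the parahoric $\P^1$-fibration is the step that fails, for a structural reason. Both $p_i^*$ and $(p_i)_*$ are $\C[\bH]$-linear by the projection formula, so $p_i^*(p_i)_*$ commutes with the coefficient action of $\C[\bH]$ on $K_\bH(X_G)$. But item (1) of the theorem says that $\C[\bH]\subset\sH_q$ acts precisely by coefficient multiplication, while relation (4) of Definition \ref{def-DAHA} gives $D_ie^{\la}-e^{s_i\la}D_i=(e^{\la}-e^{s_i\la})/(1-e^{\al_i})$, which is nonzero whenever $s_i\la\neq\la$. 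Hence the nil-DAHA generator $D_i$ is \emph{not} $\C[\bH]$-linear, and the operator you build cannot be it. The very localization check you propose for the Leibniz rule would refute it: on a fibre the Atiyah--Bott--Lefschetz formula gives $\bigl(p_i^*(p_i)_* a\bigr)_w = a_w/(1-e^{w\al_i})+a_{ws_i}/(1-e^{-w\al_i})$, manifestly $\C[\bH]$-linear. Item (3) of Theorem \ref{finK} points to the same mismatch from the other side: twisting by $\cO_{\sB}(\la)$ \emph{commutes} with the genuine $D_i$, whereas it does not commute with $p_i^*(p_i)_*$ when $\la$ is not pulled back from the base.

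There is a second, related, error of labels: $p_i$ quotients by the \emph{right} $\mathbf{P}_i$-action, so $p_i^{-1}p_i(X_w)=\overline{\bI w\mathbf{P}_i/\bI}=X_{ws_i}$ when $ws_i>w$, not $X_{s_iw}$ as you assert. Your push-pull thus produces the right-Demazure formula $[\cO_{X_w}]\mapsto[\cO_{X_{ws_i}}]$, while the theorem (and its use in Theorem \ref{LSS-formula} via the identification $[\cO_{X_w}]\leftrightarrow D_w$) requires the left-Demazure formula $[\cO_{X_w}]\mapsto[\cO_{X_{s_iw}}]$. Both defects have the same root: $p_i^*(p_i)_*$ interacts with the right torus (line bundles) but is transparent to the coefficient ring, whereas the Kostant--Kumar $D_i$ is built the other way around. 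The established route is different in kind: embed $K_\bH(X_G)$ into $\prod_{w\in\widetilde{W}_\af}\C(\bH)$ by fixed-point localization, let $\sH_q$ act there through $\imath^*:\sH_q\hookrightarrow\sR_q$ so that $D_i=\tfrac{1}{1-e^{\al_i}}-\tfrac{e^{\al_i}}{1-e^{\al_i}}s_i$ acts by the twisted $W_\af$-action on the localization, and then prove by a combinatorial analysis of the Schubert restrictions that this action preserves the integral $\C[\bH]$-lattice and sends $[\cO_{X_w}]$ to $[\cO_{X_{s_iw}}]$ when $s_iw>w$. Your strategy of ``geometric operators + relation check + cyclic vector'' is sound in outline, but needs a construction of $D_i$ that is not the $\P^1$-fibration push-pull.
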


Being a regular representation, we sometimes identify $K_\bH ( X_G )$ with $\sH_q$ (through $e^{\la} [\cO_{X_w}] \leftrightarrow e^{\la} D_w$ for $\la \in \bX_*^\af, w \in \widetilde{W}_\af$) and consider product of two elements in $\sH_q \cup K _\bH ( X_G )$. We may denote this product on $K_\bH ( X_G )$ by $\odot_q$.

\begin{thm}[Kostant-Kumar \cite{KK90}]\label{sH-sph}
The pullback defines an inclusion map $\pi^* : K_\bH ( \Gr_G ) \hookrightarrow K _\bH ( X_G )$ such that
$$\pi^* [\cO_{\Gr_G (\beta)}] = [X_{t_{\beta}}] D_{w_0} \hskip 5mm \beta \in Q^{\vee}.$$
In particular, $\mathrm{Im} \, \pi^* = \sH_q \odot_q D_{w_0}$ is a $\sH_q$-submodule.\hfill $\Box$
\end{thm}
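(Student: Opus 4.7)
\emph{Injectivity.} My approach is to prove the three assertions (injectivity, the Schubert-class formula, and the identification of the image) in sequence, with the Schubert-class formula as the central computational step. For injectivity, note that $\pi : X_G \to \Gr_G$ is Zariski-locally trivial with fibre $\sB$, and since $H^0(\sB, \cO_\sB) = \C$ and $H^{>0}(\sB, \cO_\sB) = 0$, we have $R\pi_* \cO_{X_G} = \cO_{\Gr_G}$. Restricted to the preimage of each Schubert variety $\Gr_G(\beta)$, the morphism $\pi$ is proper, so the projection formula yields $\pi_* \pi^* [\cO_{\Gr_G(\beta)}] = [\cO_{\Gr_G(\beta)}]$ on every generator. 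Hence $\pi^*$ admits a left inverse and is injective.

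\emph{The formula.} Fix $\beta \in Q^\vee$ and let $v_\beta$ be the unique element of maximal length in the coset $t_\beta W \subset \widetilde{W}_\af$. By the second property of the Bruhat decomposition recalled above, $\pi^{-1}(\mathring{\Gr_G}(\beta)) = \bigsqcup_{w \in t_\beta W} \bO_G^\af(w)$. Passing to closures and using the smoothness of $\pi$ together with normality of affine Schubert varieties, one obtains $\pi^{-1}(\Gr_G(\beta)) = X_{v_\beta}$ scheme-theoretically, so $\pi^*[\cO_{\Gr_G(\beta)}] = [\cO_{X_{v_\beta}}]$. On the nil-Hecke side, the relations $D_i^2 = D_i$ and the braid relations of Definition \ref{def-DAHA} yield the Demazure-product identity $D_w \cdot D_u = D_{w * u}$; applied with $w = t_\beta$, $u = w_0$, we obtain $t_\beta * w_0 = \max(t_\beta W) = v_\beta$, because right-multiplying $D_{t_\beta}$ by simple reflections from a reduced expression of $w_0$ and absorbing repetitions via $D_i^2 = D_i$ produces the longest representative of the coset. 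Under the identification of Theorem \ref{sH-reg}, this reads $[\cO_{X_{t_\beta}}] \odot_q D_{w_0} = [\cO_{X_{v_\beta}}]$, matching the geometric computation.

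\emph{The image and main obstacle.} Running the same preimage argument for arbitrary $\beta \in \bX_*$ shows that $\mathrm{Im}\,\pi^*$ is the $\C[\bH]$-span of $\{[\cO_{X_{v_\beta}}]\}_{\beta \in \bX_*}$. On the other hand, every coset of $W$ in $\widetilde{W}_\af$ is uniquely of the form $t_\beta W$ for some $\beta \in \bX_*$, and $D_w \cdot D_{w_0} = D_{\max(wW)}$, so the left $\sH_q$-submodule $\sH_q \odot_q D_{w_0}$ admits the same $\C[\bH]$-basis $\{D_{v_\beta}\}_{\beta \in \bX_*}$; associativity of $\odot_q$ automatically provides the $\sH_q$-submodule structure. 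Hence $\mathrm{Im}\,\pi^* = \sH_q \odot_q D_{w_0}$. The subtlest step throughout is upgrading the set-theoretic equality $\pi^{-1}(\Gr_G(\beta)) = X_{v_\beta}$, which is immediate from the Bruhat decomposition, to a reduced-scheme equality; this is what is really needed for the K-theoretic pullback formula and relies on normality of affine Schubert varieties together with the local triviality of the $\sB$-bundle $\pi$.
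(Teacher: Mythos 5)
Your argument is correct. Note that the paper does not prove Theorem \ref{sH-sph} — it cites it from Kostant–Kumar and closes the box — so there is no in-paper proof to compare against; your reconstruction is the standard one. The three steps (a $\pi_*\pi^* = \mathrm{id}$ injectivity argument from $R\pi_*\cO_{X_G} = \cO_{\Gr_G}$; identifying $\pi^{-1}(\Gr_G(\beta))$ with $X_{v_\beta}$ for $v_\beta = \max(t_\beta W)$ and matching with the Demazure-product computation $D_{t_\beta}D_{w_0} = D_{v_\beta}$; and then reading off both $\mathrm{Im}\,\pi^*$ and $\sH_q \odot_q D_{w_0}$ in the basis $\{D_{v_\beta}\}_{\beta\in\bX_*}$) all check out, and you correctly flag the only genuine subtlety, namely that the set-theoretic equality $\pi^{-1}(\Gr_G(\beta)) = X_{v_\beta}$ is scheme-theoretic because $\pi$ is flat (a locally trivial $\sB$-bundle) over the reduced scheme $\Gr_G(\beta)$. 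One cosmetic simplification: rather than arguing that ``absorbing repetitions'' produces the longest coset representative, factor $t_\beta = v u'$ with $v$ the minimal length representative of $t_\beta W$ and $\ell(t_\beta)=\ell(v)+\ell(u')$; then $D_{t_\beta}D_{w_0} = D_v D_{u'}D_{w_0} = D_v D_{w_0} = D_{vw_0} = D_{v_\beta}$, using $D_{u'}D_{w_0}=D_{w_0}$ (as $u',w_0\in W$) and the length additivity $\ell(vw_0)=\ell(v)+\ell(w_0)$ from Theorem \ref{af-len}. This makes the Demazure-product step purely formal.
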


\begin{thm}\label{LSS-formula}
Let $w \in \widetilde{W}_\af^-$ and let $\beta \in \bX_*^-$. We have
$$\pi^* [\cO_{\Gr_G (w)}] \odot_q \pi^* [\cO_{\Gr_G (\beta)}] = \pi^* [\cO_{\Gr_G (w t_\beta)}].$$
\end{thm}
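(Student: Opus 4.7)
The plan is to translate the identity into the nil-DAHA $\sH_q$ via the identifications supplied by Theorems \ref{sH-reg} and \ref{sH-sph}, and then verify it by combining length additivity with the idempotency of $D_{w_0}$. Under the identification $[\cO_{X_v}] \leftrightarrow D_v$, Theorem \ref{sH-sph} yields $\pi^*[\cO_{\Gr_G(\beta)}] = D_{t_\beta} \odot_q D_{w_0}$. I would first record that, for any $w \in \widetilde{W}_\af^-$ lying in $t_\beta W$, one also has $\pi^*[\cO_{\Gr_G(w)}] = D_w \odot_q D_{w_0}$: writing $t_\beta = wv$ with $v \in W$, the minimality of $w$ in its coset forces $\ell(t_\beta) = \ell(w) + \ell(v)$, whence $D_{t_\beta} D_{w_0} = D_w D_v D_{w_0} = D_w D_{w_0}$, using the auxiliary identity $D_v D_{w_0} = D_{w_0}$ for every $v \in W$ (a short induction from $D_{w_0}^2 = D_{w_0}$).

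Next I would verify $\ell(wt_\beta) = \ell(w) + \ell(t_\beta)$, which simultaneously certifies that $wt_\beta$ lies in $\widetilde{W}_\af^-$ and that $D_w \odot_q D_{t_\beta} = D_{wt_\beta}$ is a reduced product in $\sH_q$. Decomposing $w = u t_\gamma$ with $u \in W$, $\gamma \in \bX_*^\le$ and $\ell(w) = \ell(t_\gamma) - \ell(u)$ by Theorem \ref{af-len}(3), and noting $\gamma + \beta \in \bX_*^-$, parts (1) and (2) of Theorem \ref{af-len} combine to give
\begin{align*}
\ell(wt_\beta) = \ell(u t_{\gamma+\beta}) = \ell(t_{\gamma+\beta}) - \ell(u) = \ell(t_\gamma) + \ell(t_\beta) - \ell(u) = \ell(w) + \ell(t_\beta),
\end{align*}
so that $wt_\beta = u t_{\gamma+\beta}$ still fits the characterization of Theorem \ref{af-len}(3) and thus lies in $\widetilde{W}_\af^-$.

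The crucial computation is the absorption identity $D_{w_0} \odot_q D_{t_\beta} = D_{t_\beta}$. For $\beta \in \bX_*^-$, Theorem \ref{af-len}(1) reads $\ell(v t_\beta) = \ell(t_\beta) - \ell(v)$ for every $v \in W$, so $t_\beta$ is the longest element of the right coset $W t_\beta$. In particular $s_i t_\beta < t_\beta$ for every $i \in \tI$, and Theorem \ref{sH-reg}(2) gives $D_{s_i} \odot_q D_{t_\beta} = D_{t_\beta}$; iterating along a reduced expression $w_0 = s_{i_1}\cdots s_{i_r}$ delivers the claim. Assembling the three steps,
\begin{align*}
\pi^*[\cO_{\Gr_G(w)}] \odot_q \pi^*[\cO_{\Gr_G(\beta)}] = D_w (D_{w_0} D_{t_\beta}) D_{w_0} = D_w D_{t_\beta} D_{w_0} = D_{w t_\beta} D_{w_0} = \pi^*[\cO_{\Gr_G(w t_\beta)}].
\end{align*}
The part of the argument one might anticipate as the main obstacle is this absorption identity, but the observation that strict anti-dominance makes $t_\beta$ the longest element of its left $W$-coset (a direct consequence of Theorem \ref{af-len}(1)) reduces it to the Demazure rule for a single simple reflection, so the proof is essentially a bookkeeping exercise on the affine length function.
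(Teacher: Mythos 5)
Your proof is correct and takes essentially the same approach as the paper's: translate everything into the nil-DAHA via Theorems \ref{sH-reg} and \ref{sH-sph}, reduce the claim to length additivity statements supplied by Theorem \ref{af-len}, and conclude via the absorption identity $D_{w_0}D_{t_\beta}=D_{t_\beta}$. You simply spell out the final assembly (which the paper compresses into "the assertion follows by Theorem \ref{sH-reg} and Theorem \ref{sH-sph}") and derive the absorption via $s_i t_\beta < t_\beta$ for $i\in\tI$ rather than via the paper's explicit $\ell(t_\beta)=\ell(w_0)+\ell(w_0 t_\beta)$, both being immediate from Theorem \ref{af-len}(1).
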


\begin{proof}
We have $\ell ( t_{\beta} ) = \ell ( w_0 ) + \ell ( w_0 t_{\beta} )$ by Theorem \ref{af-len} 1). We have $w = u t_{\gamma}$ for some $u \in W$ and $\gamma \in \bX_*^{\le}$ such that $\ell ( w ) = \ell ( t_{\gamma} ) - \ell ( u )$ by Theorem \ref{af-len} 3). Now we have $\ell ( u t_{\gamma + \beta} ) = \ell ( w ) + \ell ( t_{\beta} )$ by Theorem \ref{af-len} 2). From these, the assertion follows by Theorem \ref{sH-reg} and Theorem \ref{sH-sph}.
\end{proof}

Theorem \ref{LSS-formula} implies that the set
$$\{ \pi^* [\cO_{\Gr_G ( \beta )}] \mid \beta \in \bX_*^- \} \subset ( K_{\bH} ( \Gr_G ), \odot_q )$$
forms a multiplicative system with respect to the right action. We denote by $K_{\bH} ( \Gr_G )_{\lo}$ the localization of $K_{\bH} ( \Gr_G )$ with respect to this right action. The action of an element $[\cO_{\Gr_G (\beta)}]$ on $K_{\bH} ( \Gr_G )$ in Theorem \ref{LSS-formula} is torsion-free, and hence we have an embedding $K_{\bH} ( \Gr_G ) \subset K_{\bH} ( \Gr_G )_\lo$. Since the left $\sH_q$-module structure on $( K_{\bH} ( \Gr_G ), \odot_q )$ commutes with this right action, we conclude that $K_{\bH} ( \Gr_G )_{\lo}$ is a $\sH_q$-module that contains $K_{\bH} ( \Gr_G )$.

\begin{cor}\label{h-op}
Let $i \in \tI$. For $\beta \in \bX_*^-$, we set
$$\bh _i := \pi^* [\cO_{\Gr_G (s_i t_{\beta} )}] \odot_q \pi^* [\cO_{\Gr_G ( t_{\beta} )}]^{-1}.$$
Then, the element $\bh_i$ is independent of the choice of $\beta$.
\end{cor}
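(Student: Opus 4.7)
The plan is to reduce the well-definedness of $\bh_i$ to the ``Pieri-type'' formula of Theorem \ref{LSS-formula}. Write $A_\beta := \pi^*[\cO_{\Gr_G(s_it_\beta)}]$ and $B_\beta := \pi^*[\cO_{\Gr_G(t_\beta)}]$ for $\beta \in \bX_*^-$; the goal is to prove $A_\beta \odot_q B_\beta^{-1} = A_{\beta'} \odot_q B_{\beta'}^{-1}$ in $K_\bH(\Gr_G)_\lo$ for any $\beta, \beta' \in \bX_*^-$. Two applications of Theorem \ref{LSS-formula} show $B_\beta \odot_q B_{\beta'} = \pi^*[\cO_{\Gr_G(t_{\beta+\beta'})}] = B_{\beta'} \odot_q B_\beta$, so the $B_\beta$'s commute with each other already inside $K_\bH(\Gr_G)$; cross-multiplying in the Ore localization, the required equality becomes equivalent to the single identity
$$A_\beta \odot_q B_{\beta'} = A_{\beta'} \odot_q B_\beta.$$

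To invoke Theorem \ref{LSS-formula} with $w = s_i t_\beta$, I must first verify that $s_i t_\beta \in \widetilde{W}_\af^-$ for $\beta \in \bX_*^-$ (this also legitimizes the notation $\Gr_G(s_it_\beta)$). I check the criterion $(s_it_\beta)(\alpha_j) \in \Delta_{\af,+}$ for every $j \in \tI$ by a direct affine-root computation. Since $t_\beta(\alpha_j) = \alpha_j - \langle\beta, \alpha_j\rangle\delta$ and $s_i$ (with $i \in \tI$) fixes $\delta$, we have $(s_it_\beta)(\alpha_j) = s_i(\alpha_j) - \langle\beta, \alpha_j\rangle\delta$. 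When $j = i$, this equals $-\alpha_i + c_i\delta$ with $c_i = -\langle\beta,\alpha_i\rangle \ge 1$, a positive affine root. When $j \ne i$, the finite part $s_i(\alpha_j)$ lies in $\Delta_+$ since $\langle\alpha_i^\vee,\alpha_j\rangle \le 0$, and the $\delta$-coefficient is nonnegative, so the expression again lies in $\Delta_{\af,+}$.

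With this in place, Theorem \ref{LSS-formula}, applied to $w = s_it_\beta \in \widetilde{W}_\af^-$ and translation parameter $\beta' \in \bX_*^-$, yields
$$A_\beta \odot_q B_{\beta'} = \pi^*[\cO_{\Gr_G(s_it_\beta \cdot t_{\beta'})}] = \pi^*[\cO_{\Gr_G(s_it_{\beta+\beta'})}],$$
and exchanging the roles of $\beta$ and $\beta'$ produces the same right-hand side, proving the displayed equation. I do not anticipate any real obstacle: the only nontrivial content is the affine-root check that $s_it_\beta \in \widetilde{W}_\af^-$, and everything else is a formal manipulation of Theorem \ref{LSS-formula} together with the Ore-localization framework set up just before the statement.
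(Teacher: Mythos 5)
Your proof is correct and follows essentially the same route as the paper's: both arguments rest entirely on Theorem \ref{LSS-formula} applied to $w = s_it_\beta$ and to $w = t_\beta$ to factor numerator and denominator, then cancel. The paper compares $\beta$ with $\gamma + \beta$ and cancels $[\cO_{\Gr_G(t_\gamma)}] \odot_q [\cO_{\Gr_G(t_\gamma)}]^{-1}$, while you compare arbitrary $\beta, \beta'$ by cross-multiplication after establishing commutativity of the $B_\beta$'s; you also make explicit the affine-root verification that $s_it_\beta \in \widetilde{W}_\af^-$, which the paper uses implicitly but does not spell out.
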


\begin{proof}
By Theorem \ref{LSS-formula}, we have
\begin{align*}
[\cO_{\Gr_G (s_i t_{\gamma + \beta} )}] \odot_q [\cO_{\Gr_G (t_{\gamma + \beta} )}]^{-1} & = [\cO_{\Gr_G ( s_i t_{\beta} )}] \odot_q [\cO_{\Gr_G (t_{\gamma})}] \odot_q  [\cO_{\Gr_G (t_{\gamma})}]^{-1} \odot_q [\cO_{\Gr_G ( t_\beta )}]^{-1}\\
& = [\cO_{\Gr_G (s_i t_{\beta})}] \odot_q [\cO_{\Gr_G (t_{\beta})}]^{-1}
\end{align*}
for $\gamma \in \bX_*^-$. Hence, we conclude the assertion.
\end{proof}

In the below, we may drop $\pi^*$ in the notation and consider
$$K_\bG ( \Gr_G ) = D_{w_0} K_\bH ( \Gr_G ) \cong D_{w_0} K _\bH ( X_G ) D_{w_0} \subset K _\bH ( X_G )$$
as a subalgebra of $K _\bH ( X_G )$. Note that $[\cO_{\Gr_G ( \beta )}] \in K_\bG ( \Gr_G )$ for $\beta \in \bX_*^{-}$. In addition, $[\cO_{\Gr_G ( 0 )}]$ is the multiplicative unit of $K_{\bG} ( \Gr_G )$, and we sometimes denote it by $1$. It is clear that $K_\bG ( \Gr_G )$ affords a regular representation of $\sH_q^{\sh}$.

For each $\gamma \in \bX_*$, we can write $\gamma = \beta_1 - \beta_2$, where $\beta_1,\beta_2 \in \bX_*^-$. In particular, we have an element
$$\mt _{\gamma} := [\cO_{\Gr_G (t_{\beta_1})}] \odot_q [\cO_{\Gr_G (t_{\beta_2} )}]^{-1}.$$

\begin{lem}
For each $\gamma \in Q^{\vee}$, the element $\mt _{\gamma} \in K_{\bG} ( \Gr_G )_{\lo}$ is independent of the choices involved.
\end{lem}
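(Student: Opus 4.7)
My plan is to exploit the multiplicativity in Theorem \ref{LSS-formula} to show that any two antidominant decompositions of $\gamma$ yield the same element after localization. The key observation is that the system $\{[\cO_{\Gr_G(\beta)}] \mid \beta \in \bX_*^-\}$ is not only Ore but actually commutative under $\odot_q$: indeed, for $\beta, \beta' \in \bX_*^-$, Theorem \ref{LSS-formula} (applied with $w = t_\beta$, and again with $w = t_{\beta'}$, both of which lie in $\widetilde{W}_\af^-$ by Theorem \ref{af-len} 3)) gives
\[
[\cO_{\Gr_G(t_{\beta})}] \odot_q [\cO_{\Gr_G(t_{\beta'})}] \;=\; [\cO_{\Gr_G(t_{\beta+\beta'})}] \;=\; [\cO_{\Gr_G(t_{\beta'})}] \odot_q [\cO_{\Gr_G(t_{\beta})}].
\]
Hence their inverses in $K_\bH(\Gr_G)_{\lo}$ also commute with every element of the multiplicative system.

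Next I would verify independence of the choice. Suppose $\gamma = \beta_1 - \beta_2 = \beta_1' - \beta_2'$ with $\beta_1,\beta_2,\beta_1',\beta_2' \in \bX_*^-$. Then $\beta_1 + \beta_2' = \beta_1' + \beta_2$, and this common sum lies in $\bX_*^-$ (the cone is closed under addition). Applying Theorem \ref{LSS-formula} to both $[\cO_{\Gr_G(t_{\beta_1})}] \odot_q [\cO_{\Gr_G(t_{\beta_2'})}]$ and $[\cO_{\Gr_G(t_{\beta_1'})}] \odot_q [\cO_{\Gr_G(t_{\beta_2})}]$ shows that both equal $[\cO_{\Gr_G(t_{\beta_1 + \beta_2'})}]$. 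Using the commutativity established above, I can rearrange in $K_\bH(\Gr_G)_{\lo}$ to obtain
\[
[\cO_{\Gr_G(t_{\beta_1})}] \odot_q [\cO_{\Gr_G(t_{\beta_2})}]^{-1} \;=\; [\cO_{\Gr_G(t_{\beta_1'})}] \odot_q [\cO_{\Gr_G(t_{\beta_2'})}]^{-1},
\]
which is the desired well-definedness.

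There is no serious obstacle here; the only subtlety is being careful that $t_{\beta} \in \widetilde{W}_\af^-$ for $\beta \in \bX_*^-$ so that Theorem \ref{LSS-formula} applies, and that one may legitimately move the inverses past each other inside the localized right module. Both points are handled by the commutativity argument in the first paragraph, so the proof reduces to an essentially two-line manipulation.
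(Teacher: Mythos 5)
Your proof is correct, and it is essentially the same approach the paper intends: the paper's proof of this lemma refers back to Corollary \ref{h-op}, which is a telescoping manipulation based on Theorem \ref{LSS-formula}, and your argument fills in that same manipulation, with the commutativity of $\{[\cO_{\Gr_G(t_\beta)}]\}_{\beta \in \bX_*^-}$ under $\odot_q$ stated explicitly to justify moving the inverses. (One trivial slip: the fact that $t_\beta \in \widetilde{W}_\af^-$ for $\beta \in \bX_*^-$ is really Theorem \ref{af-len} 1), which gives $\ell(t_\beta u) = \ell(t_\beta) + \ell(u)$, rather than item 3).)
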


\begin{proof}
Similar to the proof of Corollary \ref{h-op}. The detail is left to the readers. 
\end{proof}

\subsection{Semi-infinite flag manifolds}\label{sif}

In this subsection, we assume that $G$ is a simple algebraic group. This assumption implies $\Lambda = \bX^*$, $Q^{\vee} = \bX_*$, and $W_\af = \widetilde{W}_\af$. In \cite{Kat18d}, we have exhibited an ind-scheme $\bQ_G^{\ra}$ of ind-infinite type that is universal among these whose set $\C$-valued points are $G (\!(z)\!) / ( H \cdot N (\!(z)\!) )$. It is equipped with a $G (\!(z)\!)$-equivariant line bundle $\cO _{\bQ_G^{\ra}} ( \la )$ for each $\la \in \bX^*$. Here we normalized the label of line bundles such that $\Gamma ( \bQ_G^{\ra}, \cO_{\bQ_G^{\mathrm{rat}}} ( \la ) )$ is co-generated by its $H$-weight $\la$-part as a $B (\!(z)\!)$-module.

\begin{thm}[\cite{FM99,FFKM}]\label{si-Bruhat}
We have an $\bI$-orbit decomposition
$$\bQ_G^{\mathrm{rat}} = \bigsqcup_{w \in W_\af} \bO ( w )$$
with the following properties:
\begin{enumerate}
\item each $\bO ( w )$ has infinite dimension and infinite codimension in $\bQ_G^{\mathrm{rat}}$;
\item the right action of $\gamma \in Q^{\vee}$ on $\bQ_G^{\mathrm{rat}}$ yields the translation $\mathbb O ( w ) \mapsto \bO ( w t_{\gamma})$;
\item we have $\bO ( w ) \subset \overline{\bO ( v )}$ if and only if $w \le_{\si} v$.\hfill $\Box$
\end{enumerate}
\end{thm}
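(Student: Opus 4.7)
The strategy is to bootstrap from the affine Bruhat decomposition $G(\!(z)\!) = \bigsqcup_{w \in W_\af} \bI \, n_w \, \bI$. First I would define $\bO(w)$ as the image in $\bQ_G^\ra$ of $\bI \cdot n_w$, where $n_w \in N_{G(\!(z)\!)}(H)$ represents $w \in W_\af$. That these orbits are disjoint and exhaust $\bQ_G^\ra$ amounts to the bijection $\bI \backslash G(\!(z)\!) / (H \cdot N(\!(z)\!)) \cong W_\af$, which one establishes by analyzing how right $\bI$-cosets inside a fixed $\bI$-double coset collapse under passing to $(H \cdot N(\!(z)\!))$-cosets: the prounipotent radical of $\bI$ decomposes into affine root subgroups $U_\gamma$ ($\gamma \in \Delta_{\af,+}$), and each is either absorbed by $n_w (H \cdot N(\!(z)\!)) n_w^{-1}$ or contributes an independent direction to the orbit.

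Using this root-subgroup picture, property (1) becomes a combinatorial count. The orbit $\bO(w)$ is a product of root subgroups $U_\gamma$ indexed by those $\gamma \in \Delta_{\af,+}$ for which $w^{-1}\gamma$ lies outside the root system of $H \cdot N(\!(z)\!)$, i.e.\ outside $\{\alpha + n\delta : \alpha \in \Delta_+, n \in \Z\}$. For any $w$, both this index set and its complement in $\Delta_{\af,+}$ are infinite (for each $\alpha \in \Delta_+$, infinitely many $\pm\alpha + n\delta$ fall on each side), giving infinite dimension and infinite codimension. Property (2) is then almost immediate: right multiplication by the cocharacter $z^\gamma \in H(\!(z)\!)$ represents $t_\gamma \in W_\af$ and normalizes $H \cdot N(\!(z)\!)$, hence carries the coset of $n_w$ to that of $n_w n_{t_\gamma}$, identifying $\bO(w) \cdot t_\gamma$ with $\bO(wt_\gamma)$.

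The heart of the argument is property (3). The plan is to exploit the fact that the order $\le_\si$ was engineered precisely so that right translation by a deeply antidominant $t_\beta$ converts it into the ordinary affine Bruhat order (cf.\ (\ref{si-ord}) and Theorem \ref{af-len}): $w \le_\si v$ iff $w t_\beta \le v t_\beta$ for all such $\beta$. Geometrically, the right $t_\beta$-action is an automorphism of $\bQ_G^\ra$ carrying $\bO(w)$ to $\bO(w t_\beta)$, so the closure relation on $\bQ_G^\ra$ is invariant under translation. For $\beta$ antidominant enough that $w t_\beta, v t_\beta \in W_\af^-$, the shifted orbits fit into a canonical comparison with the Schubert cells $\bO_G^\af(w t_\beta), \bO_G^\af(v t_\beta)$ in the thin affine flag variety $X_G$, where the closure relation is the ordinary Bruhat order. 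The main obstacle is to make this comparison rigorous on the ind-infinite-type scheme $\bQ_G^\ra$: concretely, one must show that the closure of $\bO(v)$ inside $\bQ_G^\ra$ can be tested, after translation, inside a finite-type Schubert-type subvariety, so that the closure computation descends to $X_G$. Once this reduction is in place, $\overline{\bO(v)} \supseteq \bO(w)$ becomes equivalent to $w t_\beta \le v t_\beta$ in the usual affine Bruhat order, which by definition is $w \le_\si v$.
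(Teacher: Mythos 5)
The paper cites this theorem from \cite{FM99,FFKM} and gives no proof of its own, so I assess your sketch on its merits. Parts (1) and (2) are broadly fine: realizing $\bO(w)$ as a product of affine root subgroups $U_\gamma$ for the $\gamma\in\Delta_{\af,+}$ whose image $w^{-1}\gamma$ has negative classical part, together with the imaginary directions $H[\![z]\!]/H$ (which your dimension count should not forget, since $H[\![z]\!]\subset\bI$ while only the constant $H$ lies in $H\cdot N(\!(z)\!)$), gives (1); and (2) is the observation that right multiplication by $z^\gamma$ normalizes $H\cdot N(\!(z)\!)$ and is $\bI$-equivariant. The bijection $\bI\backslash G(\!(z)\!)/(H\cdot N(\!(z)\!))\cong W_\af$ is more naturally an Iwasawa-type statement than a consequence of the Bruhat decomposition plus a collapsing analysis (the groups $\bI$ and $H\cdot N(\!(z)\!)$ are not commensurable, so $\bI$-double cosets neither refine nor coarsen the ones you want), but that point is not the central issue.

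The genuine gap is in (3). You propose to test $\overline{\bO(v)}\supset\bO(w)$ by translating by an antidominant $t_\beta$ and then ``descending'' to a Schubert closure in the thin affine flag variety $X_G=G(\!(z)\!)/\bI$. But there is no morphism between $\bQ_G^\ra$ and $X_G$ in either direction --- neither $\bI$ nor $H\cdot N(\!(z)\!)$ contains the other --- so there is nothing along which the closure computation could descend. The definition (\ref{si-ord}) of $\le_\si$ already reduces the combinatorics to the affine Bruhat order after translation; what a proof must supply is the geometric fact that the topological closure in the ind-scheme $\bQ_G^\ra$ realizes that order, and this cannot be imported from $X_G$. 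The correct finite-dimensional models --- and what \cite{FM99,FFKM} actually work with --- are the quasi-map spaces $\sQ(\gamma)$ of \S\ref{sec:QM}: after translation (and using Theorem \ref{sib} to know that $w\le_\si e$ forces $w=ut_\beta$ with $\beta\in Q^\vee_+$), the translated cells sit inside some $\sQ(\gamma)$, whose Drinfeld--Pl\"ucker description (Theorem \ref{Dr}) and defect stratification give the closure order. That geometric input is the ingredient your sketch is missing; replacing the appeal to $X_G$ by the quasi-map spaces is not a cosmetic change but the essential step.
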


We define a $\C [\bH]$-module $K_{\bH} ( \bQ_G^{\ra} )$ as:
$$K_{\bH} ( \bQ_G^{\ra} ) := \{ \sum_{w \in W_{\af}} a_{w} [\cO_{\bQ_G ( w )}] \mid a_w \in \C [\bH], \, \exists \beta_0 \in Q^{\vee} \text{ s.t. } a _{u t_{\beta}} = 0, \, \forall u \in W, \beta \not> \beta_0 \},$$
where the sum in the definition is understood to be formal (i.e. we allow infinite sums). We define its subset
$$K_{\bH} ( \bQ_G ( t_{\beta} ) ) := \{ \sum_{w \in W_{\af}} a_{w} [\cO_{\bQ_G ( w )}] \mid a_w \in \C [\bH] \, \text{ s.t. } a _{u t_{\gamma}} = 0, \, \forall u \in W, \gamma \not\ge \beta \}$$
for each $\beta \in Q^{\vee}$. Employing the family $\{ K_{\bH} ( \bQ_G ( t_{\beta} ) ) \}_{\beta \in Q^{\vee}}$ of subsets of $K_{\bH} ( \bQ_G^{\mathrm{rat}} )$ as an open base of $0$, we obtain a topology on $K_{\bH} ( \bQ_G^{\ra} )$.

\begin{thm}[\cite{KNS17} Theorem 6.5]\label{H-si}
The vector space $K_{\bH} ( \bQ_G^{\mathrm{rat}} )$ affords a representation of $\sH_q$ such that:
\begin{enumerate}
\item the subalgebra $\C [\bH] \subset \sH_q$ acts by the multiplication as $\C [\bH]$-modules;
\item we have
$$D_i ( [\cO_{\bQ_G (w)}] ) = \begin{cases} [\cO_{\bQ_G (s_i w)}] & (s_i w >_{\si} w) \\ [\cO_{\bQ_G (w)}] & (s_i w <_{\si} w)\end{cases}.$$
\end{enumerate}
\end{thm}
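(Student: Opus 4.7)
The strategy is to build the $\sH_q$-action geometrically from natural operators on $\bQ_G^{\mathrm{rat}}$. For each $i \in \tI_\af$ the minimal parahoric $\mathbf{P}_i \supset \bI$ yields a projection $p_i : \bQ_G^{\mathrm{rat}} \to \bQ_G^{\mathrm{rat}}/\mathbf{P}_i$ whose fibers are isomorphic to $\P^1$. I would define
$$D_i ( \mathcal F ) := R p_{i,*} \, p_i^* \mathcal F$$
(or, equivalently on $K$-classes, the Demazure-type convolution), and define $T_\gamma$ for $\gamma \in \bX_*(G)$ as the pushforward along right translation by $t_\gamma \in G((z))/(H \cdot N((z)))$. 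Multiplication by $e^\la \in \C[\bH]$ is simply twisting by the line bundle $\cO_{\bQ_G^{\mathrm{rat}}}(\la)$ for $\la \in \bX^*$ and scaling by $q$ for $\la = \delta$ (loop rotation). All of these are manifestly well-defined on each orbit, so the point is to track how they act on $\bI$-equivariant $K$-classes of closures of $\bI$-orbits.

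Next I would compute the action on Schubert classes using Theorem \ref{si-Bruhat}. When $s_iw >_\si w$, the projection $p_i$ restricted to $\overline{\bO(s_iw)}$ is birational onto its image while $\overline{\bO(w)}$ is a section, so the projection formula together with the fact that the $\P^1$-fiber of $p_i$ at the generic point of $p_i(\overline{\bO(w)})$ has $[\cO_{\P^1}] = 1$ yields $D_i[\cO_{\bQ_G(w)}] = [\cO_{\bQ_G(s_iw)}]$; when $s_iw <_\si w$, both orbits lie in the same fiber of $p_i$ and the idempotency of Demazure pushforward on $\P^1$ gives back $[\cO_{\bQ_G(w)}]$. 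The nil-DAHA relations of Definition \ref{def-DAHA} then follow in a standard manner: $D_i^2 = D_i$ from the contractibility of $Rp_{i,*}p_i^* \circ Rp_{i,*}p_i^* \cong Rp_{i,*}p_i^*$; braid relations $(D_iD_j\cdots)^{m_{i,j}} = (D_jD_i\cdots)^{m_{i,j}}$ from the existence of the next parahoric $\mathbf{P}_{\{i,j\}}$ and the reducedness of its minimal $\mathbf{P}_{\{i,j\}}/\bI$; the Leibniz rule $D_ie^\la - e^{s_i\la}D_i = (e^\la - e^{s_i\la})/(1 - e^{\al_i})$ by applying the projection formula to $p_i^*(\cO(\la))$ and computing on the fiber; and the commutation $T_\gamma e^\la = q^{\left<\gamma,\la\right>}e^\la T_\gamma$ by noting that right translation by $t_\gamma$ rescales the $z$-variable, so it alters the loop-rotation weight of a $\la$-weight section by exactly $\left<\gamma,\la\right>$.

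The main obstacle is that $\bQ_G^{\mathrm{rat}}$ is of ind-infinite type, so none of the operators is a priori defined on the formal completion $K_\bH(\bQ_G^{\mathrm{rat}})$; one must verify continuity with respect to the filtration $\{K_\bH(\bQ_G(t_\beta))\}_{\beta \in Q^\vee}$ used to define its topology. For $i \in \tI$, the operator $D_i$ permutes only the $W$-factor of $W_\af = W \ltimes Q^\vee$ and preserves each $K_\bH(\bQ_G(t_\beta))$, so continuity is immediate. The genuinely delicate cases are $D_0$ and $T_\gamma$, which shift the translation index: one must show that $s_0 u t_\beta = s_\vartheta u t_{\beta + s_\vartheta \vartheta^\vee}$-type shifts remain bounded below in $Q^\vee$ along any convergent sum, which is an application of Theorem \ref{sib} together with a case analysis comparing $\le_\si$ and the lattice shift produced by $s_0$. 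Once continuity is established, the algebra relations extend uniquely from the dense subspace spanned by Schubert classes to all of $K_\bH(\bQ_G^{\mathrm{rat}})$, completing the construction. I expect the bookkeeping for the $D_0$-continuity and the compatibility of the $T_\gamma$-shift with the defining topology to be the most technical part of the argument.
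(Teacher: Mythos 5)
The paper does not prove this statement; it is imported wholesale by citation to \cite{KNS17}, Theorem 6.5, so there is no in-paper argument to compare against. Evaluating your proposal on its own terms, it breaks at the very first step: the fibration $p_i : \bQ_G^{\mathrm{rat}} \to \bQ_G^{\mathrm{rat}}/\mathbf{P}_i$ does not exist. One has $\bQ_G^{\mathrm{rat}} = G(\!(z)\!)/(H \cdot N(\!(z)\!))$, while a minimal parahoric $\mathbf{P}_i \supset \bI$ sits inside $G[\![z]\!]$ (plus one affine $SL_2$-direction for $i=0$); in particular $\mathbf{P}_i$ contains $N[\![z]\!]$ but certainly not $N(\!(z)\!)$, so $H \cdot N(\!(z)\!) \not\subset \mathbf{P}_i$ and there is no right $\mathbf{P}_i$-quotient of $\bQ_G^{\mathrm{rat}}$. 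This is a fundamental difference between the semi-infinite flag manifold and the Kac--Moody affine flag variety $X_G = G(\!(z)\!)/\bI$: the latter does fiber over $G(\!(z)\!)/\mathbf{P}_i$, but $H \cdot N(\!(z)\!)$ is not contained in any parahoric, so the $Rp_{i,*}p_i^*$ machinery that produces the Kostant--Kumar action of Theorem \ref{sH-reg} is simply unavailable here. The parenthetical escape hatch ``or, equivalently on $K$-classes, the Demazure-type convolution'' is not a minor rephrasing: the natural replacement is the correspondence $\bQ_G^{\mathrm{rat}} \leftarrow \mathbf{P}_i \times_{\bI} \bQ_G^{\mathrm{rat}} \rightarrow \bQ_G^{\mathrm{rat}}$, and although the action map has $\P^1$-fibers, making sense of the pushforward along it is genuinely nontrivial because (Theorem \ref{si-Bruhat}) every $\bI$-orbit in $\bQ_G^{\mathrm{rat}}$ has both infinite dimension and infinite codimension, so the usual coherent $K$-theoretic formalism does not apply and the pushforward would have to be constructed and shown to preserve the completed $K$-group of \S \ref{sif}. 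That construction is essentially the content of the theorem, not a routine verification.

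As far as I am aware, \cite{KNS17} does not go down the convolution route at all. The $\sH_q$-action there is built via the representation-theoretic model: sections of $\cO_{\bQ_G(w)}(\la)$ are identified (using the normality and vanishing results from \cite{Kat18}) with Demazure submodules of level-zero extremal weight modules over the quantum affine algebra, the operators $D_i$ are the Demazure operators acting on those modules, and the formula in part~2 together with the nil-DAHA relations is verified through the combinatorics of semi-infinite (quantum) Lakshmibai--Seshadri paths. Your observation that $D_0$ shifts the $Q^{\vee}$-grading and that continuity with respect to the filtration is the delicate point is a real and correct observation, but it is handled in that model by bounding weights of extremal weight modules rather than by any argument about proper pushforwards of ind-infinite-type ind-schemes. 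If you wish to pursue a genuinely geometric construction, you would need to (a) make rigorous sense of the convolution pushforward in this setting and prove it is continuous, and (b) re-derive the quadratic, braid, and Leibniz relations from scratch in the ind-infinite-type regime; neither step is supplied by the projection formula alone.
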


For each $\beta \in Q^{\vee}$, we set
$$K_{\bG} ( \bQ_G^{\mathrm{rat}} ) := D_{w_0} ( K_{\bH} ( \bQ_G^{\mathrm{rat}} ) ) \hskip 3mm \text{and} \hskip 3mm K_{\bG} ( \bQ_G ( t_{\beta}) ) := D_{w_0} ( K_{\bH} ( \bQ_G ( t_{\beta}) ) ).$$

From the description of Theorem \ref{H-si}, we deduce that the right $Q^{\vee}$-action gives $\sH_q$-module endomorphisms of $K_\bH ( \bQ_G^{\mathrm{rat}} )$. We denote this endomorphism for $\beta \in Q^{\vee}$ by $Q^{\beta}$. It gives rise to an endomorphism of $K_{\bG} ( \bQ_G^{\mathrm{rat}} )$. 
We set $\C_q (\!(Q^{\vee})\!) := \C_q Q^{\vee} \otimes _{\C_q Q^{\vee}_+} \C_q [\![Q^{\vee}_+]\!]$. The commutative rings $\C_q Q^{\vee}$ and $\C_q (\!(Q^{\vee})\!)$ act on $K_{\bG} ( \bQ_G^{\ra} )$ from the right.

\begin{thm}\label{HQc}
For each $\la \in \La$, the $\C [\bH]$-linear extension of the assignment
$$[\cO _{\bQ_G ( w )}] \mapsto [\cO _{\bQ_G ( w )} ( \la )] \in K _\bH ( \bQ_G^{\mathrm{rat}} ) \hskip 3mm w \in W_\af$$
defines a $\sH_q$-module automorphism $($that we call $\Xi ( \la ))$. In addition, we have:
\begin{enumerate}
\item $\Xi ( \la )\circ \Xi ( \mu ) = \Xi ( \la + \mu )$ for $\la, \mu \in \La$;
\item $[\cO _{\bQ_G ( w )} ( \la )] = e ^{w \la} [\cO _{\bQ_G ( w )}] + \sum_{v <_\si w} a_w^v ( \la ) [\cO _{\bQ_G ( v )}]$ for $a_w^v \in \C [\bH]$;
\item The coefficients $a_w^v$ belongs to a $\C_q$-span of $\{e^\mu \}_{\mu \in \Sigma ( \la )}$;
\item $[\cO _{\sB ( w )} ( \la )] = e ^{w \la} [\cO _{\sB ( w )}] + \sum_{w < v \in W} a_w^v ( \la ) [\cO _{\sB ( v )}]$ for each $w \in W$.
\end{enumerate}
\end{thm}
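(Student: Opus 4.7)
The plan is to establish the expansion formulas (2)–(4) first, since these both carry the concrete content and also certify that $\Xi(\la)$ is a well-defined endomorphism of the completion $K_{\bH}(\bQ_G^{\ra})$. I then deduce the $\sH_q$-module property and multiplicativity (1) from the geometric nature of line bundle twist.

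I would begin with the classical case (4). For $w \in W$, the Schubert basis of $K_{\bH}(\sB)$ is triangular with respect to Atiyah-Bott localization: $[\cO_{\sB(v)}]$ restricts to zero at $v' \in \sB^H$ unless $v' \ge v$, with leading restriction $1$ at $v' = v$. Tensoring by $\cO_{\sB}(\la)$ multiplies the restriction at $v'$ by $e^{v'\la}$, so matching leading restrictions at $v' = w$ forces the coefficient $e^{w\la}$ of $[\cO_{\sB(w)}]$. The remaining coefficients come from fixed points $v' > w$, hence each is a $\C_q$-combination of $\{e^{v'\la}\}_{v' > w}$, which is contained in the $\C_q$-span of $\{e^{\mu}\}_{\mu \in \Sigma(\la)}$.

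Next I would extend to (2)–(3). Since $\cO_{\bQ_G(w)}(\la)$ is supported on $\bQ_G(w) = \overline{\bO(w)}$, Theorem \ref{si-Bruhat}(3) forces the expansion to involve only $[\cO_{\bQ_G(v)}]$ with $v \le_\si w$. The normalization of $\cO_{\bQ_G^{\ra}}(\la)$ gives $H$-weight $w\la$ on the fiber over the distinguished point of $\bO(w)$ (compatibly with the $Q^\vee$-translation in Theorem \ref{si-Bruhat}(2)), yielding the leading term $e^{w\la}[\cO_{\bQ_G(w)}]$. The lower-order coefficients can be computed via a Bott-Samelson-type resolution of $\bQ_G(w)$ together with the $D_i$-action described in Theorem \ref{H-si}(2), and the characters that appear are successive $s_i$-conjugates of $e^{w\la}$, hence are supported on $\Sigma(\la)$. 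To see that the expansion yields a bona fide element of $K_{\bH}(\bQ_G^{\ra})$ (i.e.\ that it satisfies the support condition with respect to the topology generated by $\{K_{\bH}(\bQ_G(t_\beta))\}$), one reduces to Theorem \ref{sib} by translating in $Q^\vee$: the set of $v \le_\si w$ of the form $v = ut_\gamma$ has $\gamma$ bounded below in a way that eventually lies in any neighborhood base.

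With (2)–(4) in hand, the $\sH_q$-module property of $\Xi(\la)$ reduces to checking commutation with $D_i$ for $i \in \tI_\af$, the $\C[\bH]$-linearity being built into the definition. Geometrically, $D_i$ comes from the $\bI$-equivariant push-pull along the natural $\P^1$-fibration to the partial semi-infinite flag variety corresponding to $\tI_\af \setminus \{i\}$, and since $\cO_{\bQ_G^{\ra}}(\la)$ is $G(\!(z)\!)$-equivariant the projection formula yields commutation on Schubert classes, which extends by continuity. Property (1) then follows from the canonical isomorphism $\cO(\la) \otimes \cO(\mu) \cong \cO(\la + \mu)$ together with the continuity of each $\Xi(\cdot)$, which has been established by the convergence analysis above.

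The main obstacle is the convergence/well-definedness: controlling how $v \le_\si w$ constrains the translation component of $v$ to ensure that the putative expansion in (2) actually defines an element of the completed $K$-group, and simultaneously confining the coefficients to the $\C_q$-span of $\{e^\mu\}_{\mu \in \Sigma(\la)}$ rather than a larger $W_\af$-orbit. Once this bookkeeping is in place, the remaining assertions are formal consequences of the $G(\!(z)\!)$-equivariance of $\cO_{\bQ_G^{\ra}}(\la)$.
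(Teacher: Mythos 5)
Your proposal reverses the logical direction of the paper: the paper first establishes the $\sH_q$-module structure and the expansion (2)--(3) on the semi-infinite side by invoking \cite[Theorems 5.10 and 6.4]{KNS17} (the Pieri--Chevalley rule, with coefficients interpreted as weighted path counts, and the commutation of line-bundle twist with the nil-DAHA action), and then \emph{derives} item (4) by restricting from the dense $G[\![z]\!]$-orbit of $\bQ_G(e)$, which fibers over $\sB$ and kills all $[\cO_{\bQ(ut_\beta)}]$ with $\beta\neq 0$. You instead prove (4) independently by Atiyah--Bott localization on $\sB$ (which is fine on its own), and then try to "lift" to (2)--(3) and the $\sH_q$-module structure. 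That lift is where your argument breaks.

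There are two concrete gaps. First, the coefficient bound in (3) does not follow from "successive $s_i$-conjugates of $e^{w\la}$ via a Bott--Samelson-type resolution." No such resolution of the ind-infinite-type $\bQ_G(w)$ is constructed or cited, and in any case the Demazure operators $D_i$ involve denominators $(1-e^{\alpha_i})^{-1}$, so applying them does not simply $W$-conjugate the weight; the containment of weights in $\Sigma(\la)$ genuinely comes from the fact that the $a_w^v$ are (graded) characters of global Weyl modules, whose $H$-weights lie in $\Sigma(\la)$ --- a representation-theoretic input that your sketch never supplies. Second, the claim that $\Xi(\la)$ commutes with each $D_i$ "by the projection formula since $\cO_{\bQ_G^{\ra}}(\la)$ is $G(\!(z)\!)$-equivariant" does not work: the projection formula along the $\P^1$-fibration to the partial flag variety would give commutation only if $\cO(\la)$ were pulled back from the base, i.e.\ only when $\langle\alpha_i^\vee,\la\rangle=0$. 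For general $\la$, $D_i$ and the character twist $e^\la$ satisfy a nontrivial Leibniz rule (Corollary \ref{Lei}), and the statement that the \emph{geometric} twist $\Xi(\la)$ nevertheless commutes with $D_i$ is precisely the hard content of \cite[Theorem 6.4]{KNS17}; it cannot be waved off by equivariance and "continuity." Without these two ingredients, both the well-definedness of $\Xi(\la)$ as a $\sH_q$-module map and item (3) are left unproven.
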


\begin{proof}
The existence of the $\sH_q$-module structure and the assertion in the first item follow from \cite[Theorem 6.4]{KNS17} (though the definition of the $K$-groups are slightly different). The second item follows by \cite[Theorem 5.10]{KNS17} since a path with the equal initial/final directions is unique, and the path interpretation of coefficients $a_w^v$ automatically imposes order relation $v <_\si w$ (see \cite[\S 2.3]{KNS17}). The third item follows from the fact that $a_w^v$ is obtained as a $q$-weighted count of the character of the global Weyl modules, whose set of $H$-weights are contained in $\Sigma ( \la )$ (see e.g. \cite[\S 1.2]{Kat18}).

We prove the fourth item. The open dense $G[\![z]\!]$-orbit $\bO$ of $\bQ_G ( e )$ is the affine fibration over $\sB$, and its fiber is a homogeneous space of $\ker \, ( G[\![z]\!] \to G )$. Since the restriction from $\bQ_G ( e )$ to $\sB$ passes $\C_\mu \otimes \cO_{\bQ_G ( e )} ( \la )$ to $\C_\mu \otimes \cO_{\sB} ( \la )$ ($\la, \mu \in \La$), this restriction yields a $\C [\bH]$-linear map
$$K _{\bH} ( \bQ_G ( e ) ) \longrightarrow K_{\bH} ( \bO ) \stackrel{\cong}{\longrightarrow} K_{\bH} ( \sB ),$$
with its kernel spanned by $[\cO_{\bQ (  u t_{\beta} )}]$ for $u \in W$ and $\beta \neq 0$. This also maps $[\cO_{\bQ ( u )}]$ to $[\cO_{\sB ( u )}]$ for each $u \in W$. Since $v \not\in W$ and $v \le_\si e$ implies $v = u t_{\beta}$ with $u \in W$ and $0 \neq \beta \in Q^{\vee}_+$, we conclude the assertion in the third item.
\end{proof}

\begin{lem}[\cite{Kat18c} Lemma 1.14]\label{div}
For each $i \in \tI$, we have
$$[\cO_{\bQ_G ( s_i )}] = [\cO_{\bQ_G ( e )}] - e^{\varpi_i} [\cO_{\bQ_G ( e )} ( - \varpi_{i} )].$$
\end{lem}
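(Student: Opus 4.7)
The plan is to exhibit an $\bH$-equivariant short exact sequence of coherent sheaves on $\bQ_G(e)$ of the form
$$0 \longrightarrow \cO_{\bQ_G(e)}(-\varpi_i) \otimes \C_{\varpi_i} \longrightarrow \cO_{\bQ_G(e)} \longrightarrow \cO_{\bQ_G(s_i)} \longrightarrow 0,$$
and then take classes in $K_\bH(\bQ_G^\ra)$. In $K$-theory the leftmost term contributes $e^{\varpi_i}\,[\cO_{\bQ_G(e)}(-\varpi_i)]$, where the factor $e^{\varpi_i}$ records the $\bH$-character of the twist $\C_{\varpi_i}$, while the middle term contributes $[\cO_{\bQ_G(e)}]$. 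Rearranging the additive relation in $K$-theory gives exactly the claimed identity.

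To produce such a sequence, the first step is to construct a distinguished global section $\sigma \in \Gamma(\bQ_G(e),\cO_{\bQ_G(e)}(\varpi_i))$ of $H$-weight $\varpi_i$. I would extract $\sigma$ from the normalization convention of \S 2.2: the $B(\!(z)\!)$-module $\Gamma(\bQ_G^\ra,\cO_{\bQ_G^\ra}(\varpi_i))$ is co-generated by its one-dimensional weight-$\varpi_i$ subspace, and the restriction of the weight-$\varpi_i$ vector to the Schubert variety $\bQ_G(e) = \overline{\bO(e)}$ furnishes $\sigma$.

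The second and crucial step is to identify the scheme-theoretic zero locus of $\sigma$ inside $\bQ_G(e)$ with $\bQ_G(s_i)$. At the level of reduced subschemes this is forced by Theorem \ref{si-Bruhat}(3), since $\bO(s_i)$ is the unique codimension-one $\bI$-orbit in $\bO(e)$ under the generic semi-infinite Bruhat order (any other $v <_\si e$ is of the form $u t_\beta$ with $\beta \in Q^\vee_+ \setminus \{0\}$ by Theorem \ref{sib}, hence has strictly larger ``semi-infinite codimension''). For the scheme-theoretic refinement---that $\sigma$ vanishes to order one along $\bQ_G(s_i)$ and is nowhere else zero---I would invoke the normality of Schubert subvarieties in $\bQ_G^\ra$ and the Cartier property of their codimension-one strata established in \cite{Kat18d}. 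Once this is granted, the ideal sheaf of $\bQ_G(s_i)$ inside $\bQ_G(e)$ is an $\bH$-equivariant line bundle, and matching $H$-weights with $\sigma$ forces it to be $\cO_{\bQ_G(e)}(-\varpi_i) \otimes \C_{\varpi_i}$, completing the construction. The main obstacle is precisely this scheme-theoretic control of the vanishing locus in the ind-infinite-dimensional variety $\bQ_G(e)$; a naive Koszul-resolution argument is unavailable, and one must rely on the finer results of \cite{Kat18d} concerning Schubert subvarieties of $\bQ_G^\ra$.
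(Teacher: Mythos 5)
Your short-exact-sequence strategy --- realize $\bQ_G(s_i)$ as the zero divisor of an $\bH$-eigensection $\sigma$ of $\cO_{\bQ_G(e)}(\varpi_i)$ of weight $\varpi_i$, and take Euler characteristics --- is the right idea, and the weight bookkeeping in the resulting $K$-theory relation is handled correctly. But the reduced-scheme identification has a genuine gap. You assert that $\bO(s_i)$ is the \emph{unique} codimension-one $\bI$-orbit in $\overline{\bO(e)}$, citing Theorem \ref{sib} and claiming that every other $v <_\si e$ must have the form $u t_\beta$ with $\beta \in Q^{\vee}_+ \setminus \{0\}$. This misreads Theorem \ref{sib}, which only asserts $v = u t_\beta$ with $u \in W$, $\beta \in Q^{\vee}_+$, and in particular allows $\beta = 0$. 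Since the $\si$-order reverses the Bruhat order on $W$, we have $s_j <_\si e$ for \emph{every} $j \in \tI$, and each $\bO(s_j)$ is a codimension-one stratum of $\overline{\bO(e)}$. Consequently $\mathrm{div}(\sigma)$ is a priori some $\bI$-invariant effective combination $\sum_{j \in \tI} m_j\, \bQ_G(s_j)$, and nothing in your argument isolates $m_j = \delta_{ij}$.

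The missing step is a divisor-class computation: one must know that each $\bQ_G(s_j)$ is Cartier in $\bQ_G(e)$, that $\cO_{\bQ_G(e)}\bigl(\bQ_G(s_j)\bigr) \cong \cO_{\bQ_G(e)}(\varpi_j)$, and that the classes $[\cO(\varpi_j)]$, $j \in \tI$, are independent in the relevant divisor class group. Matching with $\cO(\mathrm{div}\,\sigma) \cong \cO_{\bQ_G(e)}(\varpi_i)$ then forces $\mathrm{div}\,\sigma = \bQ_G(s_i)$ with multiplicity one, and normality of $\bQ_G(e)$ (so that the divisor satisfies $S_1$ and is reduced once generically reduced) upgrades this to the scheme-theoretic equality $Z(\sigma) = \bQ_G(s_i)$. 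These facts about boundary divisors of semi-infinite Schubert varieties are in \cite{Kat18d}; alternatively the lemma can be extracted directly from the Pieri--Chevalley rule of \cite[Theorem 5.10]{KNS17} at $\la = -\varpi_i$, $w = e$, bypassing the geometric construction entirely.
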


We consider a $\C [\bH]$-module endomorphism $H_i$ ($i \in \tI$) of $K_{\bH} ( \bQ_G^{\mathrm{rat}} )$ as:
$$H_i : [\cO_{\bQ_G ( w )}] \mapsto [\cO_{\bQ_G ( w )}] - e ^{\varpi_i} [\cO_{\bQ_G ( w )} ( - \varpi_i )] \hskip 5mm w \in W_\af.$$

\begin{lem}\label{csq}
For $i, j \in \tI$, we have
$$\Xi ( \varpi_i ) \circ Q^{\al^{\vee}_j} = q^{- \left< \al^{\vee}_j, \varpi_i \right>} Q^{\al^{\vee}_j} \circ \Xi ( \varpi_i ) \in \mathrm{End}_{\sH_q} K _\bH ( \bQ_G^{\mathrm{rat}} ).$$
\end{lem}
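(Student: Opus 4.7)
The plan is to verify the $q$-commutation by evaluating both sides on each basis element $[\cO_{\bQ_G(w)}]$ of $K_{\bH}(\bQ_G^{\ra})$ and reducing to an identity of expansion coefficients. Applying both compositions to $[\cO_{\bQ_G(w)}]$ and expanding the line bundle classes via Theorem \ref{HQc}(2) yields
\begin{align*}
\Xi(\varpi_i) \circ Q^{\al_j^\vee}[\cO_{\bQ_G(w)}] &= [\cO_{\bQ_G(wt_{\al_j^\vee})}(\varpi_i)] = \sum_{v\le_\si wt_{\al_j^\vee}} a^v_{wt_{\al_j^\vee}}(\varpi_i)\, [\cO_{\bQ_G(v)}],\\
Q^{\al_j^\vee} \circ \Xi(\varpi_i)[\cO_{\bQ_G(w)}] &= \sum_{v'\le_\si w} a^{v'}_w(\varpi_i)\, [\cO_{\bQ_G(v' t_{\al_j^\vee})}].
\end{align*}
Since the semi-infinite Bruhat order is right-translation invariant by its very definition \eqref{si-ord}, reparametrizing the first sum via $v = v' t_{\al_j^\vee}$ (with $v' \le_\si w$) shows that the claim is equivalent to the coefficient identity
\begin{equation*}
a^{v' t_{\al_j^\vee}}_{wt_{\al_j^\vee}}(\varpi_i) \;=\; q^{-\left<\al_j^\vee,\varpi_i\right>}\, a^{v'}_w(\varpi_i) \qquad (v' \le_\si w).
\end{equation*}

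For the leading term $v' = w$, Theorem \ref{HQc}(2) gives $a^w_w(\varpi_i) = e^{w\varpi_i}$, and the level-zero action $t_\beta \mu = \mu - \left<\beta,\mu\right>\delta$ of $W_\af$ on $\bX^*\oplus\Z\delta$ (with $e^\delta = q$) yields $e^{wt_{\al_j^\vee}\varpi_i} = q^{-\left<\al_j^\vee,\varpi_i\right>}e^{w\varpi_i}$, matching the required identity. For the strictly lower coefficients $v' <_\si w$, I invoke the semi-infinite Lakshmibai--Seshadri path model from \cite[Theorem 5.10]{KNS17}, which underlies the proof of Theorem \ref{HQc}(2)--(3): right translation by $t_{\al_j^\vee}$ induces a bijection between the paths contributing to $a^{v't_{\al_j^\vee}}_{wt_{\al_j^\vee}}(\varpi_i)$ and those contributing to $a^{v'}_w(\varpi_i)$, under which the terminal weight uniformly shifts by $-\left<\al_j^\vee,\varpi_i\right>\delta$ and hence each $q$-contribution by $q^{-\left<\al_j^\vee,\varpi_i\right>}$.

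The main obstacle is this last step: one must verify that the bijection between LS paths induced by right translation really shifts the total $q$-weight uniformly by the scalar $q^{-\left<\al_j^\vee,\varpi_i\right>}$, with no extra contributions coming from intermediate segments of the paths. Once this essentially tautological but bookkeeping-heavy check is carried out, the coefficient identity holds at all orders, and the desired $q$-commutation follows.
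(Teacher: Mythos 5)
Your reduction to the coefficient identity
\begin{equation*}
a^{v' t_{\al_j^\vee}}_{wt_{\al_j^\vee}}(\varpi_i) = q^{-\left<\al_j^\vee,\varpi_i\right>}\, a^{v'}_w(\varpi_i)
\end{equation*}
is correct, as is your leading-term verification via the level-zero action $t_\beta\mu = \mu - \left<\beta,\mu\right>\delta$. However, as you acknowledge, the lower-order terms are left to an unverified ``bookkeeping-heavy check'' on LS paths, and this is a genuine gap: you would need to establish that right translation by $t_{\al_j^\vee}$ induces a degree-shifting bijection on semi-infinite LS paths, which is essentially re-deriving a translation invariance result from scratch.

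The paper closes precisely this gap by a cleaner route that stays one level of abstraction above the path model. The coefficients $a_v^w$ in $\Xi(\varpi_i)([\cO_{\bQ_G(w)}]) = \sum_v a_v^w\,[\cO_{\bQ_G(v)}]$ are characterized by the graded-character identity $\gch\,\Gamma(\bQ_G(w),\cO_{\bQ_G(w)}(\la+\varpi_i)) = \sum_v a_v^w\,\gch\,\Gamma(\bQ_G(v),\cO_{\bQ_G(v)}(\la))$ for $\la\in\La_+$. One then invokes the already-established translation formula for graded characters of sections, $\gch\,\Gamma(\bQ_G(wt_\gamma),\cO_{\bQ_G(wt_\gamma)}(\la)) = q^{-\left<\gamma,\la\right>}\gch\,\Gamma(\bQ_G(w),\cO_{\bQ_G(w)}(\la))$ from \cite[Corollary A.4]{Kat18d}. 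Applying this simultaneously to the shift $w\mapsto wt_{\al_j^\vee}$ on the left and $v\mapsto vt_{\al_j^\vee}$ on the right produces the coefficient relation for all $v$ at once, with no path-level case analysis. Your argument would be repaired by replacing the appeal to a path bijection with a citation of this translation formula for sections; the former is a way of re-proving the latter, and the latter is exactly what the paper has pre-packaged.
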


\begin{proof}
For each $w \in W_\af$, we have
\begin{align*}
\Xi ( \varpi _i ) ( [\cO_{\bQ_G ( w )}] ) = & \sum_{v \in W_\af} a_v^w [\cO_{\bQ_G ( v )}], \hskip 5mm \text{where} \hskip 3mm a_v^w \in \C [\bH] \text{ and}\\
\gch \, \Gamma ( \bQ_G ( w ), & \cO_{\bQ_G ( w )} ( \la + \varpi_i ) ) = \sum_{v \in W_\af} a_v^w \gch \, \Gamma ( \bQ_G ( v ), \cO_{\bQ_G ( v )} ( \la ) )
\end{align*}
for each $\la \in \La_+$. Since we have
$$\gch \, \Gamma ( \bQ_G ( w t_{\gamma} ), \cO_{\bQ_G ( w t_{\gamma} )} ( \la ) ) = q^{- \left< \gamma, \la \right>} \gch \, \Gamma ( \bQ_G ( w ), \cO_{\bQ_G ( w )} ( \la ) )$$
for each $\gamma \in Q^{\vee}$ and $\la \in \La$ by \cite[Corollary A.4]{Kat18d}, we deduce that
$$\Xi ( \varpi_i ) \circ Q^{\al^{\vee}_j} ( [\cO_{\bQ_G ( w )}] ) = q^{- \left< \al^{\vee}_j, \varpi_i \right>} Q^{\al^{\vee}_j} \circ \Xi ( \varpi_i ) ( [\cO_{\bQ_G ( w )}] ).$$
Thus, the $\C [\bH]$-linearity of the composition maps implies the result.
\end{proof}

The following result is a version of the Demazure character formula for semi-infinite flag manifolds \cite[Theorem A]{Kat18}:

\begin{thm}\label{dcf}
Let $w \in W$ and $\la \in \La$. We have
$$D_{t_{w \beta}} [ \cO_{\bQ_G ( w )} ( \la ) ] = [ \cO_{\bQ_G ( wt_{\beta} )} ( \la ) ] = q^{- \left< \beta, \la \right>} Q^{\beta} [ \cO_{\bQ_G ( w )} ( \la ) ]$$
for every $\beta \in Q^{\vee}_<$. Moreover, $\{ \cO_{\bQ_G ( e )} ( \la )\}_{\la \in \La}$ is a $\C_q (\!(Q^{\vee})\!)$-free basis of $K_{\bG} ( \bQ^\ra_G )$.
\end{thm}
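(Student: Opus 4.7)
The plan is to handle the two equalities of Theorem \ref{dcf} separately and then deduce the basis claim from the first equality.

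For the second equality, I combine the defining formula $Q^\beta[\cO_{\bQ_G(w)}] = [\cO_{\bQ_G(wt_\beta)}]$ with the commutation $\Xi(\la)Q^\beta = q^{-\langle\beta,\la\rangle}Q^\beta\Xi(\la)$ supplied by Lemma \ref{csq}. Since $\Xi(\la)[\cO_{\bQ_G(v)}] = [\cO_{\bQ_G(v)}(\la)]$ by the definition of $\Xi$, one obtains at once
\[
[\cO_{\bQ_G(wt_\beta)}(\la)] = \Xi(\la)Q^\beta[\cO_{\bQ_G(w)}] = q^{-\langle\beta,\la\rangle}Q^\beta\Xi(\la)[\cO_{\bQ_G(w)}] = q^{-\langle\beta,\la\rangle}Q^\beta[\cO_{\bQ_G(w)}(\la)].
\]

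For the first equality, I exploit the $\sH_q$-equivariance of $\Xi(\la)$ from Theorem \ref{HQc}(1), which makes $\Xi(\la)$ commute with $D_{t_{w\beta}}$ and reduces the claim to the untwisted assertion $D_{t_{w\beta}}[\cO_{\bQ_G(w)}] = [\cO_{\bQ_G(wt_\beta)}]$. Theorem \ref{af-len}(1),(2) gives $\ell(t_{w\beta}) = \ell(t_\beta) = \ell(wt_\beta) + \ell(w)$, so the identity $t_{w\beta} = (wt_\beta)\cdot w^{-1}$ is length-additive, and concatenating reduced expressions of $wt_\beta$ and $w^{-1}$ produces a reduced expression of $t_{w\beta}$. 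I analyse the corresponding product of simple $D_i$'s applied to $[\cO_{\bQ_G(w)}]$ in two blocks via Theorem \ref{H-si}(2): the $w^{-1}$-block first strips the finite Weyl factor of $w$ by simple reflections and arrives at $[\cO_{\bQ_G(e)}]$, each step being strictly $\le_\si$-increasing because on $W$ the semi-infinite order is opposite to Bruhat; the $wt_\beta$-block then lifts $e$ up to $wt_\beta \in \widetilde{W}_\af^-$, which has $Q^\vee$-translation part $\beta \in Q^\vee_<$ strictly antidominant. The hard part, which I expect to be the main obstacle, is verifying that this second block yields a strictly $\le_\si$-increasing chain step by step. I plan to argue this by induction on $\ell(wt_\beta)$: for a reduced expression $wt_\beta = s_{j_1}\cdots s_{j_{m-p}}$, each prefix $s_{j_k}\cdots s_{j_{m-p}}$ has a translation part that remains outside $Q^\vee_+$ (shrinking towards $\beta$ as $k$ decreases), and Theorem \ref{sib} then forces every such prefix to satisfy $\not\le_\si e$, so that each $D_{j_k}$ acts non-trivially and produces the next class in the chain rather than leaving it fixed.

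For the basis claim, specialising the first equality to $w=e$ gives $D_{t_\beta}[\cO_{\bQ_G(e)}(\la)] = q^{-\langle\beta,\la\rangle}Q^\beta[\cO_{\bQ_G(e)}(\la)]$ for every $\beta \in Q^\vee_<$, so each translate $[\cO_{\bQ_G(t_\beta)}(\la)]$ lies in the $\C_q(\!(Q^\vee)\!)$-span of $[\cO_{\bQ_G(e)}(\la)]$. Applying $D_{w_0}$ and using $K_{\bG}(\bQ_G^\ra) = D_{w_0}K_{\bH}(\bQ_G^\ra)$ together with the right $Q^\vee$-action shows that this span exhausts $K_{\bG}(\bQ_G^\ra)$. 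Linear independence over $\C_q(\!(Q^\vee)\!)$ I will extract by isolating the leading $H$-weight term $e^\la[\cO_{\bQ_G(e)}]$ in the expansion supplied by Theorem \ref{HQc}(2)--(3), combined with the $Q^\vee$-grading coming from the translation action, which separates distinct pairs $(\la, \beta)$.
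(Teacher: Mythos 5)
Your handling of the second displayed equality (commutation of $\Xi(\la)$ with $Q^{\beta}$ from Lemma \ref{csq}), the reduction of the first to the untwisted case $D_{t_{w\beta}}[\cO_{\bQ_G(w)}]=[\cO_{\bQ_G(wt_\beta)}]$ via the $\sH_q$-equivariance of $\Xi(\la)$, the length-additive factorization $D_{t_{w\beta}}=D_{wt_\beta}D_{w^{-1}}$ from Theorem \ref{af-len}, and the analysis of the $w^{-1}$-block are all correct. Note that the paper does not reprove this assertion: it cites [Kat18, Theorem 4.13] for $\la\in\La_+$ and extends to all $\la$ by Theorem \ref{HQc}, so your self-contained argument is a genuinely different route.

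There is, however, a real gap in the $wt_\beta$-block. You want the partial products $v_k := s_{j_{m-k+1}}\cdots s_{j_m}$ of a reduced expression $wt_\beta=s_{j_1}\cdots s_{j_m}$ to form a $\si$-increasing chain. What is actually needed at step $k$ is $v_k >_\si v_{k-1}$, but your criterion (translation part of $v_k$ outside $Q^{\vee}_+$, hence $v_k\not\le_\si e$ by Theorem \ref{sib}) does not give it: once the chain sits well above $e$, a single drop $v_k <_\si v_{k-1}$ still leaves $v_k\not\le_\si e$, so Theorem \ref{sib} produces no contradiction. Independently, the premise that the translation part of $v_k$ stays outside $Q^{\vee}_+$ and ``shrinks towards $\beta$'' is unproven: left-multiplication by $s_0$ changes the translation part by $-u^{-1}\vartheta^{\vee}$, where $u$ is the finite part of $v_{k-1}$, and $u^{-1}\vartheta^{\vee}$ can be either a positive or a negative coroot. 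What does close the gap is a counting argument: set $\ell^{\si}(ut_\gamma):=\ell(ut_\gamma t_{\gamma'})-\ell(t_{\gamma'})$ for $\gamma'$ deeply antidominant, so that Theorem \ref{af-len} gives $\ell^{\si}(ut_\gamma)=-\ell(u)-2\langle\gamma,\rho\rangle$; then $\ell^{\si}(wt_\beta)=\ell(t_\beta)-\ell(w)=\ell(wt_\beta)=m$ while $\ell^{\si}(e)=0$. Each of the $m$ steps $v_{k-1}\mapsto v_k=s_{j_{m-k+1}}v_{k-1}$ changes $\ell^{\si}$ by exactly $\pm 1$, so a total change of $+m$ over $m$ steps forces every step to be $+1$, i.e.\ $v_k >_\si v_{k-1}$. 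Your write-up does not supply this, and ``$\not\le_\si e$'' is not a substitute for it. (Your translation-part claim then falls out a posteriori, since $\ell^{\si}(v_k)=k>0$ forces $\langle\gamma_k,\rho\rangle<0$.)

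For the basis assertion, your plan is in the right direction but sketchier than the paper's, which identifies $\bigoplus_{u\in W}\C[\bH][\cO_{\bQ_G(u)}]$ with $K_\bH(\sB)$ as $\sH_q(\tI)$-modules and reads off linear independence from the $[\cO_{\bQ_G(e)}]$-coefficient in the $D_{w_0}$-invariants (using Theorem \ref{HQc}(2),(4)) before applying $Q^{\vee}$-translations; you will need something equivalent to make ``isolating the leading $H$-weight term'' precise in the presence of formal $Q^{\vee}_+$-sums.
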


\begin{proof}
The first assertion for $\la \in \La_+$ is \cite[Theorem 4.13]{Kat18} (it lifts to the formal version by \cite{KNS17}). In view of Theorem \ref{HQc}, it prolongs to all $\la \in \La$. This proves the first assertion.

We prove the second assertion. Note that $\bigoplus_{u \in W} \C [\bH] [\cO_{\bQ_G ( u )}] \subset K_{\bH} ( \bQ^\ra_G )$ is stable by the $\sH_q ( \tI )$-action, and it is isomorphic to $K_{\bH} ( \sB )$ as $\sH_q ( \tI )$-modules by the comparison of the actions. In view of Theorem \ref{HQc} 2) and 4), it follows that the coefficient of $[\cO_{\bQ_G ( e )}]$ distinguishes two elements in the $D_{w_0}$-invariants of $\bigoplus_{u \in W} \C [\bH] [\cO_{\bQ_G ( u )}]$. Since we allow formal sums with respect to $Q^{\vee}_+$, we conclude that $\{ \cO_{\bQ_G ( e )} ( \la )\}_{\la \in \La}$ defines a $\C_q [\![Q^{\vee}]\!]$-free basis of $K_{\bG} ( \bQ_G ( e ) )$. Now the assertion follows by the $Q^{\vee}$-translations.\end{proof}

\begin{lem}\label{Lei2}
For each $i \in \tI_\af$, $\la \in \bX^*$, and $w \in W_\af$, we have
$$D_i ( e^{\la} [\cO_{\bQ_G ( w )}] ) \equiv \begin{cases}
e^{\la} [\cO_{\bQ_G ( w )}] + e^{s_i \la} [\cO_{\bQ_G ( s_i w )}] & \left< \al_i^{\vee}, \la \right> < 0, s_i w >_\si w \\
e^{\la} [\cO_{\bQ_G ( s_i w )}] & \left< \al_i^{\vee}, \la \right> = 0, s_i w >_\si w \\
- e^{s_i\la} [\cO_{\bQ_G ( w )}] + e^{s_i \la} [\cO_{\bQ_G ( s_i w )}] & \left< \al_i^{\vee}, \la \right> > 0, s_i w >_\si w \\
( e^{\la} + e^{s_i \la} ) [\cO_{\bQ_G ( w )}] & \left< \al_i^{\vee}, \la \right> < 0, s_i w <_\si w \\
e^{\la} [\cO_{\bQ_G ( w )}] & \left< \al_i^{\vee}, \la \right> = 0, s_i w <_\si w \\
0 & \left< \al_i^{\vee}, \la \right> > 0, s_i w <_\si w
\end{cases}$$
modulo the $\C_q$-span of $\{ e^{\mu} [\cO_{\bQ_G ( v )}] \mid \mu \in \Sigma_* ( \la ), v \in W_\af \}$.
\end{lem}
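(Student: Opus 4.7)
The plan is to reduce this to a direct computation using the Leibniz rule from Corollary \ref{Lei} together with the $\sH_q$-action from Theorem \ref{H-si}. Writing
$$D_i \bigl( e^{\la} [\cO_{\bQ_G ( w )}] \bigr) = \frac{e^{\la} - e^{s_i \la}}{1 - e^{\al_i}} \, [\cO_{\bQ_G ( w )}] + e^{s_i \la} \, D_i \bigl( [\cO_{\bQ_G ( w )}] \bigr),$$
I would treat the two summands separately. The second summand is given directly by Theorem \ref{H-si}: it equals $e^{s_i \la} [\cO_{\bQ_G ( s_i w )}]$ when $s_i w >_\si w$ and $e^{s_i \la} [\cO_{\bQ_G ( w )}]$ when $s_i w <_\si w$.

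For the first summand I would expand the quotient as a finite geometric sum, splitting by the sign of $n := \left< \al_i^{\vee}, \la \right>$. When $n = 0$ the numerator vanishes. When $n > 0$ one has $s_i \la = \la - n \al_i$ and
$$\frac{e^{\la} - e^{s_i \la}}{1 - e^{\al_i}} = - \sum_{k=0}^{n-1} e^{s_i \la + k \al_i} = - e^{s_i \la} - \sum_{k=1}^{n-1} e^{s_i \la + k \al_i},$$
while for $n < 0$, setting $n' = - n$, one gets
$$\frac{e^{\la} - e^{s_i \la}}{1 - e^{\al_i}} = \sum_{k=0}^{n'-1} e^{\la + k \al_i} = e^{\la} + \sum_{k=1}^{n'-1} e^{\la + k \al_i}.$$
(For $i = 0$, the identification $e^{\al_0} = q \cdot e^{- \vartheta}$ and $e^{s_0 \la} = q^{\left< \vartheta^{\vee}, \la \right>} e^{s_{\vartheta} \la}$ handles the affine case in exactly the same fashion, with $q$-powers appropriately absorbed into $e^{s_0 \la}$.) The key observation is that every intermediate weight $e^{s_i \la + k \al_i}$ (for $0 < k < n$) or $e^{\la + k \al_i}$ (for $0 < k < n'$) is a strict convex combination of $\la$ and $s_i \la$, hence lies in $\Sigma_* ( \la )$; therefore these middle terms contribute to the error modulus. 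Modulo the $\C_q$-span of $\{ e^{\mu} [\cO_{\bQ_G ( v )}] \mid \mu \in \Sigma_* ( \la ), v \in W_{\af} \}$, the first summand reduces to $e^{\la} [\cO_{\bQ_G ( w )}]$ when $n < 0$, to $0$ when $n = 0$, and to $- e^{s_i \la} [\cO_{\bQ_G ( w )}]$ when $n > 0$.

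Finally I would assemble the six cases by adding the two summands. In each of the three $s_i w >_\si w$ subcases this yields the stated formulas directly; in each of the three $s_i w <_\si w$ subcases one further uses that $e^{s_i \la} = e^{\la}$ when $n = 0$ and that $-e^{s_i\la}[\cO_{\bQ_G(w)}] + e^{s_i\la}[\cO_{\bQ_G(w)}] = 0$ when $n > 0$ to match the claimed residues.

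There is no real obstacle: the argument is a careful bookkeeping of finite geometric sums. The only point that deserves care is confirming that the intermediate weights produced by expanding the Leibniz quotient genuinely lie in $\Sigma_* ( \la )$, which follows because they are proper convex combinations of the endpoints $\la, s_i \la \in W \la$.
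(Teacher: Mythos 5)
Your proof is correct and follows precisely the route the paper itself indicates as an alternative: the paper's proof says "One can also directly prove using Corollary \ref{Lei} and the convexity results in \cite[\S 1]{Che95}," which is exactly what you do (expanding the telescoping quotient $\frac{e^{\la}-e^{s_i\la}}{1-e^{\al_i}}$ into a finite geometric sum and noting that the interior terms lie strictly inside the convex hull of $W\la$, hence in $\Sigma_*(\la)$, since $W\la$ consists of the extreme points). The only cosmetic difference is that the paper's primary citation phrases things via the Hecke operator $D_i - 1$ from \cite[Proposition 3.3]{Che95}, whereas you work directly with $D_i$; this is an equivalent bookkeeping choice.
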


\begin{proof}
The assertion follows from the behavior of the Hecke operators (i.e. $D_i - 1$) seen in (the $t = 0$ version of the $t^{1/2}$-twist of) \cite[Proposition 3.3]{Che95}. One can also directly prove using Corollary \ref{Lei} and the convexity results in \cite[\S 1]{Che95}.
\end{proof}

Let $\la \in \La$. We consider two subspaces
\begin{align*}
K _{\preceq \la} := & \, \Span_{\C_q} \{ e^{\mu} [\cO_{\bQ_G ( w )}] \mid w \in W_\af, \mu \in \Sigma ( \la )\} \subset K_{\bH} ( \bQ_G^\ra )\\
K _{\prec \la} := & \, \Span_{\C_q} \{ e^{\mu} [\cO_{\bQ_G ( w )}] \mid w \in W_\af, \mu \in \Sigma_* ( \la )\} \subset K_{\bH} ( \bQ_G^\ra ).
\end{align*}
Here we stress that our span consists of finite sums.

\begin{cor}\label{Kla}
For each $\la \in \La$, the spaces $K _{\prec \la} \subset K _{\preceq \la}$ are $\sH_q^0$-submodules of $K_{\bH} ( \bQ_G^\ra )$.
\end{cor}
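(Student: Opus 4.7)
The plan is to reduce the claim to checking that each generator $D_i$ ($i \in \tI_\af$) of $\sH_q^0$ preserves the two subspaces. The key tool is Lemma \ref{Lei2}, which for any generator $e^{\mu} [\cO_{\bQ_G ( w )}]$ expresses $D_i ( e^{\mu} [\cO_{\bQ_G ( w )}] )$ as a $\C_q$-combination of elements of the form $e^{\mu} [\cO_{\bQ_G ( \cdot )}]$ and $e^{s_i \mu} [\cO_{\bQ_G ( \cdot )}]$, modulo the $\C_q$-span of $\{ e^{\nu} [\cO_{\bQ_G ( v )}] \mid \nu \in \Sigma_* ( \mu ),\, v \in W_\af \}$.

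For $K_{\preceq \la}$, I would take $\mu \in \Sigma ( \la )$ and argue as follows. Since $\Sigma ( \la )$ is $W$-invariant (as the convex hull of a $W$-orbit) and $s_0$ acts on $\bX^*$ through $s_{\vartheta}$ up to a $q$-twist coming from the $\delta$-component, both $\mu$ and $s_i \mu$ lie in $\Sigma ( \la )$, so the main terms remain in $K_{\preceq \la}$. For the correction terms, $\mu \in \Sigma ( \la )$ implies $W \mu \subset \Sigma ( \la )$, and by convexity $\Sigma ( \mu ) \subset \Sigma ( \la )$; hence $K_{\prec \mu} \subset K_{\preceq \la}$.

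For $K_{\prec \la}$, the same reasoning applies once I establish the sharper containment $\Sigma_* ( \mu ) \subset \Sigma_* ( \la )$ whenever $\mu \in \Sigma_* ( \la )$. The main step is the claim $\Sigma ( \mu ) \cap W \la = \emptyset$: if some $w \la$ lay in $\Sigma ( \mu )$, it would be a convex combination of elements of $W \mu \subset \Sigma_* ( \la )$, i.e.\ of non-extreme points of $\Sigma ( \la )$; but $w \la$ is an extreme point of $\Sigma ( \la )$ and admits no such non-trivial convex representation, giving a contradiction. Consequently $\Sigma_* ( \mu ) \subset \Sigma ( \la ) \setminus W \la = \Sigma_* ( \la )$, and the corrections lie in $K_{\prec \mu} \subset K_{\prec \la}$. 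I do not anticipate any serious obstacle; the only point that calls for a moment of care is $i = 0$, where $s_0$ does not preserve $\bX^*$ setwise, but the $\bX^*$-projection of $s_0 \mu$ is $s_{\vartheta} \mu \in W \mu$, so the $W$-invariance of $\Sigma ( \la )$ and $\Sigma_* ( \la )$ still closes the argument.
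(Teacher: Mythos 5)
Your proposal is correct, and it follows essentially the same route as the paper's one-line proof (``Combine Theorem \ref{H-si}, Corollary \ref{Lei}, and Lemma \ref{Lei2}''): you reduce to the generators $D_i$, invoke Lemma \ref{Lei2}, and then supply the convexity bookkeeping that the paper leaves implicit, namely $\Sigma(\mu)\subset\Sigma(\la)$ for $\mu\in\Sigma(\la)$, and $\Sigma(\mu)\cap W\la=\emptyset$ for $\mu\in\Sigma_*(\la)$ by extremality of $W\la$ in $\Sigma(\la)$, together with the $q$-twist observation for $i=0$.
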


\begin{proof}
Combine Theorem \ref{H-si}, Corollary \ref{Lei}, and Lemma \ref{Lei2}.
\end{proof}

\begin{thm}\label{X-weight}
For each $\la \in \La$, we have a unique element $C ( \la ) \in K _{\preceq \la}$ with the following properties:
\begin{enumerate}
\item We have $C ( \la ) \equiv D_{w_0} ( e^{w_0 \la} [\cO_{\bQ_G ( w_0 )}] )\mod K_{\prec \la}$;
\item For each $\beta \in Q^{\vee}_<$, we have $D_{t_{\beta}} C ( \la ) = q^{- \left< \beta, \la \right>} C ( \la ) Q^\beta$.
\end{enumerate}
\end{thm}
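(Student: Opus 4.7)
The plan is to prove uniqueness via a weight-eigenvalue argument, then construct $C(\la)$ starting from the symmetrized extremal element $D_{w_0}(e^{w_0\la}[\cO_{\bQ_G(w_0)}])$ and correcting it inside $K_{\prec\la}$.

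\emph{Uniqueness.} If $C,C' \in K_{\preceq\la}$ both satisfy (1) and (2), then $X := C - C'$ lies in $K_{\prec\la}$ and satisfies $D_{t_\beta} X = q^{-\left<\beta,\la\right>} X Q^\beta$ for every $\beta \in Q^\vee_<$. Using Lemma \ref{Lei2} and Corollary \ref{Kla}, the filtration of $K_{\prec\la}$ by the finite weight poset $\Sigma_*(\la)$ is preserved by each $D_{t_\beta}$, and on the graded piece of weight $\mu$ the leading action scales by $q^{-\left<\beta,\mu\right>}$. Choosing $\beta$ deep inside $Q^\vee_<$ so that the finitely many scalars $q^{-\left<\beta,\mu\right>}$ appearing in $X$ are distinct from $q^{-\left<\beta,\la\right>}$ forces every graded component of $X$ to vanish, hence $X = 0$.

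\emph{Existence.} Set $C_0 := D_{w_0}(e^{w_0\la}[\cO_{\bQ_G(w_0)}])$. Iterated application of Corollary \ref{Lei} produces at most $2^{\ell(w_0)}$ terms, so $C_0 \in K_{\preceq\la}$ is a finite $\C_q$-combination that satisfies (1) tautologically. For (2) modulo $K_{\prec\la}$, length additivity in Theorem \ref{af-len} together with $t_\beta w_0 = w_0 t_{w_0\beta}$ gives $D_{t_\beta} D_{w_0} = D_{w_0} D_{t_{w_0\beta}}$ for $\beta \in Q^\vee_<$; combining this with Theorem \ref{dcf} applied to $w = w_0$ and the expansion $[\cO_{\bQ_G(w_0)}(\la)] = e^{w_0\la}[\cO_{\bQ_G(w_0)}] + Z$ from Theorem \ref{HQc} (with $Z$ supported at strictly lower weights in $\Sigma_*(\la)$) yields $D_{t_\beta} C_0 \equiv q^{-\left<\beta,\la\right>} C_0 Q^\beta \pmod{K_{\prec\la}}$. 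The residual error $E_\beta := D_{t_\beta} C_0 - q^{-\left<\beta,\la\right>} C_0 Q^\beta \in K_{\prec\la}$ is killed by a correction $R \in K_{\prec\la}$ solving $\mathcal L_\beta(R) = -E_\beta$ for the operator $\mathcal L_\beta := D_{t_\beta} - q^{-\left<\beta,\la\right>}(\cdot) Q^\beta$. The uniqueness argument shows $\mathcal L_\beta$ has trivial kernel on $K_{\prec\la}$ and is invertible on each weight-graded piece; commutativity of $\{D_{t_\beta}\}_{\beta \in Q^\vee_<}$ (again via length additivity) makes the piecewise corrections assemble into a single $R$ valid for all $\beta$ simultaneously, and we set $C(\la) := C_0 + R$.

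The main obstacle is precisely this consistent simultaneous assembly of $R$ across all $\beta \in Q^\vee_<$: although $\mathcal L_\beta$ inverts graded piece by graded piece by the eigenvalue separation, one must verify, via a careful inductive use of Lemma \ref{Lei2} along the finite weight poset of $\Sigma_*(\la)$, that the resulting $R$ is a single finite $\C_q$-combination in $K_{\prec\la}$ that simultaneously kills every $E_\beta$ rather than merely a $\beta$-dependent solution.
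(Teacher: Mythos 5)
Your proposal has the right skeleton---start from $C_0 := D_{w_0}(e^{w_0\la}[\cO_{\bQ_G(w_0)}])$, isolate the error $E_\beta$, and try to cancel it by a correction $R \in K_{\prec\la}$---and this is indeed how the paper proceeds. But you explicitly flag the central difficulty (a single $R$ working for all $\beta \in Q^\vee_<$, and $R$ being a \emph{finite} $\C_q$-combination rather than merely a $\C(q)$-combination) and then do not close it: ``careful inductive use of Lemma~\ref{Lei2}'' is not a mechanism. In fact Lemma~\ref{Lei2} alone cannot close the gap, because $\mathcal L_\beta^{-1}$ introduces denominators of the form $1 - q^{\left<\beta,\mu-\la\right>}$, so a priori your $R$ lives in $\C(q)\otimes_{\C_q}K_{\prec\la}$ with $\beta$-dependent coefficients. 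Commutativity of $\{D_{t_\beta}\}$ does not by itself force the $R_\beta$ to coincide, nor does it clear the $q$-denominators.

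The paper closes both gaps with two ingredients that are absent from your proposal. First, it runs an \emph{explicit induction on $\Sigma(\la)$} with the induction hypothesis that $D_{w_0}K_{\prec\la}$ is spanned, as a $\C_q Q^{\vee}$-module, by the already-constructed joint eigenvectors $\{C(\mu)\}_{\mu\in\Sigma_*(\la)}$. This is what lets one expand $E_\beta = \sum_\mu C(\mu) b_\la^\mu$ cleanly and solve the eigenvector equation coefficient by coefficient. Second, and crucially, it invokes Theorem~\ref{dcf}: the class $[\cO_{\bQ_G(e)}(\la)]$ is already a joint $D_{t_\beta}$-eigenvector in the completed module $K_{\bH}(\bQ_G^\ra)$ and $\{[\cO_{\bQ_G(e)}(\la)]\}_\la$ is a $\C_q(\!(Q^\vee)\!)$-free basis, so the eigenvector with the prescribed eigenvalue and leading term exists and is \emph{unique} in the completion before any correction is attempted. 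The $\beta$-independence of the $c_\la^\mu$ then drops out of this uniqueness, and integrality over $\C_q$ follows from the intersection $K_{\preceq\la} = \bigl(\C(q)\otimes_{\C_q}K_{\preceq\la}\bigr)\cap K_{\bH}(\bQ_G^\ra)$. Without the a priori eigenvector supplied by Theorem~\ref{dcf}, your argument as stated does not establish either point, and so the existence half of the proof is incomplete. Your uniqueness argument is essentially sound given the induction hypothesis, but it also quietly relies on the graded pieces of $K_{\prec\la}$ being spanned by genuine joint eigenvectors, which is exactly the inductive input you do not set up.
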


\begin{proof}[Proof of Theorem \ref{X-weight}]
We prove the assertion by induction on the inclusion relation between $\Sigma ( \la )$. We assume that $D_{w_0} K_{\prec \la}$ is spanned by the joint eigenvectors with respect to the action of $\{ D_{t_{\beta}} \}_{\beta \in Q^{\vee}_<}$, and construct $C ( \la ) \in D_{w_0} K_{\preceq \la}$. Thanks to Theorem \ref{dcf} and Theorem \ref{HQc}, the element $C ( \la )$ exists (in fact uniquely) as an element in $K_{\bH} ( \bQ ^\ra_G )$.

The case $\la = 0$ is clear by setting $C ( 0 ) := D_{w_0} ( [\cO_{\bQ_G ( w_0 )}] ) = [\cO_{\bQ_G ( e )}]$ thanks to Lemma \ref{Lei2}.

We consider the general case by induction. Write $e >_\si w = u t_{\gamma}$ for $u \in W$ and $\gamma \in Q^{\vee}_+$. Let $\beta' \in Q^{\vee}_<$ be such that $\gamma + \beta' \in Q^{\vee}_{<}$. We have
$$\ell ( w t_{\beta'} ) = \ell ( t_{\beta'} ) - \ell ( u ) - 2 \left< \gamma, \rho \right> \hskip 5mm \text{and hence} \hskip 5mm \ell ( w t_{\beta'} ) < \ell ( t_{\beta'} )$$
by Theorem \ref{af-len}. It follows that
$$\ell ( t_{\beta + \beta'} ) > \ell ( w t_{\beta'} ) + \ell ( t_{\beta} ) \hskip 10mm \beta \in Q^{\vee}_<.$$
Consequently, the coefficient of $[\cO_{\bQ_G ( t_{\beta} )}]$ of $D_{t_{\beta}} ( C ( \la ) )$ modulo $K_{\prec \la}$ must be determined by the coefficient of $[\cO_{\bQ_G ( e )}]$ in $C ( \la )$ by Lemma \ref{Lei2}, that is $e^{t_{\beta} ( \la )} = q^{- \left< \beta, \la \right>} e^{\la}$. We set
$$C' ( \la ) := D_{w_0} ( e^{w_0 \la} [\cO_{\bQ_G ( w_0 )}] ).$$
Since $D_{t_{\beta}} ( C' ( \la ) )$ is $D_{w_0}$-invariant, we conclude that
$$D_{t_{\beta}} ( C' ( \la ) ) = q^{- \left< \beta, \la \right>} C' ( \la ) Q^{\beta} \mod K_{\prec \la}$$
by Theorem \ref{dcf}. In particular, we find that
\begin{equation}
D_{t_{\beta}} ( C' ( \la ) ) - q^{- \left< \beta, \la \right>} C' ( \la ) Q^{\beta} \in K_{\prec \la}.\label{Cdiff}
\end{equation}
By the first condition of our assertion and the induction hypothesis, we find that $D_{w_0} K_{\prec \la}$ is spanned by $\{ C ( \mu ) \}_{\mu \in \Sigma_* ( \la )}$ as a $\C_q Q^{\vee}$-module. These are the $D_{t_{\beta}}$-eigenvectors for each $\beta \in Q^{\vee}_<$. We expand the LHS of (\ref{Cdiff}) as
$$\sum_{\mu \in \Sigma_* ( \la )} C ( \mu ) b_{\la}^{\mu} \hskip 5mm b_{\la}^{\mu} \in \C_q Q^{\vee}_+.$$
Here we remark that this sum must be finite.

For any choices of $c_{\la}^{\mu} \in \C ( q ) [\![Q^{\vee}_+]\!]$ ($\mu \in \La$), we have
\begin{align*}
D_{t_{\beta}} ( C' ( \la ) - & \, \sum_{\mu \in \Sigma_* ( \la )} C ( \mu ) c_{\la}^{\mu} ) - q^{- \left< \beta, \la \right>} ( C' ( \la ) - \sum_{\mu \in \Sigma_* ( \la )} C ( \mu ) c_{\la}^{\mu} )\\
& = \sum_{\mu \in \Sigma_* ( \la )} C ( \mu ) ( b_{\la}^{\mu}  - q^{- \left< \beta, \mu \right>} c_\la^\mu + q^{- \left< \beta, \la \right>} c_\la^\mu ).
\end{align*}
It follows that the element
\begin{equation}
C' ( \la ) - \sum_{\mu \in \Sigma_* ( \la )} c_{\la}^{\mu} C ( \mu ) \hskip 5mm c_{\la}^{\mu} := \frac{q^{\left< \beta, \mu \right>}}{1 - q^{\left< \beta, \mu - \la \right>}} b_{\la}^{\mu} \in \frac{1}{1 - q^{\left< \beta, \mu - \la \right>}}\C_q Q^{\vee}_+\label{eigC}
\end{equation}
satisfies the desired properties in $\C ( q ) \otimes_{\C_q} K _{\preceq \la}$ (note that we have $\left< \beta, \mu - \la \right> \neq 0$ for every $\mu \in \Sigma_* ( \la )$ for some choice of $\beta$). Here we remark that the coefficients $\{ c_{\la}^{\mu} \}_{\mu}$ does not depend on the choice of $\beta \in Q^{\vee}_<$ by the characterization in $\C ( q )\otimes_{\C_q} K_{\bH} ( \bQ^\ra_G )$ coming from Theorem \ref{dcf}. Thus, we conclude that (\ref{eigC}) belongs to
$$K _{\preceq \la} = \left( C ( q ) \otimes_{\C_q} K _{\preceq \la}  \right) \cap K_{\bH} ( \bQ^\ra_G )\subset \C ( q )\otimes_{\C_q} K_{\bH} ( \bQ^\ra_G ).$$

Therefore, we obtain the desired element $C ( \la )$ inside $K_{\preceq \la}$ by induction. Hence, the induction proceeds and we conclude the result.
\end{proof}

\begin{cor}\label{fundC}
For each $i \in \tI$, we have
$$[\cO_{\bQ_G ( e )} ( \varpi_i )] = C ( \varpi_i ) \frac{1}{1 - Q^{\al_i^{\vee}}} := \sum_{m \ge 0} C ( \varpi_i ) Q^{m \al_i^{\vee}}.$$
\end{cor}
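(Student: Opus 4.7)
\medskip

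The plan is to establish the equivalent identity
$$C(\varpi_i) \;=\; (1 - Q^{\alpha_i^\vee})\,[\cO_{\bQ_G(e)}(\varpi_i)] \qquad (\star)$$
inside $K_\bG(\bQ^\ra_G)$; the corollary then follows by formally expanding $(1-Q^{\alpha_i^\vee})^{-1}$ as a geometric series. The first step is to observe that both sides of $(\star)$ are $D_{t_\beta}$-eigenvectors with the common eigenvalue $q^{-\langle\beta,\varpi_i\rangle}Q^\beta$ for every $\beta \in Q^\vee_<$. For the left-hand side this is Theorem \ref{X-weight}(2); for the right-hand side it follows from Theorem \ref{dcf} applied to $[\cO_{\bQ_G(e)}(\varpi_i)]$, together with the fact that right-multiplication by $(1-Q^{\alpha_i^\vee})$ commutes with the $\sH_q$-action.

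Next, I invoke the basis statement of Theorem \ref{dcf}: the family $\{[\cO_{\bQ_G(e)}(\la)]\}_{\la\in\La}$ is a $\C_q(\!(Q^\vee)\!)$-free basis of $K_\bG(\bQ^\ra_G)\otimes_{\C_q Q^\vee}\C_q(\!(Q^\vee)\!)$, and its element $[\cO_{\bQ_G(e)}(\la)]$ has $D_{t_\beta}$-eigenvalue $q^{-\langle\beta,\la\rangle}Q^\beta$. Since $Q^\vee_<$ spans $Q^\vee$ over $\Q$ as a convex cone, the assignment $\la \mapsto (\beta \mapsto q^{-\langle\beta,\la\rangle}Q^\beta)$ separates distinct $\la$. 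Expanding $C(\varpi_i)$ in this basis and matching eigenvalues therefore forces all components to vanish except the $\la=\varpi_i$ one, yielding
$$C(\varpi_i) \;=\; f(Q) \cdot (1 - Q^{\alpha_i^\vee})\,[\cO_{\bQ_G(e)}(\varpi_i)]$$
for some uniquely determined $f(Q) \in \C_q(\!(Q^\vee)\!)$.

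It remains to show $f(Q) = 1$. For this, I apply $\Xi(-\varpi_i)$ to both sides; using Lemma \ref{csq} (which gives $\Xi(-\varpi_i)\circ Q^{\alpha_i^\vee} = q\,Q^{\alpha_i^\vee}\circ\Xi(-\varpi_i)$) and $\Xi(-\varpi_i)[\cO_{\bQ_G(e)}(\varpi_i)]=[\cO_{\bQ_G(e)}]$, the identity becomes
$$\Xi(-\varpi_i)C(\varpi_i) \;=\; f(Q)\,\bigl([\cO_{\bQ_G(e)}] - q\,[\cO_{\bQ_G(t_{\alpha_i^\vee})}]\bigr).$$
I then compare the coefficient of $[\cO_{\bQ_G(e)}]$ on both sides. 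By Theorem \ref{X-weight}(1) the coefficient of $[\cO_{\bQ_G(e)}]$ in $C(\varpi_i)$ agrees modulo $K_{\prec\varpi_i}$ with that of $D_{w_0}(e^{w_0\varpi_i}[\cO_{\bQ_G(w_0)}])$, and using Corollary \ref{Lei} together with $D_{w_0}[\cO_{\bQ_G(w_0)}]=[\cO_{\bQ_G(e)}]$ and the Weyl character expression \eqref{WCF} I identify this coefficient as $e^{\varpi_i}$. After multiplication by $e^{-\varpi_i}$ coming from the leading term of $[\cO_{\bQ_G(e)}(-\varpi_i)]$ in Theorem \ref{HQc}(2), this contributes the constant $1$ to the left-hand side in $\Xi(-\varpi_i)C(\varpi_i)$, matching the constant term of $f(Q)$ on the right.

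The main obstacle is the control of the higher-order $Q$-contributions, i.e.\ confirming that no further terms beyond $1 - qQ^{\alpha_i^\vee}$ appear. This is handled via the finite-support property $C(\varpi_i) \in K_{\preceq\varpi_i}$: any term $[\cO_{\bQ_G(ut_\beta)}]$ appearing in $C(\varpi_i)$ with $u \in W$, $\beta \in Q^\vee_+$, contributes to the $[\cO_{\bQ_G(e)}]$-coefficient of $\Xi(-\varpi_i)C(\varpi_i)$ only through the restriction map $K_\bH(\bQ_G(e)) \to K_\bH(\sB)$ combined with the right-translation action; matching the $D_{t_\beta}$-eigenvector condition then determines these terms uniquely in terms of the constant-$Q$ coefficient, producing exactly the single shift $-qQ^{\alpha_i^\vee}$ and no other. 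Consequently $f(Q)=1$, which establishes $(\star)$ and hence the corollary.
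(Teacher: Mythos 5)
Your first two steps are solid and do line up with the spirit of the paper's hint (the eigenvector argument via Theorem \ref{dcf} is exactly the ``through Theorem \ref{dcf}'' part). In particular, Step~2 correctly establishes that $C(\varpi_i) = g(Q)\,[\cO_{\bQ_G(e)}(\varpi_i)]$ for some unique $g(Q)\in\C_q(\!(Q^{\vee})\!)$, since the joint $D_{t_\beta}$-eigenvalue $q^{-\langle\beta,\la\rangle}Q^\beta$ ($\beta \in Q^{\vee}_<$) separates the $\C_q(\!(Q^{\vee})\!)$-basis $\{[\cO_{\bQ_G(e)}(\la)]\}_{\la}$.

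The genuine gap is in the last paragraph. Matching the $[\cO_{\bQ_G(e)}]$-coefficient determines only the constant term of $g$; the $D_{t_\beta}$-eigenvector condition that you then reinvoke is precisely the input already used in Step~2 and determines $C(\varpi_i)$ only up to a scalar $g(Q)\in\C_q(\!(Q^{\vee})\!)$, so it cannot ``produce exactly the single shift $-qQ^{\al_i^{\vee}}$.'' The finite-support property $C(\varpi_i)\in K_{\preceq\varpi_i}$ is indeed the extra constraint that pins down $g(Q)$, but your argument never actually uses it to constrain the higher coefficients $g_\gamma$: a priori one could imagine $g(Q)(1-Q^{\al_i^{\vee}})^{-1}\cdot(1-Q^{\al_i^{\vee}})[\cO_{\bQ_G(e)}(\varpi_i)]$ being finite for more exotic $g$, absent concrete information on the expansion of $[\cO_{\bQ_G(e)}(\varpi_i)]$. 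The paper closes this hole by invoking the explicit Pieri--Chevalley rule \cite[Theorem~5.10]{KNS17}: for a fundamental weight $\varpi_i$ it exhibits the expansion of $[\cO_{\bQ_G(e)}(\varpi_i)]$ in Schubert classes as a sum whose translation parts lie in $\Z_{\ge 0}\al_i^{\vee}$ and become periodic, so that $(1-Q^{\al_i^{\vee}})[\cO_{\bQ_G(e)}(\varpi_i)]$ is \emph{visibly} a finite element of $K_{\preceq\varpi_i}$ satisfying the two defining properties of Theorem~\ref{X-weight}, whence it equals $C(\varpi_i)$ by uniqueness. That combinatorial input (or some substitute for it) is what you are missing; the abstract eigenvector and leading-term arguments alone do not suffice.
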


\begin{proof}
Compare $C ( \varpi_i )$ with the Pieri-Chevalley rule in \cite[Theorem 5.10]{KNS17} through Theorem \ref{dcf}.
\end{proof}

\begin{thm}[\cite{Kat18c} Theorem 3.11 and Remark 3.12]\label{qK-int}
There exists a $R ( \bG )$-linear embedding
$$\Psi_G : qK_{\bG} ( \sB ) \hookrightarrow K_{\bG} ( \bQ_G^{\ra} )$$
such that:
\begin{enumerate}
\item $\Psi_{G} ( Q^{\beta} ) = [\cO_{\bQ_G ( t_{\beta} )}]$ for each $\beta \in Q^{\vee}_+$;
\item $\Psi_G ( A^{\la} (\bullet) ) = \Xi ( \la ) ( \Psi_G ( \bullet ) )$ for each $- \la \in \La_+$. \hfill $\Box$
\end{enumerate}
\end{thm}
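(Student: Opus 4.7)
The plan is to define $\Psi_G$ on a generating set of $qK_\bG(\sB)$ and then to check that the defining relations are preserved. By Theorem \ref{reconst}~(4), $qK_G(\sB)$ is generated as a ring over $R(G)$ by the variables $Q^\beta$ ($\beta \in Q^\vee_+$) together with the quantum multiplications by $[\cO_\sB(-\varpi_i)]$ ($i \in \tI$); the $q$-deformed lift of these multiplications is provided by the operators $A^{-\varpi_i}$ of Theorem \ref{reconst}~(3), and Theorem \ref{reconst}~(1) gives the group law $A^\la \circ A^\mu = A^{\la+\mu}$. On the target side, Theorem \ref{HQc}~(1) supplies the matching group law $\Xi(\la)\circ\Xi(\mu) = \Xi(\la+\mu)$, and Theorem \ref{dcf} exhibits $\{[\cO_{\bQ_G(e)}(\la)]\}_{\la \in \La}$ as a $\C_q (\!(Q^\vee)\!)$-free basis of $K_\bG(\bQ_G^\ra)$.

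First I would set $\Psi_G([\cO_\sB] \otimes Q^\beta) := [\cO_{\bQ_G(t_\beta)}]$ for $\beta \in Q^\vee_+$, extend $R(\bG)$-linearly, and impose $\Psi_G(A^\la \bullet) = \Xi(\la) \Psi_G(\bullet)$ for each $-\la \in \La_+$. Consistency of this prescription on the free $R(\bG)$-algebra generated by the $Q^\beta$ and by the action of $\{A^\la\}_{-\la \in \La_+}$ on $[\cO_\sB]$ then follows at once from the two matching group laws recalled above; Lemma \ref{csq} gives the additional compatibility between $\Xi(\varpi_i)$ and the $Q^\vee$-shifts that corresponds to the $q$-commutation of $A^\la$ with $Q^\beta$.

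The main obstacle is to show that the genuine ring relations of $qK_\bG(\sB)$ are respected by $\Psi_G$. For this I would invoke Theorem \ref{reconst}~(5): a polynomial relation holds in $qK_\bG(\sB)$ if and only if its pairing against $[\cO_\sB]$ vanishes after every line bundle twist $\la \in \La_+$. Consequently the verification reduces to an identification of the Gromov--Witten pairing $\langle \bullet,\bullet\rangle^{\GW}_\la$ with a pairing on $K_\bG(\bQ_G^\ra)$ evaluated at $[\cO_{\bQ_G(e)}]$. This is the geometric heart of the matter: one builds it from the Euler characteristic pairings on the quasi-map spaces $\sQ(\beta)$ and $\sX(\beta)$ from \S\ref{GQL} (using the rationality of singularities in Theorems \ref{rat}, together with Givental's main lemma), combined with the Pieri--Chevalley expansion
\[
[\cO_{\bQ_G(e)}(\varpi_i)] \;=\; \sum_{m \ge 0} C(\varpi_i)\, Q^{m \al_i^\vee}
\]
of Corollary \ref{fundC}, which furnishes the geometric realization of the quantum product with $[\cO_\sB(-\varpi_i)]$ as a $Q$-adic power series via the eigenvectors $C(\varpi_i)$ constructed in Theorem \ref{X-weight}.

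Injectivity would then follow by specialization. Setting $Q^\beta \equiv 0$ for $\beta \neq 0$ reduces $\Psi_G$ to the pullback $K_\bG(\sB) \to K_\bG(\bQ_G(e))$ induced by the affine fibration used in the proof of Theorem \ref{HQc}~(4), which is injective; a $Q$-adic filtration argument, legitimate because both sides are compatibly filtered by $Q^\vee_+$, then upgrades classical injectivity to injectivity of $\Psi_G$. The genuinely hard step is the pairing identification in the third paragraph—matching quantum $K$-invariants with sheaf-cohomological data on semi-infinite flag manifolds—which is precisely where the reduced eigenbasis $\{C(\la)\}$ produced in Theorem \ref{X-weight} does the work of turning the formal recursions of Gromov--Witten theory into honest geometric character formulas.
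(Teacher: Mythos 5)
The paper does not prove Theorem~\ref{qK-int}; it is imported verbatim from \cite{Kat18c} (Theorem 3.11 and Remark 3.12) and used as a black box, so there is no in-paper proof to compare against. Your structural outline --- generation of $qK_\bG(\sB)$ over $R(\bG)$ by the $Q^\beta$ and the $A^\la$ via Theorem~\ref{reconst}(4), well-definedness via the pairing criterion of Theorem~\ref{reconst}(5), and injectivity by specializing $Q^\beta \mapsto 0$ and bootstrapping along the $Q^\vee_+$-filtration --- is the right shape. But you leave a genuine gap at what you yourself flag as the geometric heart. The proof needs, concretely: (i) Givental's main lemma (the factorization of $\pi_\beta$ through $\sGB_{0,\beta}$) together with the rational resolution $\pi_\beta \colon \sX(\beta) \to \sQ(\beta)$ and Theorem~\ref{rat} to replace GW pairings on $\sX(\beta)$ by Euler characteristics $\chi(\sQ(\beta), \cO_{\sQ(\beta)}(\la))$; (ii) a semi-infinite Borel--Weil identification matching those Euler characteristics against the graded characters $\gch\,\Gamma(\bQ_G(t_\beta), \cO(\la))$ that govern the $\Xi(\la)$-action; and (iii) a verification that the $R(\bG)$-linear map these data induce is indeed determined by the two conditions in the statement. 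You name (i) and gesture at (ii), but neither is proven, and that is precisely the nontrivial content of \cite{Kat18c} that the theorem cites.

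A secondary problem is the appeal to $C(\la)$ (Theorem~\ref{X-weight}) and Corollary~\ref{fundC}. First, it is a sign confusion: the quantum product with $[\cO_\sB(-\varpi_i)]$ is realized by $\Xi(-\varpi_i)$, and Corollary~\ref{fundC} expands $[\cO_{\bQ_G(e)}(+\varpi_i)]$ --- the \emph{inverse} operator. The finiteness of $\Xi(-\varpi_i)$ on Schubert classes, which is what the embedding $\Psi_G$ actually needs, is controlled by Lemma~\ref{div}, not by $C(\la)$. Second, the classes $C(\la)$ are new to \emph{this} paper and are only used later (Lemma~\ref{Heis-comp}, Proposition~\ref{resGH}) to invert $\Xi(\varpi_i)$ inside $K_\bG(\Gr_G)_\lo$; invoking them to reconstruct an external theorem that the paper takes as input is both unnecessary and liable to obscure the dependency structure.
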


\section{Darboux coordinates of $K_\bG ( \Gr_G )_{\lo}$}

We work in the same settings as in \S \ref{setup}.

\subsection{Non-commutative $K$-theoretic Peterson isomorphism}

\begin{thm}\label{K-id}
Assume that $G$ is simple. We have a $\sH_q^\sh$-module embedding
$$\Phi_G : K _{\bG} ( \Gr _G )_{\lo} \hookrightarrow K_{\bG} ( \bQ_G^{\ra} )$$
that sends $[\cO_{\Gr_G ( 0 )}]$ to $[\cO_{\bQ_G ( e )}]$, intertwines the right product $\odot_q$ on the LHS to the tensor product on the RHS. More precisely, we have: For each $i \in \tI$ and $\xi \in K _{\bG} ( \Gr _G )$, it holds
$$\Phi ( \xi \odot_q ( e^{- \varpi_i} - e^{- \varpi_i} \bh_i ) ) = \Xi ( - \varpi_i ) ( \xi ).$$
\end{thm}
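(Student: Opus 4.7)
The plan is to exploit the fact that $K_\bG(\Gr_G) \cong \sH_q^\sh$ is the left regular module over $\sH_q^\sh$ with cyclic vector $[\cO_{\Gr_G(0)}] = D_{w_0}$ (Theorems~\ref{sH-reg} and~\ref{sH-sph}), so that $\Phi_G$ is forced by the choice $\Phi_G([\cO_{\Gr_G(0)}]) = [\cO_{\bQ_G(e)}]$; then to extend to the localization via the invertibility of the right $Q^{\vee}$-action on $K_\bG(\bQ_G^{\ra})$; and finally to verify the Darboux identity at $\xi = 1$ and propagate by $\sH_q^\sh$-linearity.

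First, I would define $\Phi_G$ on the un-localized $K_\bG(\Gr_G)$ as the unique left $\sH_q^\sh$-module map with $\Phi_G([\cO_{\Gr_G(0)}]) = [\cO_{\bQ_G(e)}]$. Next, combining $\pi^*[\cO_{\Gr_G(\beta)}] = D_{t_\beta} D_{w_0}$ with $D_{w_0}[\cO_{\bQ_G(e)}] = [\cO_{\bQ_G(e)}]$ (which follows because $s_i <_\si e$ for $i \in \tI$ implies $D_i [\cO_{\bQ_G(e)}] = [\cO_{\bQ_G(e)}]$ via Theorem~\ref{H-si}) and applying Theorem~\ref{dcf} at $w = e$, I obtain $\Phi_G([\cO_{\Gr_G(\beta)}]) = Q^\beta [\cO_{\bQ_G(e)}]$ for $\beta \in \bX_*^-$. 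Since the right $Q^{\vee}$-action on $K_\bG(\bQ_G^{\ra})$ is a bijective $\sH_q$-module endomorphism, the relation $\Phi_G(\xi \odot_q [\cO_{\Gr_G(\beta)}]) = \Phi_G(\xi) Q^\beta$ then holds for every $\xi$, and $\Phi_G$ extends uniquely to a map $K_\bG(\Gr_G)_\lo \to K_\bG(\bQ_G^{\ra})$.

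For the Darboux identity, both sides are $\sH_q^\sh$-linear in $\xi$ (the left because right-$\odot_q$ multiplication is $\sH_q^\sh$-linear, the right because $\Xi(-\varpi_i)$ is an $\sH_q$-module automorphism by Theorem~\ref{HQc}), so it suffices to check $\xi = 1$. The crucial step is to identify $\Phi_G(\bh_i)$. Writing $\bh_i = [\cO_{\Gr_G(s_i \beta)}] \odot_q [\cO_{\Gr_G(\beta)}]^{-1}$ for any $\beta \in \bX_*^-$ and using $s_i t_\beta = t_{s_i \beta} s_i$ together with $\ell(s_i t_\beta) = \ell(t_\beta) - 1 = \ell(t_{s_i\beta}) - 1$, one gets $D_{t_{s_i\beta}} D_{s_i} = D_{t_{s_i\beta}}$ in the Demazure product. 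Since $e >_\si s_i$ forces $D_{s_i}[\cO_{\bQ_G(s_i)}] = [\cO_{\bQ_G(e)}]$, this yields $D_{t_{s_i\beta}}[\cO_{\bQ_G(e)}] = D_{t_{s_i\beta}}[\cO_{\bQ_G(s_i)}]$, which Theorem~\ref{dcf} at $w = s_i$ evaluates to $Q^\beta [\cO_{\bQ_G(s_i)}]$. Cancelling the $Q^\beta$ gives $\Phi_G(\bh_i) = [\cO_{\bQ_G(s_i)}]$, and Lemma~\ref{div} then delivers $\Phi_G(e^{-\varpi_i} - e^{-\varpi_i}\bh_i) = e^{-\varpi_i}([\cO_{\bQ_G(e)}] - [\cO_{\bQ_G(s_i)}]) = [\cO_{\bQ_G(e)}(-\varpi_i)] = \Xi(-\varpi_i)(\Phi_G(1))$, as required.

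Injectivity follows from iterating the established Darboux identity: the image of $\Phi_G$ contains $\Xi(\la)[\cO_{\bQ_G(e)}] = [\cO_{\bQ_G(e)}(\la)]$ for every $\la \in \bX^*$, and Theorem~\ref{dcf} identifies these as a $\C_q(\!(Q^{\vee})\!)$-free basis of $K_\bG(\bQ_G^{\ra})$, which forces $\Phi_G$ to separate elements of $K_\bG(\Gr_G)_\lo$. The main obstacle is the identity $\Phi_G(\bh_i) = [\cO_{\bQ_G(s_i)}]$: interpreting the numerator $[\cO_{\Gr_G(s_i \beta)}]$ on the semi-infinite side requires the correct Demazure-product manipulation together with the nontrivial $w = s_i$ case of Theorem~\ref{dcf}; once this leading-term computation is in place, the remaining assertions reduce to formal cyclic-module bookkeeping.
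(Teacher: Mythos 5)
Your computation of the Darboux identity is correct and is a nice, self-contained re-derivation of what the paper outsources to \cite{Kat18c}: you identify $\Phi_G(\bh_i) = [\cO_{\bQ_G(s_i)}]$ by a clean Demazure-product calculation ($D_{t_{s_i\beta}}D_{s_i} = D_{t_{s_i\beta}}$ because $t_{s_i\beta}s_i < t_{s_i\beta}$, then $D_{s_i}[\cO_{\bQ_G(s_i)}]=[\cO_{\bQ_G(e)}]$ and Theorem~\ref{dcf} at $w=s_i$), and then invoke Lemma~\ref{div}. The paper instead cites \cite[Prop.~2.13, Rem.~2.14]{Kat18c} both for the existence of the embedding $\Phi_G$ and for the correspondence of Schubert classes, and uses Lemma~\ref{e-char} to recognize $\Xi(-\varpi_i)$ as right multiplication by an element of $K_{\bG}(\Gr_G)_\lo$; your version is more explicit and, for that part, entirely sound.

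The gap is in your injectivity argument. You assert that ``the image of $\Phi_G$ contains $\Xi(\la)[\cO_{\bQ_G(e)}]$ for every $\la\in\bX^*$'' by iterating the Darboux identity, but the identity you established only produces $\Xi(-\varpi_i)$, so iteration only yields $[\cO_{\bQ_G(e)}(\la)]$ for $-\la\in\La_+$ (twisted by $Q^\gamma$). Obtaining $[\cO_{\bQ_G(e)}(\varpi_i)]$ in the image requires showing that $(1-Q^{\al_i^\vee})\Xi(\varpi_i)$ sends $[\cO_{\bQ_G(e)}]$ to a \emph{finite} linear combination of $\{[\cO_{\bQ_G(w)}]\}$, which is exactly Corollary~\ref{fundC} and depends on the substantially harder Theorem~\ref{X-weight} --- machinery that in the paper is developed \emph{after} Theorem~\ref{K-id} and is not available to you here. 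Moreover, even granting that the image contained a $\C_q(\!(Q^\vee)\!)$-basis, this would not by itself force $\Phi_G$ to be injective: containing linearly independent elements in the image says nothing about the kernel unless you also show that their preimages span $K_{\bG}(\Gr_G)_\lo$, which is again the content of later results (Proposition~\ref{resGH}, Corollary~\ref{genKG}) whose proofs rely on Theorem~\ref{K-id}, so you would be arguing in a circle. The direct route to injectivity, consistent with the paper's citation, is to work $\bH$-equivariantly: $\Phi$ sends the $\C[\bH]$-basis $\{[\cO_{\Gr_G(ut_\beta)}]\}_{u\in W,\,\beta\in Q^\vee}$ of $K_{\bH}(\Gr_G)_\lo$ bijectively onto the linearly independent subset $\{[\cO_{\bQ_G(ut_\beta)}]\}$ of the Schubert basis of $K_{\bH}(\bQ_G^\ra)$ (cf.\ Corollary~\ref{ncGr}(1)), from which injectivity of $\Phi$ and hence of its restriction $\Phi_G$ is immediate.
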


To prove Theorem \ref{K-id}, we need:

\begin{lem}\label{e-char}
We have an isomorphism
$$\End_{\sH_q^\sh} ( K_{\bG} ( \Gr_G )_\lo ) \cong K_{\bG} ( \Gr_G )_\lo$$
determined by the image of $[\cO_{\Gr_G ( 0 )}]$. In particular, every $\sH_q^\sh$-endomorphism of $K_{\bG} ( \Gr_G )_\lo$ is obtained by the composition of the right multiplication of $K_{\bG} ( \Gr_G )$ followed by the application of $\mt_{\gamma}$ for some $\gamma \in \bX_*$.
\end{lem}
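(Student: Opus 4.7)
The plan is to identify left $\sH_q^\sh$-module endomorphisms of $K_\bG(\Gr_G)_\lo$ with right $\odot_q$-multiplications, using the regular-bimodule structure. By Theorem \ref{sH-reg} together with the spherical identification $K_\bG(\Gr_G) = D_{w_0} K_\bH(X_G) D_{w_0} \cong \sH_q^\sh$ recalled just above the statement, we may regard $K_\bG(\Gr_G)$ as the regular left $\sH_q^\sh$-module, with $[\cO_{\Gr_G(0)}] = D_{w_0}$ playing the role of the cyclic generator, and the right $\odot_q$-action being the right regular action. In particular the left $\sH_q^\sh$-action commutes with the right $\odot_q$-action, and both extend to the localization $K_\bG(\Gr_G)_\lo$ in view of Theorem \ref{LSS-formula}.

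Given $\xi \in K_\bG(\Gr_G)_\lo$, the map $r_\xi : \eta \mapsto \eta \odot_q \xi$ defines a left $\sH_q^\sh$-linear endomorphism precisely because the two actions commute, and $r_\xi([\cO_{\Gr_G(0)}]) = \xi$; hence the assignment $\xi \mapsto r_\xi$ is injective with evaluation at $[\cO_{\Gr_G(0)}]$ as a left inverse. For surjectivity, given a left $\sH_q^\sh$-linear $\phi$, set $\xi := \phi([\cO_{\Gr_G(0)}])$. By the cyclicity $K_\bG(\Gr_G) = \sH_q^\sh \cdot [\cO_{\Gr_G(0)}]$, the endomorphisms $\phi$ and $r_\xi$ agree on the subspace $K_\bG(\Gr_G)$. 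To propagate the coincidence to all of $K_\bG(\Gr_G)_\lo$, I would invoke that the ring inclusion $\sH_q^\sh \hookrightarrow K_\bG(\Gr_G)_\lo$ is an Ore localization and hence a ring epimorphism; consequently any left $\sH_q^\sh$-linear endomorphism of $K_\bG(\Gr_G)_\lo$ is automatically left $K_\bG(\Gr_G)_\lo$-linear, hence given by right multiplication by a uniquely determined element, so $\phi = r_\xi$.

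For the more refined second assertion, any $\xi \in K_\bG(\Gr_G)_\lo$ admits a presentation $\xi = \xi_0 \odot_q [\cO_{\Gr_G(t_\beta)}]^{-1}$ with $\xi_0 \in K_\bG(\Gr_G)$ and $\beta \in \bX_*^-$ by the very construction of the localization. Setting $\gamma := -\beta$, we identify $\mt_\gamma = [\cO_{\Gr_G(0)}] \odot_q [\cO_{\Gr_G(t_\beta)}]^{-1} = [\cO_{\Gr_G(t_\beta)}]^{-1}$ (the instance $\beta_1 = 0$, $\beta_2 = \beta$ in the definition of $\mt_\gamma$), so $\xi = \xi_0 \odot_q \mt_\gamma$; associativity of $\odot_q$ then yields $r_\xi = r_{\mt_\gamma} \circ r_{\xi_0}$, which is exactly the claimed factorization. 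The main obstacle is justifying the ring-epimorphism step in the non-commutative setting: one must verify that $S = \{[\cO_{\Gr_G(t_\beta)}]\}_{\beta \in \bX_*^-}$ satisfies the (right) Ore condition inside $\sH_q^\sh$ (implicit in Theorem \ref{LSS-formula} and the construction of $K_\bG(\Gr_G)_\lo$), and then apply the standard fact that Ore localizations are ring epimorphisms to upgrade left $\sH_q^\sh$-linearity to left $K_\bG(\Gr_G)_\lo$-linearity on the target bimodule.
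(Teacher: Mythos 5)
Your argument is the same in substance as the paper's: regard $K_\bG(\Gr_G)$ as the left regular $\sH_q^\sh$-module, so that its $\sH_q^\sh$-endomorphisms are exactly right $\odot_q$-multiplications determined by the value at the unit $[\cO_{\Gr_G(0)}]$, and then transport this to the localization. The one place you diverge is in how the localization step is justified. The paper is terse here: given $f$, it chooses $\beta\in\bX_*^-$ with $f([\cO_{\Gr_G(0)}])\odot_q \mt_\beta\in K_\bG(\Gr_G)$, uses the regular-module fact on the unlocalized side, and re-inverts $\mt_\beta$, asserting uniqueness. You instead invoke that $\sH_q^\sh\hookrightarrow K_\bG(\Gr_G)_\lo$ should be a (right) Ore localization, hence a ring epimorphism, so left $\sH_q^\sh$-linearity on $K_\bG(\Gr_G)_\lo$ upgrades automatically to left $K_\bG(\Gr_G)_\lo$-linearity and forces $\phi=r_\xi$. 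That is a valid and arguably cleaner route, and you are right that the Ore condition is the genuine hypothesis at work. Two cautions: Theorem \ref{LSS-formula} by itself only gives that the right $S$-action is multiplicative and torsion-free, which suffices for the module localization but does not by itself yield the Ore condition (nor the ring structure on $K_\bG(\Gr_G)_\lo$, which the paper also uses without comment); Ore-ness here really comes from the $q$-commutation of the translation elements with the remaining generators, visible for instance through the $\Phi/\Psi$ dictionary and Lemma \ref{csq}. Also, the paper first factors off the torus part $K_{\bH'}(\Gr_{H'})$ to reduce to simple $G$; you omit this reduction, but since your argument never uses simplicity that is not a gap. Your handling of the second sentence (the factorization $\xi=\xi_0\odot_q\mt_\gamma$ and $r_\xi=r_{\mt_\gamma}\circ r_{\xi_0}$) matches the paper's intended reading.
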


\begin{proof}
As the torus factor $H'$ of $G$ produces $K_{\bH'} ( \Gr_{H'} ) = K_{\bH'} ( \Gr_{H'} )_\lo$ as a ($\C_q$-)tensor factors of $K_{\bG} ( \Gr_G )$ and $K_{\bG} ( \Gr_G )_\lo$ that are isomorphic to a Heisenberg algebra, we can factor out such a factor to assume that $G$ is simple.

Since $K_{\bG} ( \Gr_G )$ affords a regular representation of $\sH_q^\sh$, we see that
$$\End_{\sH_q^\sh} ( K_{\bG} ( \Gr_G ) ) \cong K_{\bG} ( \Gr_G ).$$
Here the isomorphism is obtained by the right multiplication and hence $f \in \End_{\sH_q^\sh} ( K_{\bG} ( \Gr_G ) )$ is determined by $f ( 1 )$.

Let $f \in \End_{\sH_q^\sh} ( K_{\bG} ( \Gr_G ) )$. By construction of $K_{\bG} ( \Gr_G )_\lo$, we can take $\beta \in \bX_*^{-}$ such that $f ( 1 ) \odot_q \mt_{\beta} \in K_{\bG} ( \Gr_G )$. It follows that $1 \mapsto f ( 1 ) \odot_q \mt_{\beta}$ uniquely gives rise to an element of $\End_{\sH_q^\sh} ( K_{\bG} ( \Gr_G ) )$. Since the right action of $\mt_{\beta}$ is invertible, we conclude that $f ( 1 ) \in K_{\bG} ( \Gr_G )_\lo$ already defines an element of $\End_{\sH_q^\sh} ( K_{\bG} ( \Gr_G )_\lo )$ uniquely as required.
\end{proof}

\begin{proof}[Proof of Theorem \ref{K-id}]
Thanks to \cite[Proposition 2.13 and Remark 2.14]{Kat18c}, we have a $\sH_q^\sh$-module embedding
$$\Phi_G : K _{\bG} ( \Gr _G )_{\lo} \hookrightarrow K_{\bG} ( \bQ_G^{\ra} )$$
that sends $\mt_{\beta}$ to $[\cO_{\bQ_G ( t_{\beta} )}]$ as the (left) $D_{w_0}$-invariant part of the corresponding embedding of $\bH$-equivariant $K$-groups (cf. Corollary \ref{ncGr}).

From the construction of the map $\Phi_G$ through its $\bH$-equivariant variants, we see that $K_{\bG} ( \bQ_G^{\ra} )$ is the completion of the image of $\Phi_G$ with respect to the topology given in \S \ref{sif}. In view of Lemma \ref{e-char}, we find that $\Xi ( \la )$ defines an element of $\End_{\sH_q^\sh} ( K_{\bG} ( \Gr_G )_\lo )$ if and only if $\Xi ( \la ) ( [\cO_{\bQ_{G} ( e )}] )$ is a finite linear combination of $\{ [\cO_{\bQ_G ( w )}]\}_{w \in W_\af}$. This happens for $\la = - \varpi_i$ by Lemma \ref{div}. Namely, we have $\Xi ( - \varpi_i ) = e ^{- \varpi_i} ( \mathrm{id} - H_i )$. Again by \cite[Proposition 2.13 and Remark 2.14]{Kat18c}, we conclude that $\Xi ( - \varpi_i )$ induces a(n left $\sH_q^\sh$-module) endomorphism of $K _{\bG} ( \Gr _G )_{\lo}$ that sends $[\cO_{\Gr_G ( 0 )}]$ to $e ^{-\varpi_i} ( \mathrm{id} - \bh_i )$. Therefore, we conclude that the equality in the assertion.
\end{proof}

\begin{cor}\label{ncGr}
Assume that $G$ is simple. We have a $\sH_q$-module embedding
$$\Phi : K _{\bH} ( \Gr _G )_{\lo} \hookrightarrow K_{\bH} ( \bQ_G^{\ra} )$$
extending $\Phi_G$ with the following properties:
\begin{enumerate}
\item we have $\Phi ( [\cO_{\Gr_G ( u t_{\beta} )}] ) = [\cO_{\bQ_G ( u t_{\beta} )}]$ for $u \in W$ and $\beta \in Q^{\vee}_<$;
\item the right multiplication by $\mt_{\gamma}$ corresponds to the right translation by $\gamma \in Q^{\gamma}$ for each $\gamma \in Q^{\vee}$;
\item For each $i \in \tI$ and $\xi \in K _{\bH} ( \Gr _G )_{\lo}$, it holds
$$\Phi ( \xi \odot_q \bh_i ) = H_i ( \xi ).$$
\end{enumerate}
\end{cor}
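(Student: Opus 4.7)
The plan is to leverage the fact that an $\bH$-equivariant version of this embedding has already been constructed in \cite[Proposition 2.13, Remark 2.14]{Kat18c}; this is essentially what the parenthetical \emph{cf.\ Corollary \ref{ncGr}} refers to in the proof of Theorem \ref{K-id} above. That prior construction furnishes an $\sH_q$-module embedding $\Phi$ whose $D_{w_0}$-invariant part is exactly $\Phi_G$. I will first invoke this existence, and reduce injectivity of $\Phi$ to that of $\Phi_G$ using the identification $K_\bH(\Gr_G) \cong \sH_q \odot_q D_{w_0}$ from Theorem \ref{sH-sph} together with the left $\sH_q$-linearity.

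Once $\Phi$ is in hand, the three listed properties are verified as follows. For (1), the $\sH_q$-linearity of $\Phi$ reduces the statement to a check on the single generator $[\cO_{\Gr_G(0)}] \mapsto [\cO_{\bQ_G(e)}]$. For $u \in W$ and $\beta \in Q^{\vee}_<$, Theorem \ref{af-len}(3) places $ut_\beta$ in $\widetilde W_\af^-$, so a reduced expression for $ut_\beta$ yields (via Theorem \ref{sH-reg}) a sequence of $D$-operators taking $[\cO_{\Gr_G(0)}]$ to $[\cO_{\Gr_G(ut_\beta)}]$. The same sequence, applied to $[\cO_{\bQ_G(e)}]$, climbs up the semi-infinite Bruhat order and, by Theorem \ref{H-si}, lands on $[\cO_{\bQ_G(ut_\beta)}]$; this is the point where one uses Theorem \ref{sib} and the length calculus of Theorem \ref{af-len} to control the $>_\si$ inequalities at each step. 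Property (2) is immediate from the construction: the right action of $\mt_\gamma$ corresponds, via Theorem \ref{LSS-formula}, to right translation by $t_\gamma$, which under $\Phi$ matches the $Q^\vee$-action on $K_\bH(\bQ_G^\ra)$ described in Theorem \ref{si-Bruhat}(2).

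For (3), I will apply Lemma \ref{div} rewritten as $\Xi(-\varpi_i) = e^{-\varpi_i}(\id - H_i)$ (viewing both sides as operators on $K_\bH(\bQ_G^\ra)$). Substituting this into the identity of Theorem \ref{K-id} and using $\C[\bH]$-bilinearity of $\odot_q$ together with $\C[\bH]$-linearity of $\Phi$ produces
\[
e^{-\varpi_i}\Phi(\xi) - e^{-\varpi_i}\Phi(\xi \odot_q \bh_i) = e^{-\varpi_i}\Phi(\xi) - e^{-\varpi_i} H_i(\Phi(\xi)).
\]
Cancelling the common term $e^{-\varpi_i}\Phi(\xi)$ on both sides and dividing out the unit $e^{-\varpi_i} \in \C[\bH]$ gives the desired $\Phi(\xi \odot_q \bh_i) = H_i(\Phi(\xi))$ for $\xi \in K_\bG(\Gr_G)_\lo$; the statement for arbitrary $\xi \in K_\bH(\Gr_G)_\lo$ then follows from $\C[\bH]$-linearity of both sides in $\xi$ together with the decomposition $K_\bH(\Gr_G)_\lo = \C[\bH] \otimes_{R(\bG)} K_\bG(\Gr_G)_\lo$.

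I expect the main obstacle to lie in the very first step, namely the well-definedness of $\Phi$: one must check that the annihilator (in the localized left $\sH_q$-module sense) of the class $[\cO_{\Gr_G(0)}]$ is contained in the annihilator of $[\cO_{\bQ_G(e)}]$. The direct Bruhat combinatorics does not suffice here; the content of \cite[Proposition 2.13]{Kat18c} is to reduce this to a comparison of graded characters via global Weyl modules, and I would either cite that result outright or reproduce the character comparison for the present setting.
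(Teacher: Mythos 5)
Your overall strategy matches the paper's: the paper also constructs $\Phi$ as the $\sH_q$-module extension of $\Phi_G$ (observing $\sH_q K_\bG(\Gr_G)_\lo = \C_q[H]\,K_\bG(\Gr_G)_\lo = K_\bH(\Gr_G)_\lo$ and the analogous equalities for $\bQ_G^\ra$), then cites \cite[Proposition 2.13 and Remark 2.14]{Kat18c} for item (1), and treats (2) and (3) as automatic from $\sH_q$- and $\C[\bH]$-linearity. Your derivation of (3) by substituting $\Xi(-\varpi_i) = e^{-\varpi_i}(\mathrm{id}-H_i)$ (from Lemma \ref{div}) into Theorem \ref{K-id} and cancelling is exactly the implicit calculation behind the paper's one-line remark, so no complaint there.

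Two small caveats. First, the identity $K_\bH(\Gr_G)_\lo = \C[\bH]\otimes_{R(\bG)} K_\bG(\Gr_G)_\lo$ that you invoke to extend (3) from $K_\bG$ to $K_\bH$ is overstated: what the paper uses, and what you actually need, is the weaker span equality $\C_q[H]\,K_\bG(\Gr_G)_\lo = K_\bH(\Gr_G)_\lo$ together with $\C[\bH]$-linearity of all three operators involved ($\Phi$, $\odot_q\bh_i$, $H_i$); the tensor-product statement over $R(\bG)$ would require an unproven flatness/freeness assertion. Second, your sketch of a direct Bruhat-combinatorial verification of item (1) is in tension with your own closing remark that the direct combinatorics does not suffice and the content is precisely what \cite[Proposition 2.13]{Kat18c} supplies. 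Indeed, the difficulty is real: to run your ``same reduced word raises $[\cO_{\bQ_G(e)}]$ to $[\cO_{\bQ_G(ut_\beta)}]$'' argument you would need that every left-initial subword of a reduced decomposition of $ut_\beta$ in the affine length $\ell$ moves \emph{up} in $\le_\si$, which is a comparison between $\ell$ and the semi-infinite length, not a formal consequence of Theorem \ref{sib} or Theorem \ref{af-len}; this is exactly the content deferred to \cite{Kat18c}, and the paper too simply cites it rather than re-deriving it. So you should drop the redundant direct argument and cite the reference, which is what you say you would fall back on anyway.
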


\begin{proof}
Notice that we have $[\cO_{\sB}] \in K_\bG ( \sB )$ in Theorem \ref{finK}, that results in $\sH_q ( \tI ) K_\bG ( \sB ) = K_\bH ( \sB )$ by Theorem \ref{finK} 5). The comparison of Theorem \ref{finK} with Theorem \ref{sH-reg} yields
$$\sH_q K _{\bG} ( \Gr _G )_{\lo} = \C_q [H] K _{\bG} ( \Gr _G )_{\lo} = K _{\bH} ( \Gr _G )_{\lo},$$
while the comparison of Theorem \ref{finK} with Theorem \ref{H-si} yields
$$\sH_q K _{\bG} ( \bQ_G^{\ra} ) = \C_q [H] K _{\bG} ( \bQ_G^{\ra} ) = K _{\bH} ( \bQ_G^{\ra} )$$
as $\sH_q$-modules with the desired properties except for the first item. The first item follows from \cite[Proposition 2.13 and Remark 2.14]{Kat18c}.
\end{proof}

\begin{cor}\label{c-char}
Keep the setting of Lemma \ref{e-char}. Each $\sH_q^\sh$-module endomorphism of $K_{\bG} ( \Gr_G )_\lo$ is continuous with respect to the the topology induced from the topology of $K_{\bH} ( \bQ_{[G,G]}^\ra )$ $($defined in $\S \ref{sif})$ under $\Phi_{[G,G]}$ $($by extending the scalar from $\C_q$ to $K_{\bH'} ( \Gr_{H'} ))$. \hfill $\Box$
\end{cor}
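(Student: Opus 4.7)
The plan is to reduce to $G$ simple via the tensor factorization used in the proof of Lemma \ref{e-char}, and then to exhibit an explicit generating set of $\End_{\sH_q^\sh}(K_\bG(\Gr_G)_\lo)$ whose elements correspond, under the embedding $\Phi$ of Theorem \ref{K-id} and Corollary \ref{ncGr}, to continuous operators on the ambient space $K_\bH(\bQ_G^\ra)$. Once each generator acts continuously on $K_\bH(\bQ_G^\ra)$, continuity will transfer to the pulled-back topology on $K_\bG(\Gr_G)_\lo$ and be preserved under composition and $K_{\bH'}(\Gr_{H'})$-linear combinations; restoring the torus factor $H'$ (on which the Heisenberg $K_{\bH'}(\Gr_{H'})$ acts continuously by shifting the $\bX_*(H')$-grading) then yields the corollary.

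Assuming $G$ is simple, I would first apply Lemma \ref{e-char} to write any $\sH_q^\sh$-endomorphism as $R_c : \xi \mapsto \xi \odot_q c$ for a unique $c \in K_\bG(\Gr_G)_\lo$, and decompose $c = c' \odot_q \mt_\gamma$ with $c' \in K_\bG(\Gr_G)$ and $\gamma \in \bX_*$. It then suffices to treat $R_{\mt_\gamma}$ and $R_{c'}$ separately. The map $R_{\mt_\gamma}$ is identified under $\Phi$, via Corollary \ref{ncGr}(2), with the right $Q^\vee$-translation by $\gamma$, which bijects $K_\bH(\bQ_G(t_\beta))$ onto $K_\bH(\bQ_G(t_{\beta+\gamma}))$ and is evidently continuous. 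For $R_{c'}$, I would combine Theorem \ref{reconst}(4) with the $K$-theoretic Peterson isomorphism \eqref{KPeterson} to deduce that $K_\bG(\Gr_G)_\lo$ is generated as an $R(\bG)$-algebra by the elements $\bh_i$ ($i \in \tI$) together with the translations $\mt_\gamma$ ($\gamma \in \bX_*$). Right multiplication by $R(\bG)$-scalars preserves each $K_\bH(\bQ_G(t_\beta))$ setwise and is therefore continuous; by Corollary \ref{ncGr}(3), $R_{\bh_i}$ corresponds under $\Phi$ to $H_i = \Id - e^{\varpi_i}\Xi(-\varpi_i)$, so the entire problem collapses to continuity of $\Xi(-\varpi_i)$ on $K_\bH(\bQ_G^\ra)$.

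To prove continuity of $\Xi(-\varpi_i)$, the key step will be a uniform support bound: I expect to find $c_0 \in Q^\vee$ (depending on $\varpi_i$ but not on $w$) such that, in the expansion of $\Xi(-\varpi_i)([\cO_{\bQ_G(w)}])$ provided by Theorem \ref{HQc}(2), every $v = u't_\gamma \le_\si w = ut_{\gamma_0}$ satisfies $\gamma \ge \gamma_0 - c_0$. Applying this bound termwise to a formal sum supported in $K_\bH(\bQ_G(t_\beta))$ will give $\Xi(-\varpi_i)(K_\bH(\bQ_G(t_\beta))) \subset K_\bH(\bQ_G(t_{\beta-c_0}))$, which is exactly the required continuity. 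The hard part is this support estimate: it rests on a careful analysis of the generic Bruhat order $\le_\si$, using Theorem \ref{sib} to control the $Q^\vee$-component of $v$ relative to $w$ and exploiting the finiteness of $W$ to turn a bound depending on the Weyl components $u, u'$ into a uniform one. Lemma \ref{csq}, which records the semi-commutation $\Xi(-\varpi_i) \circ Q^{\al_j^\vee} = q^{\left< \al_j^\vee, \varpi_i \right>} Q^{\al_j^\vee} \circ \Xi(-\varpi_i)$, provides a complementary reduction: it cuts the verification down to a finite set of $W_\af$-representatives modulo $Q^\vee$-translations, making the support estimate tractable.
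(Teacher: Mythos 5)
Your overall strategy (reduce to $G$ simple via tensor factorization, decompose any $\sH_q^\sh$-endomorphism as $R_{\mt_\gamma} \circ R_{c'}$ with $c' \in K_\bG(\Gr_G)$ using Lemma~\ref{e-char}, observe $R_{\mt_\gamma}$ is continuous) is correct and matches the first steps of what the paper's terse $\Box$ implies. But the remainder has two real problems.

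First, a precision error that matters: you claim $K_\bG(\Gr_G)_\lo$ is generated as an $R(\bG)$-algebra by $\bh_i$ and $\mt_\gamma$. But $\bh_i \notin K_\bG(\Gr_G)_\lo$: it is defined via the Schubert class $[\cO_{\Gr_G(s_i t_\beta)}]$, which lives in $K_\bH(\Gr_G)$ but is not $D_{w_0}$-invariant. Only the full combination $\phi_i = e^{-\varpi_i}(\mathrm{id} - \bh_i)$ lies in $K_\bG(\Gr_G)_\lo$; you would need to restate the generation claim with $\phi_i$ (and, for the localized ring, $\xi_i$ too, cf.\ Corollary~\ref{genKG} — though that corollary logically comes later, so using Theorem~\ref{reconst}(4) plus the Peterson isomorphism as you propose is the non-circular route, provided you address the $q$-level version). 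More importantly, for the continuity of $R_{c'}$ you only need $c'$ to range over $K_\bG(\Gr_G)$, which is a \emph{finite free} $R(\bG)$-module on Schubert classes $[\cO_{\Gr_G(\beta_k)}]$; there is no need to invoke ring generation at all.

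Second, and this is the genuine gap: your plan hinges on a ``uniform support bound'' for $\Xi(-\varpi_i)$, but you do not actually establish it — you say it ``rests on a careful analysis of the generic Bruhat order $\le_\si$'' and invoke finiteness of $W$. This analysis is never carried out. In fact no analysis of $\le_\si$ is required: Lemma~\ref{div} already tells you $\Xi(-\varpi_i)([\cO_{\bQ_G(e)}]) = e^{-\varpi_i}([\cO_{\bQ_G(e)}] - [\cO_{\bQ_G(s_i)}])$ is a \emph{finite} combination of Schubert classes, $\Xi(-\varpi_i)$ is an $\sH_q$-module automorphism by Theorem~\ref{HQc}, and Lemma~\ref{csq} gives commutation with translations up to $q$-powers; since every $[\cO_{\bQ_G(w)}]$ is reached from $[\cO_{\bQ_G(e)}]$ by a $D_u$ and a $Q^\gamma$, and each such operator shifts supports in a controlled way, the uniform bound follows immediately. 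You cite Lemma~\ref{csq} as a ``complementary reduction'' but do not actually use it to close the gap.

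Finally, note that the route through $\Xi(-\varpi_i)$ is a detour from what the paper intends. The direct argument is: $c'$ is a finite $R(\bG)$-combination of Schubert classes $[\cO_{\Gr_G(\beta_k)}]$ ($\beta_k \in \bX_*^{\le}$), and right multiplication $\odot_q$ by a single such class translates the $Q^\vee$-support by a bounded amount (visible from Theorem~\ref{LSS-formula} and the Demazure-operator picture, or directly from the identification of $\odot_q$ with the product in $\sH_q$ under Corollary~\ref{ncGr}(1)). Finitely many such operators give a uniform bound; composing with $R_{\mt_\gamma}$ preserves continuity. This bypasses both the generation claim and the support estimate for $\Xi(-\varpi_i)$.
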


\subsection{Darboux generators of $K_{\bG} ( \Gr_G )_\lo$}

For each $i \in \tI$, we set
$$\phi_i := e ^{- \varpi_i} ( \mathrm{id} - \odot_q \bh_i ) \in K_{\bG} ( \Gr_G )_\lo \cong \End_{\sH_q^\sh} ( K_{\bG} ( \Gr_G )_\lo ).$$

\begin{lem}\label{Heis-comp}
Assume that $G$ is simple. There exists a unique $\sH_q^\sh$-module endomorphism $\xi_i$ on $K_{\bG} ( \Gr_G )_\lo$ for each $i \in \tI$ such that
$$\xi_i \circ \phi_i = ( \mathrm{id} - \mt_{\al^{\vee}_i} ) \hskip 3mm \text{and} \hskip 3mm \phi_i \circ \xi_i = ( \mathrm{id} - q \mt_{\al^{\vee}_i} ).$$
In addition, we have
$$\xi_i \circ \xi_j = \xi_j \circ \xi_i, \hskip 3mm \xi_i \circ \phi_j = \phi_j \circ \xi_i, \hskip 3mm \text{and} \hskip 3mm \phi_i \circ \phi_j = \phi_j \circ \phi_i \hskip 5mm i \neq j.$$
\end{lem}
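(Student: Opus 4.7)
The plan is to transplant the problem to the semi-infinite side via the embedding $\Phi_G : K_\bG ( \Gr_G )_\lo \hookrightarrow K_\bG ( \bQ_G^\ra )$ of Theorem \ref{K-id}, under which right multiplication by $\phi_i$ corresponds to the operator $\Xi ( - \varpi_i )$. Since Theorem \ref{HQc}(1) makes $\Xi ( - \varpi_i )$ an $\sH_q$-module automorphism of $K_\bH ( \bQ_G^\ra )$ with inverse $\Xi ( \varpi_i )$, I will propose as a candidate the operator
\[
\Xi_i := ( \mathrm{id} - Q^{\al_i^\vee} ) \circ \Xi ( \varpi_i ) \in \End_{\sH_q} ( K_\bH ( \bQ_G^\ra ) ),
\]
which is also an $\sH_q^\sh$-linear endomorphism of $K_\bG ( \bQ_G^\ra )$ because $\Xi ( \varpi_i )$ and the right action of $Q^{\al_i^\vee}$ both commute with $D_{w_0}$.

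First I would verify the target algebraic relations in this picture. Trivially $\Xi_i \circ \Xi ( - \varpi_i ) = \mathrm{id} - Q^{\al_i^\vee}$, and using Lemma \ref{csq} with $\langle \al_i^\vee, \varpi_i \rangle = 1$ we get $\Xi ( - \varpi_i ) \circ ( \mathrm{id} - Q^{\al_i^\vee} ) = ( \mathrm{id} - q Q^{\al_i^\vee} ) \circ \Xi ( - \varpi_i )$, so $\Xi ( - \varpi_i ) \circ \Xi_i = \mathrm{id} - q Q^{\al_i^\vee}$. For $i \ne j$ we have $\langle \al_j^\vee, \varpi_i \rangle = 0$, whence Lemma \ref{csq} gives that $Q^{\al_j^\vee}$ commutes with $\Xi ( \pm \varpi_i )$ and Theorem \ref{HQc}(1) gives that $\Xi ( \pm \varpi_i )$ commutes with $\Xi ( \pm \varpi_j )$; combining these, the four operators $\Xi_i, \Xi_j, \Xi ( - \varpi_i ), \Xi ( - \varpi_j )$ pairwise commute.

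The hard part will be to show that $\Xi_i$ restricts to an endomorphism of the subspace $\Phi_G ( K_\bG ( \Gr_G )_\lo ) \subset K_\bG ( \bQ_G^\ra )$, so that via Lemma \ref{e-char} it corresponds to a unique element $\xi_i \in K_\bG ( \Gr_G )_\lo$ whose right multiplication realizes $\Xi_i$. For this I will invoke the criterion used in the proof of Theorem \ref{K-id}: by Lemma \ref{e-char} (together with Corollary \ref{c-char}), an $\sH_q^\sh$-linear endomorphism of $K_\bG ( \bQ_G^\ra )$ descends to one of $\Phi_G ( K_\bG ( \Gr_G )_\lo )$ precisely when its value on $[ \cO_{\bQ_G ( e )} ]$ is a finite linear combination of $\{ [ \cO_{\bQ_G ( w )} ] \}_{w \in W_\af}$. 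Applying Corollary \ref{fundC},
\[
\Xi_i ( [ \cO_{\bQ_G ( e )} ] ) = ( \mathrm{id} - Q^{\al_i^\vee} ) [ \cO_{\bQ_G ( e )} ( \varpi_i ) ] = ( \mathrm{id} - Q^{\al_i^\vee} ) \sum_{m \ge 0} C ( \varpi_i ) Q^{m \al_i^\vee} = C ( \varpi_i ),
\]
and $C ( \varpi_i ) \in K_{\preceq \varpi_i}$ is a finite sum by Theorem \ref{X-weight}. Hence Lemma \ref{e-char} delivers the desired unique $\xi_i \in K_\bG ( \Gr_G )_\lo$, and the relations established in the previous paragraph transfer through $\Phi_G$ to give all the claimed identities.
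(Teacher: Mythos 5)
Your proposal is correct and follows essentially the same route as the paper: both define $\xi_i$ as $(\mathrm{id} - Q^{\al_i^\vee})\circ\Xi(\varpi_i)$ on the semi-infinite side, derive the commutation relations from Lemma \ref{csq} and Theorem \ref{HQc}, and use Corollary \ref{c-char} together with Corollary \ref{fundC} to verify that $\xi_i([\cO_{\bQ_G(e)}]) = C(\varpi_i)$ is a finite linear combination of Schubert classes, so that $\xi_i$ descends to $K_{\bG}(\Gr_G)_\lo$ via Lemma \ref{e-char}.
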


\begin{proof}
We transplant these endomorphisms to $K_{\bG} ( \bQ_G^\ra )$. We set $\phi_i$ to be the endomorphism $\Xi ( - \varpi_i )$, and $\xi_i$ to be the endomorphism $( 1 - Q^{\al_i^{\vee}} ) \Xi ( \varpi_i )$ for each $i \in \tI$. A priori, $\xi_i$ only defines a $\sH_q^\sh$-endomorphism of the completion of $K_{\bG} ( \Gr_G )_\lo$ that is isomorphic to $K_{\bG} ( \bQ^{\ra} )$ via (the natural extension of) $\Phi_G$.  To see that $\xi_i$ defines an endomorphism of $K_{\bG} ( \Gr_G )_\lo$, it suffices to see that whether $( 1 - Q^{\al_i^{\vee}} ) \Xi ( \varpi_i )$ defines an endomorphism of $K_{\bG} ( \Gr_G )_\lo$. By Corollary \ref{c-char}, it suffices to see that
$$( 1 - Q^{\al_i^{\vee}} ) \Xi ( \varpi_i ) ( [\cO_{\bQ_G ( e ) }]) = [\cO_{\bQ_G ( e )} ( \varpi_i)] - [\cO_{\bQ_G ( t_{\al_i^{\vee}} )} ( \varpi_i)]$$
is a finite linear combination of $\{ [\cO_{\bQ_G ( w ) }] \}_{w \in W_\af}$, that is the content of Corollary \ref{fundC}. Now the commutation relation between them follow from Lemma \ref{csq}.
\end{proof}

\begin{cor}
Keep the setting of Lemma \ref{Heis-comp}. Then, the elements
\begin{equation}
\Phi_G ( \left( \prod_{i \in \tI, \left< \al_i^{\vee}, \la \right> <0} \xi_i^{- \left< \al_i^{\vee}, \la \right>} \right) \left( \prod_{i \in \tI, \left< \al_i^{\vee}, \la \right> >0} \phi_i^{\left< \al_i^{\vee}, \la \right>} \right) [\cO_{\Gr_G ( 0 )}] ) \hskip 5mm \la \in \La\label{ClaG}
\end{equation}
are $\C_q Q^{\vee}$-linearly independent in $K_{\bG} ( \bQ^\ra )$. In particular, there is no additional relations among $\{ \xi_i, \phi_i \}_{i \in \tI}$ $($to those presented in Lemma $\ref{Heis-comp})$.
\end{cor}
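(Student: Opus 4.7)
The plan is to transplant everything from $K_\bG(\Gr_G)_\lo$ to the semi-infinite side $K_\bG(\bQ_G^\ra)$ via $\Phi_G$, where Theorem \ref{dcf} gives us a $\C_q(\!(Q^\vee)\!)$-free basis consisting of line-bundle classes. From the proof of Lemma \ref{Heis-comp}, under $\Phi_G$ the operator $\phi_i$ corresponds to $\Xi(-\varpi_i)$, the operator $\xi_i$ corresponds to $(1 - Q^{\al_i^\vee})\Xi(\varpi_i)$, and $[\cO_{\Gr_G(0)}]$ maps to $[\cO_{\bQ_G(e)}]$. Since operators indexed by distinct $i,j \in \tI$ commute (Lemma \ref{Heis-comp}, together with the $i\neq j$ case of Lemma \ref{csq}), the computation factors through each simple index separately.

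Next, for a single index $i$ with $m := -\langle\al_i^\vee,\la\rangle > 0$, I will iterate $\xi_i$ and use Lemma \ref{csq} (specifically $\Xi(\varpi_i) Q^{\al_i^\vee} = q^{-1} Q^{\al_i^\vee}\Xi(\varpi_i)$) to push all $\Xi(\varpi_i)$'s to the right. This yields $\xi_i^m = \prod_{k=0}^{m-1}(1 - q^{-k}Q^{\al_i^\vee})\cdot \Xi(m\varpi_i)$. Combining these single-index computations with the composition law $\Xi(\la)\circ\Xi(\mu) = \Xi(\la+\mu)$ from Theorem \ref{HQc}(1), the image of (\ref{ClaG}) under $\Phi_G$ acquires the normal form
$$P_\la(Q) \cdot [\cO_{\bQ_G(e)}(-\la)], \qquad P_\la(Q) := \prod_{i \,:\, \langle\al_i^\vee,\la\rangle < 0}\;\prod_{k=0}^{-\langle\al_i^\vee,\la\rangle - 1}\bigl(1 - q^{-k} Q^{\al_i^\vee}\bigr) \in \C_q Q^\vee,$$
which is a nonzero polynomial (equal to $1$ when $\la \in \La_+$).

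The linear independence assertion then follows directly from Theorem \ref{dcf}: the family $\{[\cO_{\bQ_G(e)}(\mu)]\}_{\mu \in \La}$ is a $\C_q(\!(Q^\vee)\!)$-free basis of $K_\bG(\bQ_G^\ra)$, and each $P_\la(Q)$ is a nonzero element of $\C_q Q^\vee$ (in fact invertible in $\C_q(\!(Q^\vee)\!)$). Hence the images $\{P_\la(Q) [\cO_{\bQ_G(e)}(-\la)]\}_{\la\in\La}$ are $\C_q(\!(Q^\vee)\!)$-linearly independent, and a fortiori $\C_q Q^\vee$-linearly independent, giving the first half of the corollary.

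For the "no additional relations" claim, I will argue by normal form: the relations in Lemma \ref{Heis-comp} ($\phi_i\xi_i = 1 - q\mt_{\al_i^\vee}$, $\xi_i\phi_i = 1 - \mt_{\al_i^\vee}$, and cross-index commutativity) allow any noncommutative monomial in $\{\xi_i,\phi_i,\mt_\gamma\}$ to be rewritten as a $\C_q Q^\vee$-linear combination of the monomials appearing in (\ref{ClaG}), by iteratively eliminating any same-index pairing of $\xi_i$ and $\phi_i$. Any hypothetical further relation would then produce a nontrivial $\C_q Q^\vee$-linear dependence among the elements of (\ref{ClaG}), contradicting the first part. The only delicate step is the tracking of the $q$-twists from Lemma \ref{csq} when putting the expression in normal form, but this is purely mechanical.
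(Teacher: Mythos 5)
Your proof is correct and follows essentially the same route as the paper: both arguments transplant via $\Phi_G$, identify $\phi_i$ with $\Xi(-\varpi_i)$ and $\xi_i$ with $(1-Q^{\al_i^{\vee}})\Xi(\varpi_i)$, and reduce linear independence to Theorem \ref{dcf}. The paper phrases the key step in terms of the elements lying in distinct joint $D_{t_\beta}$-eigenspaces, while you make the same content explicit by writing out the normal form $P_\la(Q)\,[\cO_{\bQ_G(e)}(-\la)]$ with $P_\la(Q)$ invertible in $\C_q(\!(Q^{\vee})\!)$ and then invoking the freeness statement of Theorem \ref{dcf} directly; these are two presentations of the same argument, since each line-bundle class $[\cO_{\bQ_G(e)}(-\la)]$ is precisely a generator of the corresponding eigenspace. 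Your spelling-out of the rewriting procedure for the "no additional relations" claim (eliminating same-index $\xi_i\phi_i$ and $\phi_i\xi_i$ pairs using the relations, with the $q$-twists from Lemma \ref{csq}) is a faithful elaboration of the paper's terser remark that any extra relation would contradict the linear independence.
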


\begin{proof}
The elements in (\ref{ClaG}) are non-zero since $\phi_i$ and $\xi_i$ defines $\Xi ( - \varpi_i )$ and $( 1 - Q^{\al_i^{\vee}} ) \Xi ( \varpi_i )$ for each $i \in \tI$, that are invertible in $K_{\bG} ( \bQ^\ra )$. In view of Theorem \ref{dcf}, these elements belong to different (joint) eigenspaces with respect to the action of $D_{t_{\beta}}$ ($\beta \in Q^{\vee}_<$), and hence they are $\C_q Q^{\vee}$-linearly independent. If we have an additional relation among $\{ \xi_i, \phi_i \}_{i \in \tI}$, then it violates the linear independence of (\ref{ClaG}). Consequently, it is impossible and hence the relations presented in Lemma \ref{Heis-comp} is optimal.
\end{proof}

We set $qK_{\bH} ( \sB )_\lo := \C Q^{\vee} \otimes_{\C Q^{\vee}_+} qK_{\bH} ( \sB )$.

\begin{thm}\label{ncPet}
Assume that $G$ is simple. We have a $\sH_q$-module isomorphism
$$\Psi^{-1} \circ \Phi : K _{\bH} ( \Gr _G )_{\lo} \hookrightarrow qK_{\bH} ( \sB )_\lo$$
with the following properties:
\begin{enumerate}
\item We have $(\Psi^{-1} \circ \Phi) ( [\cO_{\Gr_G ( u )}] \mt_{\beta} ) = [\cO_{\sB ( u )}]Q^{\beta}$ for $u \in W$ and $\beta \in Q^{\vee}$;
\item For each $i \in \tI$ and $\xi \in K _{\bG} ( \Gr _G )_{\lo}$, it holds
$$(\Psi^{-1} \circ \Phi) ( \phi_i ( \xi ) ) = A^{- \varpi_i} \left( (\Psi^{-1} \circ \Phi) ( \xi ) \right).$$
\end{enumerate}
\end{thm}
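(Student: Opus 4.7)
The plan is to realize both $K_{\bH}(\Gr_G)_{\lo}$ and $qK_{\bH}(\sB)_{\lo}$ as $\sH_q$-submodules of the common ambient module $K_{\bH}(\bQ_G^{\ra})$---the former through $\Phi$ of Corollary \ref{ncGr}, the latter through a suitable $\bH$-equivariant localized extension $\Psi$ of $\Psi_G$ from Theorem \ref{qK-int}---and then to show that these two submodules coincide. For the extension, I would use $K_{\bH}(\sB) = \C_q[H] \cdot K_{\bG}(\sB)$ (Theorem \ref{finK} 5)) to promote the $R(\bG)$-linear embedding $\Psi_G$ to a $\C_q[H]$-linear $\sH_q$-equivariant injection $qK_{\bH}(\sB) \hookrightarrow K_{\bH}(\bQ_G^{\ra})$, then invert the action of $\{[\cO_{\bQ_G(t_{\beta})}]\}_{\beta \in Q^{\vee}_+}$---which act as invertible right $Q^{\vee}$-translations by Theorem \ref{HQc} together with Theorem \ref{dcf}---to produce $\Psi : qK_{\bH}(\sB)_{\lo} \hookrightarrow K_{\bH}(\bQ_G^{\ra})$ with $\Psi([\cO_{\sB}]) = [\cO_{\bQ_G(e)}]$ and $\Psi(A^{-\varpi_i}\eta) = \Xi(-\varpi_i)\Psi(\eta)$, the latter by Theorem \ref{qK-int} 2).

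Next, I would verify that $\Phi$ and $\Psi$ have the same image. They agree on the canonical generator: Theorem \ref{K-id} via Corollary \ref{ncGr} gives $\Phi([\cO_{\Gr_G(0)}]) = [\cO_{\bQ_G(e)}] = \Psi([\cO_{\sB}])$. Using $\sH_q$-equivariance together with the Demazure formulas of Theorem \ref{finK} 2), Theorem \ref{sH-reg}, Theorem \ref{sH-sph}, and Theorem \ref{H-si}---and accounting for the order flip $w \le v \Leftrightarrow w \ge_\si v$ on $W$---this extends to the Schubert-class identification of property (1) at $\beta = 0$. The right $\mt_{\beta}$-action on $K_{\bH}(\Gr_G)_{\lo}$ matches the right $Q^{\beta}$-translation on $K_{\bH}(\bQ_G^{\ra})$ (by the definition of $\mt_{\beta}$ and Corollary \ref{ncGr}), and by Theorem \ref{qK-int} 1) this coincides under $\Psi$ with the right $Q^{\beta}$-multiplication on $qK_{\bH}(\sB)_{\lo}$; this gives the full property (1). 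Property (2) then follows by comparing Theorem \ref{K-id} ($\Phi(\xi \odot_q \phi_i) = \Xi(-\varpi_i)\Phi(\xi)$) with the formula $\Psi(A^{-\varpi_i}\eta) = \Xi(-\varpi_i)\Psi(\eta)$ already established.

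Finally, to conclude that $\Psi^{-1}\circ\Phi$ is an isomorphism, I would invoke Theorem \ref{reconst} 4): $qK_{G}(\sB)$ is generated as a ring by $R(G)$, $\C Q^{\vee}$, and quantum multiplications by $\{[\cO_{\sB}(-\varpi_i)]\}_{i \in \tI}$. After $\bH$-equivariantization and localization, this says that $qK_{\bH}(\sB)_{\lo}$ is the $\C_q[H] \otimes \C_q Q^{\vee}$-span of iterated $A^{-\varpi_i}$-images of $[\cO_{\sB}]$. Under $\Psi$ and $\Phi$, this generating set corresponds to the $\C_q[H] \otimes \C_q Q^{\vee}$-span of iterated $\phi_i$-images of $[\cO_{\Gr_G(0)}]$, which exhausts $K_{\bH}(\Gr_G)_{\lo}$ by Lemma \ref{Heis-comp}: the relation $\xi_i \circ \phi_i = \mathrm{id} - \mt_{\alpha_i^{\vee}}$ lets us express $\xi_i$ in terms of $\phi_i^{-1}$ and $\mt_{\alpha_i^{\vee}}$, valid in the localization where $\phi_i$ becomes invertible. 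The main anticipated obstacle is the interplay between the completion $K_{\bG}(\bQ_G^{\ra})$ and the finite-type localization $K_{\bG}(\Gr_G)_{\lo}$: while $\Xi(-\varpi_i)$ is an automorphism of the former, we must verify that it restricts appropriately to the latter, which is handled by Corollary \ref{fundC} together with the basis description in Theorem \ref{dcf}.
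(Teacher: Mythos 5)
Your overall strategy is the same as the paper's: realize both $K_{\bH}(\Gr_G)_\lo$ (via $\Phi$ from Corollary \ref{ncGr}) and $qK_{\bH}(\sB)_\lo$ (via a $\C_q[H]$-extension $\Psi$ of $\Psi_G$ from Theorem \ref{qK-int}) as $\sH_q$-submodules of $K_{\bH}(\bQ_G^{\ra})$ and match them. However, the paper deliberately does \emph{not} re-derive the Schubert-class matching; it simply cites \cite[Theorem 4.1 and its proof]{Kat18c}, where the heavy lifting (the commutative, $\bG$-level Peterson isomorphism and its behavior on Schubert bases) was already done, and then quotes Corollary \ref{ncGr} for the passage to $\bH$-equivariance. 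Item (2) is then immediate from $\Phi(\xi\odot_q\phi_i) = \Xi(-\varpi_i)\Phi(\xi)$. Your plan attempts to reprove the Schubert matching from scratch, which is a much bigger task than the proposal acknowledges: the operators $D_i$ move \emph{up} the semi-infinite order toward $e$, so $[\cO_{\bQ_G(e)}]$ is not a cyclic vector under $\sH_q^0$ for the span of $\{[\cO_{\bQ_G(u)}]\}_{u\in W}$; to produce the other Schubert classes one must combine $\Xi(-\varpi_i)$ with Lemma \ref{div}-type extraction and then verify that the $Q$-corrections match under $\Psi$, which is exactly what the cited result supplies.

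There is also a genuine error at the end. You assert that the relation $\xi_i\circ\phi_i = \mathrm{id}-\mt_{\al_i^{\vee}}$ ``lets us express $\xi_i$ in terms of $\phi_i^{-1}$ and $\mt_{\al_i^{\vee}}$, valid in the localization where $\phi_i$ becomes invertible.'' But $\phi_i$ is \emph{not} invertible in $K_{\bG}(\Gr_G)_\lo$: the localization $(\cdot)_\lo$ inverts only the translations $\mt_\beta$ ($\beta\in\bX_*^-$), and the would-be inverse $\Xi(\varpi_i)$ takes $[\cO_{\bQ_G(e)}]$ to $[\cO_{\bQ_G(e)}(\varpi_i)] = C(\varpi_i)\sum_{m\geq 0}Q^{m\al_i^{\vee}}$, an infinite series (Corollary \ref{fundC}), which lies outside $\mathrm{Im}\,\Phi$. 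The entire content of Lemma \ref{Heis-comp} is that the \emph{regularized} operator $\xi_i = (1-Q^{\al_i^{\vee}})\Xi(\varpi_i)$, rather than $\Xi(\varpi_i) = \phi_i^{-1}$ itself, is a well-defined endomorphism of $K_{\bG}(\Gr_G)_\lo$. Consequently, your claim that iterated $\phi_i$-images already exhaust $K_{\bH}(\Gr_G)_\lo$ is unsupported by the argument you give; the true basis involves both $\phi_i$- and $\xi_i$-iterations (cf.\ the proof of Proposition \ref{resGH}, where the basis $\{\Phi_G^{-1}(C_\la)\}_{\la\in\La}$ is built from both). If you want to establish surjectivity this way rather than via \cite{Kat18c}, you must argue directly that $\xi_i[\cO_{\Gr_G(0)}]$ lies in the $R(\bH)\otimes\C_q Q^{\vee}$-span of the $\phi$-iterations, which does not follow from the relation you quote.
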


\begin{proof}
The existence of the isomorphism and the first item follows from Corollary \ref{ncGr} and \cite[Theorem 4.1 and its proof]{Kat18c}. The second item is a consequence of the identification of $\phi_i$ with $\Xi ( - \varpi_i )$ under $\Phi$.
\end{proof}

\begin{prop}\label{resGH}
We have a $\C_q$-algebra embedding
$$K_{\bG} ( \Gr_G )_\lo \hookrightarrow K_{\bH} ( \Gr_H )$$
given by $\mt_{\gamma} \mapsto \mt_{\gamma}$ $(\gamma \in \bX_*)$, $e^{\la} \mapsto e^{\la}$ $(\la \in \bX^* ( G ) )$, and
$$\phi_i \mapsto e^{- \varpi_i}, \xi_i \mapsto ( 1 - \mt_{\al_i^{\vee}} ) e^{\varpi_i} \hskip 2mm (i \in \tI).$$
\end{prop}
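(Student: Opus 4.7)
\medskip

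\noindent\emph{Plan of proof.} I reduce to $G$ simple and simply connected using the decomposition $G\cong[G,G]\times H'$, so that $\bX_*=Q^\vee$ and both sides of the proposed map factor identically through $K_{\bH'}(\Gr_{H'})$. For well-definedness it suffices to verify that the images of the generators obey the relations of Lemma \ref{Heis-comp} together with the commutation $\phi_i\mt_\gamma=q^{\langle\gamma,\varpi_i\rangle}\mt_\gamma\phi_i$, transported from Lemma \ref{csq} via $\phi_i=\Xi(-\varpi_i)$ and $\mt_\gamma=Q^\gamma$. Each relation reduces to a short manipulation in $K_\bH(\Gr_H)$ governed by $\mt_\gamma e^\la=q^{\langle\gamma,\la\rangle}e^\la\mt_\gamma$: for example $\xi_i\phi_i\mapsto(1-\mt_{\al_i^\vee})e^{\varpi_i}e^{-\varpi_i}=1-\mt_{\al_i^\vee}$ is immediate, $\phi_i\xi_i\mapsto e^{-\varpi_i}(1-\mt_{\al_i^\vee})e^{\varpi_i}=1-q\mt_{\al_i^\vee}$ because $\langle\al_i^\vee,\varpi_i\rangle=1$, and the $i\ne j$ cross-commutations follow from $\langle\al_j^\vee,\varpi_i\rangle=0$.

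For injectivity I plan to use the Corollary following Lemma \ref{Heis-comp}, according to which the elements
\[
e_\la:=\Big(\prod_{\langle\al_i^\vee,\la\rangle<0}\xi_i^{-\langle\al_i^\vee,\la\rangle}\Big)\Big(\prod_{\langle\al_j^\vee,\la\rangle>0}\phi_j^{\langle\al_j^\vee,\la\rangle}\Big)[\cO_{\Gr_G(0)}],\qquad\la\in\La,
\]
are $\C_q Q^\vee$-linearly independent and the relations of Lemma \ref{Heis-comp} are the only ones holding among $\{\phi_i,\xi_i\}$. Using $\xi_i\phi_i=1-\mt_{\al_i^\vee}$ together with the commutation relations, every $x\in K_\bG(\Gr_G)_\lo$ can be normal-ordered into a finite sum $x=\sum_\la c_\la e_\la$ with $c_\la\in\C_q[\bX_*]$. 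A short induction using $(1-\mt_{\al_i^\vee})e^{\varpi_i}=e^{\varpi_i}(1-q\mt_{\al_i^\vee})$ yields $\xi_i^b\mapsto\prod_{k=0}^{b-1}(1-q^{-k}\mt_{\al_i^\vee})e^{b\varpi_i}$ and $\phi_j^a\mapsto e^{-a\varpi_j}$, so the image of $e_\la$ equals $p_\la(\mt)\,e^\la$ for a nonzero polynomial $p_\la$ in $\{\mt_{\al_i^\vee}\}_{i\in\tI}$. Since $\{e^\la\}_{\la\in\bX^*}$ is a left $\C_q[\bX_*]$-basis of $K_\bH(\Gr_H)$ and $\C_q[\bX_*]$ is an integral domain, the vanishing of the image $\sum_\la(c_\la p_\la)e^\la$ forces $c_\la p_\la=0$, whence $c_\la=0$ for every $\la$ and therefore $x=0$.

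The main obstacle is the normal-ordering step—justifying that the relations used above suffice to write every product of generators as a $\C_q[\bX_*]$-combination of $\{e_\la\}_\la$. This rests on the minimality clause in the Corollary after Lemma \ref{Heis-comp}, which ultimately descends from the explicit description of $\phi_i,\xi_i$ as the operators $\Xi(-\varpi_i)$ and $(1-Q^{\al_i^\vee})\Xi(\varpi_i)$ on $K_\bG(\bQ_G^\ra)$ via $\Phi$ of Theorem \ref{K-id}.
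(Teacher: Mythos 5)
Your verification that the images of $\phi_i,\xi_i,\mt_\gamma,e^\la$ satisfy the relations of Lemma~\ref{Heis-comp}, and your observation that the image of $e_\la$ is a nonzero element of the form $p_\la(\mt)\,e^\la$ in the Heisenberg algebra $K_\bH(\Gr_H)$, are both correct and match the easy part of the paper's argument. However, there is a genuine gap at the step you call ``normal-ordering.'' You assert that every $x\in K_\bG(\Gr_G)_\lo$ can be written as a finite sum $\sum_\la c_\la e_\la$ with $c_\la\in\C_q[\bX_*]$, but this presupposes that $\mt_\gamma,\ e^\la,\ \phi_i,\ \xi_i$ \emph{generate} $K_\bG(\Gr_G)_\lo$ as a $\C_q$-algebra --- which is precisely the nontrivial content of the proof (it is exactly Corollary~\ref{genKG}, which the paper derives \emph{from} the proof of Proposition~\ref{resGH}, so invoking it here is circular). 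The unnamed corollary following Lemma~\ref{Heis-comp} that you lean on gives only $\C_q Q^\vee$-linear \emph{independence} of the $e_\la$ and minimality of the Heisenberg relations; linear independence plus no-extra-relations never implies that the subalgebra they generate is everything. Your ``short induction'' normal-orders a product that is already known to lie in this subalgebra, which does not help.

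The paper closes this gap by a substantial computation on the semi-infinite side: it first proves via the telescoping identity $\bigl(\prod_{j=0}^{m-1}(1-q^{-j}Q^{\al_i^\vee})\bigr)\Xi(m\varpi_i)=\bigl((1-Q^{\al_i^\vee})\Xi(\varpi_i)\bigr)^m$, Corollary~\ref{fundC}, Lemma~\ref{csq}, and the Pieri--Chevalley rule of~\cite{KNS17} that the image $\Phi_G(e_\la)$ is a joint eigenvector of the $D_{t_\gamma}$ ($\gamma\in Q^\vee_<$), then runs a leading-term induction (first for $\la\in\La_+$, then for general $\la\in\La$ via the $W$-orbit argument and \cite[Corollary~3.15]{NOS18}) to pin down $\Phi_G(e_\la)=C(\la)$ in the notation of Theorem~\ref{X-weight}. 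Only after this identification does Theorem~\ref{X-weight} together with Theorem~\ref{dcf} (via Theorem~\ref{K-id}) yield that $\{\Phi_G^{-1}(C_\la)\}_\la$ is a $\C_q Q^\vee$-basis of $K_\bG(\Gr_G)_\lo$, which gives both generation and (via the distinct eigenvalues) injectivity in one stroke. You would need to reproduce this identification $e_\la = \Phi_G^{-1}(C(\la))$ and the basis statement to make your argument complete; nothing in the Heisenberg relations alone can substitute for it.
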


\begin{rem}
{\bf 1)} Taking Theorem \ref{K-id} into account, Proposition \ref{resGH} follows as the symmetrization of a result of Daniel Orr \cite[(0.2) and Theorem 5.1]{Orr20} when $G$ is simple of types $\mathsf{ADE}$; {\bf 2)} By taking the $q = 1$ specialization, this embedding becomes an embedding of commutative algebras that gives rise to an isomorphism between their fraction fields.
\end{rem}

\begin{proof}[Proof of Proposition \ref{resGH}]
The element $e^{\la}$ $(\la \in \bX^* ( G ))$ and $t_{\gamma}$ ($\gamma \in \bX_* ( G )$) generates a common subalgebras of the both sides. If we add these elements to the case of $G =[G,G]$, then we obtain the whole embedding. Thus, we can assume that $G$ is simple. 
 
The commutation relation is preserved by a direct calculation. Thus, it remains to see that the elements in Proposition \ref{resGH} generates the whole $K_{\bG} ( \Gr_G )_\lo$. We have
\begin{align*}
\left( \prod_{j = 0}^{m-1} ( 1 - q^{-j} Q^{\al_i^{\vee}}) \right) \Xi ( m \varpi_i ) & = \left( \prod_{j = 0}^{m-1} ( 1 - q^{-j} Q^{\al_i^{\vee}}) \right) \Xi ( \varpi_i )^m\\
= ( 1 - Q^{\al_i^{\vee}}) \Xi ( \varpi_i ) & \left( \prod_{j = 0}^{m-2} ( 1 - q^{-j} Q^{\al_i^{\vee}}) \right) \Xi ( \varpi_i )^{m-1}\\
& = \cdots\\
& = \left( ( 1 - Q^{\al_i^{\vee}}) \Xi ( \varpi_i ) \right)^m.
\end{align*}

The Pieri-Chevalley rule \cite[Theorem 5.13]{KNS17} is $\C [\bH]$-linear, and the action of $\Xi ( \varpi_i )$ sends the Schubert class $[\cO_{\bQ ( w )}]$ ($w \in W_\af$) to a possibly infinite sum
$$e^{\mu} [\cO_{\bQ ( v )}] \hskip 5mm w \ge_\si v \in W_\af, \mu \in \Sigma ( \varpi_i ).$$
In view of Corollary \ref{fundC}, the action of $( 1 - Q^{\al_i^{\vee}})\Xi ( \varpi_i )$ sends the Schubert class $\cO_{\bQ ( e )}$ to a linear combination of
$$e^{v \varpi_i} [\cO_{\bQ ( v )}] \hskip 5mm v \in W$$
modulo the formal sum of $e^{\mu} [\cO_{\bQ ( v )}]$ for $\mu \in \Sigma_* ( \varpi_i )$ and $v \in W_\af$. In addition, the term of the shape $e^{\varpi_i} [\cO_{\bQ ( v )}]$ must be $e^{\varpi_i} [\cO_{\bQ ( e )}]$ by inspection (using Lemma \ref{Lei2}).

We have $[Q^{\al^{\vee}_i}, \Xi ( \pm \varpi_j )] = 0$ for $i \neq j$ (Lemma \ref{csq}). In view of Theorem \ref{dcf} and the fact that $Q^{\beta}$ ($\beta \in Q^{\vee}$) commutes with the $\sH_q$-action, we deduce that
\begin{equation}
\left( \prod_{i \in \tI, \left< \al_i^{\vee}, \la \right> < 0} \Xi ( - \varpi_i ) ^{- \left< \al_i^{\vee}, \la \right>} \right) \prod_{i \in \tI, \left< \al_i^{\vee}, \la \right> > 0} \left( ( 1 - Q^{\al_i^{\vee}}) \Xi ( \varpi_i ) \right)^{\left< \al_i^{\vee}, \la \right>}[\cO_{\bQ ( e )}]\label{Cpres}
\end{equation}
is a (joint) eigenfunctions of $D_{t_{\gamma}}$ ($\gamma \in Q^{\vee}_<$). By Theorem \ref{X-weight}, we deduce that the $\C _q$-coefficient of the term $e^{\mu}[\cO_{\bQ ( w )}]$ ($w \in W_\af$) in (\ref{Cpres}) is non-zero only if $\mu \in \Sigma ( \la )$, and the class (\ref{Cpres}) is uniquely determined by the $\C_q$-coefficients of $e^{\la}[\cO_{\bQ ( t_{\beta} )}]$ for all $\beta \in Q^{\vee}$.

We first examine the case $\la \in \La_+$. Since $\la \in \Sigma ( \la )$ is an extremal point, we find that $( \la + \varpi_i) \in \Sigma ( \la + \varpi_i )$ is attained uniquely as the sum of elements from $\Sigma ( \la )$ and $\Sigma ( \varpi_i )$ whenever $\la \in \La_+$ (namely the sum of $\la \in \Sigma ( \la )$ and $\varpi_i \in \Sigma ( \varpi_i )$). From this, we find that the $\C _q$-coefficient of the term $e^{\la}[\cO_{\bQ ( w )}]$ ($w \in W_\af$) is just one for $w = e$ and it is zero for $w \neq e$ by induction from the case $\la = 0 \in \La_+$. Since the both sides are (joint) eigenfunctions of $D_{t_{\gamma}}$ ($\gamma \in Q^{\vee}_<$) with common (joint) eigenvalues whose coefficients of $e^{\la}[\cO_{\bQ ( t_{\beta} )}]$ ($\beta \in Q^{\vee}$) are the same, we conclude
\begin{align*}
C_{\la} = \left( \prod_{i \in \tI} ( ( 1 - Q^{\al_i^{\vee}}) \Xi ( \varpi_i ) )^{\left< \al_i^{\vee}, \la \right>}\right) [\cO_{\bQ_G ( e )}] \hskip 5mm \la \in \La_+
\end{align*}
by Theorem \ref{X-weight}.

Now we consider general $\la \in \La$. Find $\tJ \subset \tI$, $\la_+ \in \La_+^{(\tI \setminus \tJ)}$, and $\la_- \in \La_+^\tJ$ such that $\la = \la_+ - \la_-$. When $\la_- = 0$, then the weight $e^{\la_+}$ appears only as a coefficient of $[\cO_{\bQ ( e )}]$ in $C_{\la_+}$ by the previous paragraph. If we want to represent $\la \in \La$ by a sum of elements from $\Sigma ( \la_+ )$ and $\Sigma ( - \la _- ) = \Sigma ( - w_0 ^{\tJ} \la_- )$, then we have necessarily $\la = \la_+ - \la_-$ since $\la$ belongs to the same $W$-orbit as $\la_+ - w_0 ^{\tJ} \la_- \in \La_+$. The coefficient of $e^{- \la_-}[\cO_{\bQ ( t_{\beta} ) }]$ in $C_{- \la_-}$ is one if $\beta = 0$, and zero if $\beta \neq 0$ by \cite[Corollary 3.15]{NOS18} (note that the set of paths $\mathrm{QLS} ( \la_- )$ contains a unique path whose weight is of the form $q^* e^{\la_-}$ since it represents the character of a local Weyl module, and such a path contributes to $[\cO_{\bQ ( e )}]$ only once by the shape of the formula). It follows that the coefficient of $e^{\la}[\cO_{\bQ ( t_{\beta} ) }]$ in $C_{\la}$ is one if $\beta = 0$, and zero if $\beta \neq 0$. Therefore, we conclude that (\ref{Cpres}) must be $C_{\la}$ for every $\la \in \La$.

It follows that
$$\Phi_G^{-1} ( C_\la ) = \left( \prod_{i \in \tI, \left< \al_i^{\vee}, \la \right> <0} \xi_i^{- \left< \al_i^{\vee}, \la \right>} \right) \left( \prod_{i \in \tI, \left< \al_i^{\vee}, \la \right> > 0} \phi_i^{\left< \al_i^{\vee}, \la \right>} \right) [\cO_{\Gr_G ( 0 )}] \in K_{\bG} ( \Gr_G )_\lo.$$
By Theorem \ref{X-weight} and Theorem \ref{K-id} (cf. Corollary \ref{ncGr}), one sees that $\{\Phi_G^{-1} ( C_\la ) \}_{\la \in P}$ forms a $\C_q Q^{\vee}$-basis of $K_{\bG} ( \Gr_G )_\lo$. Thus, the elements in the assertion generates the whole $K_{\bG} ( \Gr_G )_\lo$, and we have the desired inclusion.
\end{proof}

\begin{cor}\label{genKG}
The $\C_q$-algebra $K_{\bG} ( \Gr_G )_\lo$ is generated by $\mt_{\gamma}$ $(\gamma \in \bX_*)$, $e^{\la}$ $(\la \in \bX^* ( G ) )$, and $\phi_i, \xi_i$ $(i \in \tI)$. \hfill $\Box$
\end{cor}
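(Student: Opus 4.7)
The plan is to extract the statement directly from the proof of Proposition~\ref{resGH}, since the generation claim is essentially what that proof establishes on the way to constructing the embedding into $K_\bH(\Gr_H)$. No new input is needed; the corollary is a bookkeeping consequence.

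First I would reduce to the simple case. For general connected reductive $G$, decompose $G \cong [G,G] \times H'$; then $K_{\bH'}(\Gr_{H'})$ appears as a $\C_q$-tensor factor of $K_\bG(\Gr_G)_\lo$ and is a Heisenberg algebra generated by $\{\mt_\gamma\}_{\gamma \in \bX_*(G)}$ together with $\{e^\la\}_{\la \in \bX^*(G)}$. These are precisely the extra generators appearing in the corollary beyond the simple-case list $\{\phi_i, \xi_i\}_{i \in \tI} \cup \{\mt_\gamma\}_{\gamma \in Q^\vee}$, so it suffices to treat $G$ simple.

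Assuming $G$ is simple, the proof of Proposition~\ref{resGH} exhibits, for each $\la \in \La$, the explicit element
$$\Phi_G^{-1}(C_\la) \;=\; \Bigl(\prod_{i:\langle \al_i^\vee,\la\rangle<0} \xi_i^{-\langle \al_i^\vee,\la\rangle}\Bigr)\Bigl(\prod_{i:\langle \al_i^\vee,\la\rangle>0} \phi_i^{\langle \al_i^\vee,\la\rangle}\Bigr)[\cO_{\Gr_G(0)}] \;\in\; K_\bG(\Gr_G)_\lo,$$
and shows that $\{\Phi_G^{-1}(C_\la)\}_{\la \in \La}$ is a $\C_q Q^\vee$-basis of $K_\bG(\Gr_G)_\lo$. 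Under the isomorphism $\End_{\sH_q^\sh}(K_\bG(\Gr_G)_\lo) \cong K_\bG(\Gr_G)_\lo$ of Lemma~\ref{e-char}, right $\odot_q$-multiplication corresponds to composition of endomorphisms, so each $\Phi_G^{-1}(C_\la)$ is literally a monomial (under $\odot_q$) in the generators $\phi_i, \xi_i$ applied to the unit $[\cO_{\Gr_G(0)}] = 1$. Moreover the $Q^\vee$-coefficient action on $K_\bG(\Gr_G)_\lo$ is implemented by right $\odot_q$-multiplication by $\mt_\beta$ ($\beta \in Q^\vee$), as follows from Corollary~\ref{ncGr}(2) combined with Theorem~\ref{dcf}. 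Therefore every element of $K_\bG(\Gr_G)_\lo$ is a finite $\C_q$-linear combination of $\odot_q$-monomials in $\{\phi_i, \xi_i\}_{i \in \tI}$ and $\{\mt_\gamma\}_{\gamma \in Q^\vee}$, which proves the simple case and, combined with the reduction above, the corollary.

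There is no serious obstacle here: the heavy lifting — constructing the basis $\{\Phi_G^{-1}(C_\la)\}$ and identifying it with explicit monomials in $\phi_i,\xi_i$ — is precisely the content of the proof of Proposition~\ref{resGH}. The only step that merits an explicit mention is the translation between the $Q^\vee$-coefficients on the semi-infinite side and the right $\mt_\gamma$-action on $K_\bG(\Gr_G)_\lo$ via $\Phi_G$, which upgrades a $\C_q Q^\vee$-spanning set to an algebraic generating set in the sense of the corollary.
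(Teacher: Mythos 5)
Your proposal is correct and matches the paper's intent exactly: the corollary carries a terminal $\Box$ with no separate proof precisely because it is, as you say, a bookkeeping consequence of the proof of Proposition~\ref{resGH}, where the elements $\Phi_G^{-1}(C_\la)$ are exhibited as monomials in $\phi_i,\xi_i$ applied to $[\cO_{\Gr_G(0)}]$ and shown to form a $\C_q Q^\vee$-basis. Your reduction to the simple case and the translation of $Q^\vee$-coefficients into right $\mt_\gamma$-multiplication are both the same steps the paper takes implicitly.
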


\begin{cor}\label{absDarboux}
We have a $\C_q$-algebra embedding
$$K_{\bG} ( \Gr_G ) \hookrightarrow K_{\bH} ( \Gr_H )$$
obtained by the restriction of the domain in Proposition $\ref{resGH}$. \hfill $\Box$
\end{cor}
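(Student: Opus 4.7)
The strategy is to derive Corollary \ref{absDarboux} as an immediate restriction of the embedding constructed in Proposition \ref{resGH}. The only additional ingredient needed is the fact, already observed in the paragraph just after Theorem \ref{LSS-formula}, that $K_{\bG}(\Gr_G)$ sits inside its localization $K_{\bG}(\Gr_G)_\lo$ as a $\C_q$-subalgebra: by Theorem \ref{LSS-formula}, the right $\odot_q$-multiplication by any $[\cO_{\Gr_G(\beta)}]$ with $\beta \in \bX_*^-$ is torsion-free on $K_{\bG}(\Gr_G)$, so the canonical map $K_{\bG}(\Gr_G) \hookrightarrow K_{\bG}(\Gr_G)_\lo$ is an injective homomorphism of $\C_q$-algebras.

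Given this inclusion, I would simply compose it with the $\C_q$-algebra embedding $K_{\bG}(\Gr_G)_\lo \hookrightarrow K_{\bH}(\Gr_H)$ produced by Proposition \ref{resGH}. The resulting composition is injective as a composite of injections and is a $\C_q$-algebra homomorphism since both factors are. By the explicit formulas of Proposition \ref{resGH}, its image on the Schubert class $[\cO_{\Gr_G(\beta)}]$ ($\beta \in \bX_*^-$) is $\mt_\beta \in K_{\bH}(\Gr_H)$, while $e^{\la}$ ($\la \in \bX^*(G)$) is fixed; hence the values of the generators of $K_{\bG}(\Gr_G)$ lie in $K_{\bH}(\Gr_H)$ as required.

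There is no real obstacle: the entire work has already been accomplished in Proposition \ref{resGH}, whose target is $K_{\bH}(\Gr_H)$ itself rather than any localization of it. This last point is what makes the corollary free once Proposition \ref{resGH} is in place; it is automatic since $H$ is a torus, so each $\mt_{\gamma}$ ($\gamma \in \bX_*$) is already a basis element of $K_{\bH}(\Gr_H)$ with multiplicative inverse $\mt_{-\gamma}$, and no further inversion is needed on the right-hand side. Thus restricting the domain of Proposition \ref{resGH} to the $\C_q$-subalgebra $K_{\bG}(\Gr_G)$ yields exactly the asserted embedding.
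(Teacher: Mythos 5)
Your proof is correct and follows exactly what the paper intends; the paper leaves this corollary unproved (the $\Box$ appears in the statement itself), treating it as immediate from Proposition \ref{resGH} together with the already-noted inclusion $K_{\bG}(\Gr_G) \subset K_{\bG}(\Gr_G)_\lo$. Your note that the target $K_{\bH}(\Gr_H)$ requires no localization because the $\mt_\gamma$ are already units is a helpful touch but not a new ingredient.
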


\section{Induction equivalence for flag varieties}

We work under the setting of \S \ref{sif}. In particular, $G$ is simple. The goal of this section is to present the following:

\begin{thm}\label{qK-surj}
Let $L = L ^\tJ$ be the standard Levi subgroup corresponding to $\tJ \subset \tI$. There is a $\C_q \bX^* ( G )$-linear surjective map
$$qK_{\bG} ( \sB )^{\wedge} \longrightarrow qK_{\bL} ( \sB^{\tJ} )^{\wedge}$$
sending $[\cO_{\sB}]$ to $[\cO_{\sB^{\tJ}}]$, and it intertwines the action of $A^{\pm \varpi_i}$ to the action of $A^{\pm \varpi_i}$ for each $i \in \tI$. In addition, the kernel of this map is generated by $Q^{-w_0 \al_i^{\vee}}$ for $i \in ( \tI \setminus \tJ )$.
\end{thm}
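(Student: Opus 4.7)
The plan is to define the map on the generators of $qK_\bG(\sB)^\wedge$ supplied by Theorem \ref{reconst} and then verify well-definedness using the Gromov--Witten characterization of relations given by the same theorem. The Darboux-type description of $K_\bG(\Gr_G)_\lo$ from Corollary \ref{genKG} and Proposition \ref{resGH}, together with the non-commutative Peterson isomorphism of Theorem \ref{ncPet}, supplies the calculational framework: both $K_\bG(\Gr_G)_\lo$ and $K_\bL(\Gr_L)_\lo$ sit inside the common Heisenberg algebra $K_\bH(\Gr_H)$, generated (together with $\mt_\gamma$ and $e^\la$) by Darboux pairs $(\phi_i,\xi_i)$ indexed by $\tI$ and $\tJ$ respectively.

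Concretely, I would define the candidate map by sending $[\cO_\sB]\mapsto[\cO_{\sB^\tJ}]$, restricting characters $R(G)\to R(L)$, setting $Q^{-w_0\al_i^\vee}\mapsto 0$ for $i\in\tI\setminus\tJ$, and intertwining the operators $A^{\pm\varpi_i}$: for $i\in\tJ$ as the honest quantum multiplication by $\cO_{\sB^\tJ}(\pm\varpi_i)$, and for $i\notin\tJ$ as the character twist by $e^{\pm\varpi_i}$ on $qK_\bL(\sB^\tJ)^\wedge$ (as in Example \ref{exToda}, which is legitimate since $\varpi_i$ is a character of $L^\tJ$ and $\cO_{\sB^\tJ}(\varpi_i)$ is equivariantly trivial). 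By Theorem \ref{reconst} these generate $qK_\bG(\sB)^\wedge$ as an algebra over $R(\bG)\otimes\C[\![Q^\vee_+]\!]$, so the map is fully determined; transported through the non-commutative Peterson isomorphism it reads as sending the Darboux pair $(\phi_i,\xi_i)$ to its $L$-side counterpart for $i\in\tJ$ and to the invertible pair $(e^{-\varpi_i},e^{\varpi_i})$ for $i\notin\tJ$.

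Well-definedness is the main content. By Theorem \ref{reconst} (5) it suffices to show that whenever a polynomial $f$ in the generators satisfies $f[\cO_\sB]=0$ in $qK_\bG(\sB)^\wedge$, its image $\bar f$ satisfies $\bar f[\cO_{\sB^\tJ}]=0$ in $qK_\bL(\sB^\tJ)^\wedge$. I would handle this by lifting to $K_\bG(\bQ_G^\ra)$ via Theorem \ref{qK-int} and exploiting the joint eigenclasses $C(\la)$ of Theorem \ref{X-weight} together with the Pieri--Chevalley expansion of Corollary \ref{fundC}: these pin down the action of $A^{\pm\varpi_i}$ in terms of Schubert classes on $\bQ_G^\ra$, enabling a direct weight-by-weight comparison with the analogous structure for $L$. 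Surjectivity is then automatic since the image contains the corresponding generators for the target. The principal obstacle is precisely this comparison: one must verify that Schubert classes $[\cO_{\bQ_G(w)}]$ indexed by $w\in W_\af$ lying outside the Levi subgroup $W^\tJ\ltimes Q^\vee_\tJ$ contribute only through the ideal generated by $Q^{-w_0\al_i^\vee}$, $i\notin\tJ$, which requires careful tracking of the semi-infinite Bruhat order $\le_\si$ and of the line-bundle twists $\Xi(\varpi_i)$ from Theorem \ref{HQc} and Theorem \ref{dcf}.
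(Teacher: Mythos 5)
Your proposal correctly identifies the generating framework (Theorem \ref{reconst}: the operators $A^{\pm\varpi_i}$, the Novikov variables and the character ring generate the quantum $K$-group, and relations are detected by Gromov--Witten pairings), and your definition of the candidate map on generators is the right one. However, the proposal stops exactly at the step where all the mathematical content lies and, more importantly, proposes to replace that step with a tool that the paper does not supply and that is not obviously available.

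Concretely, the paper's proof of Theorem \ref{qK-surj} rests on a genuine geometric input that your proposal never invokes: Lemma \ref{Q-ind}, the induction isomorphism $\sQ_G(\beta)\cong G\times_{P^\tJ}\sQ_{L^\tJ}(\beta)$ for $\beta\in -w_0Q^\vee_{\tJ,+}$, combined with the rational-singularities result Theorem \ref{rat}. This is what allows the author to compute the $G$-side pairing $\langle A^\la\widetilde f[\cO_\sB],[\cO_\sB]\rangle^{\GW}_G$ as the algebraic induction $D_{w_0}(\cdots)$ of the $L$-side pairing, modulo $(Q_i\mid i\in\tJ^\#)$, and thereby transfer relations through the GW-characterization of Theorem \ref{reconst}(5). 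Your proposal instead suggests lifting to $K_\bG(\bQ^{\ra}_G)$ via Theorem \ref{qK-int} and ``comparing weight-by-weight'' using $C(\la)$, Corollary \ref{fundC}, and the semi-infinite Bruhat order. But there is no analogue of Lemma \ref{Q-ind} established for the semi-infinite flag manifolds $\bQ^\ra_G$ and $\bQ^\ra_{L}$, and your ``principal obstacle''---showing that Schubert classes $[\cO_{\bQ_G(w)}]$ with $w$ outside $W^\tJ\ltimes Q^\vee_\tJ$ only contribute through the ideal $(Q^{-w_0\al^\vee_i})$---is not a technical loose end but is essentially equivalent in difficulty to the theorem itself. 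Naming the obstacle does not overcome it.

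A second, related problem: you lean on the embeddings $K_\bG(\Gr_G)_\lo\subset K_\bL(\Gr_L)_\lo\subset K_\bH(\Gr_H)$ from \S 3 (Proposition \ref{resGH}, Corollary \ref{genKG}, Theorem \ref{ncPet}) as your ``calculational framework''. Those results are indeed independent of Theorem \ref{qK-surj}, so no circularity there; but they live in the \emph{localized} rings, where $\mt_\gamma$ (equivalently $Q^\gamma$) is invertible for all $\gamma$. The map of Theorem \ref{qK-surj} kills $Q^{-w_0\al^\vee_i}$ for $i\notin\tJ$, so it cannot factor through that localization. Your plan to transport the computation through the non-commutative Peterson isomorphism is therefore working in the wrong category: the Darboux framework of \S 3 produces inclusions of localized rings, not surjections of the unlocalized quantum $K$-groups. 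The paper's proof avoids this by never passing through the localization in this step, instead computing the GW pairings directly on quasi-map spaces where the Levi reduction is geometrically transparent.
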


Theorem \ref{qK-surj} is proved in subsection \S \ref{sec:qKs} via explicit calculation. We record more general result as Theorem \ref{qK-surj-gen}.

\subsection{Reductions of quasi-map spaces}

\begin{lem}\label{Q-ind}
Let $\beta \in -w_0 Q^{\vee}_{\tJ, +}$. We have an isomorphism
$$\sQ_{G} ( \beta ) \cong G \times_{P ^\tJ} \sQ _{L ^\tJ} ( \beta ),$$
where the the unipotent radical of $P ^\tJ$ acts on $\sQ _{L ^\tJ} ( \beta )$ trivially.
\end{lem}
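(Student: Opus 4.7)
\textbf{Proof plan for Lemma \ref{Q-ind}.}

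The plan is to exhibit a natural $G$-equivariant morphism $p:\sQ_G(\beta)\to G/P^\tJ$ via the Drinfeld--Pl\"ucker (DP) description (Theorem \ref{Dr}) and identify its fiber over the base point with $\sQ_{L^\tJ}(\beta)$. The starting observation is that $\beta\in-w_0 Q^{\vee}_{\tJ,+}$ forces $\langle w_0\beta,\varpi_i\rangle=0$ for every $i\in\tI\setminus\tJ$, since $-w_0\beta\in Q^{\vee}_{\tJ,+}$ pairs trivially with such $\varpi_i$. Consequently, in any DP-datum $\{(\psi_\lambda,\sL^\lambda)\}_{\lambda\in\Lambda_+}$ parametrizing a point of $\sQ_G(\beta)$, the line bundles $\sL^{\varpi_i}$ for $i\notin\tJ$ have degree zero and are therefore canonically trivial, with $\psi_{\varpi_i}$ determined (up to scalar) by a nonzero vector $v_i\in V(\varpi_i)$. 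The DP-compatibility restricted to $\Lambda^{(\tI\setminus\tJ)}_+$ is precisely the system of Pl\"ucker relations defining $G/P^\tJ\hookrightarrow\prod_{i\notin\tJ}\P(V(\varpi_i))$, so the assignment $(\psi_\lambda,\sL^\lambda)\mapsto([v_i])_{i\notin\tJ}$ produces the $G$-equivariant morphism $p$.

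Given $p$, transitivity of $G$ on $G/P^\tJ$ yields $\sQ_G(\beta)\cong G\times^{P^\tJ}p^{-1}(eP^\tJ)$. The fiber consists of DP-data with $\psi_{\varpi_i}(\cO_{\P^1})=\C\bv_{\varpi_i}\otimes\cO_{\P^1}$ for $i\notin\tJ$. Choosing $\lambda\in\Lambda^{(\tI\setminus\tJ)}_+$ strictly regular outside $\tJ$ and arbitrary $\mu\in\Lambda^\tJ_+$, the DP-identity
$$\psi_{\lambda+\mu}(\sL^{\lambda+\mu})=\psi_\lambda(\sL^\lambda)\otimes\psi_\mu(\sL^\mu)\subset V(\lambda+\mu)\otimes\cO_{\P^1}\subset V(\lambda)\otimes V(\mu)\otimes\cO_{\P^1}$$
forces $\psi_\mu(\sL^\mu)$ to lie in $\{v\in V(\mu):\bv_\lambda\otimes v\in V(\lambda+\mu)\}\otimes\cO_{\P^1}$. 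A highest-weight argument (using $f_j\bv_\lambda=0$ iff $j\in\tJ$, together with the coproduct expansion of $f\cdot(\bv_\lambda\otimes\bv_\mu)$) identifies this subspace with the irreducible $L^\tJ$-submodule $V^{L^\tJ}(\mu)=U(\mathfrak{l}^\tJ)\cdot\bv_\mu\subset V(\mu)$. The remaining data $\{(\psi_\mu,\sL^\mu)\}_{\mu\in\Lambda^\tJ_+}$ is then precisely a DP-datum for $L^\tJ$ of the matching degree, so $p^{-1}(eP^\tJ)\cong\sQ_{L^\tJ}(\beta)$.

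Triviality of the $U^\tJ$-action is immediate: since $U^\tJ\subset B$ and $L^\tJ$ normalizes $U^\tJ$, for any $l\in L^\tJ$ and $u\in U^\tJ$ we have $u\cdot lB=l(l^{-1}ul)B=lB$, so every point of $P^\tJ/B$ is $U^\tJ$-fixed. Hence $U^\tJ$ acts trivially on any quasi-map with image in $P^\tJ/B$, and the $P^\tJ$-action on the fiber factors through $L^\tJ$, as required to form the twisted product.

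I expect the main technical obstacle to be the middle step: namely, verifying scheme-theoretically that $\{v:\bv_\lambda\otimes v\in V(\lambda+\mu)\}=V^{L^\tJ}(\mu)$ as an equality (not merely an inclusion), promoting this to a functorial statement in families, and ensuring that the resulting DP-degrees of the fiber correctly label it as $\sQ_{L^\tJ}(\beta)$ with the paper's chosen indexing (which is why the condition $\beta\in-w_0 Q^{\vee}_{\tJ,+}$, rather than $\beta\in Q^{\vee}_{\tJ,+}$, appears naturally on both sides once the DP-degree formula $\deg\sL^\lambda=\langle w_0\beta,\lambda\rangle$ is used consistently). A secondary point requiring care is that defect data on the $G$-side transfer cleanly to defect data for $L^\tJ$, which follows because the defect is already constrained to lie in $Q^{\vee}_\tJ$ once the $\lambda\notin\tJ$-line bundles are shown to be trivial.
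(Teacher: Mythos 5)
Your plan is correct and follows essentially the same route as the paper: both exploit that $\beta \in -w_0 Q^{\vee}_{\tJ,+}$ forces $\deg\sL^{\varpi_i}=\langle w_0\beta,\varpi_i\rangle=0$ for $i\notin\tJ$, so the Pl\"ucker relations in those directions produce a $G$-equivariant projection to $G/P^{\tJ}$, and then identify the fiber over the base point with $\sQ_{L^{\tJ}}(\beta)$ (the paper phrases the fiber identification directly via Pl\"ucker relations over $\C(\!(z)\!)$ rather than your explicit highest-weight computation $\{v:\bv_\lambda\otimes v\in V(\lambda+\mu)\}=V^{L^{\tJ}}(\mu)$, but the content is the same). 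The one device you do not anticipate, and which dissolves the ``scheme-theoretic / functorial in families'' worry you flag at the end, is that the paper avoids promoting the fiber identification to a moduli statement: it instead constructs the $G$-equivariant surjection $G\times_{P^{\tJ}}\sQ_{L^{\tJ}}(\beta)\to\sQ_G(\beta)$, observes it is a bijection and hence a homeomorphism between projective \emph{normal} varieties (normality is Theorem \ref{rat}), and concludes by Zariski's main theorem.
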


\begin{proof}
The definition of $\sQ _G ( \beta )$ is to consider a collection of $\C$-lines $\ell _\la$ in $V ( \la ) \otimes \C [z]$ for each $\la \in \La_+$ (cf. \cite[Lemma 3.28 and Theorem 3.30]{Kat18d}). In particular, such collections must satisfy the same relation as $\C (\!(z)\!)$-lines if we extend the scalar. By (\ref{Pemb}), we have $\ell_{\varpi_i} \in V ( \varpi_i ) \subset V ( \varpi_i ) \otimes \C (\!( z )\!)$ for $i \not\in \tJ$. Thanks to the Pl\"ucker relations (see e.g. \cite[Theorem 1.1.2]{BG}), we know that $\ell_{\varpi_i} \in G \bv_{\varpi_i}$ for $i \not\in \tJ$. Therefore, a point of $\sQ_{G} ( \beta )$ is $G$-conjugate to a point represented as a collection of $\C$-lines $\{\ell'_\la\}_{\la \in \La_+}$ such that $\ell'_{\varpi_i} = \C \bv_{\varpi_i}$ for $i \not\in \tJ$. By the Pl\"ucker relation (considered over the field $\C (\!(z)\!)$), it follows that $\ell'_{\varpi_j} \in L^\tJ (\!(z)\!) \bv_{\varpi_j}$ for $j \in \tJ$ in this case. This forces our point to belong to $\sQ _{L ^\tJ} ( \beta )$, with the trivial action of the unipotent radical of $P ^\tJ$. From these, we deduce a surjective homomorphism $G \times_{P ^ \tJ } \sQ _{L ^ \tJ } ( \beta ) \rightarrow \sQ _{G} ( \beta )$. Since the $G$-orbit of $\{ \C \bv_{\varpi_i} \}_{i \not\in \tJ}$ is $\sB_{\tJ}$, this map is a homeomorphism between projective normal varieties. It must be an isomorphism by the Zariski main theorem.  
\end{proof}

\begin{cor}\label{H-surj}
Keep the setting of Lemma \ref{Q-ind}. For each $\la \in \La_+$, we have a surjective $(P^\tJ$-module$)$ map
$$H^0 ( \sQ_{G} ( \beta ), \cO_{\sQ_{G} ( \beta )} ( \la ) ) \longrightarrow \!\!\!\!\! \rightarrow H^0 ( \sQ_{L ^ \tJ } ( \beta ), \cO_{\sQ_{L ^ \tJ } ( \beta )} ( \la ) ).$$
\end{cor}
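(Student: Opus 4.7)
The idea is to realise $\sQ_G(\beta) \to \sB_\tJ := G/P^\tJ$ as the fibre bundle furnished by Lemma \ref{Q-ind}, identify the restriction of sections with the counit of Frobenius reciprocity, and conclude by a dominance check on $L^\tJ$-highest weights. Decompose $\la = \la_\tJ + \la^\tJ$ with $\la_\tJ := \sum_{j \in \tJ} \langle \al_j^\vee, \la\rangle\varpi_j$ and $\la^\tJ := \sum_{i \not\in \tJ} \langle \al_i^\vee, \la\rangle\varpi_i$; the latter is a character of $P^\tJ$, so $\cO_{\sQ_G(\beta)}(\la^\tJ) = \pi^* \cO_{\sB_\tJ}(\la^\tJ)$. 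The projection formula then yields $\pi_* \cO_{\sQ_G(\beta)}(\la) = G \times_{P^\tJ} W$ as $G$-equivariant sheaves on $\sB_\tJ$, where $W := H^0(\sQ_{L^\tJ}(\beta), \cO(\la))$ is regarded as a $P^\tJ$-module via $P^\tJ \twoheadrightarrow L^\tJ$ (the unipotent radical of $P^\tJ$ acts trivially on $\sQ_{L^\tJ}(\beta)$, by Lemma \ref{Q-ind}). Consequently $H^0(\sQ_G(\beta), \cO(\la)) \cong \Ind^G_{P^\tJ} W$, and the restriction to the fibre over $eP^\tJ$ is precisely the counit $\epsilon : \Ind^G_{P^\tJ} W \to W$ of the Frobenius adjunction.

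Since $L^\tJ$ is reductive and we work in characteristic zero, $W$ decomposes as $\bigoplus_\mu V^\tJ(\mu)^{\oplus m_\mu}$, and $\epsilon$ is surjective if and only if the counit on each summand $\Ind^G_{P^\tJ} V^\tJ(\mu) \to V^\tJ(\mu)$ is surjective. By the parabolic Borel--Weil theorem, whenever $\mu$ is $G$-dominant (i.e.\ $\mu \in \La_+$) one has $\Ind^G_{P^\tJ} V^\tJ(\mu) \cong V_G(\mu)$ and the counit is the natural projection onto the highest $L^\tJ$-isotypic component of $V_G(\mu)$, which is surjective. The problem thus reduces to the claim that every $L^\tJ$-highest weight $\mu$ appearing in $W$ lies in $\La_+$.

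The main obstacle is this $G$-dominance claim. I would prove it by first establishing the weight bound that all $H$-weights of $H^0(\sQ_{L^\tJ}(\beta), \cO(\la_\tJ))$ lie in $\la_\tJ - \sum_{j \in \tJ} \Z_{\geq 0}\,\al_j$; this follows from the Pl\"ucker--Drinfeld embedding of $\sQ_{L^\tJ}(\beta)$ (Theorem \ref{Dr}) into $\prod_{j \in \tJ} \P(V^\tJ(\varpi_j) \otimes \C[z])$ with appropriate degree bounds, together with the realisation of $V^\tJ(\la_\tJ)$ as the top-weight quotient via the embedding $\sB^\tJ \hookrightarrow \sQ_{L^\tJ}(\beta)$. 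Twisting by the character $\la^\tJ$, any $L^\tJ$-highest weight $\mu$ of $W$ has the form $\mu = \la - \sum_{j \in \tJ} c_j\,\al_j$ with $c_j \geq 0$. For $i \in \tJ$, $L^\tJ$-dominance gives $\langle \al_i^\vee, \mu\rangle \geq 0$; for $i \not\in \tJ$,
\[
\langle \al_i^\vee, \mu\rangle \;=\; \langle \al_i^\vee, \la\rangle \;-\; \sum_{j \in \tJ} c_j\,\langle \al_i^\vee, \al_j\rangle \;\geq\; 0,
\]
using $\la \in \La_+$ and the non-positivity of the off-diagonal Cartan integers $\langle \al_i^\vee, \al_j\rangle$ for $i \neq j$. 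This yields the required $G$-dominance and completes the proof.
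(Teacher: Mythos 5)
Your proof is correct and follows essentially the same route as the paper's: both pass through the fibre bundle $\sQ_G(\beta) \cong G\times_{P^\tJ}\sQ_{L^\tJ}(\beta)$ from Lemma \ref{Q-ind}, identify $H^0(\sQ_G(\beta),\cO(\la))$ with the induced module via Leray, and reduce surjectivity of restriction to the fibre to $G$-dominance of the $L^\tJ$-highest weights appearing in $H^0(\sQ_{L^\tJ}(\beta),\cO(\la))$, which is then checked using the non-positivity of the off-diagonal Cartan integers and the weight bound on sections. The one small difference is the source of that weight bound: the paper pulls it from \cite{Kat18d} via the surjection $H^0(\bQ_{L^\tJ}(e),\cO(\la))\twoheadrightarrow H^0(\sQ_{L^\tJ}(\beta),\cO(\la))$, whereas you propose reading it off from the Pl\"ucker--Drinfeld embedding (Theorem \ref{Dr}), which works but elides the justification that sections on $\sQ_{L^\tJ}(\beta)$ are controlled by those of the ambient product of projective spaces.
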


\begin{proof}
In view of \cite[Theorem 3.33]{Kat18d}, we have a surjection
$$H^0 ( \bQ_{L ^ \tJ } ( e ), \cO_{\bQ_{L ^ \tJ } ( e )} ( \la ) ) \longrightarrow \!\!\!\!\! \rightarrow H^0 ( \sQ_{L ^ \tJ } ( \beta ), \cO_{\sQ_{L ^ \tJ } ( \beta )} ( \la ) ).$$
In view of \cite[Theorem 1.2]{Kat18d}, the $H$-weight of $H^0 ( \bQ_{L ^ \tJ } ( e ), \cO_{\bQ_{L ^ \tJ } ( e )} ( \la ) )$ is concentrated in $w_0 \la + Q^{\vee}_{\tJ,+}$. Since $\sQ_{L ^ \tJ } ( \beta )$ is stable under the $L ^ \tJ $-action, it follows that $H^0 ( \bQ_{L ^ \tJ } ( e ), \cO_{\bQ_{L ^ \tJ } ( e )} ( \la ) )$ is a direct sum of finite-dimensional irreducible $L ^ \tJ $-module. Since $\left< \al_i^{\vee}, \al_j \right> \le 0$ for every $i \in \tI \setminus \tJ$ and $j \in \tJ$ (and $\la \in \La_+$), every finite-dimensional irreducible $L ^ \tJ $-submodule in $H^0 ( \bQ_{L ^ \tJ } ( e ), \cO_{\bQ_{L ^ \tJ } ( e )} ( \la ) )$ is an irreducible $[L ^ \tJ , L ^ \tJ ]$-module twisted by a weight $\mu$ such that $\left< \al_i^{\vee}, \mu \right> \le 0$ for every $i \in ( \tI \setminus \tJ )$. It follows that
$$H^0 ( \sQ_{L ^ \tJ } ( \beta ), \cO_{\sQ_{L ^ \tJ } ( \beta )} ( \la ) )^* \hookrightarrow H^0 ( G / P ^ \tJ , \mathcal V )^*,$$
where $\mathcal V$ is the $G$-equivariant vector bundle obtained by inflating the $P ^ \tJ $-module $H^0 ( \sQ_{L ^ \tJ } ( \beta ), \cO_{\sQ_{L ^ \tJ } ( \beta )} ( \la ) )$. By the Leray spectral sequence, we have
$$H^0 ( G / P ^ \tJ , \mathcal V ) \cong H^0 ( \sQ_{G} ( \beta ), \cO_{\sQ_{G} ( \beta )} ( \la ) ).$$
Therefore, we conclude
$$H^0 ( \sQ_{G} ( \beta ), \cO_{\sQ_{G} ( \beta )} ( \la ) ) \cong H^0 ( G / P ^ \tJ , \mathcal V ) \longrightarrow \!\!\!\!\! \rightarrow H^0 ( \sQ_{L ^ \tJ } ( \beta ), \cO_{\sQ_{L ^ \tJ } ( \beta )} ( \la ) )$$
as desired.
\end{proof}

Let $\g [z] := \g \otimes \C [z]$ be the Lie algebra obtained by scalar extension. Each $\la \in \La_+$ defines a $\g [z]$-module $\bW _G ( \la )$ that is the global Weyl module in the sense of \cite{CP01}. By expressing $\la \in \La_+$ as the sum $\la = \la^{(1)} + \la^{(2)}$ of $\la^{(1)} \in \La_+^{\tJ}$ and $\la^{(2)} \in \La^{\tI \setminus \tJ}$, we have the corresponding global Weyl module $\bW _{[L ^ \tJ, L ^ \tJ]} ( \la^{(1)} )$ of $[\mathfrak{l}^{\tJ}, \mathfrak{l}^{\tJ}][z]$ (by taking the external tensor product of the global Weyl modules for all simple factors of $[L^{\tJ}, L^{\tJ}]$). We define
$$\bW _{L ^ \tJ} ( \la ) := \bW _{[L ^ \tJ, L ^ \tJ]} ( \la^{(1)} ) \otimes \C_{\la^{(2)}},$$
that is a $( [\mathfrak{l}^{\tJ}, \mathfrak{l}^{\tJ}][z] + \h )$-module.

\begin{cor}\label{inclWeyl}
For each $\la \in \La_+$, we have an inclusion $\bW _{L ^ \tJ} ( \la ) \subset \bW _G ( \la )$ between global Weyl modules.
\end{cor}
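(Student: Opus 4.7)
\smallskip

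My plan is to reinterpret Corollary \ref{H-surj} on the representation-theoretic side via the well-known duality (established in \cite{Kat18d}) between global Weyl modules and spaces of sections of line bundles on semi-infinite flag manifolds. Concretely, the graded dual of $\bW_G(\la)$ is naturally isomorphic to $H^0(\bQ_G(e), \cO_{\bQ_G(e)}(\la))$ as a $(\mathfrak{n}^-[z] + \h)$-module (the character bound $w_0\la + Q^\vee_+$ recalled in the proof of Corollary \ref{H-surj} is the shadow of this identification), and similarly for the Levi: the graded dual of $\bW_{L^\tJ}(\la) = \bW_{[L^\tJ,L^\tJ]}(\la^{(1)}) \otimes \C_{\la^{(2)}}$ is identified with $H^0(\bQ_{L^\tJ}(e), \cO_{\bQ_{L^\tJ}(e)}(\la))$ equipped with the corresponding $([\mathfrak{l}^\tJ,\mathfrak{l}^\tJ][z] + \h)$-action. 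Under these identifications, the inclusion we want to produce is the dual of a surjection $H^0(\bQ_G(e), \cO(\la)) \twoheadrightarrow H^0(\bQ_{L^\tJ}(e), \cO(\la))$ between section spaces.

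To produce this surjection, I would take the direct limit of Corollary \ref{H-surj} as $\beta$ grows in $-w_0 Q^\vee_{\tJ,+}$. The surjection $H^0(\bQ_{L^\tJ}(e), \cO(\la)) \twoheadrightarrow H^0(\sQ_{L^\tJ}(\beta), \cO(\la))$ from \cite[Theorem 3.33]{Kat18d} (already used in the proof of Corollary \ref{H-surj}) interchanges the limit of section spaces of $\sQ_{L^\tJ}(\beta)$ with the sections on $\bQ_{L^\tJ}(e)$, and analogously for $G$. The compatibility of the surjections in Corollary \ref{H-surj} along increasing $\beta$ follows from the compatibility of the Leray argument there with the natural stabilization maps $\sQ(\beta) \hookrightarrow \sQ(\beta')$. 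Passing to the limit yields the claimed surjection of sections, and dualizing produces the inclusion $\bW_{L^\tJ}(\la) \hookrightarrow \bW_G(\la)$.

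Finally, to ensure this is an inclusion of $([\mathfrak{l}^\tJ,\mathfrak{l}^\tJ][z] + \h)$-modules, I would note that at the cyclic-vector level the map sends the generator of $\bW_{L^\tJ}(\la)$ to the generator of $\bW_G(\la)$: all defining relations of $\bW_{L^\tJ}(\la)$ (annihilation by $\mathfrak{n}_{[L^\tJ,L^\tJ]}[z] \subset \mathfrak{n}[z]$, the $\h$-weight equal to $\la$, and the Serre-type relations $f_i^{\langle \al_i^\vee,\la\rangle + 1} = 0$ for $i \in \tJ$) hold in $\bW_G(\la)$. Thus a natural $([\mathfrak{l}^\tJ,\mathfrak{l}^\tJ][z] + \h)$-module map $\bW_{L^\tJ}(\la) \to \bW_G(\la)$ exists, and the injectivity obtained from the section-space argument above shows it is the desired inclusion.

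The main obstacle is the bookkeeping of the duality between global Weyl modules and semi-infinite sections, particularly verifying that the limit over $\beta$ intertwines the algebraic and geometric pictures cleanly, and that the twist $\la = \la^{(1)} + \la^{(2)}$ on the Levi side matches the restriction of the $\h$-action from $\bW_G(\la)$. These are essentially bookkeeping matters once the duality statement from \cite{Kat18d} is invoked, but they need to be set up with care.
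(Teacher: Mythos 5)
Your proposal is essentially the same as the paper's proof: both identify $\bW_{L^\tJ}(\la)$ with the union over $\beta \in -w_0 Q^\vee_{\tJ,+}$ of the duals $H^0(\sQ_{L^\tJ}(\beta), \cO(-w_0\la))^*$ (via \cite[Prop.\ 5.1]{BF14b} and \cite[Thm.\ 3.33]{Kat18d}), and then dualize Corollary \ref{H-surj} to obtain the inclusion. The only discrepancy is a $-w_0$ twist in the line bundle ($\cO(\la)$ in your write-up versus $\cO(-w_0\la)$ in the paper), which falls under the bookkeeping you already flag, and you add an extra cyclic-vector remark about the module structure that the paper leaves implicit.
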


\begin{proof}
In view of \cite[Proposition 5.1]{BF14b} (cf. \cite[Theorem 3.33]{Kat18d}), we have
\begin{equation}
\bigcup_{\beta \in -w_0 Q^{\vee}_{\tJ,+}} H^0 ( \sQ_{L^\tJ} ( \beta ), \cO_{\sQ_{L^\tJ} ( \beta )} ( -w_0 \la ) )^* = \bW_{L ^ \tJ} ( \la ).\label{ind-W}
\end{equation}

By Corollary \ref{H-surj}, we have
$$H^0 ( \sQ_{L^\tJ} ( \beta ), \cO_{\sQ_{L^\tJ} ( \beta )} ( -w_0 \la ) )^* \hookrightarrow H^0 ( \sQ_{G} ( \beta ), \cO_{\sQ_{G} ( \beta )} ( -w_0 \la ) )^* \hookrightarrow \bW _G ( \la ).$$
Combined with (\ref{ind-W}), we conclude the result.
\end{proof}

\begin{prop}\label{Aqt}
Let $i \in \tI$. Find $i' \in \tI$ such that $\al_{i'} = w_0 \al_i$. The $A^{\pm \varpi_{i}}$-action on $qK_{\bG} ( \sB )$ is the same as the tensor product of $\cO _{\sB} ( \pm \varpi_{i} )$ on $K_{\bG} ( \sB )^{\wedge}$ modulo $Q_{i'}$.
\end{prop}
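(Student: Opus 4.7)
The plan is to prove the congruence $A^{-\varpi_i} \equiv \cO_\sB(-\varpi_i) \otimes \;\pmod{Q_{i'}}$ on $qK_{\bG}(\sB)^{\wedge}$ by reducing, via the non-degeneracy of $\langle\cdot,\cdot\rangle^{\GW}$, to a pointwise identity of line bundles on the graph space $\sX(\beta)$; the corresponding statement for $A^{+\varpi_i}$ will then follow by inverting modulo $Q_{i'}$.

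First I would exploit the defining identity $\langle A^{-\varpi_i}a, b\rangle^{\GW} = \langle a, b\rangle^{\GW}_{-\varpi_i}$ together with the $\C[\bH][\![Q^{\vee}_+]\!]$-bilinearity and non-degeneracy of the pairing. Since $Q^\beta$ is divisible by $Q_{i'}$ whenever $\langle\beta,\varpi_{i'}\rangle \ge 1$, the needed congruence reduces to the statement that, for all $a, b \in K_{\bG}(\sB)$ and every $\beta \in Q^{\vee}_+$ with $\langle\beta,\varpi_{i'}\rangle = 0$ (equivalently $\beta \in -w_0 Q^{\vee}_{\tI\setminus\{i\},+}$), one has
\begin{align*}
&\chi\bigl(\sX(\beta),\, \pi_\beta^* \cO_{\sQ_G(\beta)}(-\varpi_i) \otimes \mathrm{ev}_1^*(a\otimes b)\bigr) \\
&\qquad = \chi\bigl(\sX(\beta),\, \mathrm{ev}_1^*\bigl(\cO_\sB(-\varpi_i)\otimes a\otimes b\bigr)\bigr).
\end{align*}
This equality follows at once from an isomorphism of line bundles $\pi_\beta^*\cO_{\sQ_G(\beta)}(-\varpi_i) \cong \mathrm{ev}_1^*\cO_\sB(-\varpi_i)$ on $\sX(\beta)$, so the rest of the plan is devoted to that isomorphism.

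Next I would apply Lemma \ref{Q-ind} with $\tJ := \tI\setminus\{i\}$, noting that the constraint on $\beta$ is exactly $\beta \in -w_0 Q^{\vee}_{\tJ,+}$; the lemma supplies $\sQ_G(\beta) \cong G \times_{P^\tJ} \sQ_{L^\tJ}(\beta)$ with the unipotent radical of $P^\tJ$ acting trivially on the fibre. Because $-\varpi_i$ pairs to zero against every $\alpha_k^{\vee}$ with $k \in \tJ$, it extends to a character of $P^\tJ$ that is trivial on $[L^\tJ, L^\tJ]$, so $\cO_{\sQ_G(\beta)}(-\varpi_i)$ is the pullback of $\cO_{G/P^\tJ}(-\varpi_i)$ along the structure projection $\pi_G \colon \sQ_G(\beta) \to G/P^\tJ$. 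Under the Pl\"ucker embedding (\ref{Pemb}), $\pi_G$ just records the $i$-th Pl\"ucker line $\mathcal L^{\varpi_i}$; by hypothesis its degree $\langle w_0\beta, \varpi_i\rangle = 0$, so $\mathcal L^{\varpi_i}$ is a constant $\C$-line in $V(\varpi_i)$ and determines a single point of $G/P^\tJ$. Consequently $\pi_G \circ \pi_\beta \colon \sX(\beta) \to G/P^\tJ$ coincides with the composition of $\mathrm{ev}_1 \colon \sX(\beta) \to \sB$ with the canonical projection $\sB \to G/P^\tJ$---both read off the same constant line, evaluated at $z = 0$---and pulling $\cO_{G/P^\tJ}(-\varpi_i)$ back along the two equal morphisms yields the claimed identification.

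The statement for $A^{+\varpi_i}$ then follows by inversion: from Theorem \ref{reconst}(1) we have $A^{-\varpi_i} \circ A^{+\varpi_i} = A^0 = \mathrm{id}$, and the classical operator $\cO_\sB(-\varpi_i)\otimes$ is invertible with inverse $\cO_\sB(+\varpi_i)\otimes$, so inverting the established congruence modulo $Q_{i'}$ gives $A^{+\varpi_i} \equiv \cO_\sB(+\varpi_i)\otimes \pmod{Q_{i'}}$. The hard part will be the geometric identification of $\pi_G \circ \pi_\beta$ with $(\sB \to G/P^\tJ)\circ\mathrm{ev}_1$ on the whole of $\sX(\beta)$: on the boundary strata where the source curve is reducible and part of the total degree $\beta$ is carried by bubble components attached to the main component, one must check that the constant $i$-th Pl\"ucker line on the main component still correctly records the projection to $G/P^\tJ$ of the evaluation at $z = 0$; this will reduce to the principle that the entire defect is "hidden inside $P^\tJ$" in a manner compatible with the fibration of Lemma \ref{Q-ind}.
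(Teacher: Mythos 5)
Your overall strategy matches the paper's: both arguments rest on Lemma \ref{Q-ind} applied to the parabolic $P^{\tJ}$ with $\tJ = \tI \setminus \{i\}$ (the paper's $\tJ'$ is the $(-w_0)$-dual of your $\tJ$, but the numerical condition $\langle\beta,\varpi_{i'}\rangle = 0$ is the same), so the core geometric input is identical. The essential divergence is \emph{where} the Euler characteristics are computed. The paper first reduces to $a = A^\mu[\cO_\sB]$, $b = [\cO_\sB]$ via Theorem \ref{reconst}, then invokes rational singularities of $\sQ_G(\beta)$ (Theorem \ref{rat}) to trade $\chi(\sX(\beta),\pi_\beta^*\cO(\lambda))$ for $\chi(\sQ_G(\beta),\cO(\lambda))$, and finishes on $\sQ_G(\beta)$ with the fibration $\eta\colon\sQ_G(\beta)\to\sB_{\tJ'}$ via a Leray/Weyl-character computation (equation (\ref{infEuler})). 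You instead stay on the graph space $\sX(\beta)$ and aim for an honest isomorphism of $\bG$-equivariant line bundles $\pi_\beta^*\cO_{\sQ_G(\beta)}(-\varpi_i) \cong \mathrm{ev}_1^*\cO_\sB(-\varpi_i)$, which is a stronger and more geometric statement but brings in exactly the boundary-stratum analysis you label ``the hard part.'' Your deduction of the $+\varpi_i$ case by inverting modulo $Q_{i'}$ is a clean shortcut; the paper treats both signs simultaneously.

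The unresolved ``hard part'' is a genuine gap as written: you observe on the open dense locus (irreducible source, no defect) that the composite Pl\"ucker-line map $\pi_G\circ\pi_\beta$ agrees with $p\circ\mathrm{ev}_1$, and then assert without argument that this persists on boundary strata where bubbles attached over $0\in\P^1$ carry part of the degree $\beta$. In fact this can be repaired without any stratum-by-stratum check: both $\pi_G\circ\pi_\beta$ and $p\circ\mathrm{ev}_1$ are regular $\bG$-equivariant morphisms from $\sX(\beta)$ (irreducible, by the cited irreducibility of graph spaces of flag varieties) to the separated scheme $G/P^{\tJ}$, so agreement on a dense open subset already forces agreement everywhere, and the line-bundle isomorphism follows. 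You would do well to state this explicitly; as it stands the argument hands the reader the most delicate step. Note also that the paper's detour through $\sQ_G(\beta)$ avoids this issue entirely at the cost of the preliminary reduction to the $a=A^\mu[\cO_\sB]$ case --- a tradeoff between a geometric density argument on $\sX(\beta)$ and an algebraic reduction to cyclic generators.
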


\begin{proof}
Let $\tJ' := \tI \setminus \{i'\}$. By our definition of $A^{\pm \varpi_i}$, it suffices to see
\begin{equation}
\left< A^{\pm \varpi_{i}} a, b \right>_{G}^{\GW} \equiv \left< \cO _{\sB} ( \pm \varpi_{i} ) \otimes a, b\right>_{G}^{\GW} \mod Q_{i'}\label{Ai}
\end{equation}
for every $a, b \in K_{\bG} ( \sB )$. Since $K_{\bG} ( \sB )$ is generated by $A^{\la}$ for $- \la \in \La_+$ and $Q^{\beta}$ ($\beta \in Q^{\vee}_+$) as $\C_q \bX^* ( G )$-algebra, we can take $a = A^{\mu}$ and $b = [\cO_{\sB}]$. Since $\sQ_G ( \beta )$ has rational singularities for every $\beta \in Q^{\vee}_+$ (Theorem \ref{rat}), we have
$$\left< A^{\pm \varpi_{i} + \la} [\cO_{\sB}], [\cO_{\sB}] \right>_{G}^{\GW} = \sum_{\beta \in Q^{\vee}_+} Q^{\beta} \chi ( \sQ_G ( \beta ), \cO_{\sQ_G ( \beta )} ( \pm \varpi_{i} + \la ) ) ) \hskip 5mm \la \in \bX^*.$$
In case $\left< \beta, \varpi_{i'} \right> = 0$, the structure map $\sQ_{L^{\tJ'}} ( \beta )\to\mathrm{pt}$ and Lemma \ref{Q-ind} yield a projection map $\eta : \sQ_G ( \beta ) \rightarrow G / P^{\tJ'} = \sB_{\tJ'}$, that is $G$-equivariant. This implies
\begin{equation}
\chi ( \sQ_G ( \beta ), \cO_{\sQ_G ( \beta )} ( \la ) ) ) = D_{w_0} ( e^{- \left< \al_{i}^{\vee}, \la \right> \varpi_{i'}} \chi ( \sQ_L ( \beta ), \cO_{\sQ_L ( \beta )} ( \la - \left< \al_{i}^{\vee}, \la \right> \varpi_{i} ) ) )\label{infEuler}
\end{equation}
for each $\la \in \bX^*$. The twist by $e^{- \varpi_{i'}}$ in the RHS of (\ref{infEuler}) is just a $\cO (1)$-line bundle twist of $\sB_{\tJ'}$ pulled back by $\eta$. Thus, it arises from the line bundle twist of $\cO _{\sB} ( \varpi_i )$ through $\mathsf{ev}_1$. Therefore, we conclude (\ref{Ai}) as required.
\end{proof}

\subsection{Proof of Theorem \ref{qK-surj}}\label{sec:qKs}
This subsection is entirely devoted to the proof of Theorem \ref{qK-surj}. We set $\tJ^\# := \{i \in \tI \mid \al_i = -w_0 \al_j, \, j \in \tI \setminus \tJ \}$ and $\tJ' := \{i \in \tI \mid \al_i = -w_0 \al_j, \, j \in \tJ \}$.

By Theorem \ref{reconst}, we know that $qK_{\bL ^ \tJ } ( \sB^{\tJ} )$ is generated from $[\cO_{\sB^{\tJ}}]$ by $A^{\pm w_0 \varpi_i}$ ($i \in \tJ$), $Q_i$ ($i \in \tJ'$), and $\bX^*_0 ( \tJ )$ as an algebra. Suppose that
$$f ( e^{\mu}, x_i, Q ) = \sum _{\vec{m} \in \Z^r, \mu \in \bX^*_0 ( \tJ  ), \gamma \in Q^{\vee}_{\tJ, +}} f_{\vec{m}, \mu, \beta} e^{\mu} x^{\vec{m}} Q^{\gamma} \in \C_q \bX^*_0 ( \tJ ) [x_1^{\pm 1},\ldots,x_r^{\pm 1}][\![Q^{\vee}_{\tJ,+}]\!],$$
where $x^{\vec{m}} := x^{m_1}_1 \cdots x^{m_r}_r$ for $\vec{m} = ( m_1,\ldots,m_r )$, satisfies
$$f ( e^{\mu}, A^{\varpi_i}, Q ) [\cO_{\sB^{\tJ}}] = 0 \in qK_{\bL ^\tJ} ( \sB^{\tJ} ),$$
where $A^{\pm w_0\varpi_i}$ is interpreted as $e^{\mp \varpi_i}$ for $i \not\in \tJ$. The line bundle $\C_{\mu} \otimes \cO_{\sQ_{L^\tJ} ( \beta )} ( -w_0 \la )$ for $\beta \in Q^{\vee}_{\tJ',+}$, $\mu \in \bX^*_0 ( \tJ )$, and $\la \in \La^\tJ$ inflates to $\cO_{\sQ_{G} ( \beta )} ( - w_0 ( \la + \mu ) )$ by Lemma \ref{Q-ind} and (\ref{Pemb}). Let
$$\tilde{f} ( e^{\mu}, A^{\varpi_i}, Q ) = \sum _{\vec{m} \in \Z^r, \nu \in \bX^* ( G ), \beta \in Q^{\vee}_{\tJ, +}} \tilde{f}_{\vec{m}, \nu, \beta} e^{\nu} x^{\vec{m}} Q^{\beta} \in \C_q \bX^* ( G ) [x_1^{\pm 1},\ldots,x_r^{\pm 1}][\![Q^{\vee}_{+}]\!]$$
be the polynomial obtained from $f$ by replacing $e^{-\varpi_{i}}$ with $x_{i'}$ (for each $i \in \tI \setminus \tJ$ and $i' \in \tI$ such that $\varpi_i = -w_0 \varpi_{i'}$). For each $\la \in \La$, we have
\begin{align*}
\left< A^\la \tilde{f} ( e^{\mu}, A^{\varpi_i}, Q ) [\cO_{\sB}], [\cO_{\sB}] \right>^{\GW}_{G} & \\
 = \sum_{\beta \in Q^{\vee}_+} \sum _{\vec{m}, \nu, \gamma} & \tilde{f}_{\vec{m}, \nu, \gamma} Q^{\beta + \gamma} e^{\nu} \chi ( \sX _G ( \beta ), \cO_{\sX_G} ( \la + \sum_{i \in \tI} m_i \varpi_i ) )\\
= \sum_{\beta \in Q^{\vee}_+} \sum _{\vec{m}, \nu, \gamma} & \tilde{f}_{\vec{m}, \nu, \gamma} Q^{\beta + \gamma} e^{\nu} \chi ( \sQ _G ( \beta ), \cO_{\sQ_G} ( \la + \sum_{i \in \tI} m_i \varpi_i ) )\\
\equiv \sum_{\beta \in Q^{\vee}_{\tJ',+}} \sum _{\vec{m},\nu,\gamma} e^{\nu} D_{w_0} (\tilde{f}_{\vec{m},\nu,\gamma} Q^{\beta + \gamma} & \, \chi ( \sQ _{L ^\tJ} ( \beta ), \cO_{\sQ_{L ^\tJ}} ( \la + \sum_{i \in \tI} m_i \varpi_i ) ) )\\
& \hskip 35mm \mod ( Q_i \mid i \in \tJ^\#),
\end{align*}
where the first equality is the the definition, the second equality follows from Theorem \ref{rat}, and the third equality follows from Lemma \ref{Q-ind} and the fact that $\cO_{\sQ_{L ^\tJ}} ( \la )$ is the restriction of $\cO_{\sQ_{G}} ( \la )$. Similarly, we have
\begin{align*}
0 = \left< A^\la f ( e^{\mu}, A^{\varpi_i}, Q ) [\cO_{\sB^{\tJ}}], [\cO_{\sB^{\tJ}}] \right>^{\GW}_{L ^ \tJ } & \\
= \sum_{\beta \in Q^{\vee}_+} \sum _{\vec{m}, \mu, \gamma} f_{\vec{m}, \mu, \gamma} \, & Q^{\beta + \gamma} e^{\mu} \chi ( \sQ_{L ^ \tJ } ( \beta ), \cO_{\sQ_{L ^ \tJ }} ( \la + \sum_{i \in \tI} m_i \varpi_i ) )
\end{align*}
for $\la \in \Lambda$. By examining the relation between $f$ and $\widetilde{f}$, we conclude
$$\left< A^\la \tilde{f} ( e^{\mu}, A^{\varpi_i}, Q ) [\cO_{\sB}], [\cO_{\sB}] \right>^{\GW}_{G} \equiv 0 \mod ( Q_i \mid i \in \tJ^\#)$$
for $\la \in \La$. In view of Theorem \ref{reconst}, this is equivalent to
$$\tilde{f} ( e^{\mu}, A^{\varpi_i}, Q ) [\cO_{\sB}] \equiv 0 \mod ( Q_i \mid i \in \tJ^\#).$$
This yields a map $qK_{\bG} ( \sB ) \rightarrow qK_{\bL ^ \tJ } ( \sB ^{\tJ} )$ that intertwines $A^{\la}$ ($\la \in \La$), $Q_i$ ($i \in \tI$), and $\C_q \bX^* (G)$-actions. The $Q_i \equiv 0$ ($i \in \tI$) specialization of this map is the restriction map, that is an isomorphism (as a consequence of the bijection between equivariant line bundles through the restriction; cf. Corollary \ref{K-rest}). Since the $\C Q^{\vee}_{\tJ',+}$-actions are free on the both of $qK_{\bG} ( \sB ) / ( Q_i \mid i \in \tJ^\#)$ and $qK_{\bL ^\tJ} ( \sB ^{\tJ} )$, we conclude that
$$qK_{\bG} ( \sB ) / ( Q_i \mid i \in \tJ^\# ) \stackrel{\cong}{\longrightarrow} qK_{\bL ^ \tJ} ( \sB ^{\tJ} )$$
as required.

\section{Finkelberg-Tsymbaliuk's conjecture}

We work in the settings of \S \ref{sec:setup}. The goal of this section is to prove the following main theorem of this paper, originally conjectured by Finkelberg-Tsymbaliuk \cite{FT19}:

\begin{thm}\label{main}
Let $G$ be a connected reductive algebraic group over $\C$ such that $[G,G]$ is simply connected and $G \cong [G, G] \times H'$ for a subtorus $H' \subset H$. Let $L$ be a reductive subgroup that contains $H$. The embedding of Corollary \ref{absDarboux} induces algebra embeddings
$$K_{\bG} ( \Gr_G ) \hookrightarrow K_{\bL} ( \Gr_L ) \hookrightarrow K_{\bH} ( \Gr_H ).$$
\end{thm}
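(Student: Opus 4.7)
The plan is to combine the Darboux embedding of Corollary \ref{absDarboux} with the surjection of quantum $K$-groups of flag varieties (Theorem \ref{qK-surj}) and the non-commutative Peterson isomorphism (Theorem \ref{ncPet}). First, using the decomposition $G \cong [G,G] \times H'$, I would split off the torus part: the factor $K_{H' \times \Gm}(\Gr_{H'})$ appears identically in all three $K$-groups as a tensor factor (it is a Heisenberg algebra attached to $H'$), so the problem reduces to the case where $G$ is semisimple and simply connected, and then, treating simple factors independently, to the case of $G$ simple. Since $L \supset H$, up to a Weyl-group conjugation we may assume $L = L^\tJ$ is a standard Levi for some $\tJ \subset \tI$.

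Second, I apply Corollary \ref{absDarboux} separately to $G$ and to $L^\tJ$, embedding both $K_\bG(\Gr_G)$ and $K_{\bL^\tJ}(\Gr_{L^\tJ})$ as $\C_q$-subalgebras of $K_\bH(\Gr_H)$. The desired factorization follows once the image of the first is shown to sit inside the image of the second. At the level of the localized rings, this is transparent from Proposition \ref{resGH} and Corollary \ref{genKG}: for $i \in \tJ$ the generators $\phi_i,\xi_i$ of $K_\bG(\Gr_G)_\lo$ coincide with the Darboux generators of $K_{\bL^\tJ}(\Gr_{L^\tJ})_\lo$; for $i \notin \tJ$, the fundamental weight $\varpi_i$ lies in $\bX^*(L^\tJ) = \bX^*_0(\tJ)$, so both $\phi_i = e^{-\varpi_i}$ and $\xi_i = (1-\mt_{\al_i^\vee})e^{\varpi_i}$ are built from the generators $e^\la$ ($\la \in \bX^*(L^\tJ)$) and $\mt_\gamma$ ($\gamma \in \bX_*$) that already belong to the $L^\tJ$-Darboux subalgebra.

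Third, the main technical step is to upgrade this containment from the localized rings to the integral subrings, i.e.\ to verify that each Schubert class $[\cO_{\Gr_G(\beta)}]$ ($\beta \in \bX_*^-$) is sent to a $\C[\bH]$-linear combination of Schubert classes on the $L^\tJ$-side rather than merely to an element of $K_{\bL^\tJ}(\Gr_{L^\tJ})_\lo$. Here I would transport the problem to quantum $K$-theory: Theorem \ref{ncPet} identifies $K_\bH(\Gr_G)_\lo \cong qK_\bH(\sB)_\lo$ (and the analogous statement for $L^\tJ$) while sending Schubert bases to Schubert bases, and Theorem \ref{qK-surj}, whose proof uses the geometric reduction $\sQ_G(\beta) \cong G \times_{P^\tJ} \sQ_{L^\tJ}(\beta)$ of Lemma \ref{Q-ind}, provides a surjection $qK_\bG(\sB)^\wedge \twoheadrightarrow qK_{\bL^\tJ}(\sB^\tJ)^\wedge$ intertwining the operators $A^{\pm\varpi_i}$ with $A^{\pm\varpi_i}$ and preserving the $R(\bG)$-integral Schubert structure. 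Composing the two Peterson isomorphisms with this surjection yields a candidate map $K_\bG(\Gr_G) \to K_{\bL^\tJ}(\Gr_{L^\tJ})$ that, after localization, is forced to coincide with the Darboux embedding of the previous step.

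The hard part is precisely this integral comparison: the Peterson isomorphism is \emph{a priori} only stated on the localized rings, while the conclusion requires matching integral $\C[\bH]$-lattices on both sides. I expect the argument to proceed by checking, via the Pieri--Chevalley expansion in $qK_\bH(\sB)$ (cf.\ the proof of Proposition \ref{resGH}), that the candidate map of step three actually lands in the $\C[\bH]$-lattice spanned by $L^\tJ$-Schubert classes, and then using the injectivity of the $L^\tJ$-Darboux embedding (Corollary \ref{absDarboux} applied to $L^\tJ$) to conclude that the step-two Darboux image of $K_\bG(\Gr_G)$ lies inside the integral subalgebra $K_{\bL^\tJ}(\Gr_{L^\tJ}) \subset K_{\bL^\tJ}(\Gr_{L^\tJ})_\lo$. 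This yields the chain $K_\bG(\Gr_G) \hookrightarrow K_{\bL^\tJ}(\Gr_{L^\tJ}) \hookrightarrow K_\bH(\Gr_H)$ claimed in Theorem \ref{main}.
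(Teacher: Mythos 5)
Your reductions in steps one and two match the paper exactly: split off the torus factor, assume $L=L^\tJ$ is standard, and observe (as in the unnamed Lemma at the end of \S 5.1) that at the level of localized rings $K_\bG(\Gr_G)_\lo \subset K_{\bL^\tJ}(\Gr_{L^\tJ})_\lo$ follows from Proposition \ref{resGH} and Corollary \ref{genKG}. You also correctly flag the integral-lattice comparison as the real content. But step three, as written, contains a genuine gap.

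You propose to form a ``candidate map'' $K_\bG(\Gr_G)\to K_{\bL^\tJ}(\Gr_{L^\tJ})$ by composing $\Psi^{-1}\circ\Phi$ for $G$, the surjection of Theorem \ref{qK-surj}, and the inverse Peterson isomorphism for $L^\tJ$. This composite is not defined on the relevant rings. The Peterson isomorphism of Theorem \ref{ncPet} lives on $qK_\bH(\sB)_\lo = \C Q^\vee\otimes_{\C Q^\vee_+}qK_\bH(\sB)$, which inverts every Novikov monomial; the surjection of Theorem \ref{qK-surj} lives on the completion $qK_\bG(\sB)^\wedge$ and its kernel is generated precisely by the monomials $Q^{-w_0\al_i^\vee}$ for $i\notin\tJ$. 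There is no ring map from a localization in which $Q^{-w_0\al_i^\vee}$ is a unit to a quotient in which it is sent to zero, so the chain you describe does not produce a map of $K$-groups, integral or localized. The paper never composes these two: Theorem \ref{qK-surj} is used only to compare finitely many structure constants, which makes sense working modulo the ideal $(Q_i\mid i\in\tJ^\#)$ rather than after inverting it.

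Concretely, the paper's integral comparison introduces two $\C_q\bX^*_0(\tJ)$-bases of $K_{\bL}(\Gr_L)$: the Schubert-flavored classes $E^\tJ[\beta;\la]$ (Lemma \ref{E-basis}) and the Darboux-flavored classes $[\cO^\star_{\tJ,\beta}(\la)]$, with a transition matrix $a^{\gamma,\mu}_{\beta,\la}(\tJ)$; Propositions \ref{a-comp} and \ref{aa-comp}, which rely on Theorem \ref{qK-surj} and the decomposition $V^{\tI(\beta)}(\la)\vert_{L^\tJ}$, show this transition matrix is obtained from the $G$-version by restricting the translation label to $\gamma\in\beta+Q^\vee_{\tJ,+}$. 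The proof of Theorem \ref{main} then passes to the completion $\mathbf K_G^\wedge$ in the $\mt_\beta$-direction with $\langle\beta,\varpi_i\rangle\to\infty$ ($i\notin\tJ$) — note this is the \emph{opposite} direction from the Novikov localization, so the completion is compatible with setting $\mt_{\al_i^\vee}\equiv 0$, and the series $(\sum_k\mt_{k\al_i^\vee})\xi_i$ inverts $\phi_i$ there. Eigenvalue separation under conjugation by $\mt_{\al_i^\vee}$ and $\phi_i$, followed by a leading-term induction on formal sums of the $[\cO^\star_\beta(\la)]$'s, shows $\mathbf K_G^\wedge\subset\mathbf K_L^\wedge$ and hence $K_\bG(\Gr_G)\subset K_\bL(\Gr_L)$. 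This bookkeeping — the $E$-classes, the transition matrix comparison, the $\mathbf K^\wedge$-completions, the induction — is what your sketch is missing, and without it the jump from the localized containment to the integral one is not justified.
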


Theorem \ref{main} is proved in \S \ref{subsec:main}. From Theorem \ref{main}, we conclude the following enhancement:

\begin{cor}\label{mcor}
Let $G$ be a connected reductive algebraic group over $\C$ such that $[G,G]$ is simply connected and $[G, G] \times H'$ for a subtorus $H' \subset H$. Let $L$ be a connected reductive subgroup of $G$ that contains $H$. Let $Z \subset H \cap Z ( G )$ be a finite subgroup. Theorem \ref{main} induces embeddings
$$K_{\bG} ( \Gr_{G / Z} ) \hookrightarrow K_{\bL} ( \Gr_{L / Z} ) \hookrightarrow K_{\bH} ( \Gr_{H/Z} )$$
of algebras.
\end{cor}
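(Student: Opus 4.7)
The plan is to deduce the corollary from Theorem \ref{main} by extending the cocharacter lattice to accommodate the central quotient by $Z$. One cannot apply Theorem \ref{main} directly to $(G/Z, L/Z, H/Z)$, since $[G/Z, G/Z]$ need not be simply connected when $Z \subset Z([G,G])$ is non-trivial; this is precisely the content the corollary adds. Because $Z$ is finite and central, the quotient groups share root data with the originals, and the quotient $\bX_*(H/Z)/\bX_*$ is canonically isomorphic to $Z$.

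The quotient maps $G \twoheadrightarrow G/Z$, $L \twoheadrightarrow L/Z$, $H \twoheadrightarrow H/Z$ induce closed embeddings of affine Grassmannians $\Gr_G \hookrightarrow \Gr_{G/Z}$, $\Gr_L \hookrightarrow \Gr_{L/Z}$, $\Gr_H \hookrightarrow \Gr_{H/Z}$ onto the unions of connected components indexed by $\pi_1(G)$, $\pi_1(L)$, $\pi_1(H)$; the complementary components are obtained via $\bG$-equivariant $\mt_\eta$-translation for $\eta$ ranging over a set of representatives of $\bX_*(H/Z)/\bX_*$. This produces identifications of $\C_q$-algebras
$$K_{\bG}(\Gr_{G/Z}) \cong K_{\bG}(\Gr_G) \otimes_{\C_q[\bX_*]} \C_q[\bX_*(H/Z)],$$
and analogously for $L$ and $H$, where the tensor product adjoins new translation generators on the right. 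Since the embedding of Corollary \ref{absDarboux} sends $\mt_\gamma$ to $\mt_\gamma$ and is hence $\C_q[\bX_*]$-linear with respect to the right translation action, the free extension $(-) \otimes_{\C_q[\bX_*]} \C_q[\bX_*(H/Z)]$ preserves injectivity and transports the chain of Theorem \ref{main} to the required embeddings
$$K_{\bG}(\Gr_{G/Z}) \hookrightarrow K_{\bL}(\Gr_{L/Z}) \hookrightarrow K_{\bH}(\Gr_{H/Z}).$$

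The main technical subtlety is to ensure that the enlarged algebras retain a consistent non-commutative structure. For $\eta \in \bX_*(H/Z) \setminus \bX_*$ and $\la \in \bX^*(H)$, the Heisenberg commutation $\mt_\eta \odot_q e^\la = q^{\langle \eta, \la \rangle} e^\la \odot_q \mt_\eta$ would involve a fractional power of $q$ unless $\la \in \bX^*(H/Z)$. The hard part will be handling this, and we propose to do so by fixing a splitting of the exact sequence $\bX^*(H/Z) \hookrightarrow \bX^*(H) \twoheadrightarrow \mathrm{Hom}(Z, \Gm)$, yielding $R(\bH) \cong R(\bH/Z) \otimes \C[\mathrm{Hom}(Z, \Gm)]$; since $Z$ acts trivially on $\Gr_{H/Z}$, the $\C[\mathrm{Hom}(Z, \Gm)]$-factor becomes central in $K_{\bH}(\Gr_{H/Z})$, so the effective $q$-commutation between $\mt_\eta$ and $e^\la$ involves only the projection of $\la$ to $\bX^*(H/Z)$, where the pairing with $\bX_*(H/Z)$ is integer by duality. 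The analogous splittings for $R(\bG)$ and $R(\bL)$ transport the Darboux generators $\phi_i, \xi_i$ compatibly along the chain; verifying this compatibility is the main bookkeeping, and the construction reduces to Theorem \ref{main} upon specializing to $Z = \{1\}$.
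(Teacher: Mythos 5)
Your overall idea — extend the translation lattice from $\bX_*$ to $\bX_*(H/Z)$ and transport the chain of Theorem \ref{main} along the extension — is the right general direction and is similar in spirit to what the paper does. You also correctly identify the genuine difficulty: for $\eta \in \bX_*(H/Z) \setminus \bX_*$ and $\la \in \bX^*(H)$ the naive Heisenberg commutation $\mt_\eta \odot_q e^\la = q^{\langle \eta, \la\rangle} e^\la \odot_q \mt_\eta$ would involve a fractional power of $q$, so a twist is unavoidable. The problem is with your proposed fix.

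The short exact sequence $\bX^*(H/Z) \hookrightarrow \bX^*(H) \twoheadrightarrow \Hom(Z,\Gm)$ does \emph{not} split as a sequence of abelian groups whenever $Z \neq \{1\}$: the middle term $\bX^*(H)$ is a free abelian group, the quotient $\Hom(Z,\Gm)$ is finite torsion, and the only group homomorphism from a torsion group to a torsion-free group is trivial. Consequently the claimed ring isomorphism $R(\bH) \cong R(\bH/Z) \otimes \C[\Hom(Z,\Gm)]$ is false: the left side is a Laurent polynomial ring, hence an integral domain, while the group ring $\C[\Hom(Z,\Gm)]$ is a product of $|Z|$ copies of $\C$, so the right side has zero divisors. (Concretely for $G = \mathop{SL}(2)$, $Z = \{\pm 1\}$: $0 \to 2\Z\varpi \to \Z\varpi \to \Z/2\Z \to 0$ does not split.) This invalidates the mechanism you propose for killing the fractional $q$-powers — there is no subring of $R(\bH)$ isomorphic to $\C[\Hom(Z,\Gm)]$ that one could declare ``central,'' and the ``projection of $\la$ to $\bX^*(H/Z)$'' is not defined at the level of groups. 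One can of course pick a set-theoretic section of $\bX^*(H) \twoheadrightarrow \Hom(Z,\Gm)$, but then the resulting multiplication rule on your proposed extension $K_{\bG}(\Gr_G)\otimes_{\C_q[\bX_*]}\C_q[\bX_*(H/Z)]$ depends on the choice of section and it is not shown to recover the actual convolution structure, nor that the extension is independent of the choice or consistent across $G$, $L$, $H$.

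The paper handles this differently: the quotient $\bX_*(H/Z)/\bX_*$ acts by outer automorphisms of the affine Dynkin diagram, which induce automorphisms of $\sH_q$; the algebra structure on $K_{\bG}(\Gr_{G/Z})$ (and the twisted structure on $K_{\bH}(\Gr_{H/Z})$) is \emph{defined} component-by-component by transporting the structure from the trivial component $\Gr_G \subset \Gr_{G/Z}$ along these twists, and compatibility with the Levi chain is verified by observing that each twist is realized by an inner conjugation of $G'[\![z]\!] \subset G'(\!(z)\!)$ by a lift of the relevant class, which restricts to $L'[\![z]\!]$. Your write-up omits this twist entirely, which is precisely what the corollary adds beyond Theorem \ref{main}.

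Two minor points. The identification $\bX_*(H/Z)/\bX_* \cong Z$ is not canonical (it is Pontryagin dual to $Z$, which is why the paper indexes by $\mathsf{Irr}\,Z$). And the tensor product $K_{\bG}(\Gr_G)\otimes_{\C_q[\bX_*]}\C_q[\bX_*(H/Z)]$ is a priori only a module extension, because $\C_q[\bX_*]$ is not central in $K_{\bG}(\Gr_G)$; making it an algebra again requires the crossed-product data that you have not supplied.
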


\begin{proof}
We set $G' := G/Z, L' := L/Z$. Note that the quotient $H \to H / Z$ induces an injective map
$$\bX_* \cong \Gr_{H} \longrightarrow \Gr_{H/Z}$$
that identifies $\bX_*$ with a subset of the group of cocharacters $\bX_*'$ of $H/Z$ via the quotient map. This gives rise to an isomorphism
$$K_{\bH} ( \Gr_{H/Z} ) \cong \bigoplus_{\chi \in \mathsf{Irr}\, Z} K_{\bH} ( \Gr_H )$$
of algebras. In particular, the connected components of $\Gr_{H/Z}$ is the union of the contributions
$$\Gr_{H / Z} = \bigsqcup_{\chi \in \mathsf{Irr}\, Z} \Gr_{H/Z}^{\chi}.$$
The same is true for $\Gr_{G'}$ and $\Gr_{L'}$, that we denote by
$$\Gr_{G'} = \bigsqcup_{\chi \in\mathsf{Irr}\, Z} \Gr_{G'}^{\chi} \hskip 5mm \text{and} \hskip 5mm \Gr_{L'}^{\chi} = \bigsqcup_{\chi \in \mathsf{Irr}\, Z} \Gr_{L'}^{\chi}.$$
Note that the content of Theorem \ref{main} under this setup is the algebra embeddings:
\begin{equation}
K_{\bG} ( \Gr_{G'}^1 ) \hookrightarrow K_{\bL} ( \Gr_{L'}^1 ) \hookrightarrow K_{\bH} ( \Gr_{H/Z}^1 ),\label{trivDC}
\end{equation}
where $1 \in \mathsf{Irr} \, Z$ is the trivial representation.

The action of $\bX_*' / \bX_*$ induces outer automorphisms of the affine Dynkin diagram of $G$. This twists the embedding $K_{\bG} ( \Gr_{G'}^{\chi} ) \subset K_{\bH} ( \Gr_{G'}^{\chi} )$ into $K_{\bG} ( \Gr_{G'}^{1} ) \subset K_{\bH} ( \Gr_{G'}^{1} )$ by the Dynkin diagram automorphisms. These outer automorphisms induce automorphisms of $\sH_q$, and hence gives rise to an algebra structure of $K_{\bG} ( \Gr_{G'} )$ induced from $K_{\bH} ( \Gr_{G'} )$. If we employ these twists of $R ( \bH )$ also to the coefficients of $K_{\bH} ( \Gr_{H/Z}^{\chi} )$, we obtain embeddings
\begin{equation}
K_{\bG} ( \Gr_{G'}^{\chi} ) \longrightarrow K_{\bH} ( \Gr_{H/Z}^{\chi} ) \hskip 5mm \chi \in \mathrm{Irr} \, Z.
\label{twistDC}
\end{equation}
Such twists, altogether along $\mathsf{Irr} \, Z$, give rise to a twist of the algebra structure of $K_{\bH} ( \Gr_{H/Z} )$ (that prolongs $K_{\bH} ( \Gr_{H/Z}^1 ) \cong K_{\bH} ( \Gr_{H} )$). With these twisted algebra structures, we obtain a morphism
$$K_{\bG} ( \Gr_{G'} ) \longrightarrow K_{\bH} ( \Gr_{H/Z} )$$
of algebras that prolongs (\ref{trivDC}) and (\ref{twistDC}).

It remains to find that such a twisting can be taken to be compatible with the analogously defined embedding $K_{\bL} ( \Gr_{L'} ) \subset K_{\bH} ( \Gr_{H/Z} )$. To see this, it is enough to mind that the twisting by $\chi \in \mathsf{Irr}\,Z$ gives a twisting of $G' [\![z]\!] \subset G' (\!(z)\!)$ by a lift of $\chi$ in $\bX_* '$ (up to internal automorphism), and it naturally induce a twisting of $L'[\![z]\!] \subset G' (\!(z)\!)$.
\end{proof}

\begin{ex}\label{SL2}
We assume that $G = \mathop{SL} ( 2 )$ and $L = H$ is its maximal torus. We have $Q^{\vee} = \bX_* = \Z \al^{\vee}$, where $\al^{\vee}$ is the positive simple coroot of $G$. Let $\varpi$ be the fundamental weight. We have
$$R ( G ) = \C [e^{\pm \varpi}]^{\mathfrak S_2} \subset \C [e^{\pm \varpi}] = R ( H ).$$
Theorem \ref{main} yields an algebra map
$$R ( G ) \equiv R ( G ) [\mathcal O_{\Gr_G ( 0 )}] \hookrightarrow K_G ( \Gr_G ) \longrightarrow K_H ( \Gr_H ) = \bigoplus_{\gamma \in Q^{\vee}} R ( H ) \mt _\gamma,$$
where $\mt_\gamma$ represents the class of the structure sheaf of $\Gr_H ( \gamma )$, that is a point. In view of Proposition \ref{resGH}, we find that
$$[\mathcal O_{\Gr_G ( 0 )}] \mapsto \mt_{0}, \hskip 5mm ( e^{\varpi} + e^{- \varpi} ) [\mathcal O_{\Gr_G ( 0 )}] \mapsto e^{\varpi} (\mt_0 - \mt_{\al^{\vee}}) + e^{- \varpi} \mt_0.$$
\end{ex}

We remark that Example \ref{SL2} is obtained from the $n = 2$ case of Example \ref{exToda} by applying Theorem \ref{main} (and Theorem \ref{ncPet}).

\subsection{Classes $E (\beta, \la )$ and $\cO^{\star} ( \la )$}
We find $\tJ \subset \tI$ such that $L$ in Theorem \ref{main} is written as $L^\tJ$. For $\beta \in \bX_*^{\le} ( \tJ )$, we set $\tJ ( \beta ) = \{j \in \tJ \mid \left< \al_j^{\vee}, \beta \right> = 0 \} \subset \tJ$. We set $w ( \tJ, \beta ) := w_0 ^{\tJ} w_0 ^{\tJ ( \beta )} w_0 ^{\tJ}$ and $\tJ ( \beta )^{\#} := \{j \in\tJ \mid \exists j' \in \tJ ( \beta ) \text{ s.t. } \varpi_j = - w_0^{\tJ} \varpi_{j'} \} $ (i.e. $w ( \tJ, \beta ) = w_0 ^{\tJ ( \beta )^\#}$). We set $\La_{+}^{\tJ} ( \beta ) := \La^{\tJ \setminus \tJ ( \beta )} + \La_+^{\tJ ( \beta )}$. For each $\la \in \La_{+}^{\tJ} ( \beta )$, we define
$$E^{\tJ} [\beta; \la] := D_{w_0^{\tJ}} ( e^{w_0^{\tJ} \la} [\cO_{\Gr_{L} ( u^\tJ_{\beta} )}] ) \in K_{\bL} ( \Gr_{\bL}),$$
where $u^{\tJ}_{\beta} \in W^{\tJ} t_{\beta} W^{\tJ}$ is the minimal length element inside the double coset.

\begin{lem}\label{E-basis}
The $\sH_q ( \tJ )$-module $K_{\bL} ( \Gr_L )$ admits a direct sum decomposition whose associated graded pieces are parametrized by $\bX_* ^{\le} ( \tJ )$. The associated graded piece corresponding to $\beta$ is isomorphic to $K_{\bL} ( \sB^\tJ_{\tJ ( \beta )^\#})$ and the correspondence is given by
$$E^{\tJ} [\beta; \la] \mapsto D_{w_0^{\tJ}} ( e^{w_0^{\tJ} \la} D_{w (\tJ, \beta )} [\cO_{\sB^{\tJ} ( w_0^{\tJ} )}] ) \hskip 5mm \la \in \La_{+}^{\tJ} ( \beta ).$$
In particular, the set $\{ E^{\tJ} [\beta; \la] \}_{\beta \in \bX_* ^{\le} ( \tJ ), \la \in \La^{\tJ}_+ (\beta)}$ forms a $\C_q \bX^*_0 ( \tJ )$-basis of $K_{\bL} ( \Gr_{\bL})$.
\end{lem}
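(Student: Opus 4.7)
The plan is to realize $K_\bL(\Gr_L)$ as a direct sum of copies of $K_\bL$ of partial flag varieties of $L$, indexed by the $L[\![z]\!]$-orbits on $\Gr_L$, and then to match the classes $E^\tJ[\beta;\la]$ with the natural line bundle basis on each piece. Throughout, I apply the analogues of the preparatory results (Theorem \ref{finK}, Theorem \ref{sH-reg}, Theorem \ref{af-len}) to the reductive group $L=L^\tJ$ and its affine Weyl data $\widetilde W_\af^L=W^\tJ\ltimes \bX_*$.

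\textbf{Step 1 (parametrisation of $\bI_L$-orbits).} Applying the $L$-version of Theorem \ref{af-len} 3), the minimal length coset representatives of $\widetilde{W}_\af^L/W^\tJ$ are parametrised by pairs $(\beta,v)$ with $\beta\in\bX_*^{\le}(\tJ)$ and $v$ a minimal length representative of a coset of $W^\tJ$ by the stabiliser of $\beta$; this stabiliser is $W^{\tJ(\beta)}$, which after the twist by $w_0^\tJ$ (coming from the inverse-element convention between left and right cosets) matches $W^{\tJ(\beta)^\#}$. The minimal length element of the double coset $W^\tJ t_\beta W^\tJ$ is $u^\tJ_\beta$, and the general minimal length coset representative has the form $u^\tJ_\beta\cdot v'$ with $v'$ running over minimal length representatives of $W^\tJ/W^{\tJ(\beta)^\#}$.

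\textbf{Step 2 (direct sum of $\sH_q(\tJ)$-submodules).} For each $\beta\in \bX_*^{\le}(\tJ)$ set
$$U_\beta:=\Span_{\C[\bH]}\bigl\{\,[\cO_{\Gr_L(u^\tJ_\beta\cdot v')}]\ \bigm|\ v'\in W^\tJ/W^{\tJ(\beta)^\#}\bigr\}\subset K_\bH(\Gr_L).$$
By Step 1 we have $K_\bH(\Gr_L)=\bigoplus_{\beta\in\bX_*^{\le}(\tJ)} U_\beta$ as a $\C[\bH]$-module. The $L$-analogue of Theorem \ref{sH-reg} shows that $D_i$ ($i\in\tJ$) acts via left multiplication by $s_i$, which preserves the $L[\![z]\!]$-orbit $\mathring{\Gr_L}(\beta)$; hence each $U_\beta$ is a $\sH_q(\tJ)$-submodule. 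The identification
$$U_\beta\stackrel{\sim}{\longrightarrow} K_\bH(\sB^\tJ_{\tJ(\beta)^\#}),\qquad [\cO_{\Gr_L(u^\tJ_\beta\cdot v')}]\mapsto [\cO_{\sB^\tJ_{\tJ(\beta)^\#}(v')}]$$
is then $\sH_q(\tJ)$-equivariant: the matching of $D_i$-actions follows by comparing the $L$-version of Theorem \ref{sH-reg} on the LHS with Theorem \ref{finK} 2) on the RHS, since on both sides the relevant length changes are governed by the position of $v'$ in the quotient $W^\tJ/W^{\tJ(\beta)^\#}$.

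\textbf{Step 3 (passage to $\bL$-equivariants and identification of basis).} Applying $D_{w_0^\tJ}$ and using the $L$-analogue of Theorem \ref{finK} 4), together with Corollary \ref{Rsph}, yields
$$K_\bL(\Gr_L)=D_{w_0^\tJ}K_\bH(\Gr_L)=\bigoplus_{\beta\in\bX_*^{\le}(\tJ)} D_{w_0^\tJ}U_\beta\ \cong\ \bigoplus_{\beta\in\bX_*^{\le}(\tJ)} K_\bL(\sB^\tJ_{\tJ(\beta)^\#}),$$
which is the asserted decomposition. Under the identification of Step 2, the class $[\cO_{\Gr_L(u^\tJ_\beta)}]$ corresponds to the Schubert class of the top-dimensional cell of $\sB^\tJ_{\tJ(\beta)^\#}$, whose pullback to $K_\bH(\sB^\tJ)$ is precisely $D_{w(\tJ,\beta)}[\cO_{\sB^\tJ(w_0^\tJ)}]$ (because $w(\tJ,\beta)=w_0^{\tJ(\beta)^\#}$ acts on the point class $[\cO_{\sB^\tJ(w_0^\tJ)}]$ by sweeping out the top cell). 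Multiplication by $e^{w_0^\tJ\la}$ matches with the line bundle twist by $\cO(\la)$ on this top cell (the $L$-version of the argument behind Theorem \ref{HQc}), so after applying $D_{w_0^\tJ}$ the class $E^\tJ[\beta;\la]$ corresponds exactly to $D_{w_0^\tJ}(e^{w_0^\tJ\la}D_{w(\tJ,\beta)}[\cO_{\sB^\tJ(w_0^\tJ)}])$. The basis statement then follows from the $L$-analogue of Theorem \ref{finK} 3)--5) applied to $\sB^\tJ_{\tJ(\beta)^\#}$: the classes $\{[\cO_{\sB^\tJ_{\tJ(\beta)^\#}}(\la)]\}_{\la\in\La^\tJ_+(\beta)}$ form a $\C_q\bX^*_0(\tJ)$-basis of $K_\bL(\sB^\tJ_{\tJ(\beta)^\#})$, since $\La^\tJ_+(\beta)$ precisely parametrises $\bL$-equivariant line bundles on $\sB^\tJ_{\tJ(\beta)^\#}$.

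The main obstacle will be the combinatorial bookkeeping in Steps 1 and 3 around the passage from $\tJ(\beta)$ to $\tJ(\beta)^\#$: the stabiliser of $\beta$ in $W^\tJ$ is $W^{\tJ(\beta)}$, but the parabolic quotient naturally appearing on the Schubert side is $\sB^\tJ_{\tJ(\beta)^\#}$, and the two are related by conjugation by $w_0^\tJ$. Tracking this twist carefully is what forces the formula to involve $w(\tJ,\beta)=w_0^\tJ w_0^{\tJ(\beta)} w_0^\tJ=w_0^{\tJ(\beta)^\#}$ and the base point $[\cO_{\sB^\tJ(w_0^\tJ)}]$ (rather than $[\cO_{\sB^\tJ(e)}]$). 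Once this identification is made cleanly, the rest of the argument is a direct translation of the finite Hecke/Demazure calculus between the two sides.
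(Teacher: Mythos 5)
Your overall strategy — split $K_{\bH}(\Gr_L)$ into $\C[\bH]$-blocks indexed by the $L[\![z]\!]$-orbits, show each block is $\sH_q(\tJ)$-stable by the Leibniz rule, then apply $D_{w_0^{\tJ}}$ and match each block with $K_{\bL}(\sB^{\tJ}_{\tJ(\beta)^\#})$ — is the same as the paper's. However the details of Steps 1--3 contain genuine errors that break the argument as written.

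First, the parametrization in Steps 1--2 is wrong. The Grassmannian Schubert classes on the $\beta$-block are indexed by cosets in $\widetilde{W}_{\af}^{L}/W^{\tJ}$, and since $u^{\tJ}_{\beta}v'W^{\tJ}=u^{\tJ}_{\beta}W^{\tJ}$ for every $v'\in W^{\tJ}$, the set $\{[\cO_{\Gr_L(u^{\tJ}_{\beta}\cdot v')}]\}_{v'\in W^{\tJ}/W^{\tJ(\beta)^\#}}$ is a single class, not a $\C[\bH]$-basis of $U_{\beta}$. You need the coset representatives $wt_{\beta}$ with $w\in W^{\tJ}/W^{\tJ(\beta)}$, or equivalently translations acting on the \emph{left}; right multiplication by $W^{\tJ}$ does not move you inside $\Gr_L$.

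Second, even after correcting this, your Step 2 identification $[\cO_{\Gr_L(u^{\tJ}_{\beta})}]\mapsto[\cO_{\sB^{\tJ}_{\tJ(\beta)^\#}(e)}]$ has the wrong orientation: on $\Gr_L$ the operator $D_i$ sends $[\cO_{\Gr_L(w)}]$ to $[\cO_{\Gr_L(s_iw)}]$ when $s_iw>w$ (enlarging the Schubert variety), whereas on $\sB^{\tJ}_{\tJ'}$ it sends $[\cO_{\sB^{\tJ}_{\tJ'}(w)}]$ to $[\cO_{\sB^{\tJ}_{\tJ'}(s_iw)}]$ when $s_iw<w$ (again enlarging the variety, but now by \emph{lowering} $w$). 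Both actions grow the variety, so the $\sH_q(\tJ)$-cyclic generator on the Grassmannian side, $[\cO_{\Gr_L(u^{\tJ}_{\beta})}]$ — the \emph{smallest} Schubert variety in the block — must correspond to the \emph{smallest} Schubert variety in $\sB^{\tJ}_{\tJ(\beta)^\#}$, i.e. to the point class (which, pulled back via Corollary \ref{Rsph}, is precisely $D_{w(\tJ,\beta)}[\cO_{\sB^{\tJ}(w_0^{\tJ})}]=[\cO_{\sB^{\tJ}(w_0^{\tJ}w_0^{\tJ(\beta)})}]$). Your Step 3 instead sends it to the structure sheaf of $\sB^{\tJ}_{\tJ(\beta)^\#}$, and then incorrectly claims this structure sheaf pulls back to $D_{w(\tJ,\beta)}[\cO_{\sB^{\tJ}(w_0^{\tJ})}]$; in fact the structure sheaf pulls back to $[\cO_{\sB^{\tJ}(e)}]$. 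The paper avoids this confusion by not asserting a direct Schubert-to-Schubert dictionary: it uses the observation that the block is $\sH_q(\tJ)$-cyclic from the $D_{w(\tJ,\beta)}$-invariant generator $[\cO_{\Gr_L(u^{\tJ}_{\beta})}]$, matches it against the corresponding cyclic generator of $K_{\bH}(\sB^{\tJ})D_{w_0^{\tJ(\beta)^\#}}$, and then finishes the basis statement via the Weyl character formula computation of the fiber of the resulting $\bL$-equivariant vector bundle — a step you replaced with a reference to ``$L$-analogues of Theorem \ref{finK} 3)--5)'' that does not actually address it.
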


\begin{proof}
By definition, we have a $\C [\bH]$-basis of $K_{\bH} ( \Gr_{\bL})$ offered by $[\cO_{\Gr_{\bL} ( w t_{\beta} )}]$ for $\beta \in \bX^{\le}_* ( \tJ )$ and $w \in W^{\tJ} / W^{\tJ ( \beta )}$. We have $K_{\bL} ( \Gr_{\bL}) = D _{w_0^{\tJ}} ( K_{\bH} ( \Gr_{\bL}) )$. By the Leibniz rule of $D_i$ for each $i \in \tI$ (Lemma \ref{Lei}), we conclude that the space of $D _{w_0^{\tJ}}$-invariants in $K_{\bH} ( \Gr_{\bL})$ is the direct sum of the $D _{w_0^{\tJ}}$-invariants in
\begin{equation}
\bigoplus_{w \in W^{\tJ} / W^{\tJ ( \beta )}} \C [\bH] [\cO_{\Gr_{\bL} ( w t_{\beta} )}]\label{orbB}
\end{equation}
for all $\beta \in \bX_* ^{\le} ( \tJ )$. The space (\ref{orbB}) is stable under the action of $D_j$ ($j \in \tJ$) again by the Leibniz rule. In addition, it is generated from $[\cO_{\Gr_{L} ( u^\tJ_{\beta} )}]$, that is $D_{w (\tJ, \beta )}$-invariant as $s_i \beta = \beta$ for $i \in \tJ ( \beta )$. By Corollary \ref{Rsph} (and Theorem \ref{finK}), we deduce that (\ref{orbB}) is isomorphic to $K_{\bH} ( \sB^\tJ_{\tJ ( \beta )^{\#}} )$ as $\sH_q ( \tJ )$-module via the assignment
$$[\cO_{\Gr_{L} ( u^\tJ_{\beta} )}] \mapsto D_{w ( \tJ, \beta )} ( [\cO_{\sB^\tJ ( w_0^\tJ )}] ).$$
This yields the desired correspondence between elements. Note that we have some $u \in W^\tJ$ such that $w_0^{\tJ} = u w ( \tJ, \beta )$ and $\ell ( w_0^{\tJ} ) = \ell ( u ) + \ell ( w ( \tJ, \beta ) )$. It follows that
$$D_{w_0^{\tJ}} ( e^{w_0^{\tJ} \la} D_{w ( \tJ, \beta )} [\cO_{\sB^{\tJ} ( w_0^{\tJ} )}] ) = D_{u} \left( D_{w (\tJ, \beta )} ( e^{w_0^{\tJ}\la} D_{w ( \tJ, \beta )} [\cO_{\sB^{\tJ} ( w_0^{\tJ} )}] ) \right),$$
represents a $\bL$-equivariant vector bundle whose fiber is a $L^{\tJ ( \beta )^\#}$-module with its character $D_{w ( \tJ, \beta )} ( e^{w_0^{\tJ} \la} )$. The latter is $\ch \, V^{\tJ ( \beta )^\#} ( w ( \tJ, \beta ) w_0^{\tJ} \la )$ by the Weyl character formula. We have
$$K_{\bL} ( \sB^\tJ_{\tJ (\beta)^\#}) \cong R ( \mathbf P ^{\tJ (\beta)^\#} ) = R ( \bL ^{\tJ (\beta)^\#} ),$$
and the set of characters $\ch \, V^{\tJ ( \beta)^\#} ( w ( \tJ, \beta ) w_0^{\tJ} \la )$ for $\la \in \La _{+}^{\tJ} ( \beta )$ is a $\C_q \bX^*_0 ( \tJ )$-basis of $R ( \bL ^{\tJ (\beta)^\#} )$. Therefore, we conclude that $\{ E^{\tJ} [\beta; \la] \}_{\la \in \La^{\tJ}_+ (\beta)}$ is the $\C_q \bX^*_0 ( \tJ )$-basis of the $D_{w_0^{\tJ}}$-invariant part of (\ref{orbB}). Since $K_{\bL} ( \Gr_{\bL})$ is the direct sum of $D_{w_0^{\tJ}}$-invariant parts of (\ref{orbB}), we conclude the result.
\end{proof}

We set $E^{\tJ}_{\mathrm{st}} [\gamma; \la] := E^{\tJ} [\gamma + \beta ; \la] \odot_q \mt_{- \beta}$ for $\la \in \La^\tJ,  \gamma \in \bX_*, \beta, \beta + \gamma \in \bX_*^- ( \tJ )$.

\begin{cor}\label{stabE}
The element $E^{\tJ}_{\mathrm{st}} [\gamma; \la]$ does not depend on the choice $($of $\beta)$.
\end{cor}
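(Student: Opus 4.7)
The plan mirrors the proof of Corollary \ref{h-op}: reduce to comparing two values of $\beta$ differing by an element of $\bX_*^-(\tJ)$, and apply the $L$-version of Theorem \ref{LSS-formula}.

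First I would reduce: given two valid choices $\beta_1, \beta_2 \in \bX_*^-(\tJ)$ with $\beta_i + \gamma \in \bX_*^-(\tJ)$, I pick any $\beta_0 \in \bX_*^-(\tJ)$ deep enough that $\beta_0 - \beta_i \in \bX_*^-(\tJ)$ for $i=1,2$; by transitivity it suffices to show the independence of $E^{\tJ}[\gamma+\beta;\la] \odot_q \mt_{-\beta}$ under replacement $\beta \rightsquigarrow \beta + \beta'$ for $\beta' \in \bX_*^-(\tJ)$.

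The key input is the following identification of minimal coset representatives for $L$: for any $\alpha \in \bX_*^-(\tJ)$, Theorem \ref{af-len}(1) (applied to $L = L^{\tJ}$, with $W$ replaced by $W^{\tJ}$) gives $\ell(u t_{\alpha}) = \ell(t_{\alpha}) - \ell(u)$ for all $u \in W^{\tJ}$; combined with Theorem \ref{af-len}(3) and the fact that the strictly antidominant element $\alpha$ is the unique point of $W^{\tJ} \alpha$ in $\bX_*^{\le}(\tJ)$, one reads off
\[
u^{\tJ}_{\alpha} = w_0^{\tJ}\, t_{\alpha}, \qquad \alpha \in \bX_*^-(\tJ).
\]
Applying this with $\alpha = \gamma+\beta$ and $\alpha = \gamma+\beta+\beta'$ (both in $\bX_*^-(\tJ)$ by assumption on the set of valid shifts) yields $u^{\tJ}_{\gamma+\beta+\beta'} = u^{\tJ}_{\gamma+\beta}\, t_{\beta'}$.

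Next, Theorem \ref{LSS-formula} applied to $L$ (whose hypotheses $u^{\tJ}_{\gamma+\beta} \in \widetilde{W}^{L,-}_{\af}$ and $\beta' \in \bX_*^-(\tJ)$ are just verified) gives
\[
[\cO_{\Gr_L(u^{\tJ}_{\gamma+\beta})}] \odot_q [\cO_{\Gr_L(t_{\beta'})}] = [\cO_{\Gr_L(u^{\tJ}_{\gamma+\beta+\beta'})}].
\]
Since $D_{w_0^{\tJ}}$ and left multiplication by $e^{w_0^{\tJ}\la}$ act from the left while $\odot_q \mt_{\beta'}$ acts from the right, they commute by associativity of the regular representation of $\sH_q$; thus
\[
E^{\tJ}[\gamma+\beta+\beta';\la] = E^{\tJ}[\gamma+\beta;\la] \odot_q \mt_{\beta'}.
\]
Finally, right-multiplying by $\mt_{-\beta-\beta'}$ and using the commutativity $\mt_{\beta'} \odot_q \mt_{-\beta-\beta'} = \mt_{-\beta}$ inside $K_{\bL}(\Gr_L)_{\lo}$ (a consequence of Theorem \ref{LSS-formula} for $L$) yields
\[
E^{\tJ}[\gamma+\beta+\beta';\la] \odot_q \mt_{-\beta-\beta'} = E^{\tJ}[\gamma+\beta;\la] \odot_q \mt_{-\beta},
\]
which completes the reduction and hence the corollary. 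The only mild obstacle is checking the identification $u^{\tJ}_{\alpha} = w_0^{\tJ} t_{\alpha}$, but this is immediate from Theorem \ref{af-len} once one restricts to the Levi $L^{\tJ}$.
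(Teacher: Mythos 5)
Your proof is correct and follows the same strategy as the paper: the decisive step is that the right $\odot_q\,\mt_{\beta'}$-action commutes with the left $\sH_q(\tJ)$-action (in particular with $D_{w_0^{\tJ}}$ and with multiplication by $e^{w_0^{\tJ}\la}$), which is precisely the one line the paper offers. You merely spell out the reduction that the paper leaves implicit, namely the identification $u^{\tJ}_{\alpha}=w_0^{\tJ}t_{\alpha}$ for $\alpha\in\bX_*^-(\tJ)$ and the $L$-version of Theorem~\ref{LSS-formula} that makes $[\cO_{\Gr_L(u^{\tJ}_{\gamma+\beta})}]\odot_q\mt_{-\beta}$ manifestly independent of $\beta$.
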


\begin{proof}
The assertion follows from the fact that the right action of $\mt_{\beta}$ commutes with the left action of $D_i$ $(i \in \tJ)$.
\end{proof}

By construction, we have $L \cong H'' \times [L, L]$ for a connected subtorus $H'' \subset H$. In particular, we have
$$L \cong H'' \times \prod_{k = 1}^n L _k$$
where each $L_k$ is a simply connected simple algebraic group. Let $Q^{\vee}_k \subset Q^{\vee}$ be the span of simple coroots corresponding to (co-)roots in $L_k$. We have
\begin{equation}
K_{\bL} ( \Gr_{\bL}) \cong K_{\bH''} ( \Gr_{H''} ) \otimes_{\C_q} \bigotimes_{k=1}^n K_{\bL_k} ( \Gr_{L_k}),\label{defKGr}
\end{equation}
where the big tensor product is also taken over $\C_q$. On $K_{\bL} ( \Gr_{\bL})$, we have the translation elements $\mt_{\beta}$ for each $\beta \in \bX_*$ obtained as the product of $\mt_{\gamma}$'s that act on one of the tensor factors. This makes (\ref{defKGr}) into the isomorphism between their localized versions.

Using this, we consider the maps $\Psi_{\tJ}$ and $\Phi_{\tJ}'$ obtained from these of Theorem \ref{K-id} and Theorem \ref{qK-int} by employing the following spaces:
$$K_{\bL} ( \bQ_{\tJ}^{\ra} ) := \bigotimes_{k = 1}^n K_{\bL_k} ( \bQ_{L_k}^{\ra} ) \otimes K_{\bH''} ( \Gr_{H''} ) \hskip 5mm \text{and} \hskip 5mm qK_{\bL} ( \sB^{\tJ} )_\lo \otimes K_{\bH''} ( \Gr_{H''} ),$$
where all the tensor products are taken over $\C_q$, the $\Phi_{\tJ}$ is $K_{\bH''} ( \Gr_{H''} )$-linear, and the map $\Psi_{\tJ}'$ is also $K_{\bH''} ( \Gr_{H''} )$-linear, though the Novikov variables and line bundles (including the Heisenberg generators of $K_{\bH''} ( \Gr_{H''} )$) are twisted by $-w_0$ from its naive definition. Note that the multiplication by $\mt_{\beta}$ ($\beta \in \bX_*$) corresponds to $Q^{-w_0 \beta}$ only if $\beta \in Q^{\vee}_{\tJ}$, and the multiplication by $Q^{\beta}$ for $\bX_*$ is extended formally.

\begin{lem}\label{transO}
For $\beta \in \bX_*$ and $\la \in \La^{\tJ}$, we have
$$E^{\tJ}_{\mathrm{st}} [\beta ; \la] = \Phi^{-1}_{\tJ} \circ \Psi'_{\tJ} ( [\cO_{\sB ^{\tJ}} ( -w_0 \la )] Q^{-w_0 \beta} ).$$
In particular, the set $\{ E^{\tJ}_{\mathrm{st}} [\beta; \la] \}_{\beta \in \bX_*, \la \in \La^{\tJ}}$ is a $\C_q \bX^*_0 ( \tJ )$-basis of $K_{\bL ^\tJ} ( \Gr_{\bL ^\tJ})_\lo$.
\end{lem}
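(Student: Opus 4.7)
\medskip
\noindent\textbf{Proof plan.} The plan is to reduce first to the strictly $\tJ$-antidominant case via the stabilization, then to compute the image of $E^{\tJ}[\beta;\la]$ under $\Phi_{\tJ}$ by $\sH_q(\tJ)$-equivariance, and finally to match it with $\Psi'_{\tJ}([\cO_{\sB^{\tJ}}(-w_0\la)]Q^{-w_0\beta})$ by uniqueness of certain joint eigenvectors. Throughout, I will use that the right $\odot_q$-action of $\mt_\gamma$ commutes with the left $\sH_q$-action, and that under $\Phi_{\tJ}$ it intertwines with the $Q^{\vee}$-translation (with the $-w_0$-twist built into $\Psi'_{\tJ}$), so the identity
$$\Phi_{\tJ}(\xi \odot_q \mt_\gamma)=\Psi'_{\tJ}\circ(\Psi'_{\tJ})^{-1}\Phi_{\tJ}(\xi)\cdot Q^{-w_0\gamma}$$
reduces the identity of the lemma to the case of strictly $\tJ$-antidominant $\beta$ by Corollary \ref{stabE}.

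Assume henceforth $\beta\in\bX_*^-(\tJ)$. Since $\left<\al_j^{\vee},\beta\right><0$ for $j\in\tJ$, the minimum length element $u^{\tJ}_\beta\in W^{\tJ}t_\beta W^{\tJ}$ is of the form $w_\beta\,t_\beta$ with $w_\beta\in W^{\tJ}$ satisfying $\ell(w_\beta t_\beta)=\ell(t_\beta)-\ell(w_\beta)$, by Theorem \ref{af-len}. By Corollary \ref{ncGr}(1) (applied to each simple factor $L_k$ of $L$), we have $\Phi_{\tJ}([\cO_{\Gr_L(u^{\tJ}_\beta)}])=[\cO_{\bQ_L(u^{\tJ}_\beta)}]$. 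Using the $\C[\bH]$-linearity of $\Phi_{\tJ}$ and the fact that it intertwines $D_j$ for $j\in\tJ$, I obtain
$$\Phi_{\tJ}\bigl(E^{\tJ}[\beta;\la]\bigr)\;=\;D_{w_0^{\tJ}}\!\bigl(e^{w_0^{\tJ}\la}[\cO_{\bQ_L(u^{\tJ}_\beta)}]\bigr).$$

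The key step is to identify the RHS with $\Psi'_{\tJ}([\cO_{\sB^{\tJ}}(-w_0\la)]Q^{-w_0\beta})$. I will first split off the translation factor: by Theorem \ref{dcf} applied on each simple factor of $L$, the $t_\beta$-part contributes a $Q^\beta$-translation together with a $q$-power that, combined with the $-w_0$-twist implanted in $\Psi'_{\tJ}$, matches exactly the $Q^{-w_0\beta}$ on the target side. It remains to check, for $\beta=0$, the identification
$$D_{w_0^{\tJ}}\!\bigl(e^{w_0^{\tJ}\la}[\cO_{\bQ_L(w_0^{\tJ})}]\bigr)\;=\;\Psi'_{\tJ}\!\bigl([\cO_{\sB^{\tJ}}(-w_0\la)]\bigr).$$
This is proved by the same strategy as Proposition \ref{resGH}: both sides are $D_{w_0^{\tJ}}$-invariant and are joint $D_{t_\gamma}$-eigenvectors ($\gamma\in Q^{\vee}_{\tJ,<}$) with the same eigenvalue; by Theorem \ref{X-weight} applied to the Levi $L$, such a vector is determined uniquely once one matches the coefficient of $e^{\la}[\cO_{\bQ_L(t_\beta)}]$ for all $\beta$, and this coefficient calculation proceeds just as in the proof of Proposition \ref{resGH}, using Theorem \ref{HQc}, Lemma \ref{Lei2}, and \cite[Corollary 3.15]{NOS18} to conclude.

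For the final basis assertion, once the identity has been established, the set $\{[\cO_{\sB^{\tJ}}(-w_0\la)]Q^{-w_0\beta}\}_{\la\in\La^{\tJ},\beta\in\bX_*}$ is visibly a $\C_q\bX^*_0(\tJ)$-basis of $qK_{\bL}(\sB^{\tJ})_\lo\otimes K_{\bH''}(\Gr_{H''})$ (by Theorem \ref{reconst}(4) applied to each simple factor and the definition of the localization), so transporting across the isomorphism $\Phi_{\tJ}^{-1}\circ\Psi'_{\tJ}$ gives the claim. Alternatively, Lemma \ref{E-basis} already gives a $\C_q\bX^*_0(\tJ)$-basis of $K_{\bL}(\Gr_L)$ before localization, and one checks that its saturation under the (free) right action of $\{\mt_\gamma\}_{\gamma\in\bX_*^-(\tJ)}$ yields precisely $\{E^{\tJ}_{\mathrm{st}}[\beta;\la]\}$. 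The main obstacle will be Paragraph 3: tracking the $q$-twists and the $w_0$ versus $w_0^{\tJ}$ twists in $\Psi'_{\tJ}$ carefully so as to pin down both the Novikov exponent $-w_0\beta$ and the line bundle label $-w_0\la$ on the nose; the uniqueness part of Theorem \ref{X-weight} is precisely the tool that absorbs these bookkeeping issues into a single equality of joint eigenvectors.
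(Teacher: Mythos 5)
The approach through Corollary \ref{stabE} (reduction to strictly $\tJ$-antidominant $\beta$) and the computation $\Phi_{\tJ}(E^{\tJ}[\beta;\la]) = D_{w_0^{\tJ}}(e^{w_0^{\tJ}\la}[\cO_{\bQ_L(u^{\tJ}_\beta)}])$ via $\C[\bH]$-linearity and $\sH_q(\tJ)$-equivariance is sound and matches the spirit of the paper's argument. But your key step (paragraph 3) contains a genuine error. You claim that $D_{w_0^{\tJ}}\bigl(e^{w_0^{\tJ}\la}[\cO_{\bQ_L(w_0^{\tJ})}]\bigr)$ is a joint $D_{t_\gamma}$-eigenvector for $\gamma\in Q^{\vee}_{\tJ,<}$. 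This is false. By Theorem \ref{HQc}(2) applied to $L$, one has
$$[\cO_{\bQ_L(w_0^{\tJ})}(\la)] = e^{w_0^{\tJ}\la}[\cO_{\bQ_L(w_0^{\tJ})}] + \sum_{v <_\si w_0^{\tJ}} a_{w_0^{\tJ}}^v(\la)[\cO_{\bQ_L(v)}],$$
and applying $D_{w_0^{\tJ}}$ gives $[\cO_{\bQ_L(e)}(\la)] = D_{w_0^{\tJ}}\bigl(e^{w_0^{\tJ}\la}[\cO_{\bQ_L(w_0^{\tJ})}]\bigr) + D_{w_0^{\tJ}}\bigl(\sum_v a_{w_0^{\tJ}}^v(\la)[\cO_{\bQ_L(v)}]\bigr)$. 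Only $[\cO_{\bQ_L(e)}(\la)]$ is the joint $D_{t_\gamma}$-eigenvector (Theorem \ref{dcf}); the element $D_{w_0^{\tJ}}\bigl(e^{w_0^{\tJ}\la}[\cO_{\bQ_L(w_0^{\tJ})}]\bigr)$ differs from it by the $\Sigma_*(\la)$-correction terms. The same issue is visible in the proof of Theorem \ref{X-weight} in the paper itself: there $C'(\la) := D_{w_0}(e^{w_0\la}[\cO_{\bQ_G(w_0)}])$ satisfies the eigenvector equation only modulo $K_{\prec\la}$, and the genuine eigenvector $C(\la)$ is $C'(\la)$ plus a nontrivial lower-order correction. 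So your appeal to the uniqueness in Theorem \ref{X-weight} cannot close the argument.

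The paper sidesteps this entirely. It does not try to show that the two sides are joint eigenvectors; rather, it observes that both $\Phi$ (Corollary \ref{ncGr}, coming from \cite[Prop.~2.13 and Rem.~2.14]{Kat18c}) and $\Psi$ (\cite[Thm.~4.1]{Kat18c}) intertwine \emph{Schubert classes} with \emph{Schubert classes}: $\Phi_{\tJ}$ sends $[\cO_{\Gr_L(u^{\tJ}_\beta)}]$ to $[\cO_{\bQ_L(u^{\tJ}_\beta)}]$, while $\Psi'_{\tJ}$ sends $[\cO_{\sB^{\tJ}(w_0^{\tJ})}]Q^{\gamma}$ to $[\cO_{\bQ_L(w_0^{\tJ}t_{\gamma})}]$ (with the appropriate $-w_0$-twist in the labels). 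Combined with the purely finite-dimensional Demazure identity $[\cO_{\sB^{\tJ}}(\la)] = D_{w_0^{\tJ}}(e^{w_0^{\tJ}\la}[\cO_{\sB^{\tJ}(w_0^{\tJ})}])$ and the fact that both maps are $\C[\bH]$-linear and $\sH_q(\tJ)$-equivariant, the first assertion follows by applying $D_{w_0^{\tJ}}(e^{w_0^{\tJ}\la}\cdot)$ on both sides — no eigenvector uniqueness is needed, and the non-eigenvector correction terms travel along unaffected. To repair your proof you should replace paragraph 3 by this Schubert-basis intertwining argument rather than the eigenvector characterization; the basis assertion then follows as you say, from Lemma \ref{E-basis}, Theorem \ref{dcf}, Theorem \ref{X-weight}, and Theorem \ref{K-id}.
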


\begin{proof}
We have $[\cO_{\sB^{\tJ}} ( \la )] = D _{w_0^{\tJ}} ( e^{w_0^{\tJ}\la} [\cO_{\sB^{\tJ} ( w_0^{\tJ} )}] ) \in K_{\bH} ( \sB^\tJ )$. In view of the correspondence between Schubert classes under the maps $\Psi$ \cite[Theorem 4.1 and its proof]{Kat18c} and $\Phi$ \cite[Proposition 2.13 and Remark 2.14]{Kat18c}, we deduce the first assertion. Taking into account of the first assertion and Theorem \ref{K-id}, the second assertion follows from Theorem \ref{dcf} and Theorem \ref{X-weight}.
\end{proof}

\begin{lem}
The embedding of Proposition \ref{resGH} induces algebra embeddings
$$K_{\bG} ( \Gr_G )_\lo \hookrightarrow K_{\bL} ( \Gr_L )_\lo \hookrightarrow K_{\bH} ( \Gr_H ).$$
\end{lem}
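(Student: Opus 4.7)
The plan is to apply Proposition~\ref{resGH} to the Levi $L=L^\tJ$ in the same way it was applied to $G$, using that $L\cong H''\times[L,L]$ with $[L,L]$ simply connected. This yields an algebra embedding $\iota_L:K_\bL(\Gr_L)_\lo\hookrightarrow K_\bH(\Gr_H)$. Denoting by $\iota_G$ the embedding of Proposition~\ref{resGH} for $G$, it is enough to show $\iota_G(K_\bG(\Gr_G)_\lo)\subset\iota_L(K_\bL(\Gr_L)_\lo)$ inside $K_\bH(\Gr_H)$; the desired factorisation $K_\bG(\Gr_G)_\lo\hookrightarrow K_\bL(\Gr_L)_\lo$ then exists and is injective because $\iota_L$ is.

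By Corollary~\ref{genKG}, $K_\bG(\Gr_G)_\lo$ is generated as a $\C_q$-algebra by the translations $\mt_\gamma$ ($\gamma\in\bX_*$), the characters $e^\la$ ($\la\in\bX^*(G)$), and the Darboux elements $\phi_i,\xi_i$ ($i\in\tI$). I check containment on each family. Using the tensor decomposition~(\ref{defKGr}) induced by $L\cong H''\times\prod_k L_k$ and the corresponding splitting $\bX_*=\bX_*(H'')\oplus\bigoplus_k\bX_*(H_k)$, every $\mt_\gamma$ factorises as a commuting product $\mt_{\gamma''}\cdot\prod_k\mt_{\gamma_k}$, where each factor lies in the Heisenberg algebra $K_{\bH''}(\Gr_{H''})$ or in one of the localised factors $K_{\bL_k}(\Gr_{L_k})_\lo$, hence in $\iota_L(K_\bL(\Gr_L)_\lo)$. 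For $\la\in\bX^*(G)\subset\bX^*(L)$ the character $e^\la$ already lies in $R(\bL)\subset K_\bL(\Gr_L)_\lo$ and is preserved by $\iota_L$.

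For the Darboux generators I would split into two cases. If $i\in\tJ$, then the fundamental weight $\varpi_i$ of $G$ may be chosen as the $i$-th fundamental weight $\varpi_i^L$ of $[L,L]$ (the defining condition $\langle\al_j^\vee,\varpi_i\rangle=\delta_{ij}$ for $j\in\tJ$ is automatic), and Proposition~\ref{resGH} applied to $L$ gives $\iota_L(\phi_i^L)=e^{-\varpi_i}=\iota_G(\phi_i)$ and $\iota_L(\xi_i^L)=(1-\mt_{\al_i^\vee})e^{\varpi_i}=\iota_G(\xi_i)$. If $i\notin\tJ$, then $\langle\al_j^\vee,\varpi_i\rangle=0$ for all $j\in\tJ$ shows $\varpi_i\in\bX^*(L)$, so $e^{\pm\varpi_i}\in R(\bL)\subset K_\bL(\Gr_L)_\lo$; combined with $\mt_{\al_i^\vee}\in\iota_L(K_\bL(\Gr_L)_\lo)$ from the previous paragraph, this shows that both $\iota_G(\phi_i)$ and $\iota_G(\xi_i)$ lie in $\iota_L(K_\bL(\Gr_L)_\lo)$, completing the verification on generators.

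The delicate step is the case $i\notin\tJ$: there $\al_i^\vee$ is not a coroot of $L$, so $\mt_{\al_i^\vee}$ does not a priori lie in the non-localised $K_\bL(\Gr_L)$. The argument must exploit the direct sum decomposition of $\bX_*$ coming from $L\cong H''\times[L,L]$ together with the full localisation inverting translations in each tensor factor of~(\ref{defKGr}) in order to realise $\mt_{\al_i^\vee}$ as a product inside $\iota_L(K_\bL(\Gr_L)_\lo)$.
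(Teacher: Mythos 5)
Your proof is correct and takes essentially the same approach as the paper: express both $K_\bG(\Gr_G)_\lo$ and $K_\bL(\Gr_L)_\lo$ as explicit subalgebras of $K_\bH(\Gr_H)$ via Proposition \ref{resGH} and Corollary \ref{genKG}, and verify containment generator by generator. The only wrinkle is your closing paragraph, which re-raises a non-issue: $\mt_{\al_i^\vee}$ for $i\notin\tJ$ lies in $\iota_L(K_\bL(\Gr_L)_\lo)$ simply because Proposition \ref{resGH} applied to $L$ sends $\mt_\gamma\mapsto\mt_\gamma$ for every $\gamma\in\bX_*$ (as you already noted two paragraphs earlier), so no decomposition into tensor factors is needed to realise it.
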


\begin{proof}
In view of Corollary \ref{genKG} and Proposition \ref{resGH}, we find that $K_{\bG} ( \Gr_G )_\lo$ and $K_{\bL} ( \Gr_L )_\lo$ are obtained by replacing the generator $e^{\varpi_i}$ $(i \in\tI)$ in $K_{\bH} ( \Gr_H )$ to $\xi_i$ for $i \in \tJ$ ($e^{- \varpi_i}$ and $\phi_i$ are the same for every $i \in \tI$). The commutation relation in Proposition \ref{resGH} implies $K_{\bG} ( \Gr_G )_\lo \subset K_{\bL} ( \Gr_L )_\lo$ inside $K_{\bH} ( \Gr_H )$.
\end{proof}

For $\la \in \La$, we write $\la = \sum_{j \in \tI} m_j \varpi_j$ for some $m_j \in \Z$. For each $\beta \in \bX_*$, we define
$$[\cO_{\beta} ^\star ( \la ) ] := \left( \prod_{j \in \tI, m_j < 0} \phi_i ^{- m_j} \right) \left( \prod_{j \in \tI, m_j > 0} \xi_i ^{m_j} \right) ( \mt_\beta ) \in K_{\bG} ( \Gr_{G} )_\lo.$$
Similarly, for each $\la \in \La$, we write $\la = \mu + \sum_{j \in \tJ} m_j \varpi_j$ for some $\mu \in \La^{\tI \setminus \tJ}$ and $m_j \in \Z$, and we define
$$[\cO_{\tJ, \beta} ^\star( \la ) ] := e^{\mu} \left( \prod_{j \in \tJ, m_j < 0} \phi_i ^{- m_j} \right) \left( \prod_{j \in \tJ, m_j > 0} \xi_i ^{m_j} \right) ( \mt_\beta ) \in K_{\bL} ( \Gr_{L} )_\lo.$$

\begin{lem}\label{O-basis}
For $\la \in \La^{\tJ}$, we have
$$[\cO_{\tJ, 0}^\star ( \la )] = E ^{\tJ}_{\mathrm{st}} [ 0; \la ] \mod (\mt_{\al_j^{\vee}} \mid j \in \tJ).$$
\end{lem}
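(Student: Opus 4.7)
The plan is to reduce the claimed congruence to a computation on the semi-infinite flag manifold side via the injective map $\Phi_\tJ \colon K_\bL(\Gr_L)_\lo \hookrightarrow K_\bL(\bQ_\tJ^\ra)$ (obtained by applying Theorem \ref{K-id} factorwise to the simple factors of $[L,L]$ and extending scalars by $K_{\bH''}(\Gr_{H''})$). Since $\Phi_\tJ$ is injective and carries the right multiplication by $\mt_{\al_j^\vee}$ ($j\in\tJ$) to the right $Q$-translation corresponding to the positive simple coroots of $L$ (up to the $-w_0$ twist built into the setup of $\Phi_\tJ$ and $\Psi'_\tJ$), the ideal $\Phi_\tJ((\mt_{\al_j^\vee}\mid j\in\tJ))$ equals the full Novikov ideal of $L$. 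Hence it suffices to verify that $\Phi_\tJ$ of both sides agree modulo this ideal.

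For the left-hand side, invoke the identifications $\Phi_\tJ(\phi_j) = \Xi(-\varpi_j)$ and $\Phi_\tJ(\xi_j) = (1-Q^{\al_j^\vee})\Xi(\varpi_j)$ established in (the $L$-version of) the proof of Lemma \ref{Heis-comp}, together with the commutativity of $\{\phi_j,\xi_j\}_{j\in\tJ}$ among each other for different indices (Lemma \ref{Heis-comp}) and Lemma \ref{csq}. Writing $\la = \sum_{j\in\tJ} m_j\varpi_j$, we compute
$$\Phi_\tJ\bigl([\cO_{\tJ,0}^\star(\la)]\bigr) = \Bigl(\prod_{j\in\tJ,\, m_j<0} \Xi(-\varpi_j)^{-m_j}\Bigr)\Bigl(\prod_{j\in\tJ,\, m_j>0} \bigl((1-Q^{\al_j^\vee})\Xi(\varpi_j)\bigr)^{m_j}\Bigr)[\cO_{\bQ_L(e)}].$$
Modulo $(Q^{\al_j^\vee}\mid j\in\tJ)$, each factor $(1-Q^{\al_j^\vee})$ becomes $1$, and the remaining product collapses via Theorem \ref{HQc} 1 to $\Xi(\la)[\cO_{\bQ_L(e)}] = [\cO_{\bQ_L(e)}(\la)]$.

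For the right-hand side, Lemma \ref{transO} with $\beta=0$ gives $\Phi_\tJ(E^\tJ_{\mathrm{st}}[0;\la]) = \Psi'_\tJ([\cO_{\sB^\tJ}(-w_0\la)])$. Modulo the full Novikov ideal of $L$, the quantum $K$-ring $qK_\bL(\sB^\tJ)_\lo$ specializes to $K_\bL(\sB^\tJ)$, and $\Psi'_\tJ$ restricts to the classical correspondence of line bundle classes built into its definition: by Theorem \ref{qK-int} 2) together with the prescribed $-w_0$ twist, the class $[\cO_{\sB^\tJ}(-w_0\la)]$ is sent to $[\cO_{\bQ_L(e)}(\la)]$ modulo Novikov. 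Thus both sides descend to the same element $[\cO_{\bQ_L(e)}(\la)]$, and the injectivity of $\Phi_\tJ$ completes the proof.

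The main obstacle I foresee is bookkeeping: ensuring that the $-w_0$ twist in $\Psi'_\tJ$ (which acts on both the line bundle labels and the Novikov/Heisenberg generators) interacts correctly with the ideal $(\mt_{\al_j^\vee}\mid j\in\tJ)$ under $\Phi_\tJ$. This ultimately hinges on the fact that $-w_0^L$ permutes the simple coroots of each simple factor of $[L,L]$, so the two flavors of Novikov ideals coincide after the twist is applied.
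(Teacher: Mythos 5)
Your proof is correct, but it takes a genuinely different route from what the paper's terse one-line proof indicates. Both arguments begin the same way, transplanting to the semi-infinite side via the embedding $\Phi_\tJ$ coming from Theorem \ref{K-id}, and both hinge on the fact that the $\mt_{\al_j^\vee}$-ideal corresponds to the Novikov ideal of $L$. From there the routes diverge. The paper's cited ingredients (Theorem \ref{sib} and Theorem \ref{HQc}~4)) indicate a direct Schubert-class comparison inside $K_{\bH}(\bQ_L^{\ra})$: one expands $[\cO_{\bQ_L(e)}(\la)]$ via Theorem \ref{HQc}~2), uses Peterson's Theorem \ref{sib} to recognize that the summands with $v\notin W^\tJ$ lie in the Novikov ideal, and identifies the surviving $W^\tJ$-part with the line-bundle expansion on $\sB^\tJ$ via Theorem \ref{HQc}~4); the latter is then matched with the Demazure expression defining $E^\tJ[\gamma;\la]$. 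You instead detour through $\Psi'_\tJ$ and the quantum $K$-ring, invoking Lemma \ref{transO} and Theorem \ref{qK-int}~2) together with the mod-Novikov reduction $A^{\mu}\equiv \cO_{\sB^\tJ}(\mu)\otimes(\bullet)$ from Theorem \ref{reconst}~2). Your approach is sound and avoids the direct Schubert bookkeeping, at the cost of relying on the $\Psi$-correspondence machinery (Lemma \ref{transO}), whereas the paper's route works entirely in $K_\bH(\bQ_L^\ra)$.

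Two small imprecisions worth tightening. First, the $-w_0$ twist is built into $\Psi'_\tJ$ but \emph{not} into $\Phi_\tJ$: by Corollary \ref{ncGr}~2), $\Phi_\tJ$ sends $\odot_q\,\mt_{\gamma}$ to the untwisted right $Q^\gamma$-translation, so the identification of $\Phi_\tJ((\mt_{\al_j^\vee}\mid j\in\tJ))$ with the Novikov ideal holds without a twist (the $-w_0^\tJ$ permutation of $\{\al_j^\vee\}_{j\in\tJ}$ is relevant for $\Psi'_\tJ$, not for $\Phi_\tJ$). Second, Theorem \ref{qK-int}~2) is stated only for $-\la\in\La_+$; you should note explicitly that it extends to all $\la$ because $A^\la$ and $\Xi(\la)$ are both invertible with $A^\la A^{-\la}=\mathrm{id}$ (Theorem \ref{reconst}~1)) and $\Xi(\la)\Xi(-\la)=\mathrm{id}$ (Theorem \ref{HQc}~1)). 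Also, when you pass the factor $(1-Q^{\al_j^\vee})$ to $1$ modulo the Novikov ideal, you should really invoke the rearrangement $\bigl((1-Q^{\al_j^\vee})\Xi(\varpi_j)\bigr)^{m_j}=\bigl(\prod_{l=0}^{m_j-1}(1-q^{-l}Q^{\al_j^\vee})\bigr)\Xi(m_j\varpi_j)$ from the proof of Proposition \ref{resGH}, since the $\Xi$'s and the $Q$-factors only commute up to powers of $q$ (Lemma \ref{csq}).
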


\begin{proof}
In view of Theorem \ref{sib} and Theorem \ref{K-id}, the assertion follows from Theorem \ref{HQc} 4) and the definitions of $\phi_i$'s and $\xi_i$'s.
\end{proof}

By the comparison of Lemma \ref{E-basis} and Lemma \ref{O-basis}, we have a transition matrix (that is a finite sum in view of Corollary \ref{genKG})
$$E^{\tJ} [\beta; \la] = \sum_{\gamma \in \bX_*, \mu \in \La^{\tJ}} a_{\beta,\la} ^{\gamma, \mu} ( \tJ ) [\cO_{\tJ, \gamma}^\star ( \mu )]$$
for $a_{\beta,\la} ^{\gamma, \mu} ( \tJ ) \in \C_q \bX^*_0 ( \tJ )$. Moreover, we have:

\begin{lem}
We have $a_{\beta,\la} ^{\beta, \la} ( \tJ ) = 1$, and
$$a_{\beta,\la} ^{\gamma, \mu} ( \tJ ) = 0 \hskip 5mm \text{for every} \hskip 5mm \gamma \not\in \beta + Q^{\vee}_{\tJ,+}.$$
\end{lem}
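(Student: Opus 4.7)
The plan is to transport the identity to $qK_{\bL}(\sB^{\tJ})_{\lo}$ via the isomorphism $F := (\Psi'_{\tJ})^{-1}\circ\Phi_{\tJ}$ of Lemma \ref{transO}, which sends $E^{\tJ}_{\mathrm{st}}[\beta;\la]$ to $[\cO_{\sB^{\tJ}}(-w_0^{\tJ}\la)]Q^{-w_0^{\tJ}\beta}$. Since $\mt_{\beta_0}$ for $\beta_0\in\bX_*^-(\tJ)$ sufficiently negative is invertible in $K_{\bL}(\Gr_L)_{\lo}$ and the stabilization identity $E^{\tJ}_{\mathrm{st}}[\gamma;\la]\odot_q\mt_{\beta_0} = E^{\tJ}[\gamma+\beta_0;\la]$ of Corollary \ref{stabE} uniformly shifts the $\gamma$-index (preserving membership in $\beta+Q^{\vee}_{\tJ,+}$), it suffices to establish the analogous triangularity with $E^{\tJ}_{\mathrm{st}}$ in place of $E^{\tJ}$.

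I will prove the reverse-triangular identity: for each $\gamma\in\bX_*$ and $\mu\in\La^{\tJ}$,
\begin{equation*}
[\cO_{\tJ,\gamma}^{\star}(\mu)] = E^{\tJ}_{\mathrm{st}}[\gamma;\mu] + \sum_{\substack{\gamma'\in\gamma+Q^{\vee}_{\tJ,+}\setminus\{0\}\\ \mu'\in\La^{\tJ}}} c_{\gamma,\mu}^{\gamma',\mu'}(\tJ)\,E^{\tJ}_{\mathrm{st}}[\gamma';\mu'].
\end{equation*}
Under $F$, the operators $\phi_j$ and $\xi_j$ (for $j\in\tJ$) transport, via Theorem \ref{K-id} and the $-w_0^{\tJ}$-twist built into $\Psi'_{\tJ}$, to $A^{w_0^{\tJ}\varpi_j}$ and $(1-Q^{\al_{j^*}^{\vee}})A^{-w_0^{\tJ}\varpi_j}$ respectively on $qK_{\bL}(\sB^{\tJ})_{\lo}$, where $j\mapsto j^*$ is the involution of $\tJ$ induced by $-w_0^{\tJ}$. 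By Theorem \ref{reconst} parts (1) and (2) combined with $A^{\la}A^{-\la}=\mathrm{id}$, each such operator acts modulo the ideal $\mathcal I := (Q^{\al_i^{\vee}}\mid i\in\tJ)$ as tensor product by the corresponding line bundle on $\sB^{\tJ}$. Composing these leading-term actions and applying them to $F(\mt_\gamma) = Q^{-w_0^{\tJ}\gamma}\cdot[\cO_{\sB^{\tJ}}]$, the leading term of $F([\cO_{\tJ,\gamma}^{\star}(\mu)])$ modulo $\mathcal I$ equals $[\cO_{\sB^{\tJ}}(-w_0^{\tJ}\mu)]Q^{-w_0^{\tJ}\gamma} = F(E^{\tJ}_{\mathrm{st}}[\gamma;\mu])$, yielding the diagonal coefficient $1$.

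Every correction term contains an extra factor $Q^{\eta}$ with $\eta\in Q^{\vee}_{\tJ,+}\setminus\{0\}$; pulled back via $F^{-1}$, it contributes to $E^{\tJ}_{\mathrm{st}}[\gamma';\mu']$ with $\gamma' = \gamma - w_0^{\tJ}\eta$. Since $-w_0^{\tJ}$ permutes the positive simple coroots of $\tJ$, we have $\gamma'\in\gamma+Q^{\vee}_{\tJ,+}\setminus\{0\}$, completing the reverse triangularity. Inverting the resulting upper-triangular change-of-basis matrix (with $1$'s on the diagonal, under the partial order $\gamma\le\gamma'\Leftrightarrow\gamma'-\gamma\in Q^{\vee}_{\tJ,+}$) delivers the claim $a_{\beta,\la}^{\beta,\la}(\tJ)=1$ and $a_{\beta,\la}^{\gamma,\mu}(\tJ)=0$ for $\gamma\notin\beta+Q^{\vee}_{\tJ,+}$.

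The main technical obstacle will be verifying the leading-term behavior of $A^{\varpi_{j^*}}$ for $\varpi_{j^*}\in\La_+^{\tJ}$, which is not directly covered by Theorem \ref{reconst}~(2) (that addresses $-\la\in\La_+^{\tJ}$). The remedy is to invert: since each correction to $A^{-\varpi_{j^*}}$ lies in $\mathcal I$, the inverse operator modulo $\mathcal I$ equals the tensor product by $\cO_{\sB^{\tJ}}(\varpi_{j^*})$, using $A^{\la}A^{-\la}=\mathrm{id}$ from Theorem \ref{reconst}~(1).
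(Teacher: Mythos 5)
Your proposal is correct in spirit and takes a genuinely different route from the paper's. The paper's proof is a one-liner resting on Lemma \ref{O-basis} (the leading-term identification of $[\cO^{\star}_{\tJ,0}(\la)]$ with $E^{\tJ}_{\mathrm{st}}[0;\la]$ modulo $(\mt_{\al_j^{\vee}})_{j\in\tJ}$, proved through semi-infinite flags) together with the observation that the line-bundle twists raise the translation filtration. You instead transport everything to $qK_{\bL}(\sB^{\tJ})_{\lo}$ via $F=(\Psi'_{\tJ})^{-1}\circ\Phi_{\tJ}$ and invoke Theorem \ref{reconst}(1)--(2) to control the $A^{\pm\varpi}$-operators modulo the Novikov ideal. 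Both are filtration/leading-term arguments, but you rederive the leading-term behavior on the quantum-$K$ side rather than citing the Grassmannian-side Lemma \ref{O-basis}; this is a legitimate alternative that makes the link to Theorem \ref{qK-surj} more transparent, at the cost of having to track the $-w_0$-twist in $\Psi'_{\tJ}$ carefully.

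There is, however, a gap in your reduction step. You claim it ``suffices to establish the analogous triangularity with $E^{\tJ}_{\mathrm{st}}$ in place of $E^{\tJ}$,'' using the stabilization $E^{\tJ}_{\mathrm{st}}[\gamma;\la]\odot_q\mt_{\beta_0}=E^{\tJ}[\gamma+\beta_0;\la]$. That identity only produces $E^{\tJ}[\beta;\la]$ for $\beta\in\bX_*^{-}(\tJ)$ (strict interior). But the transition matrix $a_{\beta,\la}^{\gamma,\mu}(\tJ)$ is indexed by $\beta\in\bX_*^{\le}(\tJ)$, $\la\in\La_+^{\tJ}(\beta)$, and when $\tJ(\beta)\neq\emptyset$ the class $E^{\tJ}[\beta;\la]$ is a genuine vector-bundle class (cf.\ Lemma \ref{E-basis}) that is not equal to $E^{\tJ}_{\mathrm{st}}[\beta;\la]$ and is not obtained by any $\mt_{\beta_0}$-shift of a stabilized class. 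Your inversion argument therefore proves the triangularity and unit diagonal for the $E^{\tJ}_{\mathrm{st}}$-basis, but to conclude $a_{\beta,\la}^{\beta,\la}(\tJ)=1$ for boundary $\beta$ one must additionally observe that in the degree-$\beta$ associated graded, $E^{\tJ}[\beta;\la]$ expands in the $E^{\tJ}_{\mathrm{st}}[\beta;\mu]$ with leading coefficient $1$ at $\mu=\la$ (this is the ``highest-weight line bundle has coefficient one'' fact underlying the expansion $E^{\tJ}[\beta;\la]=\sum_{\mu}d_{\mu}^{\la}E^{\tJ}_{\mathrm{st}}[\beta;\mu]$ used in Proposition \ref{aa-comp}). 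Without this, the unit-diagonal claim is not established on the boundary strata. A secondary, more cosmetic issue: the twist built into $\Psi'_{\tJ}$ is by $-w_0$ (the longest element of $W$, not of $W^{\tJ}$), per the paragraph preceding Lemma \ref{transO}; you write $-w_0^{\tJ}$ throughout, which is consistent internally but does not match the paper's normalization and would change the precise labels (though not the triangularity) if carried out faithfully.
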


\begin{proof}
The assertion follows by Lemma \ref{O-basis} and the fact that the effect of line bundle twists of $\bQ_{L_k}$ raises the translation parts by $Q^{\vee}_{\tJ,+}$.
\end{proof}

\begin{prop}\label{a-comp}
For each $\la \in \La^\tJ$ and $\beta \in \bX_* ^{-}$, we have
$$a_{\beta,\la} ^{\gamma, \mu} ( \tJ ) = a_{\beta,\la} ^{\gamma, \mu} \hskip 5mm \gamma \in \beta + Q^{\vee}_{\tJ,+}.$$
\end{prop}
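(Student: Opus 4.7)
The plan is to transport both sides of the identity to the quantum $K$-theoretic picture via Theorem \ref{ncPet} and its $L$-analogue, and then apply Theorem \ref{qK-surj} to match coefficients term by term.

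First, I apply the isomorphism $\Psi^{-1}\circ\Phi \colon K_{\bG}(\Gr_G)_{\lo} \xrightarrow{\sim} qK_{\bG}(\sB)_{\lo}$ of Theorem \ref{ncPet} together with its $L$-version $(\Psi_{\tJ}')^{-1}\circ\Phi_{\tJ}$ used in Lemma \ref{transO}. The hypothesis $\beta \in \bX_*^-$ yields $E^{\tI}[\beta;\la]\odot_q\mt_{\beta'} = E^{\tI}[\beta+\beta';\la]$ for every $\beta' \in \bX_*^-$ (since $D_{w_0}$ commutes with the right $\odot_q$-action), so $E^{\tI}[\beta;\la] = E^{\tI}_{\mathrm{st}}[\beta;\la]$, and analogously on the $L$-side. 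Hence, by Lemma \ref{transO}, these classes transport to $[\cO_{\sB}(-w_0\la)]Q^{-w_0\beta}$ in $qK_{\bG}(\sB)_{\lo}$ and $[\cO_{\sB^{\tJ}}(-w_0\la)]Q^{-w_0\beta}$ in the $L$-analogue. Under the same transport, the basis classes $[\cO_{\gamma}^{\star}(\mu)]$ and $[\cO_{\tJ,\gamma}^{\star}(\mu)]$ become explicit $\sH_q^{\sh}$-operator monomials: using the identifications $\phi_i \leftrightarrow \Xi(-\varpi_i)$ and $\xi_i \leftrightarrow (1 - Q^{\al_i^\vee})\Xi(\varpi_i)$ supplied by Lemma \ref{Heis-comp}, each $[\cO_{\gamma}^{\star}(\mu)]$ is a product of such operators acting on $Q^{-w_0\gamma}[\cO_{\sB}]$, and similarly for its $L$-analogue.

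Next, I invoke Theorem \ref{qK-surj}: the surjection $qK_{\bG}(\sB)^{\wedge} \twoheadrightarrow qK_{\bL}(\sB^{\tJ})^{\wedge}$ intertwines $A^{\pm\varpi_j}$ for $j \in \tJ$ and has kernel generated by $Q^{-w_0\al_i^\vee}$ for $i \in \tI\setminus\tJ$. Pushing the $G$-side identity $[\cO_{\sB}(-w_0\la)]Q^{-w_0\beta} = \sum_{\gamma,\mu} a_{\beta,\la}^{\gamma,\mu}[\cO_{\gamma}^{\star}(\mu)]$ forward via this surjection yields, for $\la \in \La^{\tJ}$, an identity of the form $[\cO_{\sB^{\tJ}}(-w_0\la)]Q^{-w_0\beta} = \sum_{\gamma,\mu} a_{\beta,\la}^{\gamma,\mu}\cdot(\text{image of }[\cO_{\gamma}^{\star}(\mu)])$. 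The image of $[\cO_{\gamma}^{\star}(\mu)]$ is identified with $[\cO_{\tJ,\gamma}^{\star}(\mu)]$ precisely when all $A^{\pm\varpi_i}$-factors of the corresponding monomial have $i \in \tJ$ and its translation shift $Q^{-w_0\gamma}$ lies in $Q^\vee_{\tJ}$, i.e.\ exactly when $\gamma \in \beta + Q^\vee_{\tJ,+}$; otherwise the monomial picks up a kernel generator $Q^{-w_0\al_i^\vee}$ for some $i \in \tI\setminus\tJ$ (either from a $\xi_i$-factor or from the $Q$-shift itself) and dies.

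Comparing with the transported $L$-expansion $E^{\tJ}[\beta;\la] = \sum_{\gamma,\mu} a_{\beta,\la}^{\gamma,\mu}(\tJ)[\cO_{\tJ,\gamma}^{\star}(\mu)]$, and invoking the uniqueness of the basis expansion (Lemma \ref{E-basis} combined with the unit-diagonal triangular transition stated just before the proposition), one forces $a_{\beta,\la}^{\gamma,\mu}(\tJ) = a_{\beta,\la}^{\gamma,\mu}$ for $\gamma \in \beta + Q^\vee_{\tJ,+}$. The main technical obstacle will be reconciling the $Q^\vee_+$-completion on which Theorem \ref{qK-surj} is stated with the Laurent-localization built into $\Phi_G$ and $\Psi_G$: one must verify that the coefficient extraction for the combinatorially finite collection of terms with $\gamma \in \beta + Q^\vee_{\tJ,+}$ is insensitive to this choice, and separately keep track of the $-w_0$-twist convention built into $\Psi_{\tJ}'$ as opposed to $\Psi_G$ (the map $\Psi_{\tJ}'$ twists Novikov variables and line bundles by $-w_0$ relative to the naive definition, and this twist must be absorbed uniformly on both sides of the comparison).
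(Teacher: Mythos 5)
Your proposal follows essentially the same route as the paper's proof: use $\beta\in\bX_*^{-}$ to identify $E[\beta;\la]$ with its stabilized version, transport via $(\Psi_{\tJ}')^{-1}\circ\Phi_{\tJ}$ (Theorem \ref{K-id}, Theorem \ref{qK-int}, Lemma \ref{transO}) to the quantum $K$-group picture, and then apply the surjection of Theorem \ref{qK-surj} to match coefficients. The paper's own proof is a two-line pointer to exactly these ingredients (plus Corollary \ref{K-rest}), so your writeup is a correct expansion of it; your final paragraph correctly flags the two places where care is needed (completion vs.\ localization, and the $-w_0$-twist built into $\Psi_{\tJ}'$), which is where the paper's terseness actually hides nontrivial bookkeeping.
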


\begin{proof}
By assumption, we have $E  [ \beta; \la ] = E _{\mathrm{st}} [ \beta; \la ]$ and $E ^{\tJ} [ \beta; \la ] = E ^{\tJ}_{\mathrm{st}} [ \beta; \la ]$. Thanks to Theorem \ref{K-id} and Theorem \ref{qK-int}, we transplant the problem to the quantum $K$-groups via $( \Psi_{\tJ}')^{-1} \circ \Phi_{\tJ}$. In view of Corollary \ref{K-rest}, the assertion follows by Theorem \ref{qK-surj} and Lemma \ref{transO}.
\end{proof}

\begin{prop}\label{aa-comp}
For each $\beta \in \bX_* ^{\le}$ and $\la \in \La_+ ( \beta )$, we have
$$a_{\beta,\la} ^{\gamma, \mu} = \sum_{\la'} c _{\la'} a_{\beta,\la'} ^{\gamma, \mu} ( \tJ ) \hskip 5mm \gamma \in \beta + Q^{\vee}_{\tJ,+},$$
where $\la' \in \La^{\tJ}_+ ( \beta )$ and $c_{\la'} \in \C_q \bX^*_0 ( \tJ )$.
\end{prop}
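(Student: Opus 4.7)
My plan is to reduce Proposition \ref{aa-comp} to Proposition \ref{a-comp} by introducing an intermediate ``stable'' basis and using that the $\beta$-summand of $K_{\bG}(\Gr_G)_\lo$ (respectively of $K_{\bL}(\Gr_L)_\lo$) admits two natural presentations: the Schubert-type basis $\{E[\beta;\la]\}_{\la \in \La_+(\beta)}$ from Lemma \ref{E-basis}, and a ``weight-type'' family $\{E_{\mathrm{st}}[\beta;\nu]\}$ obtained via right translation by $\mt_{\delta}$ for $\delta \in \bX_*^-$. The transition between the two is governed by the Weyl character formula on the partial flag varieties $\sB_{\tI(\beta)^\#}$ and $\sB^\tJ_{\tJ(\beta)^\#}$, and it couples the two sides to give the claim.

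First, I would upgrade Proposition \ref{a-comp} to a statement about the stable coefficients for arbitrary $\beta \in \bX_*$. Choosing $\delta \in \bX_*^-$ with $\beta+\delta \in \bX_*^-$ and using that $\mt_{-\delta}$ commutes with each $\phi_j, \xi_j$ (Lemma \ref{Heis-comp}), the right-translation identity $E_{\mathrm{st}}[\beta;\nu] = E[\beta+\delta;\nu]\odot_q\mt_{-\delta}$ yields $a^{\gamma,\mu}_{\beta,\nu;\mathrm{st}} = a^{\gamma+\delta,\mu}_{\beta+\delta,\nu}$ for the coefficients in the $[\cO^\star_\gamma(\mu)]$-basis. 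Since $\gamma \in \beta+Q^\vee_{\tJ,+}$ iff $\gamma+\delta \in (\beta+\delta)+Q^\vee_{\tJ,+}$, Proposition \ref{a-comp} applied to $\beta+\delta \in \bX_*^-$ gives
\[a^{\gamma,\mu}_{\beta,\nu;\mathrm{st}} = a^{\gamma+\delta,\mu}_{\beta+\delta,\nu} = a^{\gamma+\delta,\mu}_{\beta+\delta,\nu}(\tJ) = a^{\gamma,\mu}_{\beta,\nu;\mathrm{st}}(\tJ),\]
so that the stable version of Proposition \ref{a-comp} holds for every $\beta \in \bX_*$ and $\nu \in \La$ (respectively $\nu \in \La^\tJ$ on the $L^\tJ$-side).

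Second, I would establish the character expansion. Lemma \ref{E-basis} identifies the $\beta$-summand in $K_{\bG}(\Gr_G)$ with $K_{\bG}(\sB_{\tI(\beta)^\#})$, under which $E[\beta;\la]$ corresponds to the character $\chi\bigl(V^{\tI(\beta)^\#}(w(\tI,\beta)w_0\la)\bigr)$. Expanding this finite-dimensional character into $H$-weight monomials and matching each monomial to its corresponding $E_{\mathrm{st}}[\beta;\nu]$ (which one verifies lies in the $\beta$-summand by the commutativity of right translation with the $\sH_q(\tI)$-action) gives a finite expansion
\[E[\beta;\la] = \sum_{\nu \in \La} c'_\nu(\la)\,E_{\mathrm{st}}[\beta;\nu],\qquad c'_\nu(\la) \in \Z \subset \C_q\bX^*_0(\tJ).\]
The same argument applied to $L^\tJ$ yields $E^\tJ[\beta;\la'] = \sum_{\nu'} c^\tJ_{\nu'}(\la')\,E^\tJ_{\mathrm{st}}[\beta;\nu']$; inverting this over the rank-$|\La^\tJ_+(\beta)|$ free $\C_q\bX^*_0(\tJ)$-module structure of the $\beta$-summand in $K_\bL(\Gr_L)$ produces
\[E^\tJ_{\mathrm{st}}[\beta;\nu] = \sum_{\la' \in \La^\tJ_+(\beta)} e_{\la'}(\nu)\,E^\tJ[\beta;\la'],\qquad e_{\la'}(\nu) \in \C_q\bX^*_0(\tJ).\]

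Combining everything, for $\gamma \in \beta + Q^\vee_{\tJ,+}$ we compute
\[a^{\gamma,\mu}_{\beta,\la} = \sum_\nu c'_\nu(\la)\,a^{\gamma,\mu}_{\beta,\nu;\mathrm{st}} = \sum_\nu c'_\nu(\la)\,a^{\gamma,\mu}_{\beta,\nu;\mathrm{st}}(\tJ) = \sum_{\la'}\Bigl(\sum_\nu c'_\nu(\la)\,e_{\la'}(\nu)\Bigr) a^{\gamma,\mu}_{\beta,\la'}(\tJ),\]
yielding the asserted identity with $c_{\la'} := \sum_\nu c'_\nu(\la)\,e_{\la'}(\nu) \in \C_q\bX^*_0(\tJ)$. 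The main obstacle is the character-expansion step: one must verify carefully that $E_{\mathrm{st}}[\beta;\nu]$ indeed falls into the $\beta$-summand for $\beta \in \bX_*^\le$ on the boundary (tracking how right translation by $\mt_{-\delta}$ interacts with the $\bX_*^\le$-indexed decomposition of Lemma \ref{E-basis} after localization), and that the inverse-Kostka-type coefficients $e_{\la'}(\nu)$ stay within $\C_q\bX^*_0(\tJ)$, which is ensured by the freeness of the $\beta$-summand over this base ring.
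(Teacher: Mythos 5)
Your proposal is essentially the paper's argument, reorganized. Both proofs hinge on the same two ingredients: (a) the Weyl-character/vector-bundle description of the $\beta$-summand furnished by Lemma \ref{E-basis} and Corollary \ref{Rsph}, which expands $E[\beta;\la]$ (resp.\ $E^\tJ[\beta;\la']$) in the $E_{\mathrm{st}}$-family with coefficients given by weight multiplicities; and (b) the transfer to quantum $K$-groups via $(\Psi'_\tJ)^{-1}\circ\Phi_\tJ$ and Corollary \ref{K-rest}, which matches the stable families on the two sides. Where you differ is in \emph{how} you couple (a) and (b): the paper invokes the branching decomposition (\ref{Vrest}), $V^{\tI(\beta)}(\la)\cong\bigoplus_{\la'}V^{\tJ(\beta)}(\la')^{\oplus c_{\la'}}$, to relate the two weight-multiplicity matrices directly and then applies Theorem \ref{qK-surj} once at the end; you instead first promote Proposition \ref{a-comp} to a ``stable'' identity $a^{\gamma,\mu}_{\beta,\nu;\mathrm{st}}=a^{\gamma,\mu}_{\beta,\nu;\mathrm{st}}(\tJ)$ for arbitrary $\beta$ via right $\mt$-translation (a clean reduction the paper does not isolate), and then recover the branching coefficients $c_{\la'}$ indirectly as $\sum_\nu c'_\nu(\la)\,e_{\la'}(\nu)$ by inverting the $L^\tJ$-side weight-multiplicity matrix. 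That inversion is exactly where the paper's route is more economical: by using (\ref{Vrest}) directly one never needs the inverse-Kostka step or its integrality over $\C_q\bX^*_0(\tJ)$. Two small points to tighten in your write-up: in the final chain of equalities you apply the stable identity for all $\nu\in\La$ appearing in the expansion, while Proposition \ref{a-comp} is stated for $\nu\in\La^\tJ$ — you need the observation (implicit in Corollary \ref{K-rest} as used in the paper) that the comparison map sends $E_{\mathrm{st}}[\beta;\nu]$ to $e^{\nu-\nu'}E^\tJ_{\mathrm{st}}[\beta;\nu']$ with $\nu'\in\La^\tJ$, so the $\La^{\tI\setminus\tJ}$-components are absorbed as character twists; and your claim $c'_\nu(\la)\in\Z$ is stronger than needed and not quite what the paper asserts ($e^\la_\mu\in\C_q\bX^*(G)$), though this does not affect the conclusion since everything stays in $\C_q\bX^*_0(\tJ)$.
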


\begin{proof}
We borrow the setting in the proof of Lemma \ref{E-basis}. The element $E [\beta; \la]$ corresponds a $G$-equivariant vector bundle over $\sB_{\tI ( \beta )^\#}$ inflated from a $L^{\tI ( \beta )}$-module $V^{\tI ( \beta )} ( \la )$, while the element $E^{\tJ} [\beta; \la']$ corresponding to a $L^{\tJ}$-equivariant vector bundle over $\sB^{\tJ}_{\tJ ( \beta )^{\#}}$ inflated from a $L^{\tJ ( \beta )}$-module $V^{\tJ ( \beta )} ( \la' )$. These are parametrized by $\La_+ ( \beta )$ and $\La^{\tJ}_+ ( \beta )$, respectively. In particular, we have
\begin{equation}
V^{\tI ( \beta )} ( \la ) \cong \bigoplus_{\la' \in \La^{\tJ}_+ ( \beta )} V^{\tJ ( \beta )} ( \la' )^{\oplus c_{\la'}},\label{Vrest}
\end{equation}
where $c_{\la'} \in \C_q \bX^*_0 ( \tJ ) \subset \C_q \bX^*$ is understood to be the multiplicity space that carries the information of character twists.

Consider the expansions
$$E^{\tJ} [\beta; \la] = \sum_{\mu} d_{\mu}^\la E^{\tJ}_{\mathrm{st}} [\beta; \mu] \hskip 2mm (\la \in \La^{\tJ (\beta)}_+)\hskip 2mm \text{and} \hskip 2mm E [\beta; \la]  = \sum_{\mu} e_{\mu}^\la E_{\mathrm{st}} [\beta; \mu] \hskip 2mm (\la \in \La^{\tI (\beta)}_+)$$
with $d_{\mu}^\la \in \C_q \bX^*_0 ( \tJ ), e_{\mu}^\la \in \C_q \bX^* ( G )$. These correspond to the expansions of the pullbacks of the class of vector bundles on $\sB^\tJ_{\tJ ( \beta )^\#}$ and $\sB_{\tI ( \beta )^\#}$ to $\sB^\tJ$ and $\sB$ in terms of line bundles by Corollary \ref{Rsph}, respectively. It respects the decomposition through the comparison given by Corollary \ref{K-rest}, that sends $E _{\mathrm{st}} [\beta; \la]$ ($\la \in \La$) to $e^{\la - \la'} E^{\tJ} _{\mathrm{st}} [\beta; \la']$ for $\la' \in \La^{\tJ}$ such that $\la - \la' \in \La^{\tI \setminus \tJ}$.

It follows that
$$d_{\mu}^\la = \sum_{\la'} c_{\la'} e_{\mu}^{\la'}.$$
Now the assertion follows by transplanting the problem to the quantum $K$-groups via $( \Psi_{\tJ}')^{-1} \circ \Phi_{\tJ}$ thanks to Proposition \ref{qK-surj}.
\end{proof}

\subsection{Proof of Theorem \ref{main}}\label{subsec:main}
This subsection is totally devoted to the proof of Theorem \ref{main}.  We consider elements of $K_{\bG} ( \Gr_{G} ) $ and $K_{\bL} ( \Gr_{L} )$ as elements of $K_{\bH} ( \Gr_{H} )$ via Corollary \ref{absDarboux}. Since we have $\phi_i, \xi_i, \mt_{\pm \al_i^{\vee}} \in K_{\bL} ( \Gr_{L} )$ for $i \not\in \tJ$, we have
\begin{equation}
K_{\bG} ( \Gr_G ) \subset K_{\bL} ( \Gr_{L} )\label{geninc}
\end{equation}
if and only if
\begin{equation}
K_{\bG} ( \Gr_G ) [\phi_i, \xi_i, \mt_{\pm \al_i^{\vee}} \mid i \not\in \tJ] \subset K_{\bL} ( \Gr_{L} ),\label{locinc}
\end{equation}
where the LHS exist as a subalgebra of $K_{\bH} ( \Gr_H )$. We consider the completions of the both sides of (\ref{locinc}) using the variables $\{ \mt_{\beta} \}_{\beta \in \bX_*}$ with respect to the direction $\left< \beta, \varpi_i \right> \to \infty$ for $i \not\in \tJ$. We denote the completion of the LHS of (\ref{locinc}) by $\mathbf K_G^{\wedge}$ and the completion of the RHS of (\ref{locinc}) by $\mathbf K_L ^{\wedge}$. We have $( \sum_{k = 0}^{\infty}\mt_{k\al_i^{\vee}}) \xi _i \in \mathbf K_G^{\wedge}$ for $i \not\in \tJ$, that is an inverse of $\phi_i$. We have (\ref{geninc}) if and only if $\mathbf K_G^{\wedge} \subset \mathbf K_L ^{\wedge}$.

For a collection $\vec{m} := \{ m_i \}_{i \in (\tI \setminus \tJ)} \in \Z^{(\tI \setminus \tJ)}$, we set $\La ( \vec{m} ) : = \{ \la \in \La \mid \left< \al_i^{\vee}, \la \right> = m _i , i \in (\tI \setminus \tJ) \}$. Assume that
$$\sum_{\la \in \La, \beta \in \gamma + Q_{+}^{\vee}} c_{\la,\beta} [\cO_\beta^\star ( \la ) ] \in K_{\bG} ( \Gr_G ) \hskip 5mm c_{\la, \beta} \in \C_q \bX^* ( G ).$$
By taking the conjugations by $\mt_{\al_i^{\vee}}$ for each $i \in ( \tI \setminus \tJ )$ and separate out the eigenvectors, we conclude that
$$\sum_{\la \in \La ( \vec{m} ), \beta \in \gamma + Q_{+}^{\vee}} c_{\la,\beta} [\cO_\beta^\star ( \la ) ] \in K_{\bG} ( \Gr_G ) [\phi_i, \xi_i, \mt_{\pm \al_i^{\vee}} \mid i \not\in \tJ].$$
Inside $\mathbf K_G^{\wedge}$, we can take conjugation by $\phi_i$ for each $i \not\in \tJ$. By examining their eigenvalues, we have
$$\sum_{\la \in \La ( \vec{m} ), \beta \in \gamma + Q_{\tJ, +}^{\vee}} c_{\la,\beta} [\cO_\beta^\star ( \la ) ] \in \mathbf K_{G} ^{\wedge}.$$
Summing them up with respect to $\vec{m}$, we find that
$$\sum_{\la \in \La, \beta \in \gamma + Q_{\tJ, +}^{\vee}} c_{\la,\beta} [\cO_\beta^\star ( \la ) ] \in \mathbf K_{G} ^{\wedge}.$$

Recall that we have $\bX^{\le}_* \subset \bX_*^{\le} ( \tJ )$ and $\La_+ ( \beta ) \subset \La ^{\tJ}_+ ( \beta ) + \La^{\tI \setminus \tJ}$, and hence there is a natural inclusion between the (labels of the) $\C_q \bX^* ( G )$-basis
\begin{equation}
\{E ( \beta, \la )\}_{\beta \in \bX^{\le}_*, \la \in \La_+ ( \beta )} \subset K_{\bG} ( \Gr_G )\label{Ebasis}
\end{equation}
into the (labels of the) $\C_q \bX^* ( G )$-basis
\begin{equation}
\{E^{\tJ} ( \beta, \la_1 ) e^{\la_2}\}_{\beta \in \bX_*^{\le} ( \tJ), \la _1 \in \La ^{\tJ}_+ ( \beta ), \la_2 \in \La ^{\tI \setminus \tJ}} \subset K_{\bL} ( \Gr _L ).\label{EJbasis}
\end{equation}

If a (formal) linear combination
\begin{equation}
\sum_{\la \in \La, \beta \in \gamma + Q_{+}^{\vee}} c_{\la,\beta} [\cO_\beta^\star ( \la ) ] \hskip 5mm c_{\la, \beta} \in \C_q \bX^* ( G )\label{flG}
\end{equation}
belongs to $K_{\bG} ( \Gr_G )$, then it represents a $\C_q \bX^* ( G )$-linear combination of (\ref{Ebasis}). In view of Proposition \ref{aa-comp}, the partial  sum corresponding to $( \gamma + Q_{\tJ,+}^{\vee} ) \subset ( \gamma + Q_{+}^{\vee} )$ yields the $\C_q \bX^* ( G )$-linear combination of (\ref{EJbasis}) through $K_{\bH} ( \Gr_{H} )$. Therefore, (\ref{flG}) belongs to $K_{\bG} ( \Gr_G )$ only if
$$\sum_{\la \in \La, \beta \in \gamma + Q_{\tJ,+}^{\vee}} c_{\la,\beta} [\cO_{\tJ, \beta}^\star ( \la ) ] \in K_{\bL} ( \Gr _L ).$$
Since the corresponding leading term element belongs to $K_{\bG} ( \Gr_G ) \subset \mathbf K_{G} ^{\wedge}$ as a linear combination of (\ref{Ebasis}) thanks to Lemma \ref{E-basis}, we conclude that $\mathbf K_{G}^{\wedge} \subset \mathbf K_L^{\wedge}$ by removing the leading terms inductively. This forces $K_{\bG} ( \Gr_{G} ) \subset K_{\bL} ( \Gr _L )$ as required. Thus, we conclude Theorem \ref{main}.

\medskip

{\small
\hskip -5.25mm {\bf Acknowledgement:} The author would like thank Michael Finkelberg and \'Eric Vasserot for the discussions on the topic in this paper. He also thanks Joel Kamnitzer for some correspondences. Part of this work was done during his visit to the Higher School of Economics in December 2019, and the Institute Henri Poincar\'e during January--March 2020. This research was supported in part by JSPS KAKENHI Grant Number JP19H01782.}

\appendix

\setcounter{section}{1}
\setcounter{thm}{0}

{\small
\begin{flushleft}
{\normalsize\textbf{Appendix A \hskip 2mm A quantum analogue of the induction equivalence}}
\end{flushleft}

Let $G$ be a connected reductive semi-simple group over $\C$, with a Borel subgroup $B$ and a maximal torus $H$. Let $B \subset P \subset G$ be a parabolic subgroup. Let $Q^{\vee}_{+}$ denote the span of positive coroots (inside the coroot lattice of $G$) identified with the effective cone of $G / B$. Let $Q^{\vee}_{P,+} \subset Q^{\vee}_+$ be the span of positive coroots of $G$ that does not belong to the standard Levi subgroup of $P$ (cf. \S \ref{setup}). Let $\{ (\al^P_i)^{\vee}\}_i$ be the set of positive simple coroots in $Q^{\vee}_{P,+}$.

For a smooth projective variety $\mathfrak X$ over $\C$, we have a subset $H_2 ( \mathfrak X )_+ \subset H_2 ( \mathfrak X, \Z )$ of the effective classes (that is a submonoid). Let $\mathcal M_{g,n,\beta} ( \mathfrak X )$ be the moduli stack of genus $g$ stable maps with $n$-marked points with degree $\beta \in H_2 ( \mathfrak X )_+$ (see \cite{BM96,Lee04}). 

\begin{thm}\label{qK-surj-gen}
Let $X$ be a smooth projective algebraic variety over $\C$ equipped with the $P$-action. We assume $H_1 ( X, \Z ) = \{ 0 \}$. Then, we have a surjective map of algebras
$$QK_G ( G \times_P X ) \longrightarrow QK_P ( X ),$$
where $QK$ denotes the big quantum $K$-group defined in Lee $\cite{Lee04}$.   
\end{thm}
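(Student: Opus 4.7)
The plan is to reduce the theorem to an equivariant identification of stable map moduli stacks, combined with the classical induction equivalence for equivariant $K$-theory. The projection $p : G \times_P X \to G/P$ is a Zariski-locally trivial fibration with fiber $X$. Since $G/P$ is simply connected and $H_1(X, \Z) = 0$, the Serre spectral sequence of $p$ in homology, together with the fact that $Q^{\vee}_{P,+}$ is a strict convex cone, yields a decomposition of effective cones
$$H_2(G \times_P X)_+ \;\cong\; H_2(X)_+ \oplus Q^{\vee}_{P, +}.$$
In particular, the Novikov ring of $G \times_P X$ is generated over that of $X$ by the variables $\{Q^{(\al^P_i)^\vee}\}_i$ attached to the simple positive coroots outside the Levi of $P$.

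For a vertical effective class $\beta \in H_2(X)_+ \hookrightarrow H_2(G \times_P X)_+$, any connected stable map $f : C \to G \times_P X$ representing $\beta$ must satisfy $(p \circ f)_* [C_i] = 0$ on every irreducible component $C_i$, since $Q^{\vee}_{P,+}$ is a strict convex cone. Hence $p \circ f$ is constant by connectedness of $C$, and the image lies in a single fiber of $p$. This produces a canonical $G$-equivariant isomorphism
$$\mathcal{M}_{g, n, \beta}(G \times_P X) \;\cong\; G \times_P \mathcal{M}_{g, n, \beta}(X),$$
compatible with the evaluation and forgetful maps. Invoking the classical induction equivalence $K_G(G \times_P Y) \cong K_P(Y)$ for quasi-projective $P$-schemes $Y$ (\cite{CG97}), the $G$-equivariant $K$-theoretic Gromov-Witten invariants of $G \times_P X$ attached to vertical classes match the $P$-equivariant invariants of $X$ of the same class. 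Classes with nonzero $Q^{\vee}_{P, +}$-component will only contribute through Novikov variables that are to be annihilated.

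The desired homomorphism $QK_G(G \times_P X) \to QK_P(X)$ is then constructed by setting $Q^\gamma \equiv 0$ for every $\gamma \in Q^{\vee}_{P, +} \setminus \{0\}$ and identifying the classical $K$-group via the induction equivalence. Surjectivity is immediate from the surjectivity of the induction isomorphism and the surjectivity of the Novikov quotient. The principal technical obstacle is to verify that Lee's definition of the big quantum product — built from $n$-point generating series and gluing morphisms — is compatible with this identification. Concretely, one must check that gluing two vertical-class stable maps along an evaluation point yields a vertical-class stable map, that the $G$-equivariant virtual structure sheaf on $\mathcal{M}_{g, n, \beta}(G \times_P X)$ corresponds under induction to the $P$-equivariant virtual structure sheaf on $\mathcal{M}_{g, n, \beta}(X)$ twisted in a controllable way by the relative tangent bundle $T_{G/P}$ of the fibration, and that the formal inversion that extracts the structure constants from the potential commutes with the specialization of Novikov variables. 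Each of these reduces to bookkeeping, thanks to the $G$-equivariance of the evaluation maps and the fact that the fiber product defining gluing preserves the locus of maps factoring through a single fiber of $p$.
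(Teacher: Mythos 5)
Your proposal follows essentially the same route as the paper: use the vanishing of $H_1(X,\Z)$ to split $H_2(G\times_P X)$, characterize the vertical effective classes, identify $\mathcal M_{0,n,\beta}(G \times_P X) \cong G \times_P \mathcal M_{0,n,\beta}(X)$ for vertical $\beta$, transport virtual structure sheaves and quantum $K$-potentials through the inflation isomorphism, and conclude by specializing the non-vertical Novikov variables to zero. Two places where you drift from what the paper actually needs (and from correctness) are worth flagging. First, the claim that the effective cone decomposes as a direct sum $H_2(G\times_P X)_+ \cong H_2(X)_+ \oplus Q^\vee_{P,+}$ is an overclaim; effective cones of fibrations generally do not split this way. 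The paper only uses the weaker (and correct) statement that the pushforward $\pi : H_2(G\times_P X)_+ \to H_2(G/P)_+ \cup \{0\}$ has fiber over $0$ equal to $H_2(X)_+$, which is exactly what your fiber-constancy argument establishes. Second, your remark that the $G$-equivariant virtual structure sheaf is the inflation ``twisted in a controllable way by $T_{G/P}$'' signals that you have not actually verified the key compatibility: in genus $0$ there is no residual twist, since $R^1\pi_* \cO_C = 0$ on the moduli of prestable genus-$0$ curves, so $R\pi_* f^* T_{G\times_P X}$ for a vertical map is exactly $R\pi_* f^* T_X$ plus the trivial bundle $\cO \otimes T_{G/P}$ pulled back from $G/P$, which is precisely the perfect obstruction theory of the inflated stack $G\times_P\mathcal M_{0,n,\beta}(X)$. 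The paper asserts this equality of obstruction theories (hence of virtual structure sheaves) without a twist. Relatedly, you say the $G$-equivariant GW invariants ``match'' the $P$-equivariant ones; as stated this is not right. They are related by \emph{algebraic induction} of virtual $P$-characters to $G$-characters (Leray for $q: G\times_P\mathcal M \to G/P$), and the paper makes this precise: the $G$-equivariant Euler--Poincar\'e characteristic is the induction of the $P$-equivariant one, and it is this induction, not equality, that makes the potentials correspond under the inflation isomorphism $K_P(X)\cong K_G(G\times_P X)$. Your hedged ``bookkeeping'' items (gluing staying vertical, commutation of inversion with Novikov specialization) are indeed routine and the paper also leaves them implicit.
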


\begin{proof}
Since $X$ is projective with $P$-action, we can consider $X \subset \mathbb P ( V )$ for a finite-dimensional $P$-module $V$. We can twist by $P$-character if necessary to assume that all the $T$-weights $\la$ appearing in $V$ satisfies $\left<( \al^P_i )^{\vee}, \la \right> \ge 0$ for all $i$ (with respect to the standard pairing, cf. \S \ref{setup}). Then, we have an algebraic induction $V^\#$ of $V$, that is the maximal finite-dimensional $G$-module that is generated by $V$. We have $G \times_P X \subset \mathbb P ( V^\# )$, and hence $G \times_P X$ is again projective. The variety $G \times_P X$ is evidently smooth as $X$ is.

Since $H_{1} ( X, \Z ) = 0$, the Leray spectral sequence yields
$$H_2 ( G \times _P X, \Z ) \cong H_2 ( X, \Z ) \oplus H_2 ( G / P, \Z ).$$
The projection map yields
$$\pi : H_2 ( G \times _P X, \Z )_+ \longrightarrow H_2 ( G / P, \Z )_+ \cup \{ 0 \},$$
and the preimage of $0$ is $H_2 ( X, \Z )_+$ by inspection. By the above identification of the effective classes, we find
\begin{equation}
\mathcal M_{g,n,\beta} ( G \times_P X ) \cong G \times_P \mathcal M_{g,n,\beta} ( X )\label{infl-mod}
\end{equation}
whenever $\beta \in \pi^{-1} ( 0 ) \cong H_2 ( X, \Z )_+$. In particular, we have an inflation map
$$\mathrm{infl} : K_P ( \mathcal M_{g,n,\beta} ( X ) ) \stackrel{\cong}{\longrightarrow} K_G ( G \times_P \mathcal M_{g,n,\beta} ( X ) ).$$
By (\ref{infl-mod}), the perfect obstruction theory of $G \times_P \mathcal M_{g,n,\beta} ( X )$ (\cite[\S 2.3 (3)]{Lee04}) can be taken as the inflation of that of $\mathcal M_{g,n,\beta} ( X )$. It follows that
$$\mathrm{infl} ( [\mathcal O^{\mathrm{vir}}_{\mathcal M_{g,n,\beta} ( X )}]) = [\mathcal O^{\mathrm{vir}}_{\mathcal M_{g,n,\beta} ( G \times_P X )}].$$
Note that the quantum $K$-invariants of $\mathcal M_{g,n,\beta} ( X )$ (\cite[\S 4.2]{Lee04}) with respect to the classes from $K_P ( X )$ are $P$-characters (corresponding to finite-dimensional virtual representations of $P$). If $\beta \in \pi^{-1} ( 0 ) \cong H_2 ( X, \Z )_+$, then we find that the inflation isomorphisms $K_P ( X ) \cong K_G ( G \times_P X )$ and $K_P ( \mathcal M_{g,n,\beta} ( X ) ) \cong K_{G} ( \mathcal M_{g,n,\beta} ( G \times_P X ) )$ send the $P$-equivariant Euler-Poincar\'e characteristic maps to $G$-equivariant Euler-Poincar\'e characteristic maps through the algebraic (virtual) induction of the $P$-characters to $G$-characters. In particular, the quantum $K$-potential (\cite[(16)]{Lee04}) of $G \times_P X$ is the inflation of that of $X$ (from $P$-characters to $G$-characters) modulo the Novikov monomial $Q^{\beta}$ with $\pi ( \beta ) \neq 0$. This induces an algebra map
$$QK_G ( G \times_P X ) / ( Q^{\beta} \mid \pi ( \beta ) \neq 0 ) \longrightarrow QK_P ( X )$$
that is an isomorphism as being an isomorphism as vector spaces.
\end{proof}

{\footnotesize
\bibliography{kmref}

\begin{thebibliography}{10}

\bibitem{ACT18}
David Anderson, Linda Chen, Hsian-Hua Tseng, and Hiroshi Iritani.
\newblock The quantum {$K$}-theory of a homogeneous space is finite.
\newblock {\em International Mathematics Research Notices}, published online
  \texttt{https://doi.org/10.1093/imrn/rnaa108}, 2020.

\bibitem{BM96}
Kai~A. Behrend and Yuri~I Manin.
\newblock {Stacks of stable maps and Gromov-Witten invariants}.
\newblock {\em Duke Mathematical Journal}, 85(1):1--60, 1996.

\bibitem{BD}
Alexander Beilinson and Vladimir Drinfeld.
\newblock Quantization of {H}itchinn's integrable system and {H}ecke
  eigensheaves.
\newblock unpublished.

\bibitem{BFM05}
Roman Bezrukavnikov, Michael Finkelberg, and Ivan Mirkovi\'c.
\newblock Equivariant homology and {$K$}-theory of affine {G}rassmannians and
  {T}oda lattices.
\newblock {\em Compos. Math.}, 141(3):746--768, 2005.

\bibitem{BB05}
Anders Bjorner and Francesco Brenti.
\newblock {\em Combinatorics of {C}oxeter Groups}, volume 231 of {\em Graduate
  Text in Mathematics}.
\newblock Springer Science+Business Media, Inc., 2005.

\bibitem{Bou2}
Nicolas Bourbaki.
\newblock {\em Lie groups and {L}ie algebras. {C}hapters 4--6}.
\newblock Elements of Mathematics (Berlin). Springer-Verlag, Berlin, 2002.
\newblock Translated from the French, Reprint of the 1989 English translation.

\bibitem{BG}
A.~Braverman and D.~Gaitsgory.
\newblock Geometric {E}isenstein series.
\newblock {\em Invent. Math.}, 150(2):287--384, 2002.

\bibitem{BF14b}
Alexander Braverman and Michael Finkelberg.
\newblock Weyl modules and {$q$}-{W}hittaker functions.
\newblock {\em Math. Ann.}, 359(1-2):45--59, 2014.

\bibitem{BFN18}
Alexander Braverman, Michael Finkelberg, and Hiraku Nakajima.
\newblock Towards a mathematical definition of {C}oulomb branches of
  3-dimensional {$N=4$} gauge theories, {II}.
\newblock {\em Adv. Theor. Math. Phys.}, 22(5):1071--1147, 2018.

\bibitem{BFN19}
Alexander Braverman, Michael Finkelberg, and Hiraku Nakajima.
\newblock Coulomb branches of {$3d$} {$N=4$} quiver gauge theories and slices
  in the affine {G}rassmannian.
\newblock {\em Adv. Theor. Math. Phys.}, 23(1):75--166, 2019.
\newblock With two appendices by Braverman, Finkelberg, Joel Kamnitzer, Ryosuke
  Kodera, Nakajima, Ben Webster and Alex Weekes.

\bibitem{CP01}
Vyjayanthi Chari and Andrew Pressley.
\newblock Weyl modules for classical and quantum affine algebras.
\newblock {\em Represent. Theory}, 5:191--223 (electronic), 2001.

\bibitem{Che95}
Ivan Cherednik.
\newblock {N}onsymmetric {M}acdonald {P}olynomials.
\newblock {\em International Mathematics Research Notices}, 2(10), 1995.

\bibitem{CG97}
Neil Chriss and Victor Ginzburg.
\newblock {\em Representation theory and complex geometry}.
\newblock Modern Birkh\"auser Classics. Birkh\"auser Boston, Inc., Boston, MA,
  2010.
\newblock Reprint of the 1997 edition.

\bibitem{FFKM}
Boris Feigin, Michael Finkelberg, Alexander Kuznetsov, and Ivan Mirkovi{\'c}.
\newblock Semi-infinite flags. {II}. {L}ocal and global intersection cohomology
  of quasimaps' spaces.
\newblock In {\em Differential topology, infinite-dimensional {L}ie algebras,
  and applications}, volume 194 of {\em Amer. Math. Soc. Transl. Ser. 2}, pages
  113--148. Amer. Math. Soc., Providence, RI, 1999.

\bibitem{FKPRW}
Michael Finkelberg, Joel Kamnitzer, Khoa Pham, Leonid Rybnikov, and Alex
  Weekes.
\newblock Comultiplication for shifted {Y}angians and quantum open {T}oda
  lattice.
\newblock {\em Adv. Math.}, 327:349--389, 2018.

\bibitem{FM99}
Michael Finkelberg and Ivan Mirkovi{\'c}.
\newblock Semi-infinite flags. {I}. {C}ase of global curve {$\bold P^1$}.
\newblock In {\em Differential topology, infinite-dimensional {L}ie algebras,
  and applications}, volume 194 of {\em Amer. Math. Soc. Transl. Ser. 2}, pages
  81--112. Amer. Math. Soc., Providence, RI, 1999.

\bibitem{FT19}
Michael Finkelberg and Alexander Tsymbaliuk.
\newblock Multiplicative slices, relativistic {T}oda and shifted quantum affine
  algebras.
\newblock In {\em Representations and Nilpotent Orbits of Lie Algebraic
  Systems}, volume 330 of {\em Progress in Mathematics}, pages 133--304.
  Birkh\"auser, 2019.

\bibitem{FP95}
William Fulton and Rahul Pandharipande.
\newblock Notes on stable maps and quantum cohomology.
\newblock In Robert~Lazarsfeld J\'anos~Koll\'ar and David Morrison, editors,
  {\em Algebraic Geometry: Santa Cruz 1995}. Amer Mathematical Society, 1995.

\bibitem{Giv00}
Alexander Givental.
\newblock On the {WDVV} equation in quantum {$K$}-theory.
\newblock {\em Michigan Math. J.}, 48:295--304, 2000.

\bibitem{GL03}
Alexander Givental and Yuan-Pin Lee.
\newblock Quantum {$K$}-theory on flag manifolds, finite-difference {T}oda
  lattices and quantum groups.
\newblock {\em Invent. Math.}, 151(1):193--219, 2003.

\bibitem{Giv96}
Alexander~B. Givental.
\newblock Equivariant {G}romov-{W}itten invariants.
\newblock {\em Internat. Math. Res. Notices}, 13:613--663, 1996.

\bibitem{GK95}
Alexander~B Givental and Bumsig Kim.
\newblock {Quantum Cohomology of Flag Manifolds and Toda Lattices}.
\newblock {\em Communications in Mathematical Physics}, 641:609--641, 1995.

\bibitem{IMT15}
Hiroshi Iritani, Todor Milanov, and Valentin Tonita.
\newblock {Reconstruction and convergence in quantum K-Theory via Difference
  Equations}.
\newblock {\em International Mathematics Research Notices},
  2015(11):2887--2937, 2015, 1309.3750.

\bibitem{Kat18}
Syu Kato.
\newblock Demazure character formula for semi-infinite flag varieties.
\newblock {\em Math. Ann.}, 371(3):1769--1801, 2018, arXiv:1605.0279.

\bibitem{Kat18d}
Syu Kato.
\newblock Frobenius splitting of {S}chubert varieties of semi-infinite flag
  manifolds.
\newblock arXiv:1810.07106v4, 2018.

\bibitem{Kat18c}
Syu Kato.
\newblock Loop structure on equivariant {$K$}-theory of semi-infinite flag
  manifolds.
\newblock arXiv:1805.01718v6, 2018.

\bibitem{Kat19a}
Syu Kato.
\newblock On quantum {$K$}-groups of partial flag manifolds.
\newblock arXiv:1906.09343, 2019.

\bibitem{KNS17}
Syu Kato, Satoshi Naito, and Daisuke Sagaki.
\newblock Equivariant {$K$}-theory of semi-infinite flag manifolds and the
  {P}ieri-{C}hevalley formula.
\newblock {\em Duke Math. J.}, 169(13):2421--2500, 2020.

\bibitem{KP01}
B.~Kim and R.~Pandharipande.
\newblock The connectedness of the moduli space of maps to homo- geneous
  spaces.
\newblock In {\em Symplectic geometry and mirror symmetry (Seoul, 2000)}, pages
  187--201. World Sci. Publ., River Edge, NJ, 2001.

\bibitem{Kim99}
Bumsig Kim.
\newblock Quantum cohomology of flag manifolds {$G/B$} and quantum {T}oda
  lattices.
\newblock {\em Ann. of Math. (2)}, 149(1):129--148, 1999.

\bibitem{KM94}
M.~Kontsevich and Yu. Manin.
\newblock Gromov-{W}itten classes, quantum cohomology, and enumerative
  geometry.
\newblock {\em Comm. Math. Phys.}, 164(3):525--562, 1994.

\bibitem{KK90}
Bertram Kostant and Shrawan Kumar.
\newblock {$T$}-equivariant {$K$}-theory of generalized flag varieties.
\newblock {\em J. Differential Geom.}, 32(2):549--603, 1990.

\bibitem{Kum02}
Shrawan Kumar.
\newblock {\em Kac-{M}oody groups, their flag varieties and representation
  theory}, volume 204 of {\em Progress in Mathematics}.
\newblock Birkh\"auser Boston, Inc., Boston, MA, 2002.

\bibitem{LLMS17}
Thomas Lam, Changzheng Li, Leonardo~C. Mihalcea, and Mark Shimozono.
\newblock A conjectural {P}eterson isomorphism in {$K$}-theory.
\newblock {\em J. Algebra}, 513:326--343, 2018.

\bibitem{LSS10}
Thomas Lam, Anne Schilling, and Mark Shimozono.
\newblock {$K$}-theory {S}chubert calculus of the affine {G}rassmannian.
\newblock {\em Compositio Mathematica}, 146(4):811--852, 2010, 0901.1506.

\bibitem{Lee04}
Yuan-Pin Lee.
\newblock Quantum {$K$}-theory, {I}: {F}oundations.
\newblock {\em Duke Math. J.}, 121(3):389--424, 2004.

\bibitem{LL10}
Naichung~Conan Leung and Changzheng Li.
\newblock Functorial relationships between {$QH^*(G/B)$} and {$QH^*(G/P)$}.
\newblock {\em Journal of Differential Geometry}, 86:303--354, 2010.

\bibitem{Lus80}
George Lusztig.
\newblock Hecke algebras and {J}antzen's generic decomposition patterns.
\newblock {\em Adv. in Math.}, 37(2):121--164, 1980.

\bibitem{Mac03}
I.~G. Macdonald.
\newblock {\em Affine {H}ecke algebras and orthogonal polynomials}, volume 157
  of {\em Cambridge Tracts in Mathematics}.
\newblock Cambridge University Press, Cambridge, 2003.

\bibitem{MV07}
I.~Mirkovi{\'c} and K.~Vilonen.
\newblock Geometric {L}anglands duality and representations of algebraic groups
  over commutative rings.
\newblock {\em Ann. of Math. (2)}, 166(1):95--143, 2007.

\bibitem{NOS18}
Satoshi Naito, Daniel Orr, and Daisuke Sagaki.
\newblock Pieri-{C}hevalley formula for anti-dominant weights in the
  equivariant {$K$}-theory of semi-infinite flag manifolds.
\newblock arXiv:1808.01468.

\bibitem{Nak15}
Hiraku Nakajima.
\newblock Towards a mathematical definition of {C}oulomb branches of
  3-dimensional {$N=4$} gauge theories, {I}.
\newblock {\em Adv. Theor. Math. Phys.}, 20(3):595--669, 2016.

\bibitem{Orr20}
Daniel Orr.
\newblock Equivariant {$K$}-theory of the semi-infinite flag manifold as a
  nil-{DAHA} module.
\newblock arXiv:2001.03490, 2020.

\bibitem{Pet97}
Dale Peterson.
\newblock Quantum cohomology of {$G/P$}.
\newblock Lecture at MIT, 1997.

\bibitem{Tel19}
Constantin Teleman.
\newblock The role of {C}oulomb branches in {2D} gauge theory.
\newblock {\em J. Eur. Math. Soc. (JEMS)}, to appear.

\end{thebibliography}
\bibliographystyle{hplain}}
\end{document}